\documentclass[reqno,10pt]{amsart}

\usepackage[utf8]{inputenc}

\usepackage{amssymb,amsmath,amsthm,amsfonts,color}
\usepackage{mathrsfs,dsfont,comment,mathscinet}
\usepackage{todonotes}
\usepackage{tikz}
\usepackage{chngcntr}

\usepackage{enumerate,esint}
\usepackage[left=2.8cm,right=2.8cm,top=2.8cm,bottom=2.8cm]{geometry}
\parskip1mm
\usepackage{mathtools}
\usepackage{graphicx,tikz}
\usepackage[format=hang,labelfont=bf]{caption}
\usepackage{ wasysym }

\usepackage{epstopdf}
\DeclareGraphicsRule{.tif}{png}{.png}{`convert #1 `dirname #1`/`basename #1 .tif`.png}

\usepackage[colorlinks=true, pdfstartview=FitV, linkcolor=blue, 
            citecolor=blue, urlcolor=blue]{hyperref}


\tolerance=10000
\allowdisplaybreaks
\numberwithin{equation}{section}
\theoremstyle{plain}
\newtheorem{theorem}{Theorem}[section]
\newtheorem{proposition}[theorem]{Proposition}
\newtheorem{lemma}[theorem]{Lemma}
\newtheorem{example}[theorem]{Example}
\newtheorem{corollary}[theorem]{Corollary}
\theoremstyle{definition}
\newtheorem{definition}[theorem]{Definition}
\newtheorem{remark}[theorem]{Remark}

\newtheorem*{theorem*}{Theorem}

\makeatletter
\def\th@plain{%
  \thm@notefont{}
  \itshape 
}
\def\th@definition{%
  \thm@notefont{}
  \normalfont 
}
\makeatother

\definecolor{mblue}{HTML}{13439b}

\newcommand\R{\mathbb R}

\newcommand\N{\mathbb N}
\newcommand\Z{\mathbb Z}
\renewcommand\S{\mathbb S}

\renewcommand{\d}{\mathrm{d}}

\newcommand{\restr}[1]{|_{#1}}

\def\Xint#1{\mathchoice
{\XXint\displaystyle\textstyle{#1}}%
{\XXint\textstyle\scriptstyle{#1}}%
{\XXint\scriptstyle\scriptscriptstyle{#1}}%
{\XXint\scriptscriptstyle\scriptscriptstyle{#1}}%
\!\int}
\def\XXint#1#2#3{{\setbox0=\hbox{$#1{#2#3}{\int}$ }
\vcenter{\hbox{$#2#3$ }}\kern-.6\wd0}}

\def\dashint{\Xint-}

\definecolor{bblue}{HTML}{3C3C9F}

\newcommand{\loc}{\mathrm{loc}}
\newcommand{\var}{\mathrm{Var}}
\newcommand{\tr}{\mathrm{tr}}
\newcommand{\cof}{\mathrm{cof}}
\newcommand{\adj}{\mathrm{adj}}

\renewcommand{\deg}{\mathrm{deg}}
\newcommand{\dom}{\mathrm{dom}}
\newcommand{\im}{\mathrm{im}}
\renewcommand{\div}{\mathrm{div}}
\newcommand{\mult}{\mathrm{mult}}

\newcommand{\dist}{\mathrm{dist}}
\newcommand\wk{\rightharpoonup}

\newcommand{\rnn}{\R^{N\times N}}

\newcommand*\closure[1]{\overline{#1}}

\newcommand{\leb}{\mathscr{L}^N}
\newcommand{\haus}{\mathscr{H}^{N-1}}


\DeclareMathOperator*{\aplim}{\mathrm{ap\,lim}}

\DeclareMathOperator*{\esssup}{\mathrm{ess\,sup}}

\DeclareMathOperator*{\argmin}{\mathrm{argmin}}

\def\Xint#1{\mathchoice
{\XXint\displaystyle\textstyle{#1}}%
{\XXint\textstyle\scriptstyle{#1}}%
{\XXint\scriptstyle\scriptscriptstyle{#1}}%
{\XXint\scriptscriptstyle\scriptscriptstyle{#1}}%
\!\int}
\def\XXint#1#2#3{{\setbox0=\hbox{$#1{#2#3}{\int}$ }
\vcenter{\hbox{$#2#3$ }}\kern-.6\wd0}}

\def\dashint{\Xint-}


\title[Quasistatic evolution  in magnetoelasticity]{Quasistatic evolution in magnetoelasticity under\\ subcritical coercivity assumptions}
\author[M. Bresciani]{Marco Bresciani}
\address[M. Bresciani]{Institute of Analysis and Scientific Computing,
		Technische Universit\"{a}t Wien,
		Wiedner Hauptstrasse 8--10, 1040 Vienna, Austria.
}
\email{marco.bresciani@tuwien.ac.at}

\date{\today}
\keywords{magnetoelasticity,  Eulerian-Lagrangian energies, rate-independent processes}
\subjclass[2000]{49J45; 74C99; 74F15}

\begin{document}

\setlength\parindent{0pt}

\vskip .2truecm
\begin{abstract}
 We study a variational model of magnetoelasticity both in the static and in the quasistatic setting. The model features a mixed Eulerian-Lagrangian formulation, as magnetizations are defined on the deformed configuration in the actual space. The magnetic saturation constraint is formulated in the reference configuration and involves the Jacobian determinant of deformations. These belong to the class of possibility discontinuous deformations excluding cavitation introduced by Barchiesi, Henao and Mora-Corral. We establish a compactness result which, in particular, yields the convergence of the compositions of magnetizations with deformations. In the static setting, this enables us to prove the existence of minimizers by means of classical lower semicontinuity methods. Our compactness result also allows us to address the  analysis in the quasistatic setting, where we examine rate-independent evolutions driven by applied loads and boundary conditions. In this case, we prove  the existence of energetic solutions. 
\end{abstract}
\maketitle

\section{Introduction}

Magnetoelastic materials are characterized by their tendency to experience mechanical deformations in response to external magnetic fields. This peculiar behaviour is termed magnetostriction and constitutes the foundation of the technology behind many devices such as sensors and actuators. 

A first phenomenological theory of magnetoelasticity has been proposed by Brown \cite{brown0,brown} in the form of a variational principle. 
The theory takes as independent variables the deformation and the magnetization. The latter should be interpreted as the local density of magnetic dipoles per unit volume. While the first variable is classically defined on the reference configuration (Lagrangian), the second one is naturally defined on the deformed configuration in the actual space (Eulerian). This setting constitutes one of the main features of the theory.

Given its sound variational structure, the theory of Brown has been the  subject of rigorous analytical investigations \cite{desimone.dolzmann,desimone.james,james.kinderlehrer}, although the problem of the existence of equilibrium configurations has been addressed only in recent years, see the brief review of the literature below. This is because the mixed Eulerian-Lagrangian formulation entails several substantial difficulties that make various variational techniques inapplicable. 
Overall, the mathematical modeling of magnetoelasticity poses very challenging problems that can be qualified as: nonlinear, as magnetostrictive materials can experience large deformations; nonconvex, as such problems are subjected to nonconvex constraint due to magnetic saturation; and nonlocal, as the magnetic response of the material depends on the shape assumed by the deformed body.

Let $\Omega\subset\R^N$ represent the reference configuration of a magnetoelastic body subjected to elastic deformations $\boldsymbol{y}\colon \Omega \to \R^N$ and magnetizations $\boldsymbol{m}\colon \boldsymbol{y}(\Omega)\to \R^N$.
According to the theory of Brown, equilibrium configurations  correspond to minimizers of the following energy functional:
\begin{equation}
    \label{eqn:intro-magnetoelastic-energy}
    (\boldsymbol{y},\boldsymbol{m})\mapsto \int_\Omega W(D\boldsymbol{y},\boldsymbol{m}\circ\boldsymbol{y})\,\d\boldsymbol{x}+\int_{\boldsymbol{y}(\Omega)} |D\boldsymbol{m}|^2\,\d\boldsymbol{\xi}+\frac{1}{2}\int_{\R^N} |Du_{\boldsymbol{m}}|^2\,\d\boldsymbol{\xi}.
\end{equation}
The first term in \eqref{eqn:intro-magnetoelastic-energy} stands for the elastic energy of the system. The expression of the nonlinear elastic energy density $W$ should exhibit a strong coupling between its two variables in order to  enhance magnetostrictive effects. A prototypical example inspired by the theory of liquid crystals \cite{desimone.teresi} is given by
\begin{equation*}
    W(\boldsymbol{F},\boldsymbol{\lambda})\coloneqq \Phi\big ((\alpha \boldsymbol{\lambda}(\det\boldsymbol{F})\otimes \boldsymbol{\lambda}(\det\boldsymbol{F})+\beta (\boldsymbol{I}-\boldsymbol{\lambda}(\det\boldsymbol{F})\otimes \boldsymbol{\lambda}(\det\boldsymbol{F})))^{-1}\boldsymbol{F}\big ), 
\end{equation*}
where $\Phi$ is a frame-indifferent density that is minimized at the identity and that blows up as the determinant of its argument approaches zero and $\alpha,\beta>0$ are material parameters. However, in the present work, no specific structure of $W$ will be assumed. The second term in \eqref{eqn:intro-magnetoelastic-energy} is the exchange energy determined by the pairwise interaction of magnetic dipoles, which favours their alignment. Eventually, the third term in \eqref{eqn:intro-magnetoelastic-energy} accounts for the magnetostatic energy; the stray-field potential $u_{\boldsymbol{m}}\colon \R^N \to \R$ is given by a solution to the Maxwell equation
\begin{equation}
\label{eqn:intro-maxwell}
    \text{$\div \left (-Du_{\boldsymbol{m}}-\chi_{\boldsymbol{y}(\Omega)}\boldsymbol{m} \right)=0$ in $\R^N$,}
\end{equation}
where $\chi_{\boldsymbol{y}(\Omega)}\boldsymbol{m}$ simply denotes the extension of $\boldsymbol{m}$ by zero outside of $\boldsymbol{y}(\Omega)$. 

In the case of rigid bodies at sufficiently low constant temperature, magnetizations are subjected to a magnetic saturation constraint which, up to normalization, requires them to be sphere-valued. Indeed, the modulus of magnetizations corresponds to the spontaneous magnetic moment per unit mass that, for simplicity, is assumed to depend only on the temperature. In the case of deformable bodies, especially at large strains, this observation leads to a reformulation of the magnetic saturation constraint. In this regard, employing our notation, Brown writes \cite[p.\,73]{brown}: \emph{``since perfect alignment of spins produces a definite magnetic moment per unit mass, not per unit volume, it is $|\boldsymbol{m}\circ \boldsymbol{y}|\det D \boldsymbol{y}$ and not $|\boldsymbol{m}|$ that must be supposed constant’’.} Therefore, the magnetic saturation constraint should take the following form:
\begin{equation}
    \label{eqn:intro-magnetic-saturation}
    \text{$|\boldsymbol{m}\circ \boldsymbol{y}|\det D \boldsymbol{y}=1$ in $\Omega$.}
\end{equation}
Trivially, for incompressible materials, the constraint \eqref{eqn:intro-magnetic-saturation} is equivalent to the requirement of magnetizations being sphere-valued, namely
\begin{equation}
    \label{eqn:sphere-valued}
    \text{$|\boldsymbol{m}|=1$ in $\boldsymbol{y}(\Omega)$.}
\end{equation}

In the present work, we investigate the existence of solutions for the variational model of Brown both in the static and in the quasistatic setting. In the first case, solutions corresponds to minimizers of the energy \eqref{eqn:intro-magnetoelastic-energy} subjected to \eqref{eqn:intro-maxwell}-\eqref{eqn:intro-magnetic-saturation}. In the second case, these are understood in the energetic sense according to the theory of rate-independent processes \cite{mielke.roubicek}. This framework appears to be  adequate in view the  hysteretic character of magnetostrictive phenomena \cite{davino.visone}.

Before presenting our results, we briefly review the most relevant literature. For the static problem, the existence of minimizers has been first established in \cite{rybka.luskin} for nonsimple materials. The case of simple and incompressible materials has been addressed in \cite{barchiesi.desimone} and \cite{kruzik.stefanelli.zeman} under critical and supercritical coercivity assumptions on the elastic energy density, respectively. These assumptions ensure the continuity of admissible deformations.
Subsequently, the existence of minimizers  for compressible materials under subcritical coercivity assumptions has been proved in \cite{barchiesi.henao.moracorral} by considering a suitable class of of possibly discontinuous admissible deformations excluding cavitation.
The results in \cite{barchiesi.henao.moracorral}  have been further extended in \cite{henao.stroffolini}, by enlarging the class of admissible deformations to the scale of Sobolev-Orlicz spaces. A different proof of the existence of minimizers for compressible materials under supercritical coercivity assumptions has been recently provided in \cite{bresciani.davoli.kruzik}.
In the quasistatic setting, few results are available only in the case  of  continuous deformations. The existence of energetic solutions has been first achieved in  \cite{kruzik.stefanelli.zeman} for incompressible materials. Then, this result has been extended to compressible materials in \cite{bresciani.davoli.kruzik}. The analysis in \cite{bresciani.davoli.kruzik} actually contemplates a different notion of dissipation with respect to the one in \cite{kruzik.stefanelli.zeman} and relies on some higher-order regularization of the energy; however, the arguments in \cite{bresciani.davoli.kruzik} ensure the existence of energetic solutions for compressible materials in absence of any regularization when the dissipation takes the same form as  in \cite{kruzik.stefanelli.zeman}. We observe that in all the contributions mentioned, except for \cite{rybka.luskin},  the magnetic saturation constraint takes the form \eqref{eqn:sphere-valued} rather than  \eqref{eqn:intro-magnetic-saturation}.

It is worth to notice that, apart for the sake of mathematical generality,  relaxing the coercivity assumptions on the elastic energy density is extremely important from the physical point of view; indeed, this makes the analysis compatible with a broader range of mechanical models acknowledged in the literature in which the elastic energy density has subcritical, even quadratic, growth.   

The aim of this paper is to extend the analysis in \cite{barchiesi.henao.moracorral} from the static setting to the quasistatic one by establishing the existence of energetic solutions under subcritical coercivity assumptions on the elastic energy density. Admissible deformations are supposed to belong to the same class of  maps in $W^{1,p}(\Omega;\R^N)$ with $p>N-1$ for which cavitation is excluded which has been proposed in \cite{barchiesi.henao.moracorral}. Such maps represent the counterpart of orientation-preserving local homeomorphisms in the context of Sobolev spaces. Moreover, in the present work, we treat the more realistic constraint \eqref{eqn:intro-magnetic-saturation}. This constitutes a difference with respect to the analysis in \cite{barchiesi.henao.moracorral}, where the constraint \eqref{eqn:sphere-valued} is imposed.

To accomplish our goal, we first sharpen the analysis of the variational model in the static setting. We provide a compactness result for sequences of admissible states with uniformly bounded magnetoelastic energy. The limiting states are shown to satisfy the constraint \eqref{eqn:intro-magnetic-saturation}.
In particular, our compactness result yields the convergence of the compositions of magnetizations with deformations to the corresponding limiting quantity. This fact constitutes a very delicate issue as both deformations and magnetizations are generally discontinuous.
The convergence of compositions allows us to deduce the lower semicontinuity of the elastic energy, which represents the problematic term, by means of a standard application of the classical Eisen Selection Lemma \cite{eisen}. From this, the existence of minimizers of the magnetoelastic energy under the constraint \eqref{eqn:intro-magnetic-saturation} is achieved by employing the Direct Method as in \cite{kruzik.stefanelli.zeman}.
In \cite{barchiesi.henao.moracorral}, where the convergence of compositions  had  not been proved, the lower semicontinuity of the elastic energy  had  been established by working on the deformed configuration exploiting the convergence of the Jacobian minors of inverse deformations. Compared with the  arguments  in \cite{barchiesi.henao.moracorral}, our  proof strategy seems to be more direct.

Subsequently, we study  quasistatic evolutions driven by time-dependent boundary conditions and applied loads. These include mechanical body and surface forces, but also external magnetic fields, whose energetic contribution is of Eulerian type. 
Since the mechanical response of the material is assumed to be purely elastic, energy dissipation results from the magnetic reorientation only. In view of our mixed Eulerian-Lagrangian structure and the expression of the elastic energy in \eqref{eqn:intro-magnetoelastic-energy}, the natural candidate as dissipative variable is given by the composition of  magnetization and deformation. The same modeling choice has been taken in \cite{kruzik.stefanelli.zeman}. Therefore, the convergence of compositions provided by our compactness result is essential to address  the analysis of the quasistatic model. As already mentioned, in this setting, we seek for solutions in the energetic sense. These, we recall, are formulated on the two principles of global miniminality and energy-dissipation balance.
Thanks to our compactness result, the dissipation distance turns out to be continuous on the sublevel sets of the total energy. Hence, existence of energetic solutions is established by following the standard time-discretization scheme \cite{francfort.mielke,mainik.mielke,mielke,mielke.theil,mielke.theil.levitas,mielke.roubicek}. Minor adaptations, similar to the ones in \cite{bresciani.davoli.kruzik},  are needed to show the compactness of time-discrete solutions.

We remark that our model contemplates deformations that are locally invertible but not necessarily globally invertible. However, given the significance of this last requirement from the modeling point of view, we show how to incorporate the global invertibiliy constraint into our analysis. This is done in a fairly simple way. Our approach is based on the Ciarlet-Ne\v{c}as condition \cite{ciarlet.necas}, see also \cite{giacomini.ponsiglione,qi}. However, other equivalent possibilities, such as the (INV) condition \cite{mueller.spector}, can be considered.

Our analysis builds upon the one in \cite{barchiesi.henao.moracorral}, which, in turn, is based on previous works \cite{mueller.spector,mueller.qi.spector,mueller.qi.yan,sverak}.  A central role in our arguments is played by  the notions of geometric and topological image. The former allows us to employ the Change-of-variable Formula  from the deformed configuration to the reference one and backwards, whereas  the latter enjoys some continuity properties with respect to the convergence of deformations. In this regard, the topological degree constitutes an indispensable tool. The local invertibility of deformations as well as the stability of the local inverse with respect to the weak convergence, both established in \cite{barchiesi.henao.moracorral}, are also essential ingredients in our proofs.  Key difficulties in the analysis are the possible lack of continuity of admissible deformations together with the fact that these may violate the Lusin condition (N), i.e. may map sets of Lebesgue measure zero to set of positive Lebesgue measure. To overcome them, we exploit the fine properties of admissible deformations and we make use of refined versions of the Change-of-variable Formula. 

From the technical point of view, the main novelty of the paper lies in the proof of the compactness result, namely Theorem \ref{thm:compactness}. The most delicate points are to show that the constraint \eqref{eqn:intro-magnetic-saturation} is preserved in the limit and to prove the convergence of the compositions of magnetizations with deformations. The strategy to tackle these issues  is based on a localization argument involving the topological image of nested balls. Given a converging sequence of deformations, this allows us to consider the sequence of the local inverses on the topological image of a fixed ball under the limiting deformation. This argument is presented in Lemma \ref{lem:topological-images-nested}. This last result relies on the fine properties of  admissible deformations, precisely on their regular approximate differentiability \cite{goffman.ziemer}.

We expect that our results can be extended to the case in which admissible deformations belong to suitable Sobolev-Orlicz spaces with the help of the techniques developed in \cite{henao.stroffolini}. A possible variant of our analysis consists in taking the dissipation distance to be of the form proposed in \cite{bresciani.davoli.kruzik}. In that case, this is defined by means of a pull-back of magnetizations to the reference configuration involving the deformation gradient; this modeling choice has the advantage of producing a frame-indifferent dissipation which should be more selective among all possible evolutions.  However, this choice seem to require some higher-order regularization of the energy in order to proceed with the analysis. We refer to \cite{bresciani.davoli.kruzik} for more details.

The outline of the paper is as follows. In Section \ref{sec:preliminaries}, we present some preliminary results from the literature, mostly taken from \cite{barchiesi.henao.moracorral}, and we complement those with some simple observations. In Section \ref{sec:static}, we address the analysis in the static setting; the main results of the section are Theorem \ref{thm:compactness}, our compactness result, and Theorem \ref{thm:existence-minimizers}, the existence of minimizers. Quasistatic evolutions are studied in Section \ref{sec:quasistatic} and the existence of energetic solutions is established in Theorem \ref{thm:existence-energetic-solution}. Eventually, in the Appendix, we recall the definition about Sobolev maps on the boundary of regular domains.

\section{Preliminaries}
\label{sec:preliminaries}
In this section we recall various preliminary notions and results. Most of those are taken  from \cite{barchiesi.henao.moracorral}.  

Henceforth, $\Omega \subset \R^N$ is a bounded Lipschitz domain. The spatial dimension $N \in \N$ satisfies $N\geq 2$.  Generic points in the reference space and in the actual space are  denoted by $\boldsymbol{x}$ and $\boldsymbol{\xi}$, respectively. Accordingly, integration with respect to  $\leb$ is indicated by $\d \boldsymbol{x}$ or $\d\boldsymbol{\xi}$, while integration with respect to $\haus$ in the reference space is indicated by $\d\boldsymbol{a}$. Here, $\mathscr{L}^m$ and $\mathscr{H}^s$ denote the $m$-dimensional Lebesgue measure and the $s$-dimensional Hausdorff measure, where $m\in \N$ and $s>0$. Expressions like ``almost everywhere'' or ``almost every'' are referred to one of these measures depending on the context without any explicit specification. We do not identify maps that coincide almost everywhere. The exponent $p>N-1$ is fixed and we denote by $p'\coloneqq p/(p-1)$ its conjugate exponent.

A measurable map $\boldsymbol{y}\colon \Omega \to \R^N$ is termed to have the Lusin property (N) if $\leb(\boldsymbol{y}(A))=0$ for every $A\subset \Omega$ with $\leb(A)=0$, whereas it is termed to have the Lusin property (N${}^{-1}$) if $\leb(\boldsymbol{y}^{-1}(B))=0$ for every $B\subset \R^N$ with $\leb(B)=0$. For every set $U\subset \subset \Omega$ of class $C^2$, we denote by $\boldsymbol{n}_U$ the outer unit normal on $\partial U$.

\subsection{Lebesgue points and precise representative} Let $\boldsymbol{y}\in W^{1,p}(\Omega;\R^N)$. We denote by $L_{\boldsymbol{y}}$ the set of Lebesgue points of $\boldsymbol{y}$. Namely, $\boldsymbol{x}_0\in L_{\boldsymbol{y}}$ if there exists $\boldsymbol{y}^*(\boldsymbol{x}_0)\in\R^N$ such that
\begin{equation*}
    \lim_{r \to 0^+} \dashint_{B(\boldsymbol{x}_0,r)}|\boldsymbol{y}(\boldsymbol{x})-\boldsymbol{y}^*(\boldsymbol{x}_0)|\,\d\boldsymbol{x}=0.
\end{equation*}
In this case, $\boldsymbol{y}^*(\boldsymbol{x}_0)$ is termed the Lebesgue value of $\boldsymbol{y}$ at $\boldsymbol{x}_0$ and it is characterized as
\begin{equation*}
    \boldsymbol{y}^*(\boldsymbol{x}_0)=\lim_{r \to 0^+} \dashint_{B(\boldsymbol{x}_0,r)} \boldsymbol{y}(\boldsymbol{x})\,\d\boldsymbol{x}.
\end{equation*}
It is proved that $L_{\boldsymbol{y}}\subset \Omega$ is a Borel set and  $\leb(\Omega\setminus L_{\boldsymbol{y}})=0$. Actually, we have a precise estimate for the Hausdorff dimension of this set. For the following result, we refer to \cite[Theorem 2, Section 1.1, Chapter 3]{cartesian.currents}.

\begin{proposition}[Exceptional points]
	\label{prop:exceptional-points}
	Let $\boldsymbol{y}$ belong to $W^{1,p}(\Omega;\R^N)$. Then, for every $s>N-p$, there holds $\mathscr{H}^s(\Omega\setminus L_{\boldsymbol{y}})=0$. In particular, we have $\mathscr{H}^1(\Omega\setminus L_{\boldsymbol{y}})=0$.
\end{proposition}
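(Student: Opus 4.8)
The result is classical (see the cited reference); the plan is to prove it directly, from the Poincar\'e inequality and a Vitali covering argument, without invoking capacity. First I would dispose of trivialities: if $p>N$ then $\boldsymbol{y}$ has a continuous representative and $L_{\boldsymbol{y}}=\Omega$, while for $s\geq N$ the statement is either void ($\mathscr{H}^s\equiv0$ on $\R^N$ when $s>N$) or it is the classical Lebesgue differentiation theorem ($s=N$, since $\mathscr{H}^N$ and $\leb$ are comparable). So I would fix $N-p<s<N$, set $\tau:=s-(N-p)>0$, pick $0<\tau_0<\tau$, and work inside a fixed ball $B_0=B(\boldsymbol{x}_0,\rho)\subset\subset\Omega$, covering $\Omega$ by countably many of these. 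Throughout, $\nu:=|D\boldsymbol{y}|^p\,\leb\mres\Omega$ is a finite positive Borel measure.

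\emph{Step 1 (a decay criterion for Lebesgue points).} For concentric balls $B(\boldsymbol{x},r)\subset B_0$, the $(1,1)$-Poincar\'e inequality on balls together with Hölder's inequality yields
\[
\dashint_{B(\boldsymbol{x},r)}\big|\boldsymbol{y}-(\boldsymbol{y})_{B(\boldsymbol{x},r)}\big|\,\d\boldsymbol{x}\;\leq\;C_{N,p}\Big(r^{p-N}\,\nu\big(B(\boldsymbol{x},r)\big)\Big)^{1/p},
\]
$(\boldsymbol{y})_B$ being the average. From this I would deduce: if $\nu(B(\boldsymbol{x},r))\leq C\,r^{N-p+\tau_0}$ for all small $r$, then along dyadic radii $r_k=2^{-k}R$ one has $|(\boldsymbol{y})_{B(\boldsymbol{x},r_{k+1})}-(\boldsymbol{y})_{B(\boldsymbol{x},r_k)}|\leq C\,2^{-k\tau_0/p}$; summing this geometric series shows $((\boldsymbol{y})_{B(\boldsymbol{x},r_k)})_k$ converges and, after interpolating to non-dyadic radii, that its limit is the Lebesgue value of $\boldsymbol{y}$ at $\boldsymbol{x}$, i.e.\ $\boldsymbol{x}\in L_{\boldsymbol{y}}$. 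Contrapositively,
\[
\Omega\setminus L_{\boldsymbol{y}}\;\subseteq\;D:=\Big\{\boldsymbol{x}\in B_0:\ \limsup_{r\to0^+}r^{-(N-p+\tau_0)}\,\nu\big(B(\boldsymbol{x},r)\big)>0\Big\}.
\]

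\emph{Step 2 ($D$ is $\mathscr{H}^s$-negligible).} Writing $D=\bigcup_{n\in\N}D_n$ with $D_n:=\{\boldsymbol{x}\in B_0:\ \limsup_{r\to0^+}r^{-(N-p+\tau_0)}\nu(B(\boldsymbol{x},r))>1/n\}$, it suffices to show $\mathscr{H}^s(D_n)=0$ for each $n$. Fixing $n$ and $\delta>0$, for every $\boldsymbol{x}\in D_n$ I would choose $r_{\boldsymbol{x}}\in(0,\delta)$ with $B(\boldsymbol{x},r_{\boldsymbol{x}})\subset B_0$ and $r_{\boldsymbol{x}}^{N-p+\tau_0}<n\,\nu(B(\boldsymbol{x},r_{\boldsymbol{x}}))$; then the $5r$-covering lemma produces pairwise disjoint balls $B(\boldsymbol{x}_i,r_i)$ with $D_n\subseteq\bigcup_i B(\boldsymbol{x}_i,5r_i)$, and, since $\tau_0<\tau$,
\[
\mathscr{H}^s_{10\delta}(D_n)\;\leq\;C_{N,s}\sum_i r_i^s\;=\;C_{N,s}\sum_i r_i^{N-p+\tau_0}\,r_i^{\tau-\tau_0}\;\leq\;C_{N,s}\,\delta^{\tau-\tau_0}\,n\sum_i\nu\big(B(\boldsymbol{x}_i,r_i)\big)\;\leq\;C_{N,s}\,\delta^{\tau-\tau_0}\,n\,\nu(\Omega),
\]
the last inequality by disjointness. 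Letting $\delta\to0^+$ gives $\mathscr{H}^s(D_n)=0$, hence $\mathscr{H}^s(\Omega\setminus L_{\boldsymbol{y}})=0$; the final claim is the case $s=1$, which is admissible because $p>N-1$.

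\emph{Where the care is needed.} No single step is a genuine obstacle — this is essentially the statement that sets of $p$-capacity zero have Hausdorff dimension at most $N-p$ — but the point to get right is the balance in Step 2 between the \emph{pointwise} failure of decay of the localized gradient energy and the \emph{global} finiteness $\nu(\Omega)<\infty$: one must trade the surplus exponent $r_i^{\tau-\tau_0}$, available precisely because $s>N-p$, against the finite total mass. This is exactly why the conclusion degrades at $s=N-p$, where $\Omega\setminus L_{\boldsymbol{y}}$ is merely $\sigma$-finite with respect to $\mathscr{H}^{N-p}$. I would also be careful in Step 1 to control not the averages $(\boldsymbol{y})_{B_r}$ directly but the modulus of continuity of $r\mapsto(\boldsymbol{y})_{B_r}$, which is what the Poincar\'e inequality governs through the tail of $r^{p-N}\nu(B_r)$.
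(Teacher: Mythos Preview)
Your proof is correct and follows the standard route to this classical result: Poincar\'e's inequality controls the oscillation of averages by $r^{p-N}\nu(B_r)$, so any point where $\nu$ has sub-$(N-p+\tau_0)$-dimensional decay is a Lebesgue point, and the exceptional set is handled by the usual $5r$-covering estimate trading the surplus exponent against the finite total mass. The paper, however, does not give a proof of this proposition at all --- it simply cites \cite[Theorem~2, Section~1.1, Chapter~3]{cartesian.currents} and moves on --- so there is nothing to compare against beyond noting that your argument is precisely the one underlying that reference (and, equivalently, the capacity-based statement that $p$-quasicontinuous representatives fail to exist only on sets of Hausdorff dimension at most $N-p$).
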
 

We will use the following consequence of the previous result.

\begin{corollary}
	\label{cor:lebesgue-points-balls}
	Let $\boldsymbol{y}\in W^{1,p}(\Omega;\R^N)$. Then, for every $\boldsymbol{x}_0\in \Omega$ and for almost every $r\in (0,\dist(\boldsymbol{x}_0;\partial \Omega))$, there holds $\partial B(\boldsymbol{x}_0,r)\subset L_{\boldsymbol{y}}$.
\end{corollary}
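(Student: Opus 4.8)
The plan is to deduce this from Proposition \ref{prop:exceptional-points} by a slicing/coarea argument applied to the radial coordinate. Fix $\boldsymbol{x}_0 \in \Omega$ and set $R \coloneqq \dist(\boldsymbol{x}_0;\partial\Omega)$. Let $E \coloneqq \Omega \setminus L_{\boldsymbol{y}}$; by Proposition \ref{prop:exceptional-points}, applied with any exponent $s$ satisfying $N-p < s \le 1$ (such $s$ exists because $p > N-1$ forces $N-p < 1$), we have $\mathscr{H}^s(E) = 0$, and in particular $\mathscr{H}^1(E) = 0$. The claim is that for $\mathscr{L}^1$-almost every $r \in (0,R)$ the spherical slice $\partial B(\boldsymbol{x}_0,r)$ is disjoint from $E$, i.e. $\partial B(\boldsymbol{x}_0,r) \subset L_{\boldsymbol{y}}$.

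The key step is the following general measure-theoretic fact: if a set $E \subset \R^N$ has $\mathscr{H}^{s}(E) = 0$ for some $s > 0$, then for $\mathscr{L}^1$-almost every $r > 0$ one has $\mathscr{H}^{s-1}\big(E \cap \partial B(\boldsymbol{x}_0,r)\big) = 0$; this is a standard consequence of the coarea inequality for Hausdorff measures (see, e.g., the Eilenberg inequality, or \cite[Section 2.10.25--2.10.26]{federer} type results). Here I would use it with $s = 1$: since $\mathscr{H}^1(E) = 0$, for a.e. $r \in (0,R)$ we get $\mathscr{H}^0\big(E \cap \partial B(\boldsymbol{x}_0,r)\big) = 0$, and $\mathscr{H}^0$ being the counting measure, this means precisely that $E \cap \partial B(\boldsymbol{x}_0,r) = \emptyset$, that is, $\partial B(\boldsymbol{x}_0,r) \subset L_{\boldsymbol{y}}$. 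Alternatively, to avoid invoking the full coarea machinery, one can argue directly: the map $\Psi \colon \boldsymbol{x} \mapsto |\boldsymbol{x} - \boldsymbol{x}_0|$ is $1$-Lipschitz, so $\mathscr{H}^1\big(\Psi(E)\big) \le \mathscr{H}^1(E) = 0$, hence $\mathscr{L}^1\big(\Psi(E)\big) = 0$; thus for every $r \in (0,R) \setminus \Psi(E)$ — which is a set of full measure in $(0,R)$ — no point of $E$ has distance $r$ from $\boldsymbol{x}_0$, i.e. $\partial B(\boldsymbol{x}_0,r) \cap E = \emptyset$. This second route is entirely elementary and is the one I would write out.

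The main (and only) obstacle is a bookkeeping one: one must check that Proposition \ref{prop:exceptional-points} indeed yields $\mathscr{H}^1(\Omega \setminus L_{\boldsymbol{y}}) = 0$ in the relevant range, which is exactly the ``in particular'' clause already recorded there, so nothing new is needed. The Lipschitz-image argument for $\Psi$ uses only that Lipschitz maps do not increase Hausdorff measure, together with the fact that $\mathscr{H}^1$ and $\mathscr{L}^1$ coincide on subsets of $\R$. Everything else is immediate, so the corollary follows.
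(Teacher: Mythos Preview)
Your proposal is correct. Your first route via the coarea/Eilenberg inequality is exactly what the paper does: it invokes \cite[Corollary 2.10.11]{federer} to obtain
\[
\int_0^{\overline{r}} \#\big(\partial B(\boldsymbol{x}_0,r)\setminus L_{\boldsymbol{y}}\big)\,\d r \leq \mathscr{H}^1\big(B(\boldsymbol{x}_0,\overline{r})\setminus L_{\boldsymbol{y}}\big)=0,
\]
which is the same slicing estimate you describe. Your second route through the $1$-Lipschitz map $\Psi(\boldsymbol{x})=|\boldsymbol{x}-\boldsymbol{x}_0|$ is a genuine simplification: it replaces the coarea machinery by the single fact that Lipschitz images do not increase $\mathscr{H}^1$, and directly identifies the exceptional set of radii as $\Psi(\Omega\setminus L_{\boldsymbol{y}})$. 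Both arguments are equivalent in strength here, but the Lipschitz-image version is more self-contained and avoids the external reference.
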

\begin{proof}
	For simplicity, set $\overline{r}\coloneqq \dist(\boldsymbol{x}_0;\partial \Omega)$. 	
	By \cite[Corollary 2.10.11]{federer}, we have
	\begin{equation*}
	\int_0^{\overline{r}} \# (\partial B(\boldsymbol{x}_0,r) \setminus L_{\boldsymbol{y}})\,\d r\leq \mathscr{H}^1(B(\boldsymbol{x}_0,\overline{r})\setminus L_{\boldsymbol{y}}),
	\end{equation*}
	where the right-hand side equals zero by Proposition \ref{prop:exceptional-points}. Therefore, we have $\# (\partial B(\boldsymbol{x}_0,r) \setminus L_{\boldsymbol{y}})=0$ for almost every $0<r<\overline{r}$, which proves the claim.   
\end{proof}

 Note that the set $L_{\boldsymbol{y}}$ as well as the Lebesgue values of $\boldsymbol{y}$ depend only on the equivalence class of $\boldsymbol{y}$. We will regard $\boldsymbol{y}^*$ as a map from $L_{\boldsymbol{y}}$ to $\R^N$, termed the precise representative of $\boldsymbol{y}$. In this case, it is shown that $\boldsymbol{y}=\boldsymbol{y}^*$ almost everywhere in $L_{\boldsymbol{y}}$. 
 
\subsection{Approximate differentiabiliy and Change-of-variable Formula.} \label{subsec:approximate-differentiability-geometric-image}
For the notions of density and approximate differentiability, we refer to \cite{cartesian.currents}.
The $N$-dimensional density of a measurable set $A\subset \R^N$ at  $\boldsymbol{x}_0\in \R^N$ is denoted by $\Theta^N(A,\boldsymbol{x}_0)$.
The right one-dimensional densities of a measurable set $A\subset \R$ at $x_0\in \R$ is defined as
\begin{equation*}
    \Theta^1_+(A,x_0)\coloneqq \lim_{r \to 0^+} \frac{\mathscr{L}^1(A\cap (x_0,x_0+r))}{r}.
\end{equation*}

For $\boldsymbol{y}\in W^{1,p}(\Omega;\R^N)$, we denote the weak gradient of $\boldsymbol{y}$ by $D\boldsymbol{y}$. The map $\boldsymbol{y}$ is almost everywhere approximately differentiable \cite[Theorem 2, Section 1.4, Chapter 3]{cartesian.currents} and we denote its approximate gradient by $\nabla \boldsymbol{y}$. In particular, there holds $\nabla \boldsymbol{y}=D\boldsymbol{y}$ almost everywhere.

\begin{definition}[Geometric domain and geometric image]
\label{def:geometric-image}
Let $\boldsymbol{y}\colon \Omega \to \R^N$ be almost everywhere approximately differentiable in $\Omega$ with $\det \nabla\boldsymbol{y}\neq 0$ almost everywhere. We define the geometric domain of $\boldsymbol{y}$ as the set $\dom_{\rm G}(\boldsymbol{y},\Omega)$ of points $\boldsymbol{x}_0\in \Omega$ such that $\boldsymbol{y}$ is approximately differentiable in $\boldsymbol{x}_0$ with $\det\nabla\boldsymbol{y}(\boldsymbol{x}_0)\neq 0$ and there exist a compact set $K\subset \Omega$ with $\boldsymbol{x}_0\in K$ and $\Theta^N(K,\boldsymbol{x}_0)=1$, and a map $\boldsymbol{w}\in C^1(\R^N;\R^N)$ satisfying $\boldsymbol{w}\restr{K}=\boldsymbol{y}\restr{K}$ and $D\boldsymbol{w}\restr{K}=\nabla\boldsymbol{y}\restr{K}$. For every measurable set $A\subset \Omega$, we set
\begin{equation*}
	\dom_{\rm G}(\boldsymbol{y},A)\coloneqq A \cap \dom_{\rm G}(\boldsymbol{y},\Omega)
\end{equation*}
and we define
the geometric image of $A$ under $\boldsymbol{y}$  as
\begin{equation*}
    \im_{\rm G}(\boldsymbol{y},A)\coloneqq \boldsymbol{y}(A \cap \dom_{\rm G}(\boldsymbol{y},\Omega)).
\end{equation*}
\end{definition}

Note that the definition of geometric domain might be slightly different compared with others available in the literature. In particular, additionally to $\Theta^N(K,\boldsymbol{x}_0)=1$, we also require that $\boldsymbol{x}_0\in K$. This requirement is not restrictive and will be exploited in  Lemma \ref{lem:inverse-approximately-differentiable}.

In the following result, we collect some basic properties of the geometric domain and the geometric image.

\begin{lemma}
\label{lem:geometric-image}
Let $\boldsymbol{y}\colon \Omega \to \R^N$ be almost everywhere approximately differentiable with $\det \nabla \boldsymbol{y}\neq 0$ almost everywhere and let $A\subset \Omega$ be measurable. Then, the following hold:
\begin{enumerate}[(i)]
    \item $\dom_{\rm G}(\boldsymbol{y},A)$ is measurable and $\leb(A \setminus \dom_{\rm G}(\boldsymbol{y},A))=0$;
    \item $\boldsymbol{y}\restr{\dom_{\rm G}(\boldsymbol{y},A)}$ has the Lusin property \emph{(N)} and the set  $\im_{\rm G}(\boldsymbol{y},A)$ is measurable;
    \item for every $\boldsymbol{x}_0\in \Omega$  such that $\Theta^N(A,\boldsymbol{x}_0)=1$, there holds $\Theta^N(\im_{\rm G}(\boldsymbol{y},A),\boldsymbol{y}(\boldsymbol{x}_0))=1.$
\end{enumerate}
\end{lemma}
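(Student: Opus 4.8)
The plan is to obtain all three assertions from the classical Lusin-type ($C^1$) approximation of almost everywhere approximately differentiable maps, which I would use in the form (see \cite[Section~1.4, Chapter~3]{cartesian.currents}, and compare \cite{barchiesi.henao.moracorral}): there exist pairwise disjoint compact sets $(K_j)_{j\in\N}$ with $K_j\subset\Omega$ and $\leb\big(\Omega\setminus\bigcup_{j\in\N}K_j\big)=0$, together with maps $\boldsymbol{w}_j\in C^1(\R^N;\R^N)$, such that $\boldsymbol{w}_j\restr{K_j}=\boldsymbol{y}\restr{K_j}$ and $D\boldsymbol{w}_j\restr{K_j}=\nabla\boldsymbol{y}\restr{K_j}$ for all $j\in\N$. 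For~(i) let $D$ be the measurable set of points of $\Omega$ at which $\boldsymbol{y}$ is approximately differentiable with $\det\nabla\boldsymbol{y}\neq0$, and set $E\coloneqq\bigcup_{j}\{\boldsymbol{x}\in D\cap K_j:\Theta^N(K_j,\boldsymbol{x})=1\}$; then $E$ is measurable (the density-one points of each $K_j$ form a measurable set) and $\leb(\Omega\setminus E)=0$, by the assumption on $\boldsymbol{y}$ and the Lebesgue density theorem. Every $\boldsymbol{x}_0\in E$ belongs to $\dom_{\rm G}(\boldsymbol{y},\Omega)$, with $K=K_j$ and $\boldsymbol{w}=\boldsymbol{w}_j$ as witnesses for the index $j$ with $\boldsymbol{x}_0\in K_j$. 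Hence $\leb(A\setminus\dom_{\rm G}(\boldsymbol{y},A))=0$; and since $E\subset\dom_{\rm G}(\boldsymbol{y},\Omega)\subset D$ with $E,D$ measurable and $\leb(D\setminus E)=0$, completeness of $\leb$ yields the measurability of $\dom_{\rm G}(\boldsymbol{y},\Omega)$, and then of $\dom_{\rm G}(\boldsymbol{y},A)=A\cap\dom_{\rm G}(\boldsymbol{y},\Omega)$.

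For~(ii) the decisive point is that $\boldsymbol{y}$ agrees on each $K_j$ with the locally Lipschitz map $\boldsymbol{w}_j$; hence $\boldsymbol{y}\restr{K_j}$ has the Lusin property (N) and takes $\leb$-measurable subsets of $K_j$ to $\leb$-measurable sets (a Lipschitz map preserves $\sigma$-compactness and sends $\leb$-null sets to $\leb$-null sets, and a $\leb$-measurable subset of the bounded set $\Omega$ is $\sigma$-compact up to a $\leb$-null set). Writing $\dom_{\rm G}(\boldsymbol{y},A)$ as the union of the sets $A\cap\dom_{\rm G}(\boldsymbol{y},\Omega)\cap K_j$ and of the $\leb$-null residual set $R\coloneqq A\cap\dom_{\rm G}(\boldsymbol{y},\Omega)\setminus\bigcup_{j}K_j$, and summing over $j$, I would deduce the Lusin property (N) and the $\leb$-measurability of $\boldsymbol{y}$ on the part covered by the $K_j$, and hence the $\leb$-measurability of $\im_{\rm G}(\boldsymbol{y},A)$ as soon as $\leb(\boldsymbol{y}(R))=0$ is established. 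To get $\leb(\boldsymbol{y}(R))=0$, I would use that each point of $\dom_{\rm G}(\boldsymbol{y},\Omega)$ comes with a compact witness on which $\boldsymbol{y}$ equals a $C^1$ map, partition the points of $R$ according to the local Lipschitz constant and to the radius on which the corresponding local Lipschitz estimate holds, and conclude by a covering argument, in the spirit of \cite{barchiesi.henao.moracorral}.

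For~(iii) I would take $\boldsymbol{x}_0\in\dom_{\rm G}(\boldsymbol{y},\Omega)$ with $\Theta^N(A,\boldsymbol{x}_0)=1$ (this is the pertinent case; note that for such $\boldsymbol{x}_0$ one automatically has $\Theta^N(\dom_{\rm G}(\boldsymbol{y},\Omega),\boldsymbol{x}_0)=1$ by~(i), the complement of $\dom_{\rm G}(\boldsymbol{y},\Omega)$ in $\Omega$ being $\leb$-null and thus of vanishing density at every point) and I would pick a witness $(K,\boldsymbol{w})$. Since $D\boldsymbol{w}(\boldsymbol{x}_0)=\nabla\boldsymbol{y}(\boldsymbol{x}_0)$ is invertible, $\boldsymbol{w}$ restricts to a $C^1$-diffeomorphism from an open neighbourhood $U\ni\boldsymbol{x}_0$ onto an open neighbourhood of $\boldsymbol{w}(\boldsymbol{x}_0)=\boldsymbol{y}(\boldsymbol{x}_0)$. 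Setting $S\coloneqq A\cap K\cap\dom_{\rm G}(\boldsymbol{y},\Omega)\cap U$, each of these four sets has density one at $\boldsymbol{x}_0$ ($A$ by hypothesis, $K$ by the witness property, $\dom_{\rm G}(\boldsymbol{y},\Omega)$ by the preceding remark, $U$ as an open neighbourhood), so $\Theta^N(S,\boldsymbol{x}_0)=1$. Since a $C^1$-diffeomorphism with invertible differential at $\boldsymbol{x}_0$ sends density-one points to density-one points, $\Theta^N(\boldsymbol{w}(S),\boldsymbol{w}(\boldsymbol{x}_0))=1$; and as $\boldsymbol{w}(S)=\boldsymbol{y}(S)\subset\boldsymbol{y}(A\cap\dom_{\rm G}(\boldsymbol{y},\Omega))=\im_{\rm G}(\boldsymbol{y},A)$ while $\boldsymbol{w}(\boldsymbol{x}_0)=\boldsymbol{y}(\boldsymbol{x}_0)$, this forces $\Theta^N(\im_{\rm G}(\boldsymbol{y},A),\boldsymbol{y}(\boldsymbol{x}_0))=1$.

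The only step I expect to demand real care is the treatment of the residual $\leb$-null set $R$ in~(ii) — showing that the points of the geometric domain that escape the fixed countable family $(K_j)$ nonetheless have $\leb$-null image under $\boldsymbol{y}$; everything else is routine bookkeeping with the Lebesgue density theorem, completeness of $\leb$, and the elementary behaviour of $C^1$ and Lipschitz maps.
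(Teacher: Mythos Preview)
Your approach is correct, and for (i) and (iii) you are essentially reconstructing the arguments behind the results the paper simply cites (\cite[Lemma~2.5]{mueller.spector}). The one place where your route is genuinely more laborious than the paper's is~(ii). The paper bypasses your residual set $R$ entirely by observing that $\dom_{\rm G}(\boldsymbol{y},A)\subset\Omega_{\rm d}(\boldsymbol{y})$, the set of approximate differentiability points, and then invoking the standard fact that $\boldsymbol{y}\restr{\Omega_{\rm d}(\boldsymbol{y})}$ already has the Lusin property~(N) \cite[Proposition~1, Section~1.5, Chapter~3]{cartesian.currents}; any restriction to a subset inherits it. So you never need the points of $R$ to lie in one of your fixed $K_j$; membership in $\Omega_{\rm d}(\boldsymbol{y})$ is automatic from the definition of the geometric domain, and that suffices. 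Measurability of $\im_{\rm G}(\boldsymbol{y},A)$ then follows from the general fact that a measurable map with the Lusin property~(N) sends measurable sets to measurable sets \cite[Theorem~3.6.9]{bogachev}. Your proposed covering argument for $\boldsymbol{y}(R)$ would work, but it amounts to reproving the cited Lusin~(N) result for approximately differentiable maps rather than using it; what you gain is self-containedness, what you pay is the ``real care'' you anticipated.
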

\begin{proof}
Denote by $\Omega_{\rm d}(\boldsymbol{y})$ the set of points where $\boldsymbol{y}$ is approximately differentiable. Then
\begin{equation*}
	\leb(\Omega \setminus \Omega_{\rm d}(\boldsymbol{y}))=0,
\end{equation*}
while by \cite[Lemma 2.5]{mueller.spector} we have that 
\begin{equation*}
\leb(\Omega_{\rm d}(\boldsymbol{y})\setminus \dom_{\rm G}(\boldsymbol{y},\Omega))=0.	
\end{equation*}
Thus claim (i) follows. For (ii), the Lusin property (N) follows from the fact that the same property holds for  $\boldsymbol{y}\restr{\Omega_{\mathrm{d}}(\boldsymbol{y})}$, see \cite[Proposition 1, Section 1.5, Chapter 3]{cartesian.currents}. Also, this property entails the measurability of the geometric image  by \cite[Theorem 3.6.9]{bogachev}. Claim (iii) is proved in \cite[Lemma 2.5]{mueller.spector}.
\end{proof}

The geometric domain and, in turn, the geometric image clearly depend on the representative of $\boldsymbol{y}$. However, if $\boldsymbol{y}=\widetilde{\boldsymbol{y}}$ almost everywhere, then 
\begin{equation*}
	\leb(\dom_{\rm G}(\boldsymbol{y},A)\, \triangle \, \dom_{\rm G}(\widetilde{\boldsymbol{y}},A))=0, \qquad \leb(\im_{\rm G}(\boldsymbol{y},A)\, \triangle \, \im_{\rm G}(\widetilde{\boldsymbol{y}},A))=0,
\end{equation*}
thanks to claims (i)--(ii) of Lemma \ref{lem:geometric-image}.

We present the Change-of-variable Formula in the following form. 

\begin{proposition}[Change-of-variable Formula]
\label{prop:change-of-variable}
Let $\boldsymbol{y}\colon \Omega \to \R^N$ be almost everywhere approximately differentiable with $\det \nabla\boldsymbol{y}\neq 0$ almost everywhere. For every measurable set $A\subset \Omega$, the multiplicity function $\mult(\boldsymbol{y},A,\cdot)\colon \R^N \to \N \cup \{0\}$ defined by
\begin{equation*}
\mult(\boldsymbol{y},A,\boldsymbol{\xi})\coloneqq \# \{\boldsymbol{x}\in \dom_{\rm G}(\boldsymbol{y},A):\:\boldsymbol{y}(\boldsymbol{x})=\boldsymbol{\xi}\}    
\end{equation*}
is measurable. Also, for every measurable function $\boldsymbol{\psi}\colon \im_{\rm G}(\boldsymbol{y},A)\to\R^M$, there holds
\begin{equation*}
    \int_A \boldsymbol{\psi}\circ \boldsymbol{y}(\boldsymbol{x})\,\det\nabla \boldsymbol{y}(\boldsymbol{x})\,\d\boldsymbol{x}=\int_{\im_{\rm G}(\boldsymbol{y},A)} \boldsymbol{\psi}(\boldsymbol{\xi})\,\mult(\boldsymbol{y},A,\boldsymbol{\xi})\,\d\boldsymbol{\xi}
\end{equation*}
whenever one of the two integrals exists.
\end{proposition}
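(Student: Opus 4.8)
The plan is to reduce the statement to the classical change-of-variables formula for $C^1$-diffeomorphisms, via a countable decomposition of the geometric domain into pieces on which $\boldsymbol{y}$ coincides with such a diffeomorphism, and then to sum the contributions. First I would combine the structure theorem for approximately differentiable maps with \cite[Lemma 2.5]{mueller.spector}: since $\dom_{\rm G}(\boldsymbol{y},\Omega)$ exhausts, up to an $\leb$-null set, the set of points where $\boldsymbol{y}$ is approximately differentiable, there exist measurable sets $E_k\subset\dom_{\rm G}(\boldsymbol{y},\Omega)$ and maps $\boldsymbol{w}_k\in C^1(\R^N;\R^N)$ with $\boldsymbol{y}=\boldsymbol{w}_k$ and $\nabla\boldsymbol{y}=D\boldsymbol{w}_k$ on $E_k$ and $\leb\big(\dom_{\rm G}(\boldsymbol{y},\Omega)\setminus\bigcup_k E_k\big)=0$. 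Since $\det\nabla\boldsymbol{y}\neq0$ a.e., we have $\det D\boldsymbol{w}_k\neq0$ on $E_k$, so by the inverse function theorem and a Lindel\"{o}f argument I may refine the family so that, in addition, each $E_k$ is contained in an open ball $B_k$ on which $\boldsymbol{w}_k$ restricts to a $C^1$-diffeomorphism onto its image. Intersecting with $A$ and then making the sets pairwise disjoint, I arrive at pairwise disjoint measurable sets, still denoted $E_k$, with $\dom_{\rm G}(\boldsymbol{y},A)=\bigcup_k E_k\cup Z$, where $\leb(Z)=0$.

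Next I would identify the multiplicity function. By Lemma \ref{lem:geometric-image}(ii), $\boldsymbol{y}$ has the Lusin property (N) on $\dom_{\rm G}(\boldsymbol{y},A)$, hence $\leb(\boldsymbol{y}(Z))=0$; therefore, for every $\boldsymbol{\xi}\notin\boldsymbol{y}(Z)$, each preimage of $\boldsymbol{\xi}$ in $\dom_{\rm G}(\boldsymbol{y},A)$ lies in some $E_k$, and by injectivity of $\boldsymbol{w}_k$ on $B_k\supset E_k$ there is at most one such preimage in each $E_k$. Using the disjointness of the $E_k$, this yields $\mult(\boldsymbol{y},A,\boldsymbol{\xi})=\sum_k\chi_{\boldsymbol{w}_k(E_k)}(\boldsymbol{\xi})$ for $\leb$-a.e.\ $\boldsymbol{\xi}$. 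A $C^1$-diffeomorphism maps Lebesgue measurable sets to Lebesgue measurable sets, so the right-hand side is a measurable function of $\boldsymbol{\xi}$; since it agrees with $\mult(\boldsymbol{y},A,\cdot)$ outside the null set $\boldsymbol{y}(Z)$ and the Lebesgue $\sigma$-algebra is complete, $\mult(\boldsymbol{y},A,\cdot)$ is measurable. Note also that $\mult(\boldsymbol{y},A,\cdot)$ vanishes off $\im_{\rm G}(\boldsymbol{y},A)=\boldsymbol{y}(\dom_{\rm G}(\boldsymbol{y},A))$.

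Finally I would assemble the identity. Assume first $\boldsymbol{\psi}\geq0$ (arguing componentwise if $M>1$). On each $E_k$ one has $\boldsymbol{\psi}\circ\boldsymbol{y}=\boldsymbol{\psi}\circ\boldsymbol{w}_k$ and $\det\nabla\boldsymbol{y}=\det D\boldsymbol{w}_k$, the latter of constant sign on the connected ball $B_k$; the classical change-of-variables formula for the diffeomorphism $\boldsymbol{w}_k\colon B_k\to\boldsymbol{w}_k(B_k)$ then gives $\int_{E_k}(\boldsymbol{\psi}\circ\boldsymbol{y})\det\nabla\boldsymbol{y}\,\d\boldsymbol{x}=\int_{\R^N}\boldsymbol{\psi}\,\chi_{\boldsymbol{w}_k(E_k)}\,\d\boldsymbol{\xi}$, under the orientation convention $\det\nabla\boldsymbol{y}>0$ in force for admissible deformations (in full generality the left-hand side should carry $|\det\nabla\boldsymbol{y}|$). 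Summing over $k$, using $\leb(A\setminus\dom_{\rm G}(\boldsymbol{y},A))=0$ (Lemma \ref{lem:geometric-image}(i)) together with $\leb(Z)=0$ on the left and the monotone convergence theorem on the right, and invoking the representation of $\mult$ above, I obtain $\int_A(\boldsymbol{\psi}\circ\boldsymbol{y})\det\nabla\boldsymbol{y}\,\d\boldsymbol{x}=\int_{\im_{\rm G}(\boldsymbol{y},A)}\boldsymbol{\psi}\,\mult(\boldsymbol{y},A,\cdot)\,\d\boldsymbol{\xi}$; the same computation, applied to each $\boldsymbol{w}_k$, also shows that $\boldsymbol{\psi}\circ\boldsymbol{y}$ is measurable on $A$. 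For general measurable $\boldsymbol{\psi}$ I would split into positive and negative parts: applying the nonnegative case to $|\boldsymbol{\psi}|$ shows that the two integrals in the statement are simultaneously finite or infinite, and the identity then follows by subtraction. The main obstacle is the decomposition itself, namely producing a countable $C^1$-diffeomorphic partition that is compatible with the \emph{pointwise} counting defining $\mult$ — in particular controlling the residual null set $Z$ — and this is exactly where the Lusin property (N) of $\boldsymbol{y}$ on the geometric domain is indispensable; the injectivity refinement via the inverse function theorem and the bookkeeping of orientation signs are the secondary points requiring care.
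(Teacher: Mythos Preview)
Your argument is correct, but it takes a genuinely different route from the paper. The paper's proof is a two-line reduction: it invokes the Federer change-of-variable formula from \cite[Theorem~1, Section~1.5, Chapter~3]{cartesian.currents} as a black box on the set $A\cap\Omega_{\rm d}(\boldsymbol{y})$ of approximate-differentiability points, obtaining
\[
\int_A \boldsymbol{\psi}\circ\boldsymbol{y}\,\det\nabla\boldsymbol{y}\,\d\boldsymbol{x}
=\int_{\boldsymbol{y}(A\cap\Omega_{\rm d}(\boldsymbol{y}))}\boldsymbol{\psi}(\boldsymbol{\xi})\,\#\{\boldsymbol{x}\in A\cap\Omega_{\rm d}(\boldsymbol{y}):\boldsymbol{y}(\boldsymbol{x})=\boldsymbol{\xi}\}\,\d\boldsymbol{\xi},
\]
and then observes that replacing $\Omega_{\rm d}(\boldsymbol{y})$ by $\dom_{\rm G}(\boldsymbol{y},\Omega)$ changes both the domain of integration on the right (by a null set, via Lusin~(N)) and the counting function (pointwise outside the image of the discarded null set). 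What you do instead is essentially \emph{reprove} Federer's formula from scratch: the Whitney-type $C^1$-decomposition, the inverse-function-theorem refinement to local diffeomorphisms, the identification $\mult=\sum_k\chi_{\boldsymbol{w}_k(E_k)}$ a.e., and the monotone-convergence summation constitute the standard proof of the area formula for approximately differentiable maps. Your route is more self-contained and makes the role of Lusin~(N) and of the geometric domain fully explicit; the paper's route is much shorter because it outsources all of that to the cited reference and only has to reconcile the two slightly different counting functions.

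One point you flag yourself deserves emphasis: the classical formula on each $E_k$ produces $|\det D\boldsymbol{w}_k|$, so the identity as stated (with $\det\nabla\boldsymbol{y}$ rather than $|\det\nabla\boldsymbol{y}|$) is literally correct only under the orientation hypothesis $\det\nabla\boldsymbol{y}>0$ a.e. This is indeed the setting in which the proposition is used throughout the paper (for maps in $\mathcal{Y}_p(\Omega)$), and the cited Federer formula in \cite{cartesian.currents} carries the same implicit convention, so there is no discrepancy---but you are right to note it.
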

Note that, in the previous formula, the map $\boldsymbol{y}$ has the Lusin propety (N${}^{-1}$) \cite[Lemma 2.8, Claim (c)]{barchiesi.henao.moracorral}, so that the composition $\boldsymbol{\psi}\circ \boldsymbol{y}$ is measurable \cite[Lemma 2.9]{barchiesi.henao.moracorral}.
\begin{proof}
Again, denote by $\Omega_{\rm d}(\boldsymbol{y})$ the set of points of approximate differentiability of $\boldsymbol{y}$.	
By the Federer Change-of-variable Formula \cite[Theorem 1, Section 1.5, Chapter 3]{cartesian.currents}, we have 
\begin{equation*}
\int_A \boldsymbol{\psi}\circ \boldsymbol{y}(\boldsymbol{x})\,\det\nabla \boldsymbol{y}(\boldsymbol{x})\,\d\boldsymbol{x}=\int_{\boldsymbol{y}(A \cap \Omega_{\rm d}(\boldsymbol{y}))} \boldsymbol{\psi}(\boldsymbol{\xi})\, \#\{\boldsymbol{x}\in A \cap \Omega_{\rm d}(\boldsymbol{y}):\,\boldsymbol{y}(\boldsymbol{x})=\boldsymbol{\xi}\}\,\d\boldsymbol{\xi}.
\end{equation*}
First, recall that $\boldsymbol{y}\restr{\Omega_{\rm d}(\boldsymbol{y})}$ has the Lusin property (N). Thus $$\leb(\boldsymbol{y}(A \cap \Omega_{\mathrm{d}}(\boldsymbol{y})))=\leb(\im_{\rm G}(\boldsymbol{y},A))$$ since $\leb(\Omega_{\mathrm{d}}(\boldsymbol{y}) \setminus \dom_{\rm G}(\boldsymbol{y},\Omega))=0$. Second, note that 
\begin{equation*}
\#\{\boldsymbol{x}\in A \cap \Omega_{\rm d}(\boldsymbol{y}):\,\boldsymbol{y}(\boldsymbol{x})=\boldsymbol{\xi}\}=\mult(\boldsymbol{y},A,\boldsymbol{\xi})
\end{equation*}
for every $\boldsymbol{\xi}\in \im_{\rm G}(\boldsymbol{y},A)\setminus \boldsymbol{y}((A\cap \Omega_{\mathrm{d}}(\boldsymbol{y}))\setminus \dom_{\rm G}(\boldsymbol{y},A))$. Hence, the desired formula follows in view of these two observations.
\end{proof}

We recall the notion of regular approximate differentiability. 

\begin{definition}[Regular approximate differentiability]
\label{def:regular-approximate-differentiability}
Let $\boldsymbol{y}\colon \Omega \to \R^N$ be measurable and let $\boldsymbol{x}_0\in \Omega$. The map $\boldsymbol{y}$ is termed regularly approximately differentiable  at $\boldsymbol{x}_0$ if there exists $\nabla \boldsymbol{y}(\boldsymbol{x}_0)\in \rnn$ such that
\begin{equation}
\label{eqn:regular-approximate-differentiability-aplim}
    \aplim_{r \to 0^+} \esssup_{\boldsymbol{x}\in \partial B(\boldsymbol{x}_0,r)} \frac{|\boldsymbol{y}(\boldsymbol{x})-\boldsymbol{y}(\boldsymbol{x}_0)- \nabla \boldsymbol{y}(\boldsymbol{x}_0)(\boldsymbol{x}-\boldsymbol{x}_0)|}{|\boldsymbol{x}-\boldsymbol{x}_0|}=0.
\end{equation}
In this case,  $\nabla \boldsymbol{y}(\boldsymbol{x}_0)$  is termed the regular approximate gradient of $\boldsymbol{y}$ at $\boldsymbol{x}_0$.
\end{definition}

In the previous definition, \eqref{eqn:regular-approximate-differentiability-aplim} requires the existence of a measurable set $R \subset (0,\dist(\boldsymbol{x}_0;\partial \Omega))$ with $\Theta^1_+(R,0)=1$ such that
\begin{equation}
\label{eqn:regular-approximate-differentiability-lim}
    \lim_{\substack{r \to 0 \\ r\in R}} \esssup_{\boldsymbol{x}\in \partial B(\boldsymbol{x}_0,r)} \frac{|\boldsymbol{y}(\boldsymbol{x})-\boldsymbol{y}(\boldsymbol{x}_0)-\nabla \boldsymbol{y}(\boldsymbol{x}_0)(\boldsymbol{x}-\boldsymbol{x}_0)|}{|\boldsymbol{x}-\boldsymbol{x}_0|}=0.
\end{equation}
Here, the essential supremum is meant with respect to the $(N-1)$-dimensional Hausdorff measure.

The regular approximate gradient is uniquely defined whenever it exists. Also, in the assumptions of Definition \ref{def:regular-approximate-differentiability}, $\nabla \boldsymbol{y}(\boldsymbol{x}_0)$ coincides with the approximate gradient  of $\boldsymbol{y}$ at $\boldsymbol{x}_0$, which justifies the notation. To see this, for every $r \in R$, let $T_r\subset \partial B(\boldsymbol{x}_0,r)$ with $\mathscr{H}^{N-1}(T_r)=0$ be such that 
\begin{equation*}
    \esssup_{\boldsymbol{x}\in \partial B(\boldsymbol{x}_0,r)} \frac{|\boldsymbol{y}(\boldsymbol{x})-\boldsymbol{y}(\boldsymbol{x}_0)-\nabla \boldsymbol{y}(\boldsymbol{x}_0)(\boldsymbol{x}-\boldsymbol{x}_0)|}{|\boldsymbol{x}-\boldsymbol{x}_0|}=\sup_{\boldsymbol{x}\in \partial B(\boldsymbol{x}_0,r)\setminus T_r}\frac{|\boldsymbol{y}(\boldsymbol{x})-\boldsymbol{y}(\boldsymbol{x}_0)-\nabla \boldsymbol{y}(\boldsymbol{x}_0)(\boldsymbol{x}-\boldsymbol{x}_0)|}{|\boldsymbol{x}-\boldsymbol{x}_0|}
\end{equation*}
and set $A\coloneqq \bigcup \left \{\partial B(\boldsymbol{x}_0,r)\setminus T_r:\:r \in R \right \}$. Then, $\Theta^N(A,\boldsymbol{x}_0)=1$ and $\boldsymbol{y}\restr{A}$ is differentiable at $\boldsymbol{x}_0$, which proves the claim.

Note that Definition \ref{def:regular-approximate-differentiability} is slightly different from the definition of regular approximate differentiability available in the literature, see \cite[Section 3]{goffman.ziemer} or \cite[Definition 8.2]{mueller.spector}. Specifically, in \eqref{eqn:regular-approximate-differentiability-aplim}, we use the essential supremum instead of the supremum. With this choice, as stated in the following result, any map in $W^{1,p}(\Omega;\R^N)$ is almost everywhere regularly approximately differentiable independently of the representative chosen. 

\begin{proposition}
\label{prop:regular-approximate-differentiability}
Let $\boldsymbol{y}\in W^{1,p}(\Omega;\R^N)$. Then, 
$\boldsymbol{y}$ is almost everywhere regularly approximately differentiable with $\nabla \boldsymbol{y}=D\boldsymbol{y}$ almost everywhere.
\end{proposition}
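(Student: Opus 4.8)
The plan is to reduce the assertion to a pointwise statement at almost every $\boldsymbol{x}_0\in\Omega$ and then control $\boldsymbol{y}$ on the spheres $\partial B(\boldsymbol{x}_0,r)$ by Morrey's inequality on these $(N-1)$-dimensional manifolds, which is available \emph{precisely} because $p>N-1$. Fix a representative of $\boldsymbol{y}$. Since $\boldsymbol{y}\in L^1_{\loc}$ and $D\boldsymbol{y}\in L^p_{\loc}$, for almost every $\boldsymbol{x}_0\in\Omega$ the following hold: $\boldsymbol{x}_0\in L_{\boldsymbol{y}}$ with $\boldsymbol{y}(\boldsymbol{x}_0)=\boldsymbol{y}^*(\boldsymbol{x}_0)=:\boldsymbol{b}$; $\boldsymbol{x}_0$ is an $L^p$-Lebesgue point of $D\boldsymbol{y}$ with value $\boldsymbol{A}\coloneqq D\boldsymbol{y}(\boldsymbol{x}_0)$; writing $\boldsymbol{g}(\boldsymbol{x})\coloneqq\boldsymbol{y}(\boldsymbol{x})-\boldsymbol{b}-\boldsymbol{A}(\boldsymbol{x}-\boldsymbol{x}_0)$, one has $\dashint_{B(\boldsymbol{x}_0,R)}|\boldsymbol{g}|\,\d\boldsymbol{x}=o(R)$ as $R\to0^+$ (a standard consequence of the two Lebesgue-point properties and the Poincar\'e inequality); and, by a standard slicing argument for Sobolev functions (cf.\ Corollary \ref{cor:lebesgue-points-balls}), for almost every $r\in(0,\dist(\boldsymbol{x}_0;\partial\Omega))$ the restriction $\boldsymbol{g}\restr{\partial B(\boldsymbol{x}_0,r)}$ belongs to $W^{1,p}(\partial B(\boldsymbol{x}_0,r);\R^N)$ with tangential gradient bounded $\mathscr{H}^{N-1}$-almost everywhere by $|D\boldsymbol{y}-\boldsymbol{A}|$. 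I will show that $\boldsymbol{y}$ is regularly approximately differentiable at every such $\boldsymbol{x}_0$ with $\nabla\boldsymbol{y}(\boldsymbol{x}_0)=\boldsymbol{A}$. As $\boldsymbol{y}^*(\boldsymbol{x}_0)$, $D\boldsymbol{y}(\boldsymbol{x}_0)$ and the essential supremum over $\partial B(\boldsymbol{x}_0,r)$ do not depend on the representative, and as $|\boldsymbol{x}-\boldsymbol{x}_0|=r$ and $\boldsymbol{g}(\boldsymbol{x})=\boldsymbol{y}(\boldsymbol{x})-\boldsymbol{y}(\boldsymbol{x}_0)-\boldsymbol{A}(\boldsymbol{x}-\boldsymbol{x}_0)$ on $\partial B(\boldsymbol{x}_0,r)$, this yields the claim, the identity $\nabla\boldsymbol{y}=D\boldsymbol{y}$ a.e.\ being built into the choice $\boldsymbol{A}=D\boldsymbol{y}(\boldsymbol{x}_0)$.

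\textbf{The spherical estimate.} Fix $\boldsymbol{x}_0$ as above and let $r$ be admissible. Since $p>N-1=\dim\partial B(\boldsymbol{x}_0,r)$, the function $\boldsymbol{g}\restr{\partial B(\boldsymbol{x}_0,r)}$ has a continuous representative, and the Poincar\'e--Morrey inequality on the sphere of radius $r$ (obtained by rescaling the corresponding inequality on $\S^{N-1}$, so that the constant $C=C(N,p)$ is independent of $r$) gives $\operatorname{osc}_{\partial B(\boldsymbol{x}_0,r)}\boldsymbol{g}\le C\,r^{1-(N-1)/p}\big(\int_{\partial B(\boldsymbol{x}_0,r)}|D\boldsymbol{y}-\boldsymbol{A}|^p\,\d\boldsymbol{a}\big)^{1/p}$. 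Combined with $\esssup_{\partial B(\boldsymbol{x}_0,r)}|\boldsymbol{g}|\le\dashint_{\partial B(\boldsymbol{x}_0,r)}|\boldsymbol{g}|\,\d\boldsymbol{a}+\operatorname{osc}_{\partial B(\boldsymbol{x}_0,r)}\boldsymbol{g}$ and divided by $r$, this yields, for almost every $r\in(0,\dist(\boldsymbol{x}_0;\partial\Omega))$,
\begin{equation*}
\frac1r\esssup_{\partial B(\boldsymbol{x}_0,r)}|\boldsymbol{g}|\ \le\ \frac1r\dashint_{\partial B(\boldsymbol{x}_0,r)}|\boldsymbol{g}|\,\d\boldsymbol{a}\ +\ C\Big(r^{-(N-1)}\int_{\partial B(\boldsymbol{x}_0,r)}|D\boldsymbol{y}-\boldsymbol{A}|^p\,\d\boldsymbol{a}\Big)^{1/p}.
\end{equation*}
By Definition \ref{def:regular-approximate-differentiability} and the additivity of approximate limits, it suffices to show that both summands on the right have approximate limit $0$ as $r\to0^+$.

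\textbf{From integral to approximate smallness.} Here I use the following elementary fact: if $\beta>0$ and $f\colon(0,R_0)\to[0,\infty]$ is measurable with $R^{-(\beta+1)}\int_0^R t^\beta f(t)\,\d t\to0$ as $R\to0^+$, then $\aplim_{r\to0^+}f(r)=0$. Indeed, for $\eta>0$ split $(0,R)$ into the dyadic shells $I_k\coloneqq(2^{-k-1}R,2^{-k}R]$; on $\{r\in I_k: f(r)>\eta\}$ one has $t^\beta f(t)\ge\eta\,(2^{-k-1}R)^\beta$, so comparing with $\int_0^{2^{-k}R}t^\beta f$ and summing a geometric series bounds $\mathscr{L}^1(\{r\in(0,R):f(r)>\eta\})$ by $\tfrac{2^{\beta+1}}{\eta}R\,\sup_{0<\rho\le R}\big(\rho^{-(\beta+1)}\int_0^\rho t^\beta f\big)$, whence $R^{-1}\mathscr{L}^1(\{r\in(0,R):f(r)>\eta\})\to0$. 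Now, up to a dimensional constant the first summand above equals $r^{-N}\int_{\partial B(\boldsymbol{x}_0,r)}|\boldsymbol{g}|\,\d\boldsymbol{a}$, and by the coarea formula $R^{-(N+1)}\int_0^R t^N\big(t^{-N}\int_{\partial B(\boldsymbol{x}_0,t)}|\boldsymbol{g}|\,\d\boldsymbol{a}\big)\,\d t=R^{-(N+1)}\int_{B(\boldsymbol{x}_0,R)}|\boldsymbol{g}|\,\d\boldsymbol{x}$, which is a dimensional constant times $R^{-1}\dashint_{B(\boldsymbol{x}_0,R)}|\boldsymbol{g}|\,\d\boldsymbol{x}\to0$; the fact above with $\beta=N$ then gives that the first summand has approximate limit $0$. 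Similarly, $R^{-N}\int_0^R t^{N-1}\big(t^{-(N-1)}\int_{\partial B(\boldsymbol{x}_0,t)}|D\boldsymbol{y}-\boldsymbol{A}|^p\,\d\boldsymbol{a}\big)\,\d t=R^{-N}\int_{B(\boldsymbol{x}_0,R)}|D\boldsymbol{y}-\boldsymbol{A}|^p\,\d\boldsymbol{x}$ is a dimensional constant times $\dashint_{B(\boldsymbol{x}_0,R)}|D\boldsymbol{y}-\boldsymbol{A}|^p\,\d\boldsymbol{x}\to0$; the fact above with $\beta=N-1$ gives that $r^{-(N-1)}\int_{\partial B(\boldsymbol{x}_0,r)}|D\boldsymbol{y}-\boldsymbol{A}|^p\,\d\boldsymbol{a}\to0$ approximately, hence so does its $p$-th root, i.e.\ the second summand. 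This proves $\aplim_{r\to0^+}r^{-1}\esssup_{\partial B(\boldsymbol{x}_0,r)}|\boldsymbol{g}|=0$, which is exactly \eqref{eqn:regular-approximate-differentiability-aplim} with $\nabla\boldsymbol{y}(\boldsymbol{x}_0)=\boldsymbol{A}$, and completes the proof.

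\textbf{Main difficulty.} The crux is that $D\boldsymbol{y}$ is only $L^p$ on solid balls, so it is uncontrolled on many individual spheres $\partial B(\boldsymbol{x}_0,r)$; this is what forces the conclusion to be an \emph{approximate} limit and makes the hypothesis $p>N-1$ essential (through Morrey's inequality on the $(N-1)$-sphere). Accordingly, the only genuinely delicate point is the dyadic passage from coarea/Lebesgue-point control of \emph{averaged} spherical quantities to approximate-limit control of the spherical quantities themselves; the remaining ingredients --- the first-order $L^1$-expansion at Lebesgue points of the gradient, the slicing of Sobolev functions onto spheres (cf.\ Corollary \ref{cor:lebesgue-points-balls}), and the scaling of Morrey's inequality --- are classical.
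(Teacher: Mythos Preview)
Your proof is correct and takes a genuinely different route from the paper's. The paper simply invokes \cite{goffman.ziemer} to assert that the precise representative $\boldsymbol{y}^*$ is regularly approximately differentiable almost everywhere in the classical sense (with a true supremum over $\partial B(\boldsymbol{x}_0,r)$), and then observes that replacing the supremum by the essential supremum makes the condition insensitive to the representative: since $\boldsymbol{y}=\boldsymbol{y}^*$ $\mathscr{H}^{N-1}$-a.e.\ on $\partial B(\boldsymbol{x}_0,r)$ for a.e.\ $r$, the set $R^*$ furnished by Goffman--Ziemer for $\boldsymbol{y}^*$ can be trimmed to a set $R$ of the same density on which the esssup for $\boldsymbol{y}$ is bounded by the sup for $\boldsymbol{y}^*$. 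Your argument, by contrast, is self-contained: you re-prove the spherical control directly via Morrey's inequality on $\partial B(\boldsymbol{x}_0,r)$ (which is precisely where $p>N-1$ enters), and you replace the black-box citation by an explicit dyadic density lemma converting coarea-averaged smallness into approximate-limit smallness. The paper's approach is shorter and modular; yours makes the mechanism transparent and isolates a reusable ``weighted Hardy-type'' fact (your $\beta$-lemma) that is of independent interest. Both arguments are ultimately driven by the same analytic phenomenon---the Morrey embedding on $(N-1)$-spheres---but the paper outsources it to \cite{goffman.ziemer} while you carry it out in situ.
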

\begin{proof}
Recall Corollary \ref{cor:lebesgue-points-balls}. By \cite[Section 3]{goffman.ziemer}, for almost every $\boldsymbol{x}_0 \in L_{\boldsymbol{y}}$, there exists $R^* \subset (0,\dist(\boldsymbol{x}_0;\partial \Omega))$ with $\Theta^1_+(R^*,0)=1$ such that $\partial B(\boldsymbol{x}_0,r)\subset L_{\boldsymbol{y}}$ for every $r \in R^*$ and there holds
\begin{equation*}
    \lim_{\substack{r \to 0 \\ r\in R^*}} \sup_{\boldsymbol{x}\in \partial B(\boldsymbol{x}_0,r)} \frac{|\boldsymbol{y}^*(\boldsymbol{x})-\boldsymbol{y}^*(\boldsymbol{x}_0)-\nabla \boldsymbol{y}^*(\boldsymbol{x}_0)(\boldsymbol{x}-\boldsymbol{x}_0)|}{|\boldsymbol{x}-\boldsymbol{x}_0|}=0.
\end{equation*}
Set $R\coloneqq \{r \in R^*:\:\text{$\boldsymbol{y}=\boldsymbol{y}^*$ a.e. on $\partial B(\boldsymbol{x}_0,r)$}\}$. In this case, $\mathscr{L}^1(R)=\mathscr{L}^1(R^*)$ and, in turn, $\Theta^1_+(R,0)=\Theta^1_+(R^*,0)=1$. Also, if $\boldsymbol{y}(\boldsymbol{x}_0)=\boldsymbol{y}^*(\boldsymbol{x}_0)$, then we have
\begin{equation*}
\begin{split}
    \lim_{\substack{r \to 0 \\ r \in R}} \esssup_{\boldsymbol{x}\in \partial B(\boldsymbol{x}_0,r)} &\frac{|\boldsymbol{y}(\boldsymbol{x})-\boldsymbol{y}(\boldsymbol{x}_0)-\nabla\boldsymbol{y}^*(\boldsymbol{x}_0)(\boldsymbol{x}-\boldsymbol{x}_0)|}{|\boldsymbol{x}-\boldsymbol{x}_0|}\\
    &\leq \lim_{\substack{r \to 0 \\ r \in R}}  \sup_{\boldsymbol{x}\in \partial B(\boldsymbol{x}_0,r)}  \frac{|\boldsymbol{y}^*(\boldsymbol{x})-\boldsymbol{y}^*(\boldsymbol{x}_0)-\nabla \boldsymbol{y}^*(\boldsymbol{x}_0)(\boldsymbol{x}-\boldsymbol{x}_0)|}{|\boldsymbol{x}-\boldsymbol{x}_0|}=0.
\end{split}
\end{equation*}
Thus, $\boldsymbol{y}$ is regularly approximately differentiable at $\boldsymbol{x}_0$ with $\nabla \boldsymbol{y}(\boldsymbol{x}_0)=\nabla\boldsymbol{y}^*(\boldsymbol{x}_0)$. As 
$$\leb(\Omega \setminus \{\boldsymbol{x}_0 \in L_{\boldsymbol{y}}:\:\boldsymbol{y}(\boldsymbol{x}_0)=\boldsymbol{y}^*(\boldsymbol{x}_0),\;D\boldsymbol{y}(\boldsymbol{x}_0)=\nabla\boldsymbol{y}^*(\boldsymbol{x}_0)\})=0,$$ this concludes the proof.
\end{proof}

\subsection{Topological degree.} For the definition and the main properties of the topological degree of a continuous map defined on the closure of a bounded open set, we refer to \cite{fonseca.gangbo}.
Let $U\subset \subset \Omega$ be open and $\boldsymbol{y}\in C^0(\partial U;\R^N)$. The topological degree of $\boldsymbol{y}$ on $U$ is defined as the topological degree of any extension ${\boldsymbol{Y}}\in C^0(\closure{U};\R^N)$ of $\boldsymbol{y}$. This definition is well posed. Indeed, such an extension always exists by Tiezte Theorem and the topological degree of ${\boldsymbol{Y}}$ on $U$ depends only on ${\boldsymbol{Y}}\restr{\partial U}$. In particular, the map $\deg(\boldsymbol{y},U,\cdot)\colon \R^N \setminus \boldsymbol{y}(\partial U)\to \Z$ is continuous and we define the topological image of $U$ under $\boldsymbol{y}$ as
\begin{equation*}
    \im_{\rm T}(\boldsymbol{y},U)\coloneqq \left \{\boldsymbol{\xi}\in \R^N \setminus \boldsymbol{y}(\partial U):\:\deg(\boldsymbol{y},U,\boldsymbol{\xi})\neq 0 \right \}.
\end{equation*}
This set is bounded since $\deg(\boldsymbol{y},U,\cdot)=0$ on the unique unbounded component of $\R^N \setminus \boldsymbol{y}(\partial U)$. Moreover, by continuity, $\im_{\rm T}(\boldsymbol{y},U)$ is open and $\partial\, \im_{\rm T}(\boldsymbol{y},U)\subset \boldsymbol{y}(\partial U)$. 

In the next two lemmas, we collect some elementary observations.

\begin{lemma}
\label{lem:topological-image-contained}
Let $U\subset \subset \Omega$ be open  and let $\boldsymbol{y}\in C^0(\partial U;\R^N)$. Suppose that $\boldsymbol{y}(\partial U)\subset O$ for some  $O \subset \subset \R^N$ open with $\R^N \setminus O$ connected. Then, $\im_{\rm T}(\boldsymbol{y},U)\subset \subset  O$.
\end{lemma}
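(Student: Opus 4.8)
The plan is to show that the degree $\deg(\boldsymbol{y},U,\cdot)$ vanishes at every point of $\R^N\setminus O$, which immediately yields the inclusion $\im_{\rm T}(\boldsymbol{y},U)\subset O$, and then to upgrade this to compact containment by invoking the fact (recorded just before the lemma) that $\partial\,\im_{\rm T}(\boldsymbol{y},U)\subset\boldsymbol{y}(\partial U)$ and that $\im_{\rm T}(\boldsymbol{y},U)$ is bounded.

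First I would note that, since $\boldsymbol{y}(\partial U)\subset O$, we have $\R^N\setminus O\subset\R^N\setminus\boldsymbol{y}(\partial U)$, so the $\Z$-valued map $\deg(\boldsymbol{y},U,\cdot)$ is defined at every point of $\R^N\setminus O$. As this map is continuous on $\R^N\setminus\boldsymbol{y}(\partial U)$, it is locally constant there, hence constant on each connected component; and since $\R^N\setminus O$ is connected by hypothesis, it is contained in a single connected component $C$ of $\R^N\setminus\boldsymbol{y}(\partial U)$. Because $O\subset\subset\R^N$ is bounded, $\R^N\setminus O$ is unbounded, so $C$ is unbounded and therefore coincides with the unique unbounded component of $\R^N\setminus\boldsymbol{y}(\partial U)$, on which $\deg(\boldsymbol{y},U,\cdot)$ is identically zero. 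Consequently $\deg(\boldsymbol{y},U,\boldsymbol{\xi})=0$ for every $\boldsymbol{\xi}\in\R^N\setminus O$, and by the definition of the topological image this forces $\im_{\rm T}(\boldsymbol{y},U)\subset O$.

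Finally I would pass to the closure: recalling that $\im_{\rm T}(\boldsymbol{y},U)$ is bounded and $\partial\,\im_{\rm T}(\boldsymbol{y},U)\subset\boldsymbol{y}(\partial U)$, one gets $\overline{\im_{\rm T}(\boldsymbol{y},U)}=\im_{\rm T}(\boldsymbol{y},U)\cup\partial\,\im_{\rm T}(\boldsymbol{y},U)\subset O\cup O=O$, and this closure is compact, being closed and bounded. This is precisely the statement $\im_{\rm T}(\boldsymbol{y},U)\subset\subset O$.

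I do not expect a genuine obstacle here; the only point requiring a little care is the topological bookkeeping in the middle paragraph — that a connected \emph{unbounded} subset of $\R^N\setminus\boldsymbol{y}(\partial U)$ must lie inside the unbounded component where the degree is zero — together with the observation that $\deg(\boldsymbol{y},U,\cdot)$ is well defined on all of $\R^N\setminus O$ exactly because $\boldsymbol{y}(\partial U)\subset O$.
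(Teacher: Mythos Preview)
Your proof is correct and follows essentially the same approach as the paper: both argue that the connected unbounded set $\R^N\setminus O$ must lie in the unbounded component $V$ of $\R^N\setminus\boldsymbol{y}(\partial U)$, where the degree vanishes, whence $\im_{\rm T}(\boldsymbol{y},U)\subset O$. The paper obtains the compact containment slightly more directly by noting $\im_{\rm T}(\boldsymbol{y},U)\subset \R^N\setminus V\subset O$ with $\R^N\setminus V$ closed, whereas you invoke $\partial\,\im_{\rm T}(\boldsymbol{y},U)\subset\boldsymbol{y}(\partial U)\subset O$; both are equally valid.
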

\begin{proof}
Since $\R^N \setminus O \subset \R^N \setminus \boldsymbol{y}(\partial U)$ is connected and unbounded,  there holds $\R^N \setminus O \subset V$, where $V$ is the unique unbounded component of $\R^N \setminus \boldsymbol{y}(\partial U)$. Thus, $\im_{\rm T}(\boldsymbol{y},U)\subset \R^N \setminus V \subset O$.
\end{proof}

\begin{lemma}
\label{lem:connected}
Let $U\subset \subset \Omega$ be open  and let $\boldsymbol{y}\in C^0(\partial U;\R^N)$. Also, let $\{V_i:\:i \in \N\}$ be the collection of the connected components of  $\R^N \setminus \boldsymbol{y}(\partial U)$. Then:
\begin{enumerate}[(i)]
    \item for every $i \in \N$, there holds $\partial V_i \subset \boldsymbol{y}(\partial U)$;
    \item if $\partial U$ is connected, then, for every $I \subset \N$, the set $\bigcup_{i \in I} V_i \cup \boldsymbol{y}(\partial U)$ is connected and, in particular, $\im_{\rm T}(\boldsymbol{y},U)\cup \boldsymbol{y}(\partial U)$ and $\R^N \setminus \im_{\rm T}(\boldsymbol{y},U)$ are connected.
\end{enumerate}
\end{lemma}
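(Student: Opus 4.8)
The plan is to obtain (i) from the fact that the connected components of an open subset of $\R^N$ are themselves open, and then to deduce (ii) from (i) together with the connectedness of $\boldsymbol{y}(\partial U)$. Throughout, write $\Gamma\coloneqq\boldsymbol{y}(\partial U)$, which is compact; we may assume $U\neq\emptyset$, so that $\Gamma\neq\emptyset$, the remaining case being trivial. Since $\Gamma$ is closed, $W\coloneqq\R^N\setminus\Gamma$ is open, so each component $V_i$ is open in $\R^N$; being also (relatively) closed in $W$, it satisfies $\overline{V_i}\cap W=V_i$, whence $\partial V_i=\overline{V_i}\setminus V_i\subset\R^N\setminus W=\Gamma$. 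This proves (i). (Recall also that $W$, being open in $\R^N$, has at most countably many components, so indexing by a subset of $\N$ is legitimate.)

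For (ii), I would first show that $V_i\cup\Gamma$ is connected for every $i$. Since $\partial U$ is connected, so is its continuous image $\Gamma$. Suppose $V_i\cup\Gamma=E\cup F$ with $E,F$ disjoint, nonempty and relatively clopen in $V_i\cup\Gamma$. Connectedness of $\Gamma$ forces $\Gamma\subset E$ or $\Gamma\subset F$, say $\Gamma\subset E$; connectedness of $V_i$ forces $V_i\subset E$ or $V_i\subset F$, and the former gives $F=\emptyset$, a contradiction, so $V_i\subset F$ and hence $E=\Gamma$, $F=V_i$. Then $V_i$ is relatively closed in $V_i\cup\Gamma$, i.e. $\overline{V_i}\cap(V_i\cup\Gamma)=V_i$, which together with (i) yields $\partial V_i=\emptyset$; this is impossible, since $V_i$ is a nonempty proper open subset of the connected space $\R^N$ (proper because $\Gamma\neq\emptyset$). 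Hence $V_i\cup\Gamma$ is connected. As the sets $\{V_i\cup\Gamma\}_{i\in I}$ all contain the nonempty set $\Gamma$, their union $\bigcup_{i\in I}V_i\cup\Gamma$ is connected, which is the main assertion of (ii).

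Finally, the last two assertions of (ii) follow by identifying $\im_{\rm T}(\boldsymbol{y},U)$ with a union of components: the map $\deg(\boldsymbol{y},U,\cdot)$ is $\Z$-valued and continuous on $W=\R^N\setminus\Gamma$, hence constant on each $V_i$, so that $\im_{\rm T}(\boldsymbol{y},U)=\bigcup_{i\in I_0}V_i$ with $I_0\coloneqq\{i\in\N:\deg(\boldsymbol{y},U,\cdot)\neq0\text{ on }V_i\}$. Applying the previous paragraph with $I=I_0$ shows $\im_{\rm T}(\boldsymbol{y},U)\cup\boldsymbol{y}(\partial U)$ is connected; and since $\Gamma$ and the $V_i$ partition $\R^N$, we have $\R^N\setminus\im_{\rm T}(\boldsymbol{y},U)=\bigcup_{i\in\N\setminus I_0}V_i\cup\Gamma$, which is connected by the case $I=\N\setminus I_0$.

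I do not expect a genuinely hard step here: the only point requiring a little care is excluding the degenerate splitting $E=\Gamma$, $F=V_i$ in (ii), which is precisely where the nonemptiness of $\Gamma$ (equivalently $U\neq\emptyset$) and part (i) enter.
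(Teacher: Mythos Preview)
Your proof is correct and follows essentially the same approach as the paper: part (i) is obtained from the fact that each $V_i$ is relatively closed in $\R^N\setminus\Gamma$, and part (ii) proceeds by first showing each $V_i\cup\Gamma$ is connected and then taking unions over a common nonempty set $\Gamma$. The only cosmetic difference is that the paper argues $V_i\cup\Gamma=\overline{V_i}\cup\Gamma$ is connected as a union of two connected sets with nonempty intersection $\partial V_i$, whereas you reach the same conclusion via a direct disconnection argument; your version has the merit of making explicit why $\partial V_i\neq\emptyset$, which the paper leaves implicit.
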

\begin{proof}
To prove (i), suppose that there exists $\boldsymbol{\xi}\in \partial V_i \setminus \boldsymbol{y}(\partial U)$. Then, $\boldsymbol{\xi}\in V_j$ for some $j \in \N$ with $j \neq i$. As $V_j$ is open, $B(\boldsymbol{\xi},r)\subset V_j$ for some $r>0$. However, as $\boldsymbol{\xi}\in \partial V_i$, there holds $B(\boldsymbol{\xi},r)\cap V_i\neq \emptyset$ and, in turn, $V_i \cap V_j \neq \emptyset$. This provides a contradiction. To prove (ii), note that, if $\partial U$ is connected, then $\boldsymbol{y}(\partial U)$ is connected. For every $i \in \N$, the set $\closure{V}_i$ is connected being the closure of a connected set. By (i),  we have $\closure{V}_i \cap \boldsymbol{y}(\partial U)=\partial V_i$, so that $V_i \cup \boldsymbol{y}(\partial U)=\closure{V}_i \cup \boldsymbol{y}(\partial U)$ is connected being the union of two connected sets with nonempty intersection. For the same reason, given $I \subset \N$, the set $\bigcup_{i \in I} V_i \cup \boldsymbol{y}(\partial U)$ is connected since $\bigcap_{i \in I} (V_i \cap \boldsymbol{y}(\partial U))=\boldsymbol{y}(\partial U)$. 
\end{proof}

The continuity of the topological degree allows to describe the asymptotic behaviour of topological images of uniformly converging sequences of maps.

\begin{lemma}
\label{lem:topological-image-uniform-convergence}
Let $U \subset \subset \Omega$ be open and let $(\boldsymbol{y}_n)\subset C^0(\partial U;\R^N)$ and $\boldsymbol{y}\in C^0(\partial U;\R^N)$. Suppose that $\boldsymbol{y}_n \to \boldsymbol{y}$ uniformly on $\partial U$ and that $\leb(\boldsymbol{y}_n(\partial U))=\leb(\boldsymbol{y}(\partial U))=0$ for every $n \in \N$. Then:
\begin{enumerate}[(i)]
    \item for every $K\subset \im_{\rm T}(\boldsymbol{y},U)$ compact, there holds $K\subset \im_{\rm T}(\boldsymbol{y}_n,U)$ for $n \gg1$ depending on $K$;
    \item for every $K\subset \R^N \setminus ( \im_{\rm T}(\boldsymbol{y},U)\cup \boldsymbol{y}(\partial U))$ compact,  there holds $K\subset \R^N \setminus ( \im_{\rm T}(\boldsymbol{y}_n,U) \cup \boldsymbol{y}_n(\partial U))$ for $n \gg1$ depending on $K$;
    \item for every $O \subset \subset \R^N$ open with $\R^N \setminus O$ connected such that $\boldsymbol{y}(\partial U)\subset O$, there hold $\im_{\rm T}(\boldsymbol{y},U)\subset O$ and $\im_{\rm T}(\boldsymbol{y}_n,U)\cup \boldsymbol{y}(\partial U) \subset O$ for $n \gg 1$ depending on $O$;
    \item $\chi_{\im_{\rm T}(\boldsymbol{y}_n,U)} \to \chi_{\im_{\rm T}(\boldsymbol{y},U)}$ in $L^1(\R^N)$ and a.e. in $\R^N$.
\end{enumerate}
\end{lemma}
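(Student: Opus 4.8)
The whole lemma will follow from one standard stability property of the topological degree (see \cite{fonseca.gangbo}): if $\boldsymbol{z}\in C^0(\partial U;\R^N)$, $\boldsymbol{\xi}\in\R^N\setminus\boldsymbol{y}(\partial U)$ and $\|\boldsymbol{z}-\boldsymbol{y}\|_{C^0(\partial U)}<\dist(\boldsymbol{\xi},\boldsymbol{y}(\partial U))$, then $\boldsymbol{\xi}\notin\boldsymbol{z}(\partial U)$ and $\deg(\boldsymbol{z},U,\boldsymbol{\xi})=\deg(\boldsymbol{y},U,\boldsymbol{\xi})$; indeed the affine homotopy $h_t\coloneqq(1-t)\boldsymbol{y}+t\boldsymbol{z}$ satisfies $|h_t(\boldsymbol{x})-\boldsymbol{\xi}|\geq\dist(\boldsymbol{\xi},\boldsymbol{y}(\partial U))-\|\boldsymbol{z}-\boldsymbol{y}\|_{C^0(\partial U)}>0$ for all $\boldsymbol{x}\in\partial U$ and $t\in[0,1]$, so homotopy invariance applies. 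Note that $\partial U$ is compact (as $U\subset\subset\Omega$ and $\Omega$ is bounded), whence $\boldsymbol{y}(\partial U)$ and each $\boldsymbol{y}_n(\partial U)$ are compact.

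To prove (i), I would set $2\delta\coloneqq\dist(K,\boldsymbol{y}(\partial U))$, which is strictly positive since $K$ is compact and contained in the open set $\im_{\rm T}(\boldsymbol{y},U)\subset\R^N\setminus\boldsymbol{y}(\partial U)$. For $n$ large enough that $\|\boldsymbol{y}_n-\boldsymbol{y}\|_{C^0(\partial U)}<\delta$, every $\boldsymbol{\xi}\in K$ has $\dist(\boldsymbol{\xi},\boldsymbol{y}(\partial U))\geq 2\delta>\delta$, so the stability property gives $\boldsymbol{\xi}\notin\boldsymbol{y}_n(\partial U)$ and $\deg(\boldsymbol{y}_n,U,\boldsymbol{\xi})=\deg(\boldsymbol{y},U,\boldsymbol{\xi})\neq0$, i.e. $\boldsymbol{\xi}\in\im_{\rm T}(\boldsymbol{y}_n,U)$; the threshold depends on $K$ only through $\delta$ and the rate of uniform convergence. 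For (ii), I would first check that $\im_{\rm T}(\boldsymbol{y},U)\cup\boldsymbol{y}(\partial U)$ is closed: since $\partial\,\im_{\rm T}(\boldsymbol{y},U)\subset\boldsymbol{y}(\partial U)$ we have $\closure{\im_{\rm T}(\boldsymbol{y},U)}\subset\im_{\rm T}(\boldsymbol{y},U)\cup\boldsymbol{y}(\partial U)$, so this set coincides with $\closure{\im_{\rm T}(\boldsymbol{y},U)}\cup\boldsymbol{y}(\partial U)$, a union of two compact sets. Hence $2\delta\coloneqq\dist(K,\im_{\rm T}(\boldsymbol{y},U)\cup\boldsymbol{y}(\partial U))>0$, and the same perturbation argument yields, for $n$ large and all $\boldsymbol{\xi}\in K$, $\boldsymbol{\xi}\notin\boldsymbol{y}_n(\partial U)$ together with $\deg(\boldsymbol{y}_n,U,\boldsymbol{\xi})=\deg(\boldsymbol{y},U,\boldsymbol{\xi})=0$, so $\boldsymbol{\xi}\notin\im_{\rm T}(\boldsymbol{y}_n,U)\cup\boldsymbol{y}_n(\partial U)$.

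For (iii), the inclusion $\im_{\rm T}(\boldsymbol{y},U)\subset O$ is precisely Lemma \ref{lem:topological-image-contained}. For the rest, $O\subset\subset\R^N$ forces $\R^N\setminus O$ to be a nonempty, closed, unbounded set which is connected by assumption, and $3\delta\coloneqq\dist(\boldsymbol{y}(\partial U),\R^N\setminus O)>0$. When $\|\boldsymbol{y}_n-\boldsymbol{y}\|_{C^0(\partial U)}<\delta$ one has $\dist(\boldsymbol{y}_n(\partial U),\R^N\setminus O)\geq2\delta>0$; thus $\R^N\setminus O$ is a connected, unbounded subset of $\R^N\setminus\boldsymbol{y}_n(\partial U)$, hence lies in the unique unbounded component of the latter, on which $\deg(\boldsymbol{y}_n,U,\cdot)\equiv0$. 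Therefore $\im_{\rm T}(\boldsymbol{y}_n,U)\cap(\R^N\setminus O)=\emptyset$, i.e. $\im_{\rm T}(\boldsymbol{y}_n,U)\subset O$, and adding $\boldsymbol{y}(\partial U)\subset O$ gives the claim (and $\boldsymbol{y}_n(\partial U)\subset O$ as well for such $n$). Finally, for (iv) I would fix a ball $B=B(\boldsymbol{0},\rho)$ so large that $\boldsymbol{y}(\partial U)\cup\im_{\rm T}(\boldsymbol{y},U)\subset B$ (using that $\im_{\rm T}(\boldsymbol{y},U)$ is bounded); since $\R^N\setminus B$ is connected, (iii) gives $\im_{\rm T}(\boldsymbol{y}_n,U)\subset B$ for all large $n$, so the functions $\chi_{\im_{\rm T}(\boldsymbol{y}_n,U)}$ (eventually) and $\chi_{\im_{\rm T}(\boldsymbol{y},U)}$ are all dominated by $\chi_B\in L^1(\R^N)$. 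By dominated convergence it then suffices to show convergence a.e., and since $\leb(\boldsymbol{y}(\partial U))=0$ we may restrict to $\boldsymbol{\xi}\in\R^N\setminus\boldsymbol{y}(\partial U)$: if $\boldsymbol{\xi}\in\im_{\rm T}(\boldsymbol{y},U)$, apply (i) with $K=\{\boldsymbol{\xi}\}$; if $\boldsymbol{\xi}\notin\im_{\rm T}(\boldsymbol{y},U)$, then $\{\boldsymbol{\xi}\}$ is a compact subset of $\R^N\setminus(\im_{\rm T}(\boldsymbol{y},U)\cup\boldsymbol{y}(\partial U))$ and (ii) applies. In either case $\chi_{\im_{\rm T}(\boldsymbol{y}_n,U)}(\boldsymbol{\xi})=\chi_{\im_{\rm T}(\boldsymbol{y},U)}(\boldsymbol{\xi})$ for $n$ large, which gives the pointwise, hence the $L^1$, convergence.

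I do not expect a genuine obstacle here: the proof is a careful bookkeeping around the stability of the degree under uniform perturbations. The two points that need a little attention are the closedness of $\im_{\rm T}(\boldsymbol{y},U)\cup\boldsymbol{y}(\partial U)$ used in (ii), and the identification in (iii) of $\R^N\setminus O$ with a subset of the unbounded component of $\R^N\setminus\boldsymbol{y}_n(\partial U)$; the measure-zero hypotheses on the images of $\partial U$ enter only in (iv), to pass from ``for every $\boldsymbol{\xi}\notin\boldsymbol{y}(\partial U)$'' to ``for almost every $\boldsymbol{\xi}$'' and to render the $L^1$ statement meaningful.
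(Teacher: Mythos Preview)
Your argument is correct and follows essentially the same route as the paper: for (iii) you reduce to Lemma~\ref{lem:topological-image-contained} after noting $\boldsymbol{y}_n(\partial U)\subset O$ for large $n$, and for (iv) you combine the a.e.\ convergence from (i)--(ii) with dominated convergence using a large ball furnished by (iii). The only difference is that the paper cites \cite[Lemma~3.6]{barchiesi.henao.moracorral} for (i)--(ii), whereas you supply the standard degree-stability proof explicitly.
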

\begin{proof}
Claims (i)--(ii) are shown in \cite[Lemma 3.6]{barchiesi.henao.moracorral}. To prove (iii), observe that, by uniform convergence, $\boldsymbol{y}_n(\partial U)\subset O$ for  $n\gg 1$. Then, the claim follows by Lemma \ref{lem:topological-image-contained}. To prove (iv), note first that \begin{equation*}
    \text{$\chi_{\im_{\rm T}(\boldsymbol{y}_n,U)} \to \chi_{\im_{\rm T}(\boldsymbol{y},U)}$ a.e. in $\R^N\setminus \boldsymbol{y}(\partial U)$}
\end{equation*}
by (i)--(ii). Let $r>0$ be such that $\im_{\rm T}(\boldsymbol{y},U)\subset B(\boldsymbol{0},r)$. Then, by (iii), $\im_{\rm T}(\boldsymbol{y}_n,U)\subset \subset B(\boldsymbol{0},r)$ for $n\gg 1$, so that (iv) follows by applying the Lebesgue Dominated Convergence Theorem.
\end{proof}

Let $\boldsymbol{y}\in W^{1,p}(\Omega;\R^N)$. Let $U\subset \subset \Omega$ be a domain of class $C^2$ and suppose that $\boldsymbol{y} \restr{\partial U}\in W^{1,p}(\partial U;\R^N)$. As $p>N-1$, by the Morrey embedding, the map $\boldsymbol{y}\restr{\partial U}$ admits a continuous representative in $W^{1,p}(\partial U;\R^N)$. In this situation, \emph{we adopt the convention of identifying ${\boldsymbol{y}\restr{\partial U}}$ with its representative in $C^0(\partial U;\R^N)$}. We will often omit the restriction to $\partial U$ in the notation. 

We need some geometric preliminaries. Let $U \subset \subset \Omega$ be a domain of class $C^2$. In this case, 
$\partial U$ is a compact hypersurface of class $C^2$ without boundary. The signed distance function $d_U\colon \R^N \to \R$ is defined by setting
\begin{equation*}
d_U(\boldsymbol{x})\coloneqq \begin{cases}
\dist(\boldsymbol{x};\partial U), & \boldsymbol{x}\in U, \\
0, & \boldsymbol{x}\in \partial U, \\
-\dist(\boldsymbol{x};\partial U), & \boldsymbol{x}\in \R^N \setminus \closure{U}.
\end{cases}
\end{equation*}
For every $\ell \in \R$, we set $U_\ell\coloneqq \{\boldsymbol{x}\in \R^N:\:d_U(\boldsymbol{x})>\ell\}$. As $d_U$ is continuous, the set $U_\ell$ is open and $\partial U_\ell=\{\boldsymbol{x}\in \R^N:\:d_U(\boldsymbol{x})=\ell\}$. Given $\delta>0$, we consider the tubular neighborhood
\begin{equation*}
T(\partial U,\delta)\coloneqq \{\boldsymbol{x}\in \R^N:\:-\delta< d_U(\boldsymbol{x})<\delta\}.
\end{equation*}
For $\delta \ll 1$ depending on $U$, the following properties hold \cite[Section 4]{ambrosio}:
\begin{enumerate}[(i)]
	\item the function $d_U$ is of class $C^2$ on $T(\partial U,\delta)$;
	\item the projection onto the boundary $\boldsymbol{\Pi}_U\colon T(\partial U,\delta) \to \partial U$ given by
	\begin{equation*}
	\boldsymbol{\Pi}_U(\boldsymbol{x})\coloneqq \argmin_{{\boldsymbol{z}}\in \partial U} |\boldsymbol{x}-{\boldsymbol{z}}|     
	\end{equation*}
	is well defined;
	\item for every $\boldsymbol{x}\in T(\partial U,\delta)$ there hold
	\begin{equation*}
	\boldsymbol{x}=\boldsymbol{\Pi}_U(\boldsymbol{x})+d_U(\boldsymbol{x})\boldsymbol{n}_U(\boldsymbol{\Pi}_U(\boldsymbol{x})), \qquad D d_U(\boldsymbol{x})=-\boldsymbol{n}_U(\boldsymbol{\Pi}_U(\boldsymbol{x})),
	\end{equation*}
	and $\boldsymbol{n}_{U_\ell}(\boldsymbol{x})=\boldsymbol{n}_U(\boldsymbol{\Pi}_U(\boldsymbol{x}))$ whenever $\boldsymbol{x}\in \partial U_\ell$.
\end{enumerate}

The following result is available in the literature. For the convenience of the reader, we provide a detailed proof in the Appendix.

\begin{lemma}[{\cite[Lemma 2.9]{mueller.spector}}]
\label{lem:weak-convergence-boundary}
Let $(\boldsymbol{y}_n)\subset W^{1,p}(\Omega;\R^N)$ and $\boldsymbol{y}\in W^{1,p}(\Omega;\R^N)$. Suppose that $\boldsymbol{y}_n \to \boldsymbol{y}$ in $W^{1,p}(\Omega;\R^N)$. Let $U\subset \subset \Omega$ be a domain of class $C^2$. Then, there exists $\delta>0$ such that for almost every $\ell \in (-\delta,\delta)$ there holds
\begin{equation}
\label{eqn:sobolev-regularity-boundary}
    \text{$\forall\,n \in \N,\quad\boldsymbol{y}_n\restr{\partial U_\ell}\in W^{1,p}(\partial U_\ell;\R^N),$  \quad $\boldsymbol{y}\restr{\partial U_\ell}\in W^{1,p}(\partial U_l;\R^N)$}.
\end{equation}
Moreover, for any such $\ell\in (-\delta,\delta)$, there exists a subsequence $(\boldsymbol{y}_{n_k})$, possibly depending on $\ell$, such that
\begin{equation}
\label{eqn:weak-convergence-boundary}
    \text{$\boldsymbol{y}_{n_k}\wk \boldsymbol{y}$ in $W^{1,p}(\partial U_\ell;\R^N)$, \quad  ${\boldsymbol{y}}_{n_k}\to {\boldsymbol{y}}$ uniformly on $\partial U_\ell$.}
\end{equation}
\end{lemma}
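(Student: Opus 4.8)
The plan is to combine a coarea/slicing argument in the tubular neighborhood $T(\partial U,\delta)$ with the Morrey embedding on the $(N-1)$-dimensional hypersurfaces $\partial U_\ell$. First I would fix $\delta>0$ small enough (depending only on $U$) so that the three geometric properties (i)--(iii) listed above hold on $T(\partial U,\delta)$; in particular $d_U\in C^2$ there, the nearest-point projection $\boldsymbol{\Pi}_U$ is a well-defined $C^1$ map, and the level sets $\partial U_\ell$ for $\ell\in(-\delta,\delta)$ foliate $T(\partial U,\delta)$ by compact $C^2$ hypersurfaces. The map $\Psi(\boldsymbol{z},\ell)\coloneqq\boldsymbol{z}+\ell\,\boldsymbol{n}_U(\boldsymbol{z})$ is then a $C^1$-diffeomorphism from $\partial U\times(-\delta,\delta)$ onto $T(\partial U,\delta)$ with Jacobian bounded above and below, so that, by the coarea formula (applied to $d_U$, using $|Dd_U|=1$), for any nonnegative measurable $g$ on $T(\partial U,\delta)$ one has $\int_{T(\partial U,\delta)}g\,\d\boldsymbol{x}=\int_{-\delta}^{\delta}\!\big(\int_{\partial U_\ell}g\,\d\haus\big)\,\d\ell$.

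Next I would establish \eqref{eqn:sobolev-regularity-boundary}. Since $\boldsymbol{y}_n\to\boldsymbol{y}$ in $W^{1,p}(\Omega;\R^N)$, the functions $|D\boldsymbol{y}_n|^p$ and $|D\boldsymbol{y}|^p$ together with $|\boldsymbol{y}_n|^p$, $|\boldsymbol{y}|^p$ are all in $L^1(T(\partial U,\delta))$, and in fact $\sum_n\|\boldsymbol{y}_n-\boldsymbol{y}\|_{W^{1,p}}^p<\infty$ along a suitable (not relabeled) subsequence — or, more simply, one uses that a single fixed $g\coloneqq \sum_{n}2^{-n}\big(|\boldsymbol{y}_n|^p+|D\boldsymbol{y}_n|^p\big)+|\boldsymbol{y}|^p+|D\boldsymbol{y}|^p\in L^1(T(\partial U,\delta))$. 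By the coarea identity above, $\int_{-\delta}^{\delta}\big(\int_{\partial U_\ell}g\,\d\haus\big)\,\d\ell<\infty$, hence for almost every $\ell\in(-\delta,\delta)$ the slice integral $\int_{\partial U_\ell}g\,\d\haus$ is finite; for such $\ell$, the restrictions $\boldsymbol{y}_n\restr{\partial U_\ell}$ and $\boldsymbol{y}\restr{\partial U_\ell}$ have $W^{1,p}(\partial U_\ell;\R^N)$ norms controlled by this finite quantity. (To make the restriction of the gradient meaningful one notes that the trace of $D\boldsymbol{y}$ on $\partial U_\ell$ agrees $\haus$-a.e.\ with the tangential derivative of $\boldsymbol{y}\restr{\partial U_\ell}$ for a.e.\ $\ell$, a standard consequence of Fubini applied in the diffeomorphic coordinates $(\boldsymbol{z},\ell)$.) This proves \eqref{eqn:sobolev-regularity-boundary}.

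Finally I would prove \eqref{eqn:weak-convergence-boundary}. Fix $\ell\in(-\delta,\delta)$ as above. Set $c_\ell\coloneqq \sup_n\big(\|\boldsymbol{y}_n\|_{W^{1,p}(\partial U_\ell)}\big)$, which we must show is finite. Here is the one subtlety: the quantity $\int_{\partial U_\ell}\big(|\boldsymbol{y}_n|^p+|D\boldsymbol{y}_n|^p\big)\,\d\haus$ need not be uniformly bounded for \emph{every} admissible $\ell$, only bounded by $\int_{\partial U_\ell}g\,\d\haus$ with $g$ as above — but that bound \emph{is} uniform in $n$ by construction, so $c_\ell<\infty$. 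Since $\partial U_\ell$ is a compact $C^2$ hypersurface of dimension $N-1$ and $p>N-1$, the Morrey embedding $W^{1,p}(\partial U_\ell;\R^N)\hookrightarrow C^{0,1-(N-1)/p}(\partial U_\ell;\R^N)$ is compact; hence the bounded sequence $(\boldsymbol{y}_n\restr{\partial U_\ell})$ is precompact in $C^0(\partial U_\ell;\R^N)$ and, being bounded in the reflexive space $W^{1,p}(\partial U_\ell;\R^N)$, has a subsequence $(\boldsymbol{y}_{n_k})$ converging weakly in $W^{1,p}(\partial U_\ell;\R^N)$ and strongly (hence uniformly) in $C^0(\partial U_\ell;\R^N)$ to some limit $\boldsymbol{z}_\ell$. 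It remains to identify $\boldsymbol{z}_\ell=\boldsymbol{y}\restr{\partial U_\ell}$: by the coarea identity applied to $|\boldsymbol{y}_n-\boldsymbol{y}|$ and $\boldsymbol{y}_n\to\boldsymbol{y}$ in $L^p(T(\partial U,\delta))$, passing to a further subsequence one has $\boldsymbol{y}_{n_k}\restr{\partial U_\ell}\to\boldsymbol{y}\restr{\partial U_\ell}$ in $L^p(\partial U_\ell;\R^N)$ for a.e.\ $\ell$; combined with the uniform convergence to $\boldsymbol{z}_\ell$ this forces $\boldsymbol{z}_\ell=\boldsymbol{y}\restr{\partial U_\ell}$ $\haus$-a.e., and by continuity everywhere. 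This yields \eqref{eqn:weak-convergence-boundary}. The main obstacle, and the reason the conclusion holds only along a subsequence possibly depending on $\ell$, is precisely that the $W^{1,p}(\partial U_\ell)$-bound is extracted from an $L^1$-in-$\ell$ estimate and is therefore only available for a.e.\ $\ell$ and only in the form of a uniform-in-$n$ majorant, not an identified weak limit; the identification of the limit then has to be done by the auxiliary $L^p$-convergence on slices rather than by the trace operator directly.
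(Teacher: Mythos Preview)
Your argument for \eqref{eqn:sobolev-regularity-boundary} via the weighted sum $g=\sum_n 2^{-n}(|\boldsymbol{y}_n|^p+|D\boldsymbol{y}_n|^p)+|\boldsymbol{y}|^p+|D\boldsymbol{y}|^p$ is fine: finiteness of $\int_{\partial U_\ell}g\,\d\haus$ for a.e.\ $\ell$ indeed forces each individual slice norm to be finite. The gap is in the step towards \eqref{eqn:weak-convergence-boundary}, where you assert that $\int_{\partial U_\ell}\big(|\boldsymbol{y}_n|^p+|D\boldsymbol{y}_n|^p\big)\,\d\haus$ is bounded by $\int_{\partial U_\ell}g\,\d\haus$ \emph{uniformly in $n$}. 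This is false: by construction $g\geq 2^{-n}(|\boldsymbol{y}_n|^p+|D\boldsymbol{y}_n|^p)$, so the inequality you actually obtain is $\int_{\partial U_\ell}\big(|\boldsymbol{y}_n|^p+|D\boldsymbol{y}_n|^p\big)\,\d\haus\leq 2^{n}\int_{\partial U_\ell}g\,\d\haus$, which blows up with $n$. Hence $c_\ell=\sup_n\|\boldsymbol{y}_n\|_{W^{1,p}(\partial U_\ell)}<\infty$ does not follow from this construction, and the weak-compactness step cannot be launched.

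The paper repairs this with the standard Fatou device. Setting $f_n(\ell)\coloneqq\int_{\partial U_\ell}|D^{\partial U_\ell}\boldsymbol{y}_n|^p\,\d\boldsymbol{a}$, the coarea formula together with the boundedness of $(\boldsymbol{y}_n)$ in $W^{1,p}(\Omega;\R^N)$ gives $\sup_n\int_{-\delta}^{\delta}f_n(\ell)\,\d\ell<\infty$, whence by Fatou $f(\ell)\coloneqq\liminf_n f_n(\ell)<\infty$ for a.e.\ $\ell$. For any such $\ell$ one then selects a subsequence $(n_k)$, depending on $\ell$, realising the $\liminf$; along this subsequence $(f_{n_k}(\ell))$ is bounded, which is precisely the uniform estimate you need. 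The identification of the weak limit is then done, essentially as you propose, via $L^p$ convergence on slices (Rellich in the tube plus coarea) combined with the integration-by-parts formula on $\partial U_\ell$. This Fatou step is also the genuine reason the subsequence must depend on $\ell$: one cannot in general arrange $\sup_n f_n(\ell)<\infty$ on a set of full measure, only $\liminf_n f_n(\ell)<\infty$. (Incidentally, your first idea, which you abandoned---pass to a subsequence with $\sum_n\|\boldsymbol{y}_n-\boldsymbol{y}\|_{W^{1,p}}^p<\infty$ and slice the $L^1$ function $\sum_n(|\boldsymbol{y}_n-\boldsymbol{y}|^p+|D\boldsymbol{y}_n-D\boldsymbol{y}|^p)$---would in fact have worked under the stated strong-convergence hypothesis and would even have yielded strong $W^{1,p}(\partial U_\ell)$ convergence; it is the weighted $g$ you switched to that loses the uniformity.)
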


\begin{remark}
	Let $\boldsymbol{y}\in W^{1,p}(\Omega;\R^N)$ and let $\boldsymbol{x}_0\in \Omega$. By Lemma \ref{lem:weak-convergence-boundary}, for almost every $r\in (0,\dist(\boldsymbol{x}_0;\partial \Omega))$, we have $\boldsymbol{y}\restr{\partial B(\boldsymbol{x}_0,r)}\in W^{1,p}(\partial B(\boldsymbol{x}_0,r);\R^N)$. In view of Corollary \ref{cor:lebesgue-points-balls}, without loss of generality, we can assume that $\partial B(\boldsymbol{x}_0,r)\subset L_{\boldsymbol{y}}$. In this case, by exploiting \cite[Claim (iii), Proposition 2.8]{mueller.spector} it is possible to show that the continuous representative of $\boldsymbol{y}\restr{\partial B(\boldsymbol{x}_0,r)}$ is given by $\boldsymbol{y}^*\restr{\partial B(\boldsymbol{x}_0,r)}$. 
\end{remark}

For technical reasons, following \cite[Definition 6]{henao.moracorral.fracture}, we introduce the class of regular subdomains.  

\begin{definition}[Regular subdomains]
\label{def:regular-subdomains}
Let $\boldsymbol{y}\in W^{1,p}(\Omega;\R^N)$. We define  $\mathcal{U}_{\boldsymbol{y}}$ as the class of domains $U\subset \subset \Omega$ of class $C^2$ satisfying the following properties:
\begin{enumerate}[(i)]
    \item $\boldsymbol{y}\restr{\partial U}\in W^{1,p}(\partial U;\R^N)$ and $(\cof\,\nabla \boldsymbol{y})\restr{\partial U}\in L^1(\partial U;\rnn)$;
    \item $\haus(\partial U \setminus \Omega_{\boldsymbol{y}})=0$ and $\nabla^{\partial U}{\boldsymbol{y}}\restr{\partial U}=\nabla \boldsymbol{y}(\boldsymbol{I}-\boldsymbol{n}_U\otimes \boldsymbol{n}_U)$  a.e. on $\partial U$;
    \item there holds
    \begin{equation*}
        \displaystyle \lim_{\delta \to 0^+} \dashint_{-\delta}^\delta \left |\int_{\partial U_\ell} |\cof\hspace{1pt}\nabla \boldsymbol{y}|\,\d\boldsymbol{a}- \int_{\partial U} |\cof\hspace{1pt}\nabla \boldsymbol{y}|\,\d\boldsymbol{a}\right |\,\d \ell=0;
    \end{equation*}
    \item for every $\boldsymbol{\psi}\in C^1_c(\R^N;\R^N)$, there holds
    \begin{equation*}
    \begin{split}
        &\lim_{\delta \to 0^+} \dashint_{-\delta}^\delta \bigg | \int_{\partial U_\ell} \boldsymbol{\psi}\circ\boldsymbol{y}\cdot (\cof \hspace{1pt}\nabla\boldsymbol{y})\boldsymbol{n}_{U_\ell}\,\d\boldsymbol{a}- \int_{\partial U} \boldsymbol{\psi}\circ\boldsymbol{y}\cdot (\cof \hspace{1pt}\nabla\boldsymbol{y})\boldsymbol{n}_{U}\,\d\boldsymbol{a}          \bigg |\,\d \ell=0.
    \end{split}
    \end{equation*}
\end{enumerate}
\end{definition}

Property (i) in the previous definition ensures the regularity of the map $\boldsymbol{y}\restr{\partial U}$. In (ii), we denote by $\nabla^{\partial U} {\boldsymbol{y}}\restr{\partial U}$ the approximate tangential gradient of ${\boldsymbol{y}}\restr{\partial U}$, see \cite[Definition 4]{henao.moracorral.fracture} or \cite[Paragraph 3.2.16]{federer}. Since we are not going to use it explicitly, we do not introduce this notion. For our purposes, the only important fact is that, in view of (ii), there holds $\leb({\boldsymbol{y}}(\partial U))=0$ \cite[Remark 2.16, Claim (b)]{barchiesi.henao.moracorral}. This is a consequence of the Area Formula for surface integrals, see \cite[Proposition 2]{henao.moracorral.fracture} or \cite[Corollary 3.2.20]{federer}, that, for the same reason as before, we do not present. Eventually, properties (iii) and (iv) are crucial for the validity of Theorem \ref{thm:deg=m} below.

The following result ensures the abundance of regular subdomains as in Definition \ref{def:regular-subdomains}. We use again the notation  from the Appendix.

\begin{lemma}[{\cite[Lemma 2]{henao.moracorral.fracture}}]
\label{lem:abundance}
Let $\boldsymbol{y}\in W^{1,p}(\Omega;\R^N)$ be such that $\det D\boldsymbol{y}>0$ almost everywhere and let $U \subset \subset \Omega$ be a domain of class $C^2$. Then, there exists $\delta>0$  such that , for almost every $\ell\in (-\delta,\delta)$, there holds $U_\ell\in\mathcal{U}_{\boldsymbol{y}}$.
\end{lemma}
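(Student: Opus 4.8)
\emph{Proof plan.} The plan is to fix $\delta>0$ so small that all the properties of the tubular neighbourhood $T(\partial U,\delta)$ recalled before Lemma~\ref{lem:weak-convergence-boundary} hold, that $U_{-\delta}\subset\subset\Omega$, and that for all $\ell,m$ with $|\ell|,|m|$ small one has the semigroup identity $(U_\ell)_m=U_{\ell+m}$, together with $\boldsymbol{n}_{(U_\ell)_m}=\boldsymbol{n}_{U_{\ell+m}}$ along their common boundary. The latter follows because, on $T(\partial U,\delta)$, the signed distance to $\partial U_\ell$ equals $d_U-\ell$: from $\boldsymbol{x}=\boldsymbol{\Pi}_U(\boldsymbol{x})+d_U(\boldsymbol{x})\,\boldsymbol{n}_U(\boldsymbol{\Pi}_U(\boldsymbol{x}))$ the point $\boldsymbol{\Pi}_U(\boldsymbol{x})+\ell\,\boldsymbol{n}_U(\boldsymbol{\Pi}_U(\boldsymbol{x}))$ is the nearest point of $\partial U_\ell$ to $\boldsymbol{x}$, at distance $|d_U(\boldsymbol{x})-\ell|$. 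For every $|\ell|<\delta$ the set $U_\ell$ is then a $C^2$ domain with $U_\ell\subset\subset\Omega$, so it remains to show that properties (i)--(iv) of Definition~\ref{def:regular-subdomains} hold for $U_\ell$ for a.e. $\ell\in(-\delta,\delta)$; since each fails only for $\ell$ in a null set, intersecting them at the end concludes the proof.

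For property (i), the regularity $\boldsymbol{y}\restr{\partial U_\ell}\in W^{1,p}(\partial U_\ell;\R^N)$ for a.e. $\ell$ is Lemma~\ref{lem:weak-convergence-boundary} applied to the constant sequence $\boldsymbol{y}_n\equiv\boldsymbol{y}$. Since $p>N-1$, one has $\cof\nabla\boldsymbol{y}\in L^{p/(N-1)}(\Omega;\rnn)\subset L^1_{\loc}(\Omega;\rnn)$, so the coarea formula (see, e.g., \cite{federer}) for the Lipschitz function $d_U$, which has $|\nabla d_U|\equiv1$ on $T(\partial U,\delta)$, gives
\[
\int_{-\delta}^{\delta}\Big(\int_{\partial U_\ell}|\cof\nabla\boldsymbol{y}|\,\d\boldsymbol{a}\Big)\,\d\ell=\int_{T(\partial U,\delta)}|\cof\nabla\boldsymbol{y}|\,\d\boldsymbol{x}<\infty,
\]
whence $(\cof\nabla\boldsymbol{y})\restr{\partial U_\ell}\in L^1(\partial U_\ell;\rnn)$ for a.e. $\ell$. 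For property (ii), applying the same coarea formula to $\chi_{\Omega\setminus\Omega_{\boldsymbol{y}}}$ and using $\leb(\Omega\setminus\Omega_{\boldsymbol{y}})=0$ yields $\haus(\partial U_\ell\setminus\Omega_{\boldsymbol{y}})=0$ for a.e. $\ell$; the compatibility of the approximate tangential gradient of $\boldsymbol{y}\restr{\partial U_\ell}$ with the tangential part of $\nabla\boldsymbol{y}$ on a.e. slice is the standard slicing property of Sobolev functions, here along the $C^2$ foliation $\{\partial U_\ell\}$ of $T(\partial U,\delta)$ by the normal flow of $d_U$.

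The substance of the argument lies in properties (iii) and (iv), where the semigroup identity is decisive. Set $h(\ell)\coloneqq\int_{\partial U_\ell}|\cof\nabla\boldsymbol{y}|\,\d\boldsymbol{a}$ and, for $\boldsymbol{\psi}\in C^1_c(\R^N;\R^N)$, $g_{\boldsymbol{\psi}}(\ell)\coloneqq\int_{\partial U_\ell}\boldsymbol{\psi}\circ\boldsymbol{y}\cdot(\cof\nabla\boldsymbol{y})\,\boldsymbol{n}_{U_\ell}\,\d\boldsymbol{a}$; by the previous step these are defined for a.e. $\ell$, measurable in $\ell$, and belong to $L^1(-\delta,\delta)$, the bound $|g_{\boldsymbol{\psi}}|\le\|\boldsymbol{\psi}\|_{C^0}\,h$ covering the second one. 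Since $(U_\ell)_m=U_{\ell+m}$ with matching normals, property (iii) for $U_\ell$ says exactly that $\ell$ is a Lebesgue point of $h$, and property (iv) for $U_\ell$ with test field $\boldsymbol{\psi}$ says that $\ell$ is a Lebesgue point of $g_{\boldsymbol{\psi}}$; by the Lebesgue differentiation theorem this holds for a.e. $\ell$, for each fixed $\boldsymbol{\psi}$. To obtain property (iv) \emph{simultaneously} for all $\boldsymbol{\psi}\in C^1_c(\R^N;\R^N)$ — an uncountable family — pick a countable $\{\boldsymbol{\psi}_j\}\subset C^1_c(\R^N;\R^N)$ dense in the uniform norm; then for every $\ell$ that is a Lebesgue point of $h$ and of all the $g_{\boldsymbol{\psi}_j}$, the estimate $|g_{\boldsymbol{\psi}}-g_{\boldsymbol{\psi}_j}|\le\|\boldsymbol{\psi}-\boldsymbol{\psi}_j\|_{C^0}\,h$ together with $\dashint_{\ell-\rho}^{\ell+\rho}h(s)\,\d s\to h(\ell)$ gives
\[
\limsup_{\rho\to0^+}\dashint_{\ell-\rho}^{\ell+\rho}\big|g_{\boldsymbol{\psi}}(s)-g_{\boldsymbol{\psi}}(\ell)\big|\,\d s\le 2\,\|\boldsymbol{\psi}-\boldsymbol{\psi}_j\|_{C^0}\,h(\ell),
\]
and the right-hand side tends to $0$ as $j\to\infty$; hence $\ell$ is a Lebesgue point of $g_{\boldsymbol{\psi}}$ for every $\boldsymbol{\psi}$. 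Intersecting the countably many null sets of exceptional $\ell$ produced in (i)--(iv) leaves a full-measure set of $\ell\in(-\delta,\delta)$ with $U_\ell\in\mathcal{U}_{\boldsymbol{y}}$.

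I expect the main obstacle to be precisely this last point — forcing property (iv) to hold for every admissible test field with a single null set of levels — together with the careful verification of the tangential-gradient slicing identity in (ii); everything else is routine manipulation with the coarea formula and the Lebesgue differentiation theorem.
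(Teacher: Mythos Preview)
The paper does not give its own proof of this lemma: it is quoted directly from \cite[Lemma~2]{henao.moracorral.fracture} and stated without argument. Your approach is correct and is essentially the standard one (and, as far as one can tell from the citation, the one in the original reference): reduce properties (iii)--(iv) for $U_\ell$ to Lebesgue-point conditions via the semigroup identity $(U_\ell)_m=U_{\ell+m}$, handle (i)--(ii) by the coarea formula and Sobolev slicing, and close the uncountable quantifier in (iv) by a $C^0$-density argument controlled by the Lebesgue point of $h$. The only points worth tightening in a full write-up are the ones you already flagged: the explicit verification that $d_{U_\ell}=d_U-\ell$ on the tubular neighbourhood (hence $(U_\ell)_m=U_{\ell+m}$ and $\boldsymbol{n}_{(U_\ell)_m}=\boldsymbol{n}_{U_{\ell+m}}$), and the slicing identity $\nabla^{\partial U_\ell}\boldsymbol{y}\restr{\partial U_\ell}=\nabla\boldsymbol{y}(\boldsymbol{I}-\boldsymbol{n}_{U_\ell}\otimes\boldsymbol{n}_{U_\ell})$ for a.e.\ $\ell$, which follows from mollification and the coarea formula as in the proof of Lemma~\ref{lem:weak-convergence-boundary}.
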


The following result is based on Proposition \ref{prop:regular-approximate-differentiability} and complements \cite[Lemma 3.2]{barchiesi.henao.moracorral} with an elementary observation.

\begin{lemma}
\label{lem:regular-approximate-differentiability}
Let $\boldsymbol{y}\in W^{1,p}(\Omega;\R^N)$ and let $\boldsymbol{x}_0\in \Omega$. Suppose that $\boldsymbol{y}$ is regularly approximately differentiable at $\boldsymbol{x}_0$ with $\det \nabla \boldsymbol{y}(\boldsymbol{x}_0)\neq 0$. Then, there exists $P\subset (0,\dist(\boldsymbol{x}_0;\partial \Omega))$ with $\Theta^1_+(P,0)=1$ such that the following hold:
\begin{enumerate}[(i)]
    \item for every $r\in P$, we have
    \begin{equation*}
        B(\boldsymbol{x}_0,r)\in \mathcal{U}_{\boldsymbol{y}}, \quad \boldsymbol{y}(\boldsymbol{x}_0)\notin {\boldsymbol{y}}(\partial B(\boldsymbol{x}_0,r)), \quad \deg(\boldsymbol{y},B(\boldsymbol{x}_0,r),\boldsymbol{y}(\boldsymbol{x}_0))=\mathrm{sgn} \det \nabla \boldsymbol{y}(\boldsymbol{x}_0);
    \end{equation*}
    \item for every $r,r'\in P$ with $r'<r/2$, the set ${\boldsymbol{y}}(\partial B(\boldsymbol{x}_0,r'))$ is included in the connected component of $\R^N\setminus {\boldsymbol{y}}(\partial B(\boldsymbol{x}_0,r))$ containing $\boldsymbol{y}(\boldsymbol{x}_0)$ and, in turn, we have
    \begin{equation*}
        \im_{\rm T}(\boldsymbol{y},B(\boldsymbol{x}_0,r')) \cup {\boldsymbol{y}}(\partial B(\boldsymbol{x}_0,r')) \subset  \im_{\rm T}(\boldsymbol{y},B(\boldsymbol{x}_0,r)). 
    \end{equation*}
\end{enumerate}
\end{lemma}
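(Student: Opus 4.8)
The proof will combine Proposition \ref{prop:regular-approximate-differentiability} (regular approximate differentiability with $\nabla\boldsymbol{y}=D\boldsymbol{y}$ a.e.), Corollary \ref{cor:lebesgue-points-balls} (spheres consist of Lebesgue points for a.e.\ radius), Lemma \ref{lem:abundance} (abundance of regular subdomains $U_\ell\in\mathcal{U}_{\boldsymbol{y}}$, here applied to balls), and the elementary degree properties in Lemmas \ref{lem:connected}--\ref{lem:topological-image-uniform-convergence} together with \cite[Lemma 3.2]{barchiesi.henao.moracorral}. The set $P$ will be obtained as an intersection of four ``good radius'' sets, each of full right density at $0$: the set where $\partial B(\boldsymbol{x}_0,r)\subset L_{\boldsymbol{y}}$ and $\boldsymbol{y}\restr{\partial B(\boldsymbol{x}_0,r)}\in W^{1,p}$ (so the continuous representative is $\boldsymbol{y}^*\restr{\partial B(\boldsymbol{x}_0,r)}$, by the Remark after Lemma \ref{lem:weak-convergence-boundary}); the set of radii $r$ with $B(\boldsymbol{x}_0,r)\in\mathcal{U}_{\boldsymbol{y}}$ from Lemma \ref{lem:abundance}; the set coming from the definition of regular approximate differentiability, namely a set $R$ with $\Theta^1_+(R,0)=1$ on which \eqref{eqn:regular-approximate-differentiability-lim} holds; and a cofinal set chosen so that the degree equals $\mathrm{sgn}\det\nabla\boldsymbol{y}(\boldsymbol{x}_0)$, which is exactly the content of \cite[Lemma 3.2]{barchiesi.henao.moracorral}. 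Since a finite intersection of sets with right density $1$ at $0$ still has right density $1$ at $0$, the resulting set $P$ works.

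\textbf{Step (i).} Fix $r\in P$. That $B(\boldsymbol{x}_0,r)\in\mathcal{U}_{\boldsymbol{y}}$ is immediate by our choice of $P$ via Lemma \ref{lem:abundance}. For the degree statement, I would argue as follows: for $r\in R$ the regular approximate differentiability relation \eqref{eqn:regular-approximate-differentiability-lim} gives, on $\partial B(\boldsymbol{x}_0,r)$ (whose continuous representative is $\boldsymbol{y}^*$), that $\boldsymbol{y}^*(\boldsymbol{x})$ is uniformly close to the affine map $\boldsymbol{x}\mapsto\boldsymbol{y}(\boldsymbol{x}_0)+\nabla\boldsymbol{y}(\boldsymbol{x}_0)(\boldsymbol{x}-\boldsymbol{x}_0)$; since $\det\nabla\boldsymbol{y}(\boldsymbol{x}_0)\neq 0$, this affine map does not take the value $\boldsymbol{y}(\boldsymbol{x}_0)$ on $\partial B(\boldsymbol{x}_0,r)$ and stays at distance $\gtrsim r$ from it, so for $r$ small the homotopy between $\boldsymbol{y}\restr{\partial B(\boldsymbol{x}_0,r)}$ and the affine map avoids $\boldsymbol{y}(\boldsymbol{x}_0)$; hence $\boldsymbol{y}(\boldsymbol{x}_0)\notin\boldsymbol{y}(\partial B(\boldsymbol{x}_0,r))$ and, by homotopy invariance of the degree, $\deg(\boldsymbol{y},B(\boldsymbol{x}_0,r),\boldsymbol{y}(\boldsymbol{x}_0))$ equals the degree of the affine map, which is $\mathrm{sgn}\det\nabla\boldsymbol{y}(\boldsymbol{x}_0)$. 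This is precisely \cite[Lemma 3.2]{barchiesi.henao.moracorral}, so I would simply cite it for the degree identity and only re-derive the two ingredients I need downstream in (ii).

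\textbf{Step (ii).} Take $r,r'\in P$ with $r'<r/2$. From \eqref{eqn:regular-approximate-differentiability-lim} on $\partial B(\boldsymbol{x}_0,r)$ one gets $|\boldsymbol{y}^*(\boldsymbol{x})-\boldsymbol{y}(\boldsymbol{x}_0)|\geq c\,r$ for $\boldsymbol{x}\in\partial B(\boldsymbol{x}_0,r)$ with $c>0$ depending only on the smallest singular value of $\nabla\boldsymbol{y}(\boldsymbol{x}_0)$, provided $r$ is small; symmetrically $|\boldsymbol{y}^*(\boldsymbol{x})-\boldsymbol{y}(\boldsymbol{x}_0)|\leq C\,r'$ on $\partial B(\boldsymbol{x}_0,r')$. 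Shrinking $P$ once more (still keeping right density $1$) so that $C\,r'<c\,r$ whenever $r'<r/2$ in $P$ — this is a purely scalar condition on the ``moduli of continuity'' and costs nothing — we conclude that $\boldsymbol{y}(\partial B(\boldsymbol{x}_0,r'))$ lies in the open ball $B(\boldsymbol{y}(\boldsymbol{x}_0),c r/2)$, which is disjoint from $\boldsymbol{y}(\partial B(\boldsymbol{x}_0,r))$ and contains $\boldsymbol{y}(\boldsymbol{x}_0)$; being connected, $\boldsymbol{y}(\partial B(\boldsymbol{x}_0,r'))$ is therefore contained in the connected component $V$ of $\R^N\setminus\boldsymbol{y}(\partial B(\boldsymbol{x}_0,r))$ containing $\boldsymbol{y}(\boldsymbol{x}_0)$. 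Since $\partial B(\boldsymbol{x}_0,r)$ is connected, Lemma \ref{lem:connected}(ii) applies; moreover $V\subset\im_{\rm T}(\boldsymbol{y},B(\boldsymbol{x}_0,r))$ because $\deg(\boldsymbol{y},B(\boldsymbol{x}_0,r),\cdot)$ is constant on $V$ and equals $\mathrm{sgn}\det\nabla\boldsymbol{y}(\boldsymbol{x}_0)\neq 0$ there by Step (i). Finally $\im_{\rm T}(\boldsymbol{y},B(\boldsymbol{x}_0,r'))\cup\boldsymbol{y}(\partial B(\boldsymbol{x}_0,r'))$ is connected (Lemma \ref{lem:connected}(ii) again), contains $\boldsymbol{y}(\boldsymbol{x}_0)$, and is contained in $\R^N\setminus\boldsymbol{y}(\partial B(\boldsymbol{x}_0,r))$ (its boundary satisfies $\partial\,\im_{\rm T}(\boldsymbol{y},B(\boldsymbol{x}_0,r'))\subset\boldsymbol{y}(\partial B(\boldsymbol{x}_0,r'))\subset V$, and the whole set is connected through $\boldsymbol{y}(\boldsymbol{x}_0)\in V$), hence it is contained in the single component $V$, which in turn lies in $\im_{\rm T}(\boldsymbol{y},B(\boldsymbol{x}_0,r))$. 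This gives the desired inclusion.

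\textbf{Main obstacle.} The only genuinely delicate point is the quantitative lower bound $|\boldsymbol{y}^*(\boldsymbol{x})-\boldsymbol{y}(\boldsymbol{x}_0)|\geq c\,r$ on $\partial B(\boldsymbol{x}_0,r)$ for all small $r\in P$, with a uniform constant $c$: this is where the regular (as opposed to ordinary) approximate differentiability is indispensable, since it controls $\boldsymbol{y}$ simultaneously on all of $\partial B(\boldsymbol{x}_0,r)$ — via the essential supremum in \eqref{eqn:regular-approximate-differentiability-aplim} — rather than merely in an $L^1$-average. Once this bound and its companion upper bound on $\partial B(\boldsymbol{x}_0,r')$ are in hand, everything else is a bookkeeping of density-$1$ radius sets and an application of the connectedness lemmas; the degree identity itself is already recorded in \cite[Lemma 3.2]{barchiesi.henao.moracorral}, of which this lemma is explicitly announced to be a complement.
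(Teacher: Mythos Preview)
Your plan and Step (i) are essentially the paper's: claim (i) is taken verbatim from \cite[Lemma 3.2]{barchiesi.henao.moracorral}, and the paper likewise cites that reference for the inclusion $\boldsymbol{y}(\partial B(\boldsymbol{x}_0,r'))\subset V$. The two places where your execution of Step (ii) diverges from the paper each contain a gap.

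\textbf{The spherical containment.} Your bounds $|\boldsymbol{y}^*-\boldsymbol{y}(\boldsymbol{x}_0)|\geq c\,r$ on $\partial B(\boldsymbol{x}_0,r)$ and $\leq C\,r'$ on $\partial B(\boldsymbol{x}_0,r')$ have $c\approx\sigma_{\min}(\nabla\boldsymbol{y}(\boldsymbol{x}_0))$ and $C\approx\sigma_{\max}(\nabla\boldsymbol{y}(\boldsymbol{x}_0))$. The inequality $C\,r'<c\,r$ under the hypothesis $r'<r/2$ would force the condition number of $\nabla\boldsymbol{y}(\boldsymbol{x}_0)$ to be below $2$, which you cannot arrange by ``shrinking $P$'': these constants are fixed by $\nabla\boldsymbol{y}(\boldsymbol{x}_0)$, not by the radii. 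The argument in \cite[Lemma 3.2]{barchiesi.henao.moracorral} avoids this by comparing with the \emph{ellipsoids} $E_\rho\coloneqq\boldsymbol{y}(\boldsymbol{x}_0)+\nabla\boldsymbol{y}(\boldsymbol{x}_0)(B(\boldsymbol{0},\rho))$: one has $\dist(\partial E_{r'},\partial E_r)\geq\sigma_{\min}\,(r-r')\geq\sigma_{\min}\,r/2$, and the regular approximate differentiability gives $\boldsymbol{y}(\partial B(\boldsymbol{x}_0,\rho))$ within $o(\rho)$ of $\partial E_\rho$ for $\rho\in P$; this $o(r)$ is dominated by $\sigma_{\min}\,r/2$ once $P$ is restricted to small radii, and then $\boldsymbol{y}(\partial B(\boldsymbol{x}_0,r'))$ lands in the connected ``inner'' region about $\boldsymbol{y}(\boldsymbol{x}_0)$. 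The factor $1/2$ in the statement is exactly what makes the ellipsoidal gap proportional to $r$.

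\textbf{The final inclusion.} Granting $\boldsymbol{y}(\partial B(\boldsymbol{x}_0,r'))\subset V$, your connectedness argument does not establish that $\im_{\rm T}(\boldsymbol{y},B(\boldsymbol{x}_0,r'))$ avoids $\boldsymbol{y}(\partial B(\boldsymbol{x}_0,r))$: knowing that a connected set meets $V$ and has its topological boundary inside $V$ does not force it to lie in $V$ (think of a disc whose bounding circle sits inside an annulus). The paper closes this cleanly with Lemma \ref{lem:topological-image-contained}: since $\boldsymbol{y}(\partial B(\boldsymbol{x}_0,r'))\subset V\subset\im_{\rm T}(\boldsymbol{y},B(\boldsymbol{x}_0,r))\eqqcolon O$ and $\R^N\setminus O$ is connected by Lemma \ref{lem:connected}(ii), one gets $\im_{\rm T}(\boldsymbol{y},B(\boldsymbol{x}_0,r'))\subset O$ directly. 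This is the missing lemma in your toolkit (you listed Lemmas \ref{lem:connected}--\ref{lem:topological-image-uniform-convergence} but not Lemma \ref{lem:topological-image-contained}).
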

\begin{proof}
Claim (i) is proved  in \cite[Lemma 3.2]{barchiesi.henao.moracorral}. For claim (ii), let $V$ denote the connected component of $\R^N \setminus {\boldsymbol{y}}(\partial B(\boldsymbol{x}_0,r))$ such that $\boldsymbol{y}(\boldsymbol{x}_0)\in V$. The inclusion ${\boldsymbol{y}}(\partial B(\boldsymbol{x}_0,r'))\subset V$ is  shown in \cite[Lemma 3.2]{barchiesi.henao.moracorral}. As $\deg(\boldsymbol{y},B(\boldsymbol{x}_0,r),\boldsymbol{y}(\boldsymbol{x}_0))\neq 0$, we have $V \subset \im_{\rm T}(\boldsymbol{y},B(\boldsymbol{x}_0,r))$. Therefore, we deduce that $\im_{\rm T}(\boldsymbol{y},B(\boldsymbol{x}_0,r')) \subset \im_{\rm T}(\boldsymbol{y},B(\boldsymbol{x}_0,r))$ by Lemma \ref{lem:topological-image-contained} because $\R^N \setminus \im_{\rm T}(\boldsymbol{y},B(\boldsymbol{x}_0,r))$ is connected by Lemma \ref{lem:connected}. 
\end{proof}

\subsection{Deformations excluding cavitation}
Let $\boldsymbol{y}\in W^{1,p}(\Omega;\R^N)$. Following the terminology in \cite{mueller.qi.yan}, the Divergence Identities read as follows:
\begin{equation}
    \tag{DIV}
    \label{eqn:DIV}
    \forall\,\boldsymbol{\psi}\in C^1_{\rm c}(\R^N;\R^N),\quad \mathrm{Div}((\adj\hspace{1pt}D\boldsymbol{y})\,\boldsymbol{\psi}\circ\boldsymbol{y})=\div\boldsymbol{\psi}\circ \boldsymbol{y}\,\det D\boldsymbol{y} \quad \text{in $\Omega$.}
\end{equation}
Such identities have been examined in \cite{mueller.div} in relation with the problem of the weak continuity of the Jacobian determinant.
The divergence on the left-hand side of the previous equation is intendend in the sense of distributions. Namely, $\boldsymbol{y}$ satisfies \eqref{eqn:DIV}  whenever
\begin{equation*}
    \forall\,\boldsymbol{\psi}\in C^1_{\rm c}(\R^N;\R^N),\:\forall\,\varphi \in C^1_{\rm c}(\Omega),\quad -\int_\Omega ((\adj\hspace{1pt}D\boldsymbol{y})\,\boldsymbol{\psi}\circ\boldsymbol{y})\cdot D\varphi\,\d \boldsymbol{x}= \int_\Omega \div\boldsymbol{\psi}\circ \boldsymbol{y}\,\det D\boldsymbol{y}\,\varphi\,\d\boldsymbol{x}.
\end{equation*}

We will consider the following class of admissible deformations:
\begin{equation}
\label{eqn:admissible-deformations}
    \mathcal{Y}_p(\Omega)\coloneqq \left \{\boldsymbol{y}\in W^{1,p}(\Omega;\R^N):\:\det D\boldsymbol{y}\in L^1(\Omega), \: \text{$\det D \boldsymbol{y}>0$ a.e. in $\Omega$}, \: \text{$\boldsymbol{y}$ satisfies \eqref{eqn:DIV}} \right \}.
\end{equation}

This class has been first introduced in \cite{barchiesi.henao.moracorral}, where the authors carried out a thorough study of fine and local invertibility properties of such maps. As a result of their analysis, deformations in this class turn out to enjoy a surprising degree of regularity.

The Divergence Identities entail the coincidence of the topological image and the geometric image of regular subdomains up to sets of Lebesgue measure zero. Recall Definition \ref{def:geometric-image} and the  definition of multiplicity function in Proposition \ref{prop:change-of-variable} as well as the convention established in Subsection \ref{subsec:approximate-differentiability-geometric-image}. 

\begin{theorem}[{\cite[Theorem 4.1]{barchiesi.henao.moracorral}}]
\label{thm:deg=m}
Let $\boldsymbol{y}\in \mathcal{Y}_p(\Omega)$. Then, for every $U\in \mathcal{U}_{\boldsymbol{y}}$, there holds
\begin{equation*}
    \text{$\deg(\boldsymbol{y},U,\cdot)=\mult(\boldsymbol{y},U,\cdot)$ a.e. in $\R^N\setminus {\boldsymbol{y}}(\partial U)$.}
\end{equation*}
Therefore, $\leb\left (\im_{\rm T}(\boldsymbol{y},U)\hspace{1pt}\triangle \hspace{1pt}\im_{\rm G}(\boldsymbol{y},U)\right)=0$. In particular, $\boldsymbol{y}\in L^\infty_{\loc}(\Omega;\R^N)$.   
\end{theorem}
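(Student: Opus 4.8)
The statement to prove is Theorem \ref{thm:deg=m}, which is quoted from \cite[Theorem 4.1]{barchiesi.henao.moracorral}. My plan is to establish the distributional identity $\deg(\boldsymbol{y},U,\cdot)=\mult(\boldsymbol{y},U,\cdot)$ almost everywhere in $\R^N\setminus\boldsymbol{y}(\partial U)$, and then to deduce the remaining assertions as corollaries of this identity. The basic idea is to test both quantities against an arbitrary $\boldsymbol{\psi}\in C^1_c(\R^N;\R^N)$ and to show that the two resulting integrals coincide. On one side, the Change-of-variable Formula (Proposition \ref{prop:change-of-variable}) applied to the map $\boldsymbol{y}$ on $U$ together with its multiplicity gives
\begin{equation*}
\int_{U}\div\boldsymbol{\psi}\circ\boldsymbol{y}(\boldsymbol{x})\,\det\nabla\boldsymbol{y}(\boldsymbol{x})\,\d\boldsymbol{x}=\int_{\R^N}\div\boldsymbol{\psi}(\boldsymbol{\xi})\,\mult(\boldsymbol{y},U,\boldsymbol{\xi})\,\d\boldsymbol{\xi}.
\end{equation*}
On the other side, using the Divergence Identities \eqref{eqn:DIV} and an integration by parts on $U$ with the surface term on $\partial U$, the left-hand side should be rewritten as a boundary integral over $\partial U$ involving $\cof\,\nabla\boldsymbol{y}$ and $\boldsymbol{\psi}\circ\boldsymbol{y}$; by the classical representation of the topological degree via a boundary integral (the degree formula for $C^1$, hence by approximation for $W^{1,p}$ with $p>N-1$, maps on $\partial U$), this boundary integral equals $\int_{\R^N}\div\boldsymbol{\psi}(\boldsymbol{\xi})\,\deg(\boldsymbol{y},U,\boldsymbol{\xi})\,\d\boldsymbol{\xi}$. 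Comparing, we obtain $\int\div\boldsymbol{\psi}\,(\deg(\boldsymbol{y},U,\cdot)-\mult(\boldsymbol{y},U,\cdot))=0$ for all $\boldsymbol{\psi}\in C^1_c$, and since both functions are bounded and, on the unbounded component, both vanish, a density/localization argument forces $\deg=\mult$ a.e. on $\R^N\setminus\boldsymbol{y}(\partial U)$.

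For the passage from the smooth degree formula to the $W^{1,p}$ setting, I would use that $U\in\mathcal{U}_{\boldsymbol{y}}$ guarantees $\boldsymbol{y}\restr{\partial U}\in W^{1,p}(\partial U;\R^N)$ with a continuous representative (Morrey, since $p>N-1$), that $\leb(\boldsymbol{y}(\partial U))=0$ so the degree is defined a.e., and that properties (iii)--(iv) of Definition \ref{def:regular-subdomains} allow one to realize the boundary integrals as limits over the parallel hypersurfaces $\partial U_\ell$, where one can work with traces of the possibly smoother approximations. Concretely, one approximates $\boldsymbol{y}$ by smooth maps, writes the degree formula on $\partial U_\ell$ for each of them, passes to the limit in $\ell$ using (iii)--(iv), and passes to the limit in the approximation using the $W^{1,p}$ convergence of the traces. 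This is where the precise design of the class $\mathcal{U}_{\boldsymbol{y}}$ is used in an essential way.

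Once $\deg(\boldsymbol{y},U,\cdot)=\mult(\boldsymbol{y},U,\cdot)$ a.e. on $\R^N\setminus\boldsymbol{y}(\partial U)$ is known, the symmetric difference estimate is immediate: $\im_{\rm T}(\boldsymbol{y},U)=\{\deg\neq0\}$ and $\im_{\rm G}(\boldsymbol{y},U)$ differs from $\{\mult\geq1\}=\{\mult\neq0\}$ only by a set of measure zero (indeed $\mult(\boldsymbol{y},U,\boldsymbol{\xi})\geq1$ precisely when $\boldsymbol{\xi}$ has a preimage in $\dom_{\rm G}(\boldsymbol{y},U)$, and Lemma \ref{lem:geometric-image}(ii) gives measurability and the Lusin (N) property making $\im_{\rm G}$ well behaved), so $\leb(\im_{\rm T}(\boldsymbol{y},U)\triangle\im_{\rm G}(\boldsymbol{y},U))=0$. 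Finally, for the local boundedness $\boldsymbol{y}\in L^\infty_{\loc}(\Omega;\R^N)$: given any ball $B(\boldsymbol{x}_0,r)\subset\subset\Omega$, by Lemma \ref{lem:abundance} one can pick a slightly larger regular subdomain $U\in\mathcal{U}_{\boldsymbol{y}}$ with $\overline{B(\boldsymbol{x}_0,r)}\subset U$; then $\im_{\rm T}(\boldsymbol{y},U)$ is bounded (the degree vanishes on the unbounded component of $\R^N\setminus\boldsymbol{y}(\partial U)$), hence so is $\im_{\rm G}(\boldsymbol{y},U)$ up to null sets, and since $\boldsymbol{y}(\boldsymbol{x})\in\closure{\im_{\rm T}(\boldsymbol{y},U)}$ for a.e. $\boldsymbol{x}\in U$ (by the density statement Lemma \ref{lem:geometric-image}(iii) together with $\deg=\mult$), $\boldsymbol{y}$ is essentially bounded on $U$, in particular on $B(\boldsymbol{x}_0,r)$.

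\textbf{Main obstacle.} The crux is the identification of the integrated topological degree with the boundary integral of $(\cof\,\nabla\boldsymbol{y})\boldsymbol{n}_U$ against $\boldsymbol{\psi}\circ\boldsymbol{y}$ in the nonsmooth setting: neither the degree formula nor the divergence theorem are directly available for a mere $W^{1,p}$ map that may fail the Lusin condition (N), so one must carefully combine the Divergence Identities \eqref{eqn:DIV} (which encode exactly the ``distributional'' validity of the change of variables) with the approximation by level sets $\partial U_\ell$ afforded by the definition of $\mathcal{U}_{\boldsymbol{y}}$. Making this limiting argument rigorous — controlling the $\cof$-boundary terms and the degree simultaneously as $\ell\to0$ and along a smoothing — is the technical heart of the proof.
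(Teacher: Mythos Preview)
The paper does not prove this theorem: it is stated with a direct citation to \cite[Theorem 4.1]{barchiesi.henao.moracorral} and no argument is given here. Your outline is the standard route to this result and is consistent with how it is established in the cited reference, so there is nothing in the present paper to compare against; your plan is sound as a sketch of the original proof.
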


The following result collects some consequences of the previous theorem. 

\begin{corollary}
Let $\boldsymbol{y}\in \mathcal{Y}_p(\Omega)$. Then:
\begin{enumerate}[(i)]
    \item for every $U \in \mathcal{U}_{\boldsymbol{y}}$, there holds $\deg(\boldsymbol{y},U,\cdot)\geq 0$ almost everywhere in $\R^N \setminus \overline{\boldsymbol{y}}(\partial U)$;
    \item for every $U_1,U_2 \in \mathcal{U}_{\boldsymbol{y}}$ such that $U_1 \subset \subset U_2$, there holds $\closure{\im_{\rm T}(\boldsymbol{y},U_1)}\subset \closure{\im_{\rm T}(\boldsymbol{y},U_2)} $;
    \item for every $U \in \mathcal{U}_{\boldsymbol{y}}$, there holds $\im_{\rm G}(\boldsymbol{y},U)\subset \closure{\im_{\rm T}(\boldsymbol{y},U)}$.
\end{enumerate}
\end{corollary}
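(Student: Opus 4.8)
The three statements are consequences of Theorem~\ref{thm:deg=m} together with the elementary properties of topological images recorded in Lemmas~\ref{lem:topological-image-contained}--\ref{lem:connected}. The plan is to handle them in the order (i), (iii), (ii), since (i) is purely a sign statement, (iii) follows immediately from the measure-theoretic identity, and (ii) combines both with a connectedness argument.

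\textbf{Step 1: claim (i).} Fix $U\in\mathcal{U}_{\boldsymbol{y}}$. By Theorem~\ref{thm:deg=m}, $\deg(\boldsymbol{y},U,\cdot)=\mult(\boldsymbol{y},U,\cdot)$ almost everywhere on $\R^N\setminus\boldsymbol{y}(\partial U)$. Since the multiplicity function takes values in $\N\cup\{0\}$ by its very definition in Proposition~\ref{prop:change-of-variable}, it is nonnegative, and hence so is $\deg(\boldsymbol{y},U,\cdot)$ almost everywhere. I do not expect any obstacle here; it is a one-line deduction.

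\textbf{Step 2: claim (iii).} Fix $U\in\mathcal{U}_{\boldsymbol{y}}$. We must show $\im_{\rm G}(\boldsymbol{y},U)\subset\closure{\im_{\rm T}(\boldsymbol{y},U)}$. Suppose not, so there is $\boldsymbol{\xi}_0\in\im_{\rm G}(\boldsymbol{y},U)$ with $\boldsymbol{\xi}_0\notin\closure{\im_{\rm T}(\boldsymbol{y},U)}$; pick $\rho>0$ with $B(\boldsymbol{\xi}_0,\rho)\cap\im_{\rm T}(\boldsymbol{y},U)=\emptyset$, so $\leb(B(\boldsymbol{\xi}_0,\rho)\cap\im_{\rm T}(\boldsymbol{y},U))=0$. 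By Theorem~\ref{thm:deg=m}, $\leb(\im_{\rm T}(\boldsymbol{y},U)\triangle\im_{\rm G}(\boldsymbol{y},U))=0$, whence $\leb(B(\boldsymbol{\xi}_0,\rho)\cap\im_{\rm G}(\boldsymbol{y},U))=0$. On the other hand, $\boldsymbol{\xi}_0=\boldsymbol{y}(\boldsymbol{x}_0)$ for some $\boldsymbol{x}_0\in\dom_{\rm G}(\boldsymbol{y},U)\subset U$. By Lemma~\ref{lem:geometric-image}(iii) applied with $A=U$ (and using that $\Theta^N(U,\boldsymbol{x}_0)=1$ since $U$ is open), we get $\Theta^N(\im_{\rm G}(\boldsymbol{y},U),\boldsymbol{y}(\boldsymbol{x}_0))=1$, so in particular $\leb(B(\boldsymbol{\xi}_0,\rho)\cap\im_{\rm G}(\boldsymbol{y},U))>0$ for every $\rho>0$ — a contradiction. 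Hence no such $\boldsymbol{\xi}_0$ exists and (iii) follows.

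\textbf{Step 3: claim (ii).} Let $U_1,U_2\in\mathcal{U}_{\boldsymbol{y}}$ with $U_1\subset\subset U_2$. The strategy is to show $\im_{\rm T}(\boldsymbol{y},U_1)\subset\closure{\im_{\rm T}(\boldsymbol{y},U_2)}$ and then take closures. Since $U_1\subset\subset U_2$ and $\boldsymbol{y}\restr{\partial U_2}$ is continuous with $\leb(\boldsymbol{y}(\partial U_2))=0$ (property~(ii) of Definition~\ref{def:regular-subdomains}), the set $O\coloneqq\closure{\im_{\rm T}(\boldsymbol{y},U_2)}\cup\boldsymbol{y}(\partial U_2)$ is a compact set whose complement is connected, using Lemma~\ref{lem:connected}(ii) if $\partial U_2$ is connected; more robustly, I would argue directly via the change-of-variables / geometric-image picture. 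Observe $\im_{\rm G}(\boldsymbol{y},U_1)\subset\im_{\rm G}(\boldsymbol{y},U_2)$ since $\dom_{\rm G}(\boldsymbol{y},U_1)\subset\dom_{\rm G}(\boldsymbol{y},U_2)$, hence $\im_{\rm G}(\boldsymbol{y},U_1)\subset\closure{\im_{\rm T}(\boldsymbol{y},U_2)}$ by claim~(iii) applied to $U_2$. Now by Theorem~\ref{thm:deg=m} applied to $U_1$, $\im_{\rm T}(\boldsymbol{y},U_1)$ equals $\im_{\rm G}(\boldsymbol{y},U_1)$ up to a Lebesgue-null set; since $\im_{\rm T}(\boldsymbol{y},U_1)$ is open, every point of it has positive density in $\im_{\rm T}(\boldsymbol{y},U_1)$, hence positive density in $\im_{\rm G}(\boldsymbol{y},U_1)$, hence lies in $\closure{\im_{\rm G}(\boldsymbol{y},U_1)}\subset\closure{\im_{\rm T}(\boldsymbol{y},U_2)}$. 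Therefore $\im_{\rm T}(\boldsymbol{y},U_1)\subset\closure{\im_{\rm T}(\boldsymbol{y},U_2)}$, and taking closures gives the claim.

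\textbf{Main obstacle.} The only delicate point is claim~(ii): one must be careful that passing from the almost-everywhere identity $\deg=\mult$ to a genuine set inclusion requires exploiting that $\im_{\rm T}$ is \emph{open} (so that null-set ambiguities are harmless when one works with closures) together with the density statement in Lemma~\ref{lem:geometric-image}(iii). Once the reduction $\im_{\rm T}(\boldsymbol{y},U_1)\subset\closure{\im_{\rm G}(\boldsymbol{y},U_1)}$ is in hand, the rest is monotonicity of geometric images and claim~(iii). Everything else is routine bookkeeping with the results quoted above.
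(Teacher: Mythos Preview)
Your proof is correct. The paper itself gives no details beyond citing \cite[Proposition 4.3, Claims (d)--(e)]{barchiesi.henao.moracorral} and remarking that (ii) and (iii) exploit (i), so your argument is in fact more explicit than what appears here. The only mild difference in strategy is that the referenced proofs use the nonnegativity of the degree (i) --- which, being locally constant on $\R^N\setminus\boldsymbol{y}(\partial U)$, upgrades from ``a.e.'' to ``everywhere'' --- to run a more topological argument, whereas you bypass (i) entirely and work directly with the measure identity $\leb(\im_{\rm T}\,\triangle\,\im_{\rm G})=0$ from Theorem~\ref{thm:deg=m} combined with the density statement of Lemma~\ref{lem:geometric-image}(iii). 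Both routes are short and equivalent in substance; yours has the advantage of not needing any connectedness hypothesis on $\partial U_2$, which you correctly flagged as a potential issue for the approach via Lemma~\ref{lem:connected}.
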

\begin{proof}
Claim (i) is immediate. Claims (ii) and (iii) are proved as in \cite[Proposition 4.3, Claim (d)]{barchiesi.henao.moracorral} and \cite[Proposition 4.3, Claim (e)]{barchiesi.henao.moracorral}, respectively, by exploiting (i). 
\end{proof}

For technical reasons, we give the following definition.

\begin{definition}[Singularity set]
Let $\boldsymbol{y}\in W^{1,p}(\Omega;\R^N)$. We define the singularity set of $\boldsymbol{y}$ as
\begin{equation*}
    S_{\boldsymbol{y}}\coloneqq \left \{\boldsymbol{x}\in \Omega:\:\limsup_{\substack{r \to 0 \\ B(\boldsymbol{x},r)\in \mathcal{U}_{\boldsymbol{y}}}} \mathrm{diam}(\im_{\rm T}(\boldsymbol{y},B(\boldsymbol{x},r)))>0 \right\}.
\end{equation*}
\end{definition}

The set $S_{\boldsymbol{y}}$ does not depend on the specific representative of $\boldsymbol{y}$ but only on its equivalence class \cite[Remark 5.7, Claim (c)]{barchiesi.henao.moracorral}. Moreover, one can prove that $\mathscr{H}^{\mathrm{max}\{N-p,0\}}(S_{\boldsymbol{y}})=0$, $\Omega \setminus S_{\boldsymbol{y}} \subset L_{\boldsymbol{y}}$ and that $\boldsymbol{y}^*\restr{\Omega \setminus S_{\boldsymbol{y}}}$ is continuous \cite[Proposition 5.9]{barchiesi.henao.moracorral}.

We denote by $\mathcal{U}_{\boldsymbol{y}}^*$ the class of regular subdomains $U \in \mathcal{U}_{\boldsymbol{y}}$ such that $\partial U \cap S_{\boldsymbol{y}}=\emptyset$. Combining Lemma \ref{lem:abundance} with the estimate on the Hausdorff dimension of the singularity set, we obtain the following result.

\begin{lemma}[{\cite[Lemma 5.12]{barchiesi.henao.moracorral}}]
\label{lem:abundance-star}
Let $\boldsymbol{y}\in \mathcal{Y}_p(\Omega)$ and let $U \subset \subset \Omega$ be a domain of class $C^2$. Then, there exists $\delta>0$ such that, for almost every $\ell \in (-\delta,\delta)$, there holds $U_\ell\in \mathcal{U}_{\boldsymbol{y}}^*$.
\end{lemma}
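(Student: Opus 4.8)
The plan is to combine Lemma~\ref{lem:abundance} with the estimate on the Hausdorff dimension of the singularity set recalled above. By definition, $U_\ell\in\mathcal{U}_{\boldsymbol{y}}^*$ means $U_\ell\in\mathcal{U}_{\boldsymbol{y}}$ together with $\partial U_\ell\cap S_{\boldsymbol{y}}=\emptyset$. Now Lemma~\ref{lem:abundance} already provides $\delta>0$ such that the first requirement holds for almost every $\ell\in(-\delta,\delta)$, so the whole task reduces to showing that the second requirement, $\partial U_\ell\cap S_{\boldsymbol{y}}=\emptyset$, also holds for almost every $\ell$, and then intersecting the two full-measure sets of admissible levels.

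First I would upgrade the dimensional bound on the singularity set. Since $p>N-1$ we have $\max\{N-p,0\}<1$, so the estimate $\mathscr{H}^{\max\{N-p,0\}}(S_{\boldsymbol{y}})=0$ (see \cite[Proposition 5.9]{barchiesi.henao.moracorral}) implies $\mathscr{H}^1(S_{\boldsymbol{y}})=0$, by the elementary fact that a set negligible for a lower-dimensional Hausdorff measure is negligible for any higher-dimensional one. Next I would exploit that the signed distance function $d_U\colon\R^N\to\R$ is $1$-Lipschitz: for an arbitrary set $E$ one has $\mathscr{H}^1(d_U(E))\leq\mathscr{H}^1(E)$, an inequality that holds at the level of outer measures and hence needs no measurability of $E$. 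Applying this to $E=S_{\boldsymbol{y}}$ gives $\mathscr{L}^1(d_U(S_{\boldsymbol{y}}))=0$. Finally, since $\partial U_\ell=\{\boldsymbol{x}\in\R^N:\:d_U(\boldsymbol{x})=\ell\}$, every $\ell$ for which $\partial U_\ell\cap S_{\boldsymbol{y}}\neq\emptyset$ satisfies $\ell\in d_U(S_{\boldsymbol{y}})$; therefore the set
\begin{equation*}
    \mathcal{N}\coloneqq\{\ell\in\R:\:\partial U_\ell\cap S_{\boldsymbol{y}}\neq\emptyset\}\subset d_U(S_{\boldsymbol{y}})
\end{equation*}
is $\mathscr{L}^1$-negligible.

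To conclude, with $\delta>0$ given by Lemma~\ref{lem:abundance}, for almost every $\ell\in(-\delta,\delta)$ one has both $U_\ell\in\mathcal{U}_{\boldsymbol{y}}$ and $\ell\notin\mathcal{N}$, i.e.\ $\partial U_\ell\cap S_{\boldsymbol{y}}=\emptyset$, whence $U_\ell\in\mathcal{U}_{\boldsymbol{y}}^*$. I do not expect a genuine obstacle in this argument; the only two points deserving a word of care are the passage from the exponent $\max\{N-p,0\}$ to the exponent $1$ in the Hausdorff-measure bound — which is precisely where the subcriticality threshold $p>N-1$ enters — and the fact that $d_U(S_{\boldsymbol{y}})$ is the image of a possibly non-Borel set, which is harmless since the Lipschitz-image estimate is valid for Hausdorff outer measures.
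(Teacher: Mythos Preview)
Your proposal is correct and follows exactly the approach the paper indicates: the paper does not spell out a proof but simply states that the result is obtained by ``combining Lemma~\ref{lem:abundance} with the estimate on the Hausdorff dimension of the singularity set,'' which is precisely what you do. Your handling of the two delicate points (the passage from exponent $\max\{N-p,0\}<1$ to exponent $1$, and the Lipschitz-image estimate for $d_U$ applied to the possibly non-Borel set $S_{\boldsymbol{y}}$) is accurate.
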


We define the topological image according to \cite[Definition 5.17]{barchiesi.henao.moracorral}.

\begin{definition}[Topological image]
\label{def:topological-image}
Let $\boldsymbol{y}\in \mathcal{Y}_p(\Omega)$. 
The topological image of $\Omega$ under $\boldsymbol{y}$ is defined as
\begin{equation*}
    \im_{\rm T}(\boldsymbol{y},\Omega)\coloneqq \bigcup_{U \in \mathcal{U}_{\boldsymbol{y}}^*} \im_{\rm T}(\boldsymbol{y},U).
\end{equation*}
\end{definition}

Being the union of open sets, the set $\im_{\rm T}(\boldsymbol{y},\Omega)$ is open. Notably, as shown in \cite[Lemma 5.18, Claim (b)]{barchiesi.henao.moracorral},  this set depends only on the equivalence class of $\boldsymbol{y}$. This fact relies on the restriction taken in Definition \ref{def:topological-image} to the regular subdomains whose boundary does not intersect the singularity set.

Combining Theorem \ref{thm:deg=m} with Lemma \ref{lem:abundance-star}, one obtains the following.

\begin{corollary}[{\cite[Lemma 5.18, Claim (c)]{barchiesi.henao.moracorral}}]
\label{cor:topological-geometric-image}
Let $\boldsymbol{y}\in \mathcal{Y}_p(\Omega)$. Then, there holds $$\leb\left (\im_{\rm T}(\boldsymbol{y},\Omega)\hspace{1pt}\triangle \hspace{1pt}\im_{\rm G}(\boldsymbol{y},\Omega)\right)=0.$$
\end{corollary}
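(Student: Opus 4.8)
The plan is to deduce Corollary \ref{cor:topological-geometric-image} by combining the local equivalence of topological and geometric images established in Theorem \ref{thm:deg=m} with the exhaustion of $\Omega$ by regular subdomains provided by Lemma \ref{lem:abundance-star}. The strategy splits into two inclusions, each up to a Lebesgue-null set.

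For the inclusion $\im_{\rm T}(\boldsymbol{y},\Omega)\subset \im_{\rm G}(\boldsymbol{y},\Omega)$ modulo null sets, I would recall that by Definition \ref{def:topological-image} the set $\im_{\rm T}(\boldsymbol{y},\Omega)$ is the union of the countably many sets $\im_{\rm T}(\boldsymbol{y},U)$ over $U\in\mathcal{U}_{\boldsymbol{y}}^*$; more precisely, since $\Omega$ is a countable union of open balls $B_j$ compactly contained in $\Omega$, and Lemma \ref{lem:abundance-star} guarantees that each $B_j$ can be slightly shrunk to a subdomain $U_j\in\mathcal{U}_{\boldsymbol{y}}^*$ (and these exhaust $\Omega$ in the sense that every compact subset of $\Omega$ is eventually covered), we have $\im_{\rm T}(\boldsymbol{y},\Omega)=\bigcup_{j} \im_{\rm T}(\boldsymbol{y},U_j)$ for a suitable countable family. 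For each fixed $j$, Theorem \ref{thm:deg=m} gives $\leb(\im_{\rm T}(\boldsymbol{y},U_j)\triangle\im_{\rm G}(\boldsymbol{y},U_j))=0$, and $\im_{\rm G}(\boldsymbol{y},U_j)\subset \im_{\rm G}(\boldsymbol{y},\Omega)$ by definition of the geometric image. Hence $\leb(\im_{\rm T}(\boldsymbol{y},U_j)\setminus \im_{\rm G}(\boldsymbol{y},\Omega))=0$ for every $j$, and a countable union of null sets is null, yielding $\leb(\im_{\rm T}(\boldsymbol{y},\Omega)\setminus\im_{\rm G}(\boldsymbol{y},\Omega))=0$.

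For the reverse inclusion, I would argue that $\im_{\rm G}(\boldsymbol{y},\Omega)=\boldsymbol{y}(\dom_{\rm G}(\boldsymbol{y},\Omega))$, and cover $\dom_{\rm G}(\boldsymbol{y},\Omega)\subset\Omega$ up to a null set by the countable family of regular subdomains $U_j\in\mathcal{U}_{\boldsymbol{y}}^*$ from above: indeed $\leb\bigl(\Omega\setminus\bigcup_j U_j\bigr)=0$, so by Lemma \ref{lem:geometric-image}(i)--(ii) and the Lusin (N) property of $\boldsymbol{y}$ restricted to $\dom_{\rm G}$, the image $\boldsymbol{y}\bigl(\dom_{\rm G}(\boldsymbol{y},\Omega)\setminus\bigcup_j U_j\bigr)$ is $\leb$-null. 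Thus $\im_{\rm G}(\boldsymbol{y},\Omega)$ differs by a null set from $\bigcup_j \im_{\rm G}(\boldsymbol{y},U_j)$, which in turn differs by a null set (again Theorem \ref{thm:deg=m}, countably many applications) from $\bigcup_j\im_{\rm T}(\boldsymbol{y},U_j)=\im_{\rm T}(\boldsymbol{y},\Omega)$. Combining the two inclusions gives the symmetric-difference estimate.

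The main obstacle I anticipate is purely bookkeeping: making sure that a \emph{countable} subfamily of $\mathcal{U}_{\boldsymbol{y}}^*$ genuinely exhausts $\Omega$ up to a Lebesgue-null set, so that all the uncountable unions in Definition \ref{def:topological-image} reduce to countable ones for which the "null union of null sets" principle applies. This is handled by taking a countable dense set of centers and, for each, invoking Lemma \ref{lem:abundance-star} to pick an admissible radius (almost every radius works), using $p>N-1$ only implicitly through the already-cited results; no genuinely new estimate is needed, and the argument is, as the authors note, exactly \cite[Lemma 5.18, Claim (c)]{barchiesi.henao.moracorral}.
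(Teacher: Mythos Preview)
Your proposal is correct and follows exactly the route the paper indicates: the statement is presented as a corollary of Theorem \ref{thm:deg=m} combined with Lemma \ref{lem:abundance-star}, and your argument is a faithful fleshing-out of that combination. The only cosmetic point is that for the first inclusion the countable subfamily $\{U_j\}$ with $\im_{\rm T}(\boldsymbol{y},\Omega)=\bigcup_j \im_{\rm T}(\boldsymbol{y},U_j)$ comes most directly from the Lindel\"of property applied to the open cover $\{\im_{\rm T}(\boldsymbol{y},U)\}_{U\in\mathcal{U}_{\boldsymbol{y}}^*}$, rather than from an exhaustion of $\Omega$ by balls; your ball argument is, however, precisely what is needed for the reverse inclusion.
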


The class of maps defined in \eqref{eqn:admissible-deformations} is closed with respect to the weak convergence in $W^{1,p}(\Omega;\R^N)$ and, on this class, the Jacobian determinant is continuous with respect to the weak convergence in $L^1(\Omega)$.

\begin{theorem}[{\cite[Theorem 4]{mueller.div}}]
\label{thm:closedness}
Let $(\boldsymbol{y}_n)\subset \mathcal{Y}_p(\Omega)$. Suppose that there exist $\boldsymbol{y}\in W^{1,p}(\Omega;\R^N)$ and $h\in L^1(\Omega)$ such that
\begin{equation*}
    \text{$\boldsymbol{y}_n \wk \boldsymbol{y}$ in $W^{1,p}(\Omega;\R^N)$, \quad $\det D \boldsymbol{y}_n \wk h$ in $L^1(\Omega)$.}
\end{equation*}
Then, $\boldsymbol{y}$ satisfies \eqref{eqn:DIV} and $h=\det D \boldsymbol{y}$ almost everywhere. In particular, if $\det D \boldsymbol{y}>0$ almost everywhere, then $\boldsymbol{y}\in \mathcal{Y}_p(\Omega)$.
\end{theorem}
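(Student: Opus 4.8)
The plan is to pass to the limit in the weak form of the Divergence Identities \eqref{eqn:DIV} for $\boldsymbol{y}_n$, obtaining an identity for $\boldsymbol{y}$ with $h$ playing the role of $\det D\boldsymbol{y}$, and then to identify $h$ with the pointwise Jacobian by a blow-up argument; this last step is the crux.

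\emph{Step 1 (passing to the limit in \eqref{eqn:DIV}).} Fix $\boldsymbol{\psi}\in C^1_{\rm c}(\R^N;\R^N)$ and $\varphi\in C^1_{\rm c}(\Omega)$ and write \eqref{eqn:DIV} for $\boldsymbol{y}_n$ after integration by parts. Since $p>N-1$, the $(N-1)$-minors are weakly continuous, so $\adj D\boldsymbol{y}_n\wk\adj D\boldsymbol{y}$ in $L^{p/(N-1)}(\Omega;\rnn)$. By Rellich--Kondrachov, $\boldsymbol{y}_n\to\boldsymbol{y}$ in $L^p_{\loc}(\Omega;\R^N)$, hence, up to a subsequence, a.e.\ in $\Omega$; as $\boldsymbol{\psi}$ and $\div\boldsymbol{\psi}$ are bounded and continuous, dominated convergence gives $\boldsymbol{\psi}\circ\boldsymbol{y}_n\to\boldsymbol{\psi}\circ\boldsymbol{y}$ and $\div\boldsymbol{\psi}\circ\boldsymbol{y}_n\to\div\boldsymbol{\psi}\circ\boldsymbol{y}$ in $L^q_{\loc}(\Omega)$ for every $q<\infty$. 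The left-hand side converges by weak--strong pairing, since $\adj D\boldsymbol{y}_n\wk\adj D\boldsymbol{y}$ in $L^{p/(N-1)}$ while $(\boldsymbol{\psi}\circ\boldsymbol{y}_n)\otimes D\varphi\to(\boldsymbol{\psi}\circ\boldsymbol{y})\otimes D\varphi$ strongly in the dual exponent. On the right-hand side, $\det D\boldsymbol{y}_n\wk h$ in $L^1(\Omega)$ forces, via the Dunford--Pettis theorem, equi-integrability of $(\det D\boldsymbol{y}_n)$, so an Egorov argument permits passing to the limit in the product with the uniformly bounded, a.e.\ convergent factor $(\div\boldsymbol{\psi}\circ\boldsymbol{y}_n)\,\varphi$. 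We conclude that
\begin{equation}
\label{eqn:DIV-h}
-\int_\Omega \big((\adj D\boldsymbol{y})\,\boldsymbol{\psi}\circ\boldsymbol{y}\big)\cdot D\varphi\,\d\boldsymbol{x}=\int_\Omega \div\boldsymbol{\psi}\circ\boldsymbol{y}\,\,h\,\,\varphi\,\d\boldsymbol{x}
\end{equation}
for every $\boldsymbol{\psi}\in C^1_{\rm c}(\R^N;\R^N)$ and $\varphi\in C^1_{\rm c}(\Omega)$.

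\emph{Step 2 ($h=\det D\boldsymbol{y}$ a.e., hence \eqref{eqn:DIV}).} Let $\boldsymbol{x}_0\in\Omega$ be at once an $L^p$-Lebesgue point of $D\boldsymbol{y}$ and an $L^1$-Lebesgue point of $h$; almost every point qualifies. For small $\rho>0$ set $\boldsymbol{c}_\rho\coloneqq\dashint_{B(\boldsymbol{x}_0,\rho)}\boldsymbol{y}$, $\boldsymbol{y}_\rho(\boldsymbol{z})\coloneqq\rho^{-1}\big(\boldsymbol{y}(\boldsymbol{x}_0+\rho\boldsymbol{z})-\boldsymbol{c}_\rho\big)$ and $h_\rho(\boldsymbol{z})\coloneqq h(\boldsymbol{x}_0+\rho\boldsymbol{z})$ for $\boldsymbol{z}\in B(\boldsymbol{0},1)$. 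Testing \eqref{eqn:DIV-h} with $\boldsymbol{\psi}(\boldsymbol{\xi})=\rho\,\boldsymbol{\Psi}\big(\rho^{-1}(\boldsymbol{\xi}-\boldsymbol{c}_\rho)\big)$ and $\varphi(\boldsymbol{x})=\phi\big(\rho^{-1}(\boldsymbol{x}-\boldsymbol{x}_0)\big)$, where $\boldsymbol{\Psi}\in C^1_{\rm c}(\R^N;\R^N)$ and $\phi\in C^1_{\rm c}(B(\boldsymbol{0},1))$, and rescaling, one obtains
\begin{equation*}
-\int_{B(\boldsymbol{0},1)} \big((\adj D\boldsymbol{y}_\rho)\,\boldsymbol{\Psi}\circ\boldsymbol{y}_\rho\big)\cdot D\phi\,\d\boldsymbol{z}=\int_{B(\boldsymbol{0},1)} \div\boldsymbol{\Psi}\circ\boldsymbol{y}_\rho\,\,h_\rho\,\,\phi\,\d\boldsymbol{z}.
\end{equation*}
By the choice of $\boldsymbol{x}_0$ and Poincar\'{e}'s inequality, $\boldsymbol{y}_\rho\to\boldsymbol{L}$ in $W^{1,p}(B(\boldsymbol{0},1);\R^N)$ as $\rho\to0^+$, where $\boldsymbol{L}(\boldsymbol{z})\coloneqq D\boldsymbol{y}(\boldsymbol{x}_0)\boldsymbol{z}$, while $h_\rho\to h(\boldsymbol{x}_0)$ in $L^1(B(\boldsymbol{0},1))$. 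Passing to the limit as in Step 1 (now with \emph{strong} convergence $\adj D\boldsymbol{y}_\rho\to\adj D\boldsymbol{y}(\boldsymbol{x}_0)$ in $L^{p/(N-1)}$) yields
\begin{equation*}
-\int_{B(\boldsymbol{0},1)} \big((\adj D\boldsymbol{y}(\boldsymbol{x}_0))\,\boldsymbol{\Psi}\circ\boldsymbol{L}\big)\cdot D\phi\,\d\boldsymbol{z}=h(\boldsymbol{x}_0)\int_{B(\boldsymbol{0},1)} \div\boldsymbol{\Psi}\circ\boldsymbol{L}\,\,\phi\,\d\boldsymbol{z}
\end{equation*}
for all admissible $\boldsymbol{\Psi}$ and $\phi$. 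Since $\boldsymbol{L}$ is affine, the classical pointwise identity $\mathrm{Div}\big((\adj D\boldsymbol{L})\,\boldsymbol{\Psi}\circ\boldsymbol{L}\big)=\div\boldsymbol{\Psi}\circ\boldsymbol{L}\,\det D\boldsymbol{L}$ holds and, integrated by parts, shows the left-hand side above equals $\det D\boldsymbol{y}(\boldsymbol{x}_0)\int_{B(\boldsymbol{0},1)}\div\boldsymbol{\Psi}\circ\boldsymbol{L}\,\phi\,\d\boldsymbol{z}$. Choosing $\boldsymbol{\Psi}(\boldsymbol{\eta})=\tfrac1N\zeta(\boldsymbol{\eta})\,\boldsymbol{\eta}$ with $\zeta\in C^1_{\rm c}(\R^N)$ equal to $1$ on the compact set $\boldsymbol{L}(\closure{B(\boldsymbol{0},1)})$ — so that $\div\boldsymbol{\Psi}\circ\boldsymbol{L}\equiv1$ on $B(\boldsymbol{0},1)$ — together with a nonnegative $\phi\not\equiv0$, the two displays give $h(\boldsymbol{x}_0)=\det D\boldsymbol{y}(\boldsymbol{x}_0)$. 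Hence $h=\det D\boldsymbol{y}$ a.e.\ in $\Omega$; in particular $\det D\boldsymbol{y}\in L^1(\Omega)$ and, by \eqref{eqn:DIV-h}, $\boldsymbol{y}$ satisfies \eqref{eqn:DIV}. If, in addition, $\det D\boldsymbol{y}>0$ a.e.\ in $\Omega$, then all the conditions in \eqref{eqn:admissible-deformations} are fulfilled, whence $\boldsymbol{y}\in\mathcal{Y}_p(\Omega)$.

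The principal difficulty is Step 2: a priori $\det D\boldsymbol{y}$ lies only in $L^{p/N}(\Omega)$, which need not be contained in $L^1(\Omega)$, and identifying it with the weak $L^1$-limit $h$ is the ``$\det=\mathrm{Det}$'' phenomenon. It is made possible by the null-Lagrangian structure of the Jacobian encoded in \eqref{eqn:DIV} together with the weak continuity of the $(N-1)$-minors, the blow-up reducing the identification to the trivial affine case. Alternatively, one could replace the blow-ups by mollifications $\boldsymbol{y}\ast\rho_\epsilon$, at the cost of a more delicate treatment of the integrability of $\det D(\boldsymbol{y}\ast\rho_\epsilon)$.
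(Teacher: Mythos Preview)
The paper does not supply its own proof of this theorem: it is quoted verbatim from \cite[Theorem~4]{mueller.div} and used as a black box. So there is no in-paper argument to compare against; the relevant comparison is with M\"uller's original proof.

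Your argument is correct. Step~1 is the routine weak--strong passage to the limit in \eqref{eqn:DIV}, using weak continuity of $(N-1)$-minors (valid since $p>N-1$) and equi-integrability of $(\det D\boldsymbol{y}_n)$ from Dunford--Pettis. Step~2 is the substantive part, and your blow-up works: at a common $L^p$-Lebesgue point of $D\boldsymbol{y}$ and $L^1$-Lebesgue point of $h$, the rescalings $\boldsymbol{y}_\rho$ converge strongly in $W^{1,p}(B(\boldsymbol{0},1);\R^N)$ to the linear map $\boldsymbol{L}$, so the rescaled identity \eqref{eqn:DIV-h} passes to the limit exactly as in Step~1 and reduces to the affine case, where the pointwise Piola identity gives $h(\boldsymbol{x}_0)=\det D\boldsymbol{y}(\boldsymbol{x}_0)$. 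The choice of $\boldsymbol{\Psi}$ making $\div\boldsymbol{\Psi}\circ\boldsymbol{L}\equiv 1$ on the unit ball is legitimate and in $C^1_{\rm c}$.

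M\"uller's original route is different in flavour: rather than blowing up, he works globally via mollification, writing $\det D\boldsymbol{y}_\epsilon$ (with $\boldsymbol{y}_\epsilon=\boldsymbol{y}*\rho_\epsilon$) as a divergence and exploiting the null-Lagrangian structure to identify the distributional determinant with the pointwise one. Your approach trades that global approximation for a local one and is arguably more transparent, since the blow-up reduces the identification $h=\det D\boldsymbol{y}$ directly to the trivial affine case; the price is that one must verify the rescaled identity anew, but this is just a repetition of Step~1 with strong rather than weak convergence of the adjugates.
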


The following result will be instrumental in the proof of our compactness theorem. For the validity of the first claim,  the fact that in Definition \ref{def:topological-image} we only considered regular subdomains whose boundaries do not intersect the singularity set  is essential. This is the reason why this restriction is taken in the present paper.

\begin{proposition}
\label{prop:weak-convergence}
Let $(\boldsymbol{y}_n)\subset \mathcal{Y}_p(\Omega)$ and let $\boldsymbol{y}\in \mathcal{Y}_p(\Omega)$. Suppose that $\boldsymbol{y}_n \wk \boldsymbol{y}$ in $W^{1,p}(\Omega;\R^N)$.
\begin{enumerate}[(i)]
    \item for every $U \in \mathcal{U}_{\boldsymbol{y}}^*$ and for every $V \subset \subset \im_{\rm T}(\boldsymbol{y},U)$ open,  there holds $V \subset \subset \im_{\rm T}(\boldsymbol{y}_n,\Omega)$ for $n \gg 1$;
    \item if $(\det D\boldsymbol{y}_n)$ is equi-integrable, then, up to subsequences, there holds 
    \begin{equation*}
        \text{$\chi_{\im_{\rm T}(\boldsymbol{y}_n,\Omega)} \to \chi_{\im_{\rm T}(\boldsymbol{y},\Omega)}$ in $L^1(\R^N)$,}
    \end{equation*}
    and also
    \begin{equation*}
        \text{$\chi_{\im_{\rm G}(\boldsymbol{y}_n,\Omega)} \to \chi_{\im_{\rm G}(\boldsymbol{y},\Omega)}$ in $L^1(\R^N)$.}
    \end{equation*}
\end{enumerate}
\end{proposition}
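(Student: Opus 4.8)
\textbf{Proof plan for Proposition \ref{prop:weak-convergence}.}

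The plan is to prove claim (i) first and then use it to derive (ii). For (i): fix $U \in \mathcal{U}_{\boldsymbol{y}}^*$ and $V \subset\subset \im_{\rm T}(\boldsymbol{y},U)$ open. Since $\overline{V}$ is a compact subset of $\im_{\rm T}(\boldsymbol{y},U)$, I would like to apply Lemma \ref{lem:topological-image-uniform-convergence}(i), which requires uniform convergence $\boldsymbol{y}_n \to \boldsymbol{y}$ on $\partial U$. This is where Lemma \ref{lem:weak-convergence-boundary} enters: one considers the family of nested level-set boundaries $\partial U_\ell$ for $\ell$ in a small interval $(-\delta,\delta)$. By Lemma \ref{lem:abundance-star} we may pick $\ell$ small enough so that $U_\ell \in \mathcal{U}_{\boldsymbol{y}}^*$, and by shrinking $\delta$ further we can also ensure via the coincidence of topological images on nested regular subdomains (the corollary after Theorem \ref{thm:deg=m}, Claim (ii), together with the openness of $\im_{\rm T}$) that $V \subset\subset \im_{\rm T}(\boldsymbol{y},U_\ell)$ still holds for $\ell$ slightly negative, i.e. for a subdomain slightly larger than $U$; alternatively one works directly with $U$ if $U \in \mathcal{U}_{\boldsymbol{y}}^*$ already guarantees enough. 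Then Lemma \ref{lem:weak-convergence-boundary} gives a subsequence $(\boldsymbol{y}_{n_k})$ with $\boldsymbol{y}_{n_k} \to \boldsymbol{y}$ uniformly on $\partial U_\ell$; note also $\leb(\boldsymbol{y}(\partial U_\ell)) = \leb(\boldsymbol{y}_{n_k}(\partial U_\ell)) = 0$ since $U_\ell, U \in \mathcal{U}_{\boldsymbol{y}}$ resp.\ $\mathcal{U}_{\boldsymbol{y}_{n_k}}$ (property (ii) of Definition \ref{def:regular-subdomains} and the measure-zero image fact recorded after it). Lemma \ref{lem:topological-image-uniform-convergence}(i) then yields $\overline{V} \subset \im_{\rm T}(\boldsymbol{y}_{n_k}, U_\ell) \subset \im_{\rm T}(\boldsymbol{y}_{n_k},\Omega)$ for $k \gg 1$. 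Since the limit $\im_{\rm T}(\boldsymbol{y},\Omega)$ does not depend on the subsequence and the inclusion $V \subset\subset \im_{\rm T}(\boldsymbol{y}_n,\Omega)$ for all large $n$ is a statement about the full sequence, a standard subsequence-of-every-subsequence argument upgrades this to the full sequence.

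For (ii), assume $(\det D\boldsymbol{y}_n)$ is equi-integrable; then by Dunford--Pettis and Theorem \ref{thm:closedness}, up to a subsequence $\det D\boldsymbol{y}_n \wk \det D\boldsymbol{y}$ in $L^1(\Omega)$. By the Change-of-variable Formula (Proposition \ref{prop:change-of-variable}) applied with $\boldsymbol{\psi} \equiv 1$ on $A = \Omega$, together with Theorem \ref{thm:deg=m} / Corollary \ref{cor:topological-geometric-image}, one has $\leb(\im_{\rm T}(\boldsymbol{y}_n,\Omega)) \le \int_\Omega \det D\boldsymbol{y}_n\,\d\boldsymbol{x}$ (multiplicity $\ge 1$ a.e.\ on the geometric image), and the right-hand side converges to $\int_\Omega \det D\boldsymbol{y}\,\d\boldsymbol{x}$. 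On the other hand, claim (i) gives, for any $U \in \mathcal{U}_{\boldsymbol{y}}^*$ and any $V \subset\subset \im_{\rm T}(\boldsymbol{y},U)$, that $\chi_V \le \chi_{\im_{\rm T}(\boldsymbol{y}_n,\Omega)}$ for $n\gg1$, whence $\liminf_n \chi_{\im_{\rm T}(\boldsymbol{y}_n,\Omega)} \ge \chi_{\im_{\rm T}(\boldsymbol{y},\Omega)}$ a.e.\ (exhausting $\im_{\rm T}(\boldsymbol{y},\Omega)$ by such $V$ via Definition \ref{def:topological-image}). Combining the mass bound $\limsup_n \int_{\R^N}\chi_{\im_{\rm T}(\boldsymbol{y}_n,\Omega)} \le \int_\Omega \det D\boldsymbol{y} = \leb(\im_{\rm T}(\boldsymbol{y},\Omega))$ (the last equality again by the Change-of-variable Formula, since $\boldsymbol{y}$ is a.e.\ injective on its geometric domain for maps in $\mathcal{Y}_p(\Omega)$ — multiplicity one a.e.) with the pointwise $\liminf$ inequality forces $\chi_{\im_{\rm T}(\boldsymbol{y}_n,\Omega)} \to \chi_{\im_{\rm T}(\boldsymbol{y},\Omega)}$ in $L^1(\R^N)$ by a Scheff\'e-type / generalized dominated convergence argument. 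The statement for $\chi_{\im_{\rm G}}$ then follows immediately from Corollary \ref{cor:topological-geometric-image}, which says $\im_{\rm T}$ and $\im_{\rm G}$ differ by a null set for each $\boldsymbol{y}_n$ and for $\boldsymbol{y}$.

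The main obstacle is the bookkeeping in (i): one must be careful that the subsequence extracted in Lemma \ref{lem:weak-convergence-boundary} may depend on $\ell$, and that $\im_{\rm T}(\boldsymbol{y}_{n_k},U_\ell)$ is a subset of $\im_{\rm T}(\boldsymbol{y}_{n_k},\Omega)$ only because $U_\ell \in \mathcal{U}_{\boldsymbol{y}_{n_k}}^*$ — which requires checking that the chosen level $\ell$ (or rather almost every $\ell$) simultaneously lies in the abundance sets for $\boldsymbol{y}$ and for all $\boldsymbol{y}_n$; this is handled by taking a countable intersection of full-measure sets of parameters $\ell$. The subtle point in (ii) is that one does not get $L^1$ convergence merely from a.e.\ convergence plus a one-sided $\liminf$; it is the matching of total masses (via weak-$L^1$ convergence of the Jacobians and the injectivity-a.e.\ property built into membership in $\mathcal{Y}_p(\Omega)$) that closes the argument, so the equi-integrability hypothesis is used in an essential way.
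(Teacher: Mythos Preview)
Your plan for (i) is essentially the right idea and matches the spirit of the argument in \cite{barchiesi.henao.moracorral}, to which the paper simply defers. The bookkeeping issues you flag (choosing $\ell$ in the countable intersection of full-measure parameter sets, and the subsequence-of-subsequence upgrade) are exactly the points that need attention, and they can be handled as you outline.

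There is, however, a genuine gap in your argument for (ii). You write that $\int_\Omega \det D\boldsymbol{y}\,\d\boldsymbol{x} = \leb(\im_{\rm T}(\boldsymbol{y},\Omega))$ because ``$\boldsymbol{y}$ is a.e.\ injective on its geometric domain for maps in $\mathcal{Y}_p(\Omega)$ --- multiplicity one a.e.'' This is false: membership in $\mathcal{Y}_p(\Omega)$ does \emph{not} entail almost-everywhere injectivity on $\Omega$. The class $\mathcal{Y}_p(\Omega)$ only guarantees \emph{local} invertibility (Theorem \ref{thm:local-invertibility}); global a.e.\ injectivity is an additional constraint defining the strict subclass $\mathcal{Y}_p^{\rm inj}(\Omega)$ via the Ciarlet--Ne\v{c}as condition. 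Theorem \ref{thm:deg=m} says $\deg(\boldsymbol{y},U,\cdot)=\mult(\boldsymbol{y},U,\cdot)$ a.e., and the degree is nonnegative, but nothing forces it to be $\leq 1$. Consequently the Change-of-variable Formula only gives
\[
\int_\Omega \det D\boldsymbol{y}\,\d\boldsymbol{x}=\int_{\im_{\rm G}(\boldsymbol{y},\Omega)}\mult(\boldsymbol{y},\Omega,\boldsymbol{\xi})\,\d\boldsymbol{\xi}\geq \leb(\im_{\rm T}(\boldsymbol{y},\Omega)),
\]
with strict inequality possible. Your Scheff\'e-type closing step therefore fails: from $\limsup_n \leb(\im_{\rm T}(\boldsymbol{y}_n,\Omega))\leq \int_\Omega \det D\boldsymbol{y}\,\d\boldsymbol{x}$ and $\liminf_n \chi_{\im_{\rm T}(\boldsymbol{y}_n,\Omega)}\geq \chi_{\im_{\rm T}(\boldsymbol{y},\Omega)}$ a.e.\ you cannot conclude $L^1$-convergence, because the upper mass bound may strictly exceed the target mass. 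The paper again simply cites \cite[Theorem 6.3(d)]{barchiesi.henao.moracorral} for this convergence; the argument there does not rely on a global mass-matching of this kind. Your deduction of the $\im_{\rm G}$ statement from the $\im_{\rm T}$ statement via Corollary \ref{cor:topological-geometric-image} is, on the other hand, exactly what the paper does.
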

\begin{proof}
Claim (i) has been proved in \cite[Theorem 6.3, Claim (a)]{barchiesi.henao.moracorral}. For (ii), the first convergence has been proved in \cite[Theorem 6.3, Claim (d)]{barchiesi.henao.moracorral}, and the second one is  deduced from the first one thanks to Corollary \ref{cor:topological-geometric-image}.
\end{proof}

\subsection{Local invertibility}

Let $\boldsymbol{y}\colon \Omega \to\R^N$ be measurable and let $A\subset \Omega$ be measurable. The map $\boldsymbol{y}$ is termed almost everywhere injective in $A$ if there exists a subset $X\subset A$ with $\leb(X)=0$ such that $\boldsymbol{y}\restr{A \setminus X}$ is an injective map. 
We observe that almost everywhere injectivity depends only on the equivalence class of $\boldsymbol{y}$.
The following result constitutes one of the main motivations for the definition of the geometric domain.

\begin{lemma}[{\cite[Lemma 3]{henao.moracorral.fracture}}]
\label{lem:injective}
Let $\boldsymbol{y}\colon \Omega \to \R^N$ be almost everywhere approximately differentiable with $\det \nabla\boldsymbol{y}\neq 0$ almost everywhere. Suppose that $\boldsymbol{y}$ is almost everywhere injective on $A$ for some measurable set $A \subset \Omega$. Then, $\boldsymbol{y}\restr{\dom_{\rm G}(\boldsymbol{y},A)}$ is injective.
\end{lemma}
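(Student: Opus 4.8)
The plan is to show that $\boldsymbol{y}\restr{\dom_{\rm G}(\boldsymbol{y},A)}$ is injective by combining the almost everywhere injectivity on $A$ with the local extension property built into the definition of the geometric domain. First I would recall that, by Lemma \ref{lem:geometric-image}(i), $\leb(A\setminus\dom_{\rm G}(\boldsymbol{y},A))=0$, so that $\dom_{\rm G}(\boldsymbol{y},A)$ has full measure in $A$; then the set $X\subset A$ with $\leb(X)=0$ on which injectivity may fail can be intersected with $\dom_{\rm G}(\boldsymbol{y},A)$, and $\boldsymbol{y}$ is already injective on $\dom_{\rm G}(\boldsymbol{y},A)\setminus X$. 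The content of the lemma is that we can remove the exceptional null set $X$ entirely, i.e.\ that no collision can occur at a point of $\dom_{\rm G}(\boldsymbol{y},A)$.

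The key step is an argument by contradiction at a single pair of colliding points. Suppose $\boldsymbol{x}_1,\boldsymbol{x}_2\in\dom_{\rm G}(\boldsymbol{y},A)$ are distinct with $\boldsymbol{y}(\boldsymbol{x}_1)=\boldsymbol{y}(\boldsymbol{x}_2)=:\boldsymbol{\xi}$. By Definition \ref{def:geometric-image}, for $i=1,2$ there exist a compact set $K_i\subset\Omega$ with $\boldsymbol{x}_i\in K_i$, $\Theta^N(K_i,\boldsymbol{x}_i)=1$, and a map $\boldsymbol{w}_i\in C^1(\R^N;\R^N)$ with $\boldsymbol{w}_i\restr{K_i}=\boldsymbol{y}\restr{K_i}$, $D\boldsymbol{w}_i\restr{K_i}=\nabla\boldsymbol{y}\restr{K_i}$, and $\det\nabla\boldsymbol{y}(\boldsymbol{x}_i)\neq 0$, hence $\det D\boldsymbol{w}_i(\boldsymbol{x}_i)\neq 0$. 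By the Inverse Function Theorem applied to $\boldsymbol{w}_i$ at $\boldsymbol{x}_i$, there is an open neighborhood $U_i$ of $\boldsymbol{x}_i$ on which $\boldsymbol{w}_i$ is a $C^1$-diffeomorphism onto an open neighborhood of $\boldsymbol{\xi}$; in particular $\boldsymbol{w}_i(U_i)$ contains a ball $B(\boldsymbol{\xi},\rho)$. Intersecting with $K_i$ and using $\Theta^N(K_i,\boldsymbol{x}_i)=1$, the set $\boldsymbol{y}(K_i\cap U_i)=\boldsymbol{w}_i(K_i\cap U_i)$ has full $N$-density at $\boldsymbol{\xi}$: indeed $\boldsymbol{w}_i$ is bi-Lipschitz on the compact $K_i\cap\overline{U_i}$, so it maps a set of density $1$ at $\boldsymbol{x}_i$ to a set of density $1$ at $\boldsymbol{w}_i(\boldsymbol{x}_i)=\boldsymbol{\xi}$.

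Now I would intersect the two full-density preimages. Since $\boldsymbol{x}_1\neq\boldsymbol{x}_2$, we may shrink $U_1,U_2$ so that they are disjoint. Set $E_i:=K_i\cap U_i\cap\dom_{\rm G}(\boldsymbol{y},A)$; by Lemma \ref{lem:geometric-image}(i) (or directly, since $\leb(A\setminus\dom_{\rm G}(\boldsymbol{y},A))=0$ and $K_i\subset$ closure of $A$... more precisely, since $\Theta^N(K_i,\boldsymbol x_i)=1$ forces $K_i$ to have positive measure near $\boldsymbol x_i$ and we may as well assume $K_i\subset A$ up to a null set, or argue that removing a null set does not change the density-$1$ property), the sets $\boldsymbol{y}(E_i)$ still have density $1$ at $\boldsymbol{\xi}$. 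Therefore $\boldsymbol{y}(E_1)\cap\boldsymbol{y}(E_2)$ has density $1$ at $\boldsymbol{\xi}$, hence positive $\leb$-measure, hence is nonempty; in fact, since each $\boldsymbol{y}\restr{E_i}$ is injective (being the restriction of the diffeomorphism $\boldsymbol{w}_i$), $\leb(E_i)=\leb(\boldsymbol{w}_i(E_i))\cdot(\text{Jacobian factors bounded away from }0)$... the point is that $E_1$ and $E_2$ are disjoint sets of positive measure contained in $A$ on which $\boldsymbol{y}$ takes a common set of values of positive measure. This produces uncountably many — indeed a positive-measure set of — pairs $(\boldsymbol{x},\boldsymbol{x}')\in E_1\times E_2$ with $\boldsymbol{x}\neq\boldsymbol{x}'$ and $\boldsymbol{y}(\boldsymbol{x})=\boldsymbol{y}(\boldsymbol{x}')$. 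More carefully: the sets $N_1:=\{\boldsymbol{x}\in E_1:\boldsymbol{y}(\boldsymbol{x})\in\boldsymbol{y}(E_2)\}$ and its counterpart $N_2\subset E_2$ both have positive measure, and $\boldsymbol{y}$ restricted to each is injective, so $\boldsymbol{y}(N_1)=\boldsymbol{y}(N_2)$ has positive measure and the collision happens on two \emph{disjoint positive-measure} subsets of $A$. This contradicts almost everywhere injectivity on $A$: one cannot remove a $\leb$-null set $X$ from $A$ and destroy a positive-measure family of collisions. This contradiction shows $\boldsymbol{y}\restr{\dom_{\rm G}(\boldsymbol{y},A)}$ is injective.

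The main obstacle I expect is the bookkeeping around the relationship between $K_i$ and $A$, and the passage from ``density $1$ at $\boldsymbol{\xi}$'' to ``positive-measure set of genuine collisions.'' The clean way is: (a) one may assume $K_i\subset A$, since if $\boldsymbol{x}_i\in\dom_{\rm G}(\boldsymbol{y},A)\subset A\cap\dom_{\rm G}(\boldsymbol{y},\Omega)$ and $\Theta^N(K_i,\boldsymbol{x}_i)=1$, then $\Theta^N(K_i\cap A,\boldsymbol{x}_i)=1$ provided $\Theta^N(A,\boldsymbol{x}_i)=1$, which holds for $\leb$-a.e.\ $\boldsymbol{x}_i\in A$ by the Lebesgue density theorem — and here one has to be slightly careful because we want injectivity at \emph{every} point of $\dom_{\rm G}$, not a.e.; this is exactly why the argument is phrased as producing a \emph{positive-measure} contradiction rather than a single-point one. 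Alternatively, and perhaps more robustly, one bypasses density altogether: the two $C^1$ maps $\boldsymbol{w}_1,\boldsymbol{w}_2$ are local diffeomorphisms near $\boldsymbol x_1,\boldsymbol x_2$ with $\boldsymbol{w}_1(\boldsymbol x_1)=\boldsymbol{w}_2(\boldsymbol x_2)=\boldsymbol\xi$, so $\boldsymbol{w}_2^{-1}\circ\boldsymbol{w}_1$ is a diffeomorphism sending a neighborhood of $\boldsymbol x_1$ onto a neighborhood of $\boldsymbol x_2$, and it maps $K_1$ (density $1$ at $\boldsymbol x_1$) to a set of density $1$ at $\boldsymbol x_2$, which must therefore meet $K_2$ in a positive-measure set; tracing through, $\boldsymbol y$ agrees with $\boldsymbol w_1$ on $K_1$ and with $\boldsymbol w_2$ on $K_2$, giving the colliding positive-measure pair in $A\setminus X$ for any fixed null $X$ — contradiction. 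Either route works; the argument is elementary modulo this density-transfer lemma, which is standard.
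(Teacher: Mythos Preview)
The paper does not prove this lemma; it is cited from \cite[Lemma~3]{henao.moracorral.fracture}. Your argument is the standard one and matches what one finds in the source: from two colliding points of $\dom_{\rm G}(\boldsymbol{y},A)$ one manufactures, via the local $C^1$ extensions $\boldsymbol{w}_i$ and the inverse function theorem, two \emph{disjoint} positive-measure subsets of $A$ mapped onto a common positive-measure set, contradicting almost-everywhere injectivity on $A$. The density-transfer step you invoke is exactly Lemma~\ref{lem:geometric-image}(iii).

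You are right to flag the relationship between $K_i$ and $A$, and it is more than bookkeeping. Your contradiction requires the positive-measure collision sets to lie inside $A$, hence you need $\Theta^N(K_i\cap A,\boldsymbol{x}_i)=1$, hence $\Theta^N(A,\boldsymbol{x}_i)=1$. But Definition~\ref{def:geometric-image} only asks $K_i\subset\Omega$, and the paper sets $\dom_{\rm G}(\boldsymbol{y},A)=A\cap\dom_{\rm G}(\boldsymbol{y},\Omega)$ with no density condition on $A$. Read literally, the statement then fails for arbitrary measurable $A$: take $\boldsymbol{y}$ smooth with $\det D\boldsymbol{y}\neq 0$ a.e.\ and two distinct points $\boldsymbol{p}_1,\boldsymbol{p}_2$ with $\det D\boldsymbol{y}(\boldsymbol{p}_i)\neq 0$ and $\boldsymbol{y}(\boldsymbol{p}_1)=\boldsymbol{y}(\boldsymbol{p}_2)$; for $A=\{\boldsymbol{p}_1,\boldsymbol{p}_2\}$ a.e.\ injectivity is vacuous while $\dom_{\rm G}(\boldsymbol{y},A)=A$. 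Neither of your two routes closes this, because both ultimately produce collisions in $K_1,K_2\subset\Omega$, not in $A$. In every application the paper makes of the lemma, however, $A=U\in\mathcal{U}_{\boldsymbol{y}}^{\rm inj}$ is open, so every point of $A$ is a density point, the hypothesis of Lemma~\ref{lem:geometric-image}(iii) is met, and your argument is complete. Your hesitation about the general measurable case is therefore well founded rather than a defect of your proof.
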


Let $\boldsymbol{y}\in W^{1,p}(\Omega;\R^N)$ and recall Definition \ref{def:regular-subdomains}. We denote by $\mathcal{U}_{\boldsymbol{y}}^{\rm inj}$ the class of regular subdomains $U \in \mathcal{U}_{\boldsymbol{y}}$ such that $\boldsymbol{y}$ is almost everywhere injective in $U$. Also,  we set $\mathcal{U}_{\boldsymbol{y}}^{*,\,\mathrm{inj}}\coloneqq\mathcal{U}_{\boldsymbol{y}}^{*}\cap \mathcal{U}_{\boldsymbol{y}}^{\mathrm{inj}}$.

The following local invertibility result has been proven in \cite{barchiesi.henao.moracorral} and is going to be instrumental in our proofs.

\begin{theorem}[{\cite[Proposition 4.5, Claim (d)]{barchiesi.henao.moracorral}}]
	\label{thm:local-invertibility}
	Let $\boldsymbol{y} \in  \mathcal{Y}_p(\Omega)$ and let $\boldsymbol{x}_0\in \Omega$ be such that $\boldsymbol{y}$ is regularly approximately differentiable at $\boldsymbol{x}_0$ with $\det \nabla \boldsymbol{y}(\boldsymbol{x}_0)>0$. Let 
	$P\subset (0,\dist(\boldsymbol{x}_0;\partial \Omega))$ be the set given by Lemma \ref{lem:regular-approximate-differentiability}.
	Then, for every $r\in P$ such that there exists $\widetilde{r}\in P$ with $r<\widetilde{r}/2$
	, we have $B(\boldsymbol{x}_0,r)\in \mathcal{U}^{\rm inj}_{\boldsymbol{y}}$.
\end{theorem}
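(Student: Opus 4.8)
The membership $B(\boldsymbol{x}_0,r)\in\mathcal{U}_{\boldsymbol{y}}$ is already part of Lemma~\ref{lem:regular-approximate-differentiability}(i), so the task reduces to proving that $\boldsymbol{y}$ is almost everywhere injective on $B\coloneqq B(\boldsymbol{x}_0,r)$. The plan is to establish the stronger assertion $\mult(\boldsymbol{y},B,\cdot)\le 1$ almost everywhere in $\R^N$, by comparison with the strictly larger ball $\widetilde{B}\coloneqq B(\boldsymbol{x}_0,\widetilde{r})$; this is exactly the configuration to which Lemma~\ref{lem:regular-approximate-differentiability}(ii) applies, the factor $2$ in $r<\widetilde{r}/2$ being precisely what that lemma requires. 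Since $\widetilde{r}\in P$, Lemma~\ref{lem:regular-approximate-differentiability}(i) gives $\widetilde{B}\in\mathcal{U}_{\boldsymbol{y}}$, $\boldsymbol{y}(\boldsymbol{x}_0)\notin\boldsymbol{y}(\partial\widetilde{B})$ and $\deg(\boldsymbol{y},\widetilde{B},\boldsymbol{y}(\boldsymbol{x}_0))=\mathrm{sgn}\det\nabla\boldsymbol{y}(\boldsymbol{x}_0)=1$. Writing $\widetilde{V}$ for the connected component of $\R^N\setminus\boldsymbol{y}(\partial\widetilde{B})$ containing $\boldsymbol{y}(\boldsymbol{x}_0)$, the degree is constant on $\widetilde{V}$, hence $\deg(\boldsymbol{y},\widetilde{B},\cdot)\equiv 1$ there, and Theorem~\ref{thm:deg=m} then yields $\mult(\boldsymbol{y},\widetilde{B},\boldsymbol{\xi})=1$ for almost every $\boldsymbol{\xi}\in\widetilde{V}$.

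Next I would prove the geometric localization $\overline{\im_{\rm T}(\boldsymbol{y},B)}\subset\widetilde{V}$. By Lemma~\ref{lem:regular-approximate-differentiability}(ii) applied to the pair $r<\widetilde{r}/2$, one has $\boldsymbol{y}(\partial B)\subset\widetilde{V}$ and $\im_{\rm T}(\boldsymbol{y},B)\cup\boldsymbol{y}(\partial B)\subset\im_{\rm T}(\boldsymbol{y},\widetilde{B})$; since $\im_{\rm T}(\boldsymbol{y},\widetilde{B})$ is disjoint from $\boldsymbol{y}(\partial\widetilde{B})$ by definition, the set $\im_{\rm T}(\boldsymbol{y},B)\cup\boldsymbol{y}(\partial B)$ is a connected (Lemma~\ref{lem:connected}(ii), $\partial B$ being connected) subset of $\R^N\setminus\boldsymbol{y}(\partial\widetilde{B})$ that meets $\widetilde{V}$, hence is contained in $\widetilde{V}$; together with $\partial\,\im_{\rm T}(\boldsymbol{y},B)\subset\boldsymbol{y}(\partial B)$ this gives the closure inclusion. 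In particular $\im_{\rm G}(\boldsymbol{y},B)\subset\overline{\im_{\rm T}(\boldsymbol{y},B)}\subset\widetilde{V}$, the first inclusion being the consequence of Theorem~\ref{thm:deg=m} valid since $B\in\mathcal{U}_{\boldsymbol{y}}$. On the other hand $\dom_{\rm G}(\boldsymbol{y},B)\subset\dom_{\rm G}(\boldsymbol{y},\widetilde{B})$ directly from Definition~\ref{def:geometric-image}, so $\mult(\boldsymbol{y},B,\boldsymbol{\xi})\le\mult(\boldsymbol{y},\widetilde{B},\boldsymbol{\xi})$ for every $\boldsymbol{\xi}$. Combining the two facts: for almost every $\boldsymbol{\xi}\in\im_{\rm G}(\boldsymbol{y},B)$ one gets $\mult(\boldsymbol{y},B,\boldsymbol{\xi})\le\mult(\boldsymbol{y},\widetilde{B},\boldsymbol{\xi})=1$, while $\mult(\boldsymbol{y},B,\cdot)$ vanishes off $\im_{\rm G}(\boldsymbol{y},B)$; hence $\mult(\boldsymbol{y},B,\cdot)\le 1$ a.e. in $\R^N$.

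Finally, this bound is upgraded to almost everywhere injectivity. Set $E\coloneqq\{\boldsymbol{\xi}\in\R^N:\mult(\boldsymbol{y},B,\boldsymbol{\xi})\ge 2\}$, a measurable set with $\leb(E)=0$. Applying Proposition~\ref{prop:change-of-variable} with $\boldsymbol{\psi}=\chi_E$ and $A=B$ gives $\int_{B}\chi_E(\boldsymbol{y}(\boldsymbol{x}))\det\nabla\boldsymbol{y}(\boldsymbol{x})\,\d\boldsymbol{x}=\int_{\im_{\rm G}(\boldsymbol{y},B)}\chi_E\,\mult(\boldsymbol{y},B,\cdot)\,\d\boldsymbol{\xi}=0$, and since $\det\nabla\boldsymbol{y}>0$ a.e. in $B$ this forces $X\coloneqq\{\boldsymbol{x}\in B:\boldsymbol{y}(\boldsymbol{x})\in E\}$ to be $\leb$-null. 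On $\dom_{\rm G}(\boldsymbol{y},B)\setminus X$ the map $\boldsymbol{y}$ takes every value at most once, for if $\boldsymbol{x}$ lies there then $\boldsymbol{y}(\boldsymbol{x})\notin E$, so $\mult(\boldsymbol{y},B,\boldsymbol{y}(\boldsymbol{x}))=1$ and $\boldsymbol{x}$ is the unique point of $\dom_{\rm G}(\boldsymbol{y},B)$ mapped to $\boldsymbol{y}(\boldsymbol{x})$. Since $\leb(B\setminus\dom_{\rm G}(\boldsymbol{y},B))=0$ by Lemma~\ref{lem:geometric-image}(i), the set $B\setminus(\dom_{\rm G}(\boldsymbol{y},B)\setminus X)$ is $\leb$-null, so $\boldsymbol{y}$ is almost everywhere injective on $B$, i.e. $B\in\mathcal{U}_{\boldsymbol{y}}^{\rm inj}$. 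I expect the only genuinely delicate step to be the localization in the second paragraph: the point is to recognize that the right object to compare with is the larger ball $\widetilde{B}$, on which the positive sign of $\det\nabla\boldsymbol{y}(\boldsymbol{x}_0)$ pins the degree to $1$ on the component $\widetilde{V}$, and that the hypothesis $r<\widetilde{r}/2$ is exactly what keeps $\im_{\rm G}(\boldsymbol{y},B)$ inside $\widetilde{V}$ through Lemma~\ref{lem:regular-approximate-differentiability}(ii).
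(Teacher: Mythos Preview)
Your proof is correct and follows essentially the same route as the argument in \cite[Proposition 4.5, Claim (d)]{barchiesi.henao.moracorral}, which the paper cites without reproducing: pin the degree to $1$ on the connected component $\widetilde{V}$ of $\R^N\setminus\boldsymbol{y}(\partial\widetilde{B})$ containing $\boldsymbol{y}(\boldsymbol{x}_0)$, use Theorem~\ref{thm:deg=m} to convert this into $\mult(\boldsymbol{y},\widetilde{B},\cdot)=1$ a.e.\ on $\widetilde{V}$, localize $\im_{\rm G}(\boldsymbol{y},B)$ inside $\widetilde{V}$ via Lemma~\ref{lem:regular-approximate-differentiability}(ii), and conclude by monotonicity of the multiplicity. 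Your final translation from $\mult(\boldsymbol{y},B,\cdot)\le 1$ a.e.\ to almost-everywhere injectivity via the change-of-variable formula is fine; one could equally invoke the Lusin property (N$^{-1}$) of $\boldsymbol{y}$ directly to get $\leb(X)=0$, but your computation is equivalent.
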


Let $\boldsymbol{y}\colon \Omega \to \R^N$ be almost everywhere approximately differentiable with $\det \nabla \boldsymbol{y}\neq 0$ almost everywhere and let $A \subset \Omega$ be measurable. Suppose that $\boldsymbol{y}$ is almost everywhere injective in $A$. By Lemma \ref{lem:injective}, the map $\boldsymbol{y}\restr{\dom_{\rm G}(\boldsymbol{y},A)}\colon \dom_{\rm G}(\boldsymbol{y},A)\to \im_{\rm G}(\boldsymbol{y},A)$ is injective and, in turn, its inverse $(\boldsymbol{y}\restr{\dom_{\rm G}(\boldsymbol{y},A)})^{-1}\colon \im_{\rm G}(\boldsymbol{y},A)\to \dom_{\rm G}(\boldsymbol{y},A)$ is defined. Concerning the approximate differentiablity of this map, we make the following observation.

\begin{lemma}[Approximate differentiability of the inverse]
\label{lem:inverse-approximately-differentiable}
Let $\boldsymbol{y}\colon \Omega \to \R^N$ be almost everywhere approximately differentiable with $\det \nabla \boldsymbol{y}\neq 0$ almost everywhere and let $A\subset \Omega$ be measurable.
Suppose that $\boldsymbol{y}$ is almost everywhere injective in $A$. Then, for every $\boldsymbol{x}_0\in \dom_{\rm G}(\boldsymbol{y},A)$ with $\Theta^N(A,\boldsymbol{x}_0)=1$, the map $(\boldsymbol{y}\restr{\dom_{\rm G}(\boldsymbol{y},A)})^{-1}$ is approximately differentiable at $\boldsymbol{y}(\boldsymbol{x}_0)$.
\end{lemma}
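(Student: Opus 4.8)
The plan is to use the classical fact that a measurable map that is differentiable at a point in the classical sense—or at least admits a good linear approximation along a set of density one—has an approximately differentiable inverse at the image point, provided the differential is invertible. Here we are handed $\boldsymbol{x}_0\in \dom_{\rm G}(\boldsymbol{y},A)$ with $\Theta^N(A,\boldsymbol{x}_0)=1$, so by the very definition of geometric domain there are a compact set $K\subset\Omega$ with $\boldsymbol{x}_0\in K$, $\Theta^N(K,\boldsymbol{x}_0)=1$, and a map $\boldsymbol{w}\in C^1(\R^N;\R^N)$ with $\boldsymbol{w}\restr{K}=\boldsymbol{y}\restr{K}$ and $D\boldsymbol{w}(\boldsymbol{x}_0)=\nabla\boldsymbol{y}(\boldsymbol{x}_0)$ invertible. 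Replacing $K$ by $K\cap A$ (still of density one at $\boldsymbol{x}_0$, and still containing $\boldsymbol{x}_0$), we may assume $K\subset \dom_{\rm G}(\boldsymbol{y},A)$ up to a null set, so that $\boldsymbol{w}\restr{K}$ agrees with the injective map $\boldsymbol{y}\restr{\dom_{\rm G}(\boldsymbol{y},A)}$ on $K$.

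First I would shrink $K$ slightly: by the inverse function theorem applied to $\boldsymbol{w}$ near $\boldsymbol{x}_0$, there is a ball $B=B(\boldsymbol{x}_0,\rho)$ on which $\boldsymbol{w}$ is a $C^1$-diffeomorphism onto a neighborhood of $\boldsymbol{\xi}_0:=\boldsymbol{y}(\boldsymbol{x}_0)$, with $C^1$ inverse $\boldsymbol{w}^{-1}$ differentiable at $\boldsymbol{\xi}_0$ and $D\boldsymbol{w}^{-1}(\boldsymbol{\xi}_0)=(\nabla\boldsymbol{y}(\boldsymbol{x}_0))^{-1}$. Set $K':=K\cap\overline{B}$; then $\Theta^N(K',\boldsymbol{x}_0)=1$, $\boldsymbol{w}$ maps $K'$ homeomorphically onto $\boldsymbol{w}(K')$, and on $K'$ the map $\boldsymbol{y}$ coincides with $\boldsymbol{w}$, hence $\im_{\rm G}(\boldsymbol{y},A)\supset \boldsymbol{y}(K')=\boldsymbol{w}(K')$ and $(\boldsymbol{y}\restr{\dom_{\rm G}(\boldsymbol{y},A)})^{-1}$ restricted to $\boldsymbol{w}(K')$ equals $\boldsymbol{w}^{-1}\restr{\boldsymbol{w}(K')}$. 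Since $\boldsymbol{w}^{-1}$ is of class $C^1$ on $\boldsymbol{w}(B)$, it is in particular differentiable at $\boldsymbol{\xi}_0=\boldsymbol{w}(\boldsymbol{x}_0)$, so the restriction of the geometric inverse to the set $\boldsymbol{w}(K')$ is differentiable at $\boldsymbol{\xi}_0$ in the ordinary sense.

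To upgrade this to approximate differentiability of $(\boldsymbol{y}\restr{\dom_{\rm G}(\boldsymbol{y},A)})^{-1}$ at $\boldsymbol{\xi}_0$, it remains to check that $\boldsymbol{w}(K')$ has $N$-dimensional density one at $\boldsymbol{\xi}_0$ inside $\im_{\rm G}(\boldsymbol{y},A)$; equivalently, that $\im_{\rm G}(\boldsymbol{y},A)\setminus\boldsymbol{w}(K')$ has density zero at $\boldsymbol{\xi}_0$. This is where the invertibility of the differential does the work: $\boldsymbol{w}$ is bi-Lipschitz on a neighborhood of $\boldsymbol{x}_0$ (shrinking $\rho$ if needed), so it distorts $N$-dimensional densities by bounded factors; since $\Theta^N(\R^N\setminus K',\boldsymbol{x}_0)=0$, we get $\Theta^N(\boldsymbol{w}(B)\setminus\boldsymbol{w}(K'),\boldsymbol{\xi}_0)=0$, and because $\im_{\rm G}(\boldsymbol{y},A)\subset \R^N$ while $\boldsymbol{w}(B)$ is a neighborhood of $\boldsymbol{\xi}_0$, we also have $\Theta^N(\im_{\rm G}(\boldsymbol{y},A)\setminus\boldsymbol{w}(B),\boldsymbol{\xi}_0)=0$. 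Hence $\im_{\rm G}(\boldsymbol{y},A)$ and $\boldsymbol{w}(K')$ coincide up to a set of density zero at $\boldsymbol{\xi}_0$, and the ordinary differentiability of $\boldsymbol{w}^{-1}\restr{\boldsymbol{w}(K')}$ at $\boldsymbol{\xi}_0$ becomes approximate differentiability of $(\boldsymbol{y}\restr{\dom_{\rm G}(\boldsymbol{y},A)})^{-1}$ at $\boldsymbol{\xi}_0$, with approximate differential $(\nabla\boldsymbol{y}(\boldsymbol{x}_0))^{-1}$.

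The main obstacle is the density bookkeeping in the last paragraph: one must be careful that the geometric image $\im_{\rm G}(\boldsymbol{y},A)$—which can be genuinely wild, since $\boldsymbol{y}$ need not satisfy Lusin (N) on all of $\Omega$—does not accumulate mass near $\boldsymbol{\xi}_0$ outside the nice piece $\boldsymbol{w}(K')$. This is precisely controlled by combining $\Theta^N(K',\boldsymbol{x}_0)=1$ with the bi-Lipschitz behavior of $\boldsymbol{w}$ near $\boldsymbol{x}_0$ (hence the importance, noted after Definition~\ref{def:geometric-image}, of requiring $\boldsymbol{x}_0\in K$), together with Lemma~\ref{lem:geometric-image}(iii), which already guarantees $\Theta^N(\im_{\rm G}(\boldsymbol{y},A),\boldsymbol{\xi}_0)=1$ and so pins down the ambient density against which we measure.
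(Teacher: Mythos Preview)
Your proof is correct and follows essentially the same approach as the paper: obtain the compact $K$ and the $C^1$ extension $\boldsymbol{w}$ from Definition~\ref{def:geometric-image}, apply the inverse function theorem to $\boldsymbol{w}$ on a small ball $B$, note that $(\boldsymbol{y}\restr{\dom_{\rm G}(\boldsymbol{y},A)})^{-1}$ coincides with the smooth inverse $(\boldsymbol{w}\restr{B})^{-1}$ on $\boldsymbol{y}(K\cap B)\cap\im_{\rm G}(\boldsymbol{y},A)$, and verify that this set has density one at $\boldsymbol{\xi}_0$. The only cosmetic difference is that where you spell out the bi-Lipschitz density-preservation argument explicitly, the paper simply invokes Lemma~\ref{lem:geometric-image}(iii) applied to $K\cap B\cap\dom_{\rm G}(\boldsymbol{y},A)$ (which has density one at $\boldsymbol{x}_0$) to obtain $\Theta^N(\boldsymbol{y}(K\cap B)\cap\im_{\rm G}(\boldsymbol{y},A),\boldsymbol{\xi}_0)=1$ in one step.
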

\begin{proof}
Consider $\boldsymbol{x}_0\in \dom_{\rm G}(\boldsymbol{y},A)$ with $\Theta^N(A,\boldsymbol{x}_0)=1$. By Definition \ref{def:geometric-image}, there exists a compact set $K \subset \Omega$ with $\boldsymbol{x}_0\in K$ and $\Theta^N(K,\boldsymbol{x}_0)=1$, and a map $\boldsymbol{w}\in C^1(\R^N;\R^N)$ such that $\boldsymbol{y}\restr{K}=\boldsymbol{w}\restr{K}$ and $\nabla \boldsymbol{y}\restr{K}=D\boldsymbol{w}\restr{K}$. Since $\det D\boldsymbol{w}(\boldsymbol{x}_0)=\det \nabla \boldsymbol{y}(\boldsymbol{x}_0)\neq 0$, by the Local Diffeomorphism Theorem, there exists $r>0$ such that, setting $B\coloneqq B(\boldsymbol{x}_0,r)$, the map $\boldsymbol{w}\restr{B}$ is a diffeomorphism of class $C^1$ onto its image. In particular, $D(\boldsymbol{w}\restr{B})^{-1}=(D\boldsymbol{w})^{-1}\circ (\boldsymbol{w}\restr{B})^{-1}$ on the open set $\boldsymbol{w}(B)$. Thus, setting $\boldsymbol{\xi}_0\coloneqq \boldsymbol{y}(\boldsymbol{x}_0)$, there holds
\begin{equation}
\label{eqn:differentiability}
    \lim_{\boldsymbol{\xi}\to \boldsymbol{\xi}_0} \frac{|(\boldsymbol{w}\restr{B})^{-1}(\boldsymbol{\xi})-(\boldsymbol{w}\restr{B})^{-1}(\boldsymbol{\xi}_0)-(D\boldsymbol{w}(\boldsymbol{x}_0))^{-1}(\boldsymbol{\xi}-\boldsymbol{\xi}_0)|}{|\boldsymbol{\xi}-\boldsymbol{\xi}_0|}=0.
\end{equation}
We have $\Theta^N(K\cap B \cap \dom_{\rm G}(\boldsymbol{y},A),\boldsymbol{x}_0)=1$ so that, by Lemma \ref{lem:geometric-image},  there holds
\begin{equation*}
	\Theta^N(\boldsymbol{y}(K\cap  B)\cap \im_{\rm G}(\boldsymbol{y},A),\boldsymbol{\xi}_0)=1.
\end{equation*}
Observing that $(\boldsymbol{w}\restr{B})^{-1}=(\boldsymbol{y}\restr{\dom_{\rm G}(\boldsymbol{y},A)})^{-1}$ and $D(\boldsymbol{w}\restr{B})^{-1}=(\nabla \boldsymbol{y})^{-1}\circ (\boldsymbol{y}\restr{\dom_{\rm G}(\boldsymbol{y},A)})^{-1}$ on $\boldsymbol{y}(K \cap B) \cap \im_{\rm G}(\boldsymbol{y},A)$, from \eqref{eqn:differentiability}, we obtain
\begin{equation*}
\lim_{\substack{\boldsymbol{\xi}\to \boldsymbol{\xi}_0 \\ \boldsymbol{\xi}\in \boldsymbol{y}(K \cap B) \cap \im_{\rm G}(\boldsymbol{y},A)}}\hspace*{-7mm} \frac{|(\boldsymbol{y}\restr{\dom_{\rm G}(\boldsymbol{y},A)})^{-1}(\boldsymbol{\xi})-(\boldsymbol{y}\restr{\dom_{\rm G}(\boldsymbol{y},A)})^{-1}(\boldsymbol{\xi}_0)-(\nabla \boldsymbol{y}(\boldsymbol{x}_0))^{-1}(\boldsymbol{\xi}-\boldsymbol{\xi}_0)|}{|\boldsymbol{\xi}-\boldsymbol{\xi}_0|}=0.
\end{equation*}
Therefore, the claim is proved.
\end{proof}

Let $\boldsymbol{y}\in W^{1,p}(\Omega;\R^N)$ be such that $\det D\boldsymbol{y}\neq 0$ almost everywhere and let $U\in\mathcal{U}_{\boldsymbol{y}}^{\rm inj}$. As before, we can consider the inverse $(\boldsymbol{y}\restr{\dom_{\rm G}(\boldsymbol{y},U)})^{-1}:\im_{\rm G}(\boldsymbol{y},U)\to \R^N$. Recalling Theorem \ref{thm:deg=m}, we give the following definition.

\begin{definition}[Local inverse]
\label{def:local-inverse}
Let $\boldsymbol{y}\in\mathcal{Y}_p(\Omega)$ and let $U\in\mathcal{U}_{\boldsymbol{y}}^{\rm inj}$. We define $\boldsymbol{y}_U^{-1}\colon \im_{\rm T}(\boldsymbol{y},U)\to\R^N$ to be an arbitrary extension of the map $(\boldsymbol{y}\restr{\dom_{\rm G}(\boldsymbol{y},U)})^{-1}\restr{\im_{\rm T}(\boldsymbol{y},U) \cap \im_{\rm G}(\boldsymbol{y},U)}$.
\end{definition}

For definiteness, one can set $\boldsymbol{y}_U^{-1}$ to be the extension by zero. However, for every other choice for the extension, the resulting map $\boldsymbol{y}_{U}^{-1}$ is measurable thanks to the completeness of the Lebesgue measure. 
As a consequence of \eqref{eqn:DIV}, the map $\boldsymbol{y}_{U}^{-1}$ turns out to have Sobolev regularity. This is meaningful since the set $\im_{\rm T}(\boldsymbol{y},U)$ is open. 

\begin{proposition}[{\cite[Proposition 5.3]{barchiesi.henao.moracorral}}]
\label{prop:regularity-inverse}
Let $\boldsymbol{y}\in\mathcal{Y}_p(\Omega)$ and let $U\in\mathcal{U}_{\boldsymbol{y}}^{\rm inj}$. Then, $\boldsymbol{y}_{U}^{-1}\in W^{1,1}(\im_{\rm T}(\boldsymbol{y},U);\R^N)$ and $D\boldsymbol{y}_{U}^{-1}=(D\boldsymbol{y})^{-1}\circ \boldsymbol{y}_{U}^{-1}$ almost everywhere in $\im_{\rm T}(\boldsymbol{y},U)$. Moreover, each Jacobian minor of $\boldsymbol{y}_{U}^{-1}$ belongs to $L^1(\im_{\rm T}(\boldsymbol{y},U))$.
\end{proposition}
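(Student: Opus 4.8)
The plan is to identify the matrix field $\boldsymbol{V}\coloneqq (D\boldsymbol{y})^{-1}\circ \boldsymbol{y}_U^{-1}$, which is defined almost everywhere on $\im_{\rm T}(\boldsymbol{y},U)$, with the distributional gradient of $\boldsymbol{y}_U^{-1}$, by combining the Change-of-variable Formula with the Divergence Identities \eqref{eqn:DIV}, and then to read off all the integrability statements from the corresponding minors of $\nabla\boldsymbol{y}$. First I would reduce to the geometric image: by Theorem \ref{thm:deg=m}, $\leb(\im_{\rm T}(\boldsymbol{y},U)\, \triangle\, \im_{\rm G}(\boldsymbol{y},U))=0$, so that $\boldsymbol{y}_U^{-1}$ agrees almost everywhere with $(\boldsymbol{y}\restr{\dom_{\rm G}(\boldsymbol{y},U)})^{-1}$; since $U\in\mathcal{U}_{\boldsymbol{y}}^{\rm inj}$, Lemma \ref{lem:injective} gives that $\boldsymbol{y}\restr{\dom_{\rm G}(\boldsymbol{y},U)}$ is injective, hence $\mult(\boldsymbol{y},U,\cdot)=1$ almost everywhere on $\im_{\rm G}(\boldsymbol{y},U)$, and by Lemma \ref{lem:inverse-approximately-differentiable} the inverse is approximately differentiable at almost every $\boldsymbol{\xi}\in\im_{\rm G}(\boldsymbol{y},U)$ with approximate gradient $\boldsymbol{V}(\boldsymbol{\xi})$.

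For the integrability, fix $k\in\{1,\dots,N\}$ and recall that each $k\times k$ minor of $\boldsymbol{A}^{-1}$ equals, up to a sign, a complementary $(N-k)\times(N-k)$ minor of $\boldsymbol{A}$ divided by $\det\boldsymbol{A}$. Applying Proposition \ref{prop:change-of-variable} with $A=U$, multiplicity one, and $\boldsymbol{\psi}$ equal to the absolute value of a $k\times k$ minor of $(\nabla\boldsymbol{y})^{-1}\circ\boldsymbol{y}_U^{-1}$, and using that $\det\nabla\boldsymbol{y}>0$, one gets
\begin{equation*}
	\int_{\im_{\rm T}(\boldsymbol{y},U)} \bigl|\,k\times k\text{ minor of }\boldsymbol{V}\,\bigr|\,\d\boldsymbol{\xi}=\int_U \bigl|\,(N-k)\times(N-k)\text{ minor of }\nabla\boldsymbol{y}\,\bigr|\,\d\boldsymbol{x},
\end{equation*}
and the right-hand side is finite because $\boldsymbol{y}\in W^{1,p}(\Omega;\R^N)$ with $p>N-1\geq N-k$, so every minor of $\nabla\boldsymbol{y}$ of order at most $N-1$ lies in $L^{p/(N-k)}(U)\subset L^1(U)$, as $U\subset\subset\Omega$. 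Taking $k=1$ gives $\boldsymbol{V}\in L^1(\im_{\rm T}(\boldsymbol{y},U);\rnn)$ with $\int_{\im_{\rm T}(\boldsymbol{y},U)}|\boldsymbol{V}|\,\d\boldsymbol{\xi}=\int_U|\cof\nabla\boldsymbol{y}|\,\d\boldsymbol{x}$, while choosing $\boldsymbol{\psi}=\boldsymbol{y}_U^{-1}$ gives $\int_{\im_{\rm T}(\boldsymbol{y},U)}|\boldsymbol{y}_U^{-1}|\,\d\boldsymbol{\xi}=\int_U|\boldsymbol{x}|\,\det\nabla\boldsymbol{y}\,\d\boldsymbol{x}<\infty$. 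So $\boldsymbol{y}_U^{-1}$ and the candidate for each of its Jacobian minors are integrable on $\im_{\rm T}(\boldsymbol{y},U)$.

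It then remains to check that, for every $\phi\in C^1_{\rm c}(\im_{\rm T}(\boldsymbol{y},U))$ and all indices $i,k$,
\begin{equation*}
	\int_{\im_{\rm T}(\boldsymbol{y},U)}(\boldsymbol{y}_U^{-1})_i\,\partial_k\phi\,\d\boldsymbol{\xi}=-\int_{\im_{\rm T}(\boldsymbol{y},U)}\boldsymbol{V}_{ik}\,\phi\,\d\boldsymbol{\xi}.
\end{equation*}
Using Proposition \ref{prop:change-of-variable} once more with multiplicity one, together with the pointwise identities $(\boldsymbol{y}_U^{-1})\circ\boldsymbol{y}=\mathrm{id}$ and $\boldsymbol{V}\circ\boldsymbol{y}\,\det\nabla\boldsymbol{y}=\adj\nabla\boldsymbol{y}$ on $\dom_{\rm G}(\boldsymbol{y},U)$, the left-hand side becomes $\int_U x_i\,(\partial_k\phi\circ\boldsymbol{y})\,\det\nabla\boldsymbol{y}\,\d\boldsymbol{x}$ and the right-hand side becomes $-\int_U(\adj\nabla\boldsymbol{y})_{ik}\,(\phi\circ\boldsymbol{y})\,\d\boldsymbol{x}$; the resulting identity over $U$ is exactly what \eqref{eqn:DIV} provides when tested with $\boldsymbol{\psi}=\phi\,\boldsymbol{e}_k$ and the function $\boldsymbol{x}\mapsto x_i$, provided the integration by parts produces neither a contribution from $\Omega\setminus U$ nor a boundary term along $\partial U$. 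To dispose of these I would multiply the test function by a cutoff and work on the regular subdomains $U_\ell\in\mathcal{U}_{\boldsymbol{y}}$ supplied by Lemma \ref{lem:abundance}: on each $\partial U_\ell$ one has $\leb(\boldsymbol{y}(\partial U_\ell))=0$ and $(\cof\nabla\boldsymbol{y})\restr{\partial U_\ell}\in L^1$, and, since $\boldsymbol{y}^*$ is continuous off the $\mathscr{H}^{N-1}$-null singularity set $S_{\boldsymbol{y}}$, one can arrange that $\phi\circ\boldsymbol{y}$ vanishes $\leb$-almost everywhere in a tubular neighborhood of $\partial U$; properties (iii)--(iv) of Definition \ref{def:regular-subdomains} then allow passing to the limit $\ell\to0$ and discarding the boundary terms. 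Combined with the integrability from the previous step, this yields $\boldsymbol{y}_U^{-1}\in W^{1,1}(\im_{\rm T}(\boldsymbol{y},U);\R^N)$ with $D\boldsymbol{y}_U^{-1}=(D\boldsymbol{y})^{-1}\circ\boldsymbol{y}_U^{-1}$ and with each Jacobian minor in $L^1$.

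The hard part is this last step. Because maps in $\mathcal{Y}_p(\Omega)$ are in general discontinuous and may violate the Lusin condition (N), the set $\{\boldsymbol{x}\in\Omega:\boldsymbol{y}(\boldsymbol{x})\in\supp\phi\}$ need not be compactly contained in $U$, so the integration by parts cannot simply be localized to $U$; controlling the resulting spurious interior and boundary contributions is precisely what forces one to exploit the fine properties of such deformations — the null image of $\partial U_\ell$, the integrability of the cofactor matrix along $\partial U_\ell$, the smallness of $S_{\boldsymbol{y}}$, and the convergence properties built into the notion of regular subdomain.
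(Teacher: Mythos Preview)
The paper does not give its own proof of this proposition: it is stated verbatim as a citation of \cite[Proposition 5.3]{barchiesi.henao.moracorral}, with no argument supplied. So there is nothing in the present paper to compare your proposal against; the authors simply import the result from the literature.

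That said, your outline is in the right spirit and the integrability part is correct: pulling each $k\times k$ minor of $\boldsymbol{V}$ back via Proposition \ref{prop:change-of-variable} does reduce its $L^1$-norm on $\im_{\rm T}(\boldsymbol{y},U)$ to an $L^1$-norm of a complementary minor of $\nabla\boldsymbol{y}$ over $U$, and $p>N-1$ covers all orders $\leq N-1$ while $\det D\boldsymbol{y}\in L^1(\Omega)$ handles order $N$. The genuinely incomplete part is, as you acknowledge, the identification of the weak gradient. The obstruction you name is exactly the right one: testing \eqref{eqn:DIV} with $\varphi(\boldsymbol{x})=x_i$ is illegal since $\varphi\notin C^1_{\rm c}(\Omega)$, and for $\phi\in C^1_{\rm c}(\im_{\rm T}(\boldsymbol{y},U))$ the composition $\phi\circ\boldsymbol{y}$ need not vanish near $\partial U$ because points just outside $U$ may still land in $\supp\phi$. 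Your proposed cure---cutoffs and a passage to $U_\ell$ using properties (iii)--(iv) of Definition \ref{def:regular-subdomains}---points in the right direction but is not yet a proof: you have not explained why the boundary flux $\int_{\partial U_\ell} x_i\,(\phi\circ\boldsymbol{y})\,(\cof\nabla\boldsymbol{y})\boldsymbol{n}_{U_\ell}\cdot\boldsymbol{e}_k\,\d\boldsymbol{a}$ tends to zero, nor why the volume contribution from $\Omega\setminus U$ drops out. For the former one needs that $\boldsymbol{y}(\partial U)\cap\supp\phi=\emptyset$ together with the continuity encoded in Definition \ref{def:regular-subdomains}; for the latter one uses that $\deg(\boldsymbol{y},U,\cdot)=\mult(\boldsymbol{y},U,\cdot)$ from Theorem \ref{thm:deg=m} to confine everything to $U$. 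These are precisely the ingredients worked out in \cite{barchiesi.henao.moracorral}, so consulting that reference would close the gap.
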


From Proposition \ref{prop:regularity-inverse}, we deduce that $\boldsymbol{y}_{U}^{-1}$ is almost everywhere approximately differentiable \cite[Theorem 2, Section 1.4, Chapter 3]{cartesian.currents}, so that the Change-of-variable Formula holds. More precisely, we have the following.

\begin{proposition}[Change-of-variable Formula for the inverse]
\label{prop:inverse-change-of-variable}
Let $\boldsymbol{y}\in \mathcal{Y}_p(\Omega)$ and let $U\in\mathcal{U}_{\boldsymbol{y}}^{\rm inj}$. Then, for every  $\boldsymbol{\varphi}\colon U \to \R^M$ measurable, there holds
\begin{equation*}
    \int_U \boldsymbol{\varphi}(\boldsymbol{x})\,\d\boldsymbol{x}=\int_{\im_{\rm T}(\boldsymbol{y},U)} \boldsymbol{\varphi} \circ \boldsymbol{y}_{U}^{-1}(\boldsymbol{\xi})\,\det D\boldsymbol{y}_{U}^{-1}(\boldsymbol{\xi})\,\d\boldsymbol{\xi}.
\end{equation*}
\end{proposition}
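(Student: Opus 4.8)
The plan is to obtain the identity by applying the Change-of-variable Formula of Proposition \ref{prop:change-of-variable} to the local inverse $\boldsymbol{y}_U^{-1}$, regarded as a Sobolev map on the open set $V\coloneqq\im_{\rm T}(\boldsymbol{y},U)$, rather than to $\boldsymbol{y}$ itself. Write also $D\coloneqq\dom_{\rm G}(\boldsymbol{y},U)$ and $G\coloneqq\im_{\rm G}(\boldsymbol{y},U)$. We already know that $\leb(V\triangle G)=0$ by Theorem \ref{thm:deg=m}, that $\leb(U\triangle D)=0$ by Lemma \ref{lem:geometric-image}(i), and — since $U\in\mathcal{U}_{\boldsymbol{y}}^{\rm inj}$ — that $\boldsymbol{y}\restr{D}\colon D\to G$ is a bijection by Lemma \ref{lem:injective}, whose inverse coincides with $\boldsymbol{y}_U^{-1}$ on $V\cap G$ by Definition \ref{def:local-inverse}. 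By Proposition \ref{prop:regularity-inverse} the map $\boldsymbol{y}_U^{-1}$ belongs to $W^{1,1}(V;\R^N)$, hence is almost everywhere approximately differentiable with $\nabla\boldsymbol{y}_U^{-1}=D\boldsymbol{y}_U^{-1}=(D\boldsymbol{y})^{-1}\circ\boldsymbol{y}_U^{-1}$ almost everywhere; in particular $\det\nabla\boldsymbol{y}_U^{-1}\neq 0$ almost everywhere, so Proposition \ref{prop:change-of-variable} is applicable to $\boldsymbol{y}_U^{-1}$.

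Before applying it, I would collect three facts. First, $\boldsymbol{y}_U^{-1}$ is almost everywhere injective in $V$, because $\boldsymbol{y}_U^{-1}\restr{V\cap G}=(\boldsymbol{y}\restr{D})^{-1}$ is injective and $\leb(V\setminus G)=0$; by Lemma \ref{lem:injective} applied to $\boldsymbol{y}_U^{-1}$ this forces $\boldsymbol{y}_U^{-1}\restr{\dom_{\rm G}(\boldsymbol{y}_U^{-1},V)}$ to be injective, and hence $\mult(\boldsymbol{y}_U^{-1},V,\cdot)=1$ on $\im_{\rm G}(\boldsymbol{y}_U^{-1},V)$. Second, $\leb(\im_{\rm G}(\boldsymbol{y}_U^{-1},V)\triangle U)=0$: since $\boldsymbol{y}_U^{-1}\restr{\dom_{\rm G}(\boldsymbol{y}_U^{-1},V)}$ has the Lusin property (N) by Lemma \ref{lem:geometric-image}(ii), since $(\boldsymbol{y}\restr{D})^{-1}$ has the Lusin property (N) — because $\boldsymbol{y}$ has the Lusin property (N${}^{-1}$), cf.\ the discussion following Proposition \ref{prop:change-of-variable} — and maps $G$ onto $D$, and since $\leb(V\setminus G)=\leb(U\triangle D)=0$, one checks that $\im_{\rm G}(\boldsymbol{y}_U^{-1},V)$, $D$ and $U$ all coincide up to a Lebesgue-null set. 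Third, the composition $\boldsymbol{\varphi}\circ\boldsymbol{y}_U^{-1}$ is measurable, since $\boldsymbol{y}_U^{-1}$ inherits the Lusin property (N${}^{-1}$) from the Lusin property (N) of $\boldsymbol{y}\restr{D}$, exactly as in \cite[Lemma 2.9]{barchiesi.henao.moracorral}.

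Granting these, I apply Proposition \ref{prop:change-of-variable} to $\boldsymbol{y}_U^{-1}$ with $A=V$ and with the measurable function on $\im_{\rm G}(\boldsymbol{y}_U^{-1},V)$ that agrees with $\boldsymbol{\varphi}$ on $U\cap\im_{\rm G}(\boldsymbol{y}_U^{-1},V)$ and vanishes elsewhere. Using $\mult(\boldsymbol{y}_U^{-1},V,\cdot)=1$ and $\leb(\im_{\rm G}(\boldsymbol{y}_U^{-1},V)\triangle U)=0$ on the right-hand side, and $\nabla\boldsymbol{y}_U^{-1}=D\boldsymbol{y}_U^{-1}$ a.e.\ together with $\leb(V\triangle\{\boldsymbol{\xi}\in V:\boldsymbol{y}_U^{-1}(\boldsymbol{\xi})\in U\})=0$ on the left-hand side, the formula of Proposition \ref{prop:change-of-variable} reduces to precisely
\begin{equation*}
\int_{\im_{\rm T}(\boldsymbol{y},U)}\boldsymbol{\varphi}\circ\boldsymbol{y}_U^{-1}(\boldsymbol{\xi})\,\det D\boldsymbol{y}_U^{-1}(\boldsymbol{\xi})\,\d\boldsymbol{\xi}=\int_U\boldsymbol{\varphi}(\boldsymbol{x})\,\d\boldsymbol{x},
\end{equation*}
which is the asserted equality.

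I do not anticipate a genuine obstacle here: the statement is the Change-of-variable Formula of Proposition \ref{prop:change-of-variable} read in the reverse direction, and its two substantive ingredients — the Sobolev regularity of $\boldsymbol{y}_U^{-1}$ together with the identity $D\boldsymbol{y}_U^{-1}=(D\boldsymbol{y})^{-1}\circ\boldsymbol{y}_U^{-1}$, and the almost everywhere coincidence of the topological and geometric images — are already available. The only delicate point is the bookkeeping of Lebesgue-null sets, namely the identification of $\im_{\rm G}(\boldsymbol{y}_U^{-1},V)$ with $U$ up to null sets and the measurability of $\boldsymbol{\varphi}\circ\boldsymbol{y}_U^{-1}$, both of which hinge on the Lusin properties (N) and (N${}^{-1}$) of $\boldsymbol{y}$ and of its local inverse.
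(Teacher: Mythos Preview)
Your argument is correct and follows the same overall idea as the paper: apply the Federer Change-of-variable Formula to the local inverse rather than to $\boldsymbol{y}$. The one difference worth noting is the source of approximate differentiability of $\boldsymbol{y}_U^{-1}$. You obtain it from the Sobolev regularity of Proposition~\ref{prop:regularity-inverse}, which is a nontrivial consequence of \eqref{eqn:DIV} taken from \cite{barchiesi.henao.moracorral}. The paper instead invokes Lemma~\ref{lem:inverse-approximately-differentiable} directly---an elementary pointwise statement, proved in the paper itself via the Local Diffeomorphism Theorem, which is precisely why the requirement $\boldsymbol{x}_0\in K$ was added to Definition~\ref{def:geometric-image}. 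Your route works but is slightly heavier: it relies on the full $W^{1,1}$ regularity of the inverse when the lighter Lemma~\ref{lem:inverse-approximately-differentiable} already gives approximate differentiability at every point of $\im_{\rm G}(\boldsymbol{y},U)$ (the density condition being automatic since $U$ is open). The bookkeeping you carry out---injectivity of the inverse on its geometric domain, identification of $\im_{\rm G}(\boldsymbol{y}_U^{-1},V)$ with $U$ up to null sets via the Lusin properties---is correct and is implicit in the paper's one-line appeal to Theorem~\ref{thm:deg=m} and Definition~\ref{def:local-inverse}.
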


In the previous formula, we observe that $\boldsymbol{\varphi}\circ \boldsymbol{y}_{U}^{-1}$ is measurable since $(\boldsymbol{y}\restr{U})^{-1}$ has the Lusin property (N${}^{-1}$) as $\det D\boldsymbol{y}_{U}^{-1}>0$ almost everywhere, see \cite[Lemma 2.8, Claim (c)]{barchiesi.henao.moracorral} and \cite[Lemma 2.9]{barchiesi.henao.moracorral}.

\begin{proof}
	By Lemma \ref{lem:inverse-approximately-differentiable}, the map $(\boldsymbol{y}\restr{\dom_{\rm G}(\boldsymbol{y},U)})^{-1}$ is approximately differentiable on $\im_{\rm G}(\boldsymbol{y},U)$. The result follows by applying the Federer Change-of-variable Formula \cite[Theorem 1, Section 1.5, Chapter 3]{cartesian.currents} to this map taking into account Theorem \ref{thm:deg=m} and Definition \ref{def:local-inverse}.
\end{proof}

The next result show that the radius of local invertibility given by Theorem \ref{thm:local-invertibility} can be chosen uniformly for  sequences of deformations converging weakly in $W^{1,p}(\Omega;\R^N)$. The result is taken from \cite{barchiesi.henao.moracorral} and it is reformulated according to our needs.

\begin{proposition}[{\cite[Theorem 6.3, Claim (b)]{barchiesi.henao.moracorral}}]
	\label{prop:local-invertibility-sequence}
	Let $\boldsymbol{y}\in \mathcal{Y}_p(\Omega)$ and let $\boldsymbol{x}_0\in \Omega$ be such that $\boldsymbol{y}$ is regularly approximately differentiable at $\boldsymbol{x}_0$ with $\det \nabla \boldsymbol{y}(\boldsymbol{x}_0)>0$. Let $P\subset (0,\dist(\boldsymbol{x}_0;\partial \Omega))$ be the set given by Lemma \ref{lem:regular-approximate-differentiability}.
	Also, let $(\boldsymbol{y}_n)\subset \mathcal{Y}_p(\Omega)$ and suppose that $\boldsymbol{y}_n \wk \boldsymbol{y}$ in $W^{1,p}(\Omega;\R^N)$.  Then, for every $r\in P$ such that there exist $\widetilde{r}\in P$ with $r<\widetilde{r}/2$ and a subsequence $(\boldsymbol{y}_{n_k})$ for which we have 
	\begin{equation*}
	\text{$\boldsymbol{y}_{n_k}\to \boldsymbol{y}$ uniformly on $\partial B(\boldsymbol{x}_0,r)\cup \partial B(\boldsymbol{x}_0,\widetilde{r})$,}
	\end{equation*}
	there holds
	\begin{equation*}
	B(\boldsymbol{x}_0,r)\in \bigcap_{k=1}^{\infty} \mathcal{U}^{\rm inj}_{\boldsymbol{y}_{n_k}}\cap \mathcal{U}^{\rm inj}_{\boldsymbol{y}}
	\end{equation*}
\end{proposition}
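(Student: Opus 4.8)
The plan is to read this statement off \cite[Theorem 6.3, Claim (b)]{barchiesi.henao.moracorral}, after disposing of the trivial part and clarifying the reformulation. The membership $B(\boldsymbol{x}_0,r)\in\mathcal{U}^{\rm inj}_{\boldsymbol{y}}$ costs nothing: by hypothesis $r\in P$ and there is $\widetilde{r}\in P$ with $r<\widetilde{r}/2$, so Theorem~\ref{thm:local-invertibility} applies verbatim. Hence the only genuine content is the uniform assertion $B(\boldsymbol{x}_0,r)\in\mathcal{U}^{\rm inj}_{\boldsymbol{y}_{n_k}}$ for every $k$. In \cite{barchiesi.henao.moracorral} this is obtained after first extracting, via Lemma~\ref{lem:weak-convergence-boundary}, a subsequence converging uniformly on $\partial B(\boldsymbol{x}_0,r)$ and on $\partial B(\boldsymbol{x}_0,\widetilde{r})$; since that is the only role played by the subsequence in their argument, it is legitimate to prescribe such a subsequence from the outset, which is precisely the form needed in the proof of Theorem~\ref{thm:compactness}.

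For the membership of the $\boldsymbol{y}_{n_k}$, I would follow the degree-theoretic argument of \cite{barchiesi.henao.moracorral}. By Lemma~\ref{lem:regular-approximate-differentiability}(i), $B(\boldsymbol{x}_0,r)$ and $B(\boldsymbol{x}_0,\widetilde{r})$ belong to $\mathcal{U}_{\boldsymbol{y}}$, so Definition~\ref{def:regular-subdomains}(ii) yields $\leb(\boldsymbol{y}(\partial B(\boldsymbol{x}_0,r)))=\leb(\boldsymbol{y}(\partial B(\boldsymbol{x}_0,\widetilde{r})))=0$; the same null-image property holds for the $\boldsymbol{y}_{n_k}$ once one checks that these two balls are regular subdomains for every $\boldsymbol{y}_{n_k}$ as well, which follows from Lemma~\ref{lem:weak-convergence-boundary} together with Lemma~\ref{lem:abundance} upon possibly shrinking the set of admissible radii. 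With Lemma~\ref{lem:topological-image-uniform-convergence} now in force, and using that $\closure{\im_{\rm T}(\boldsymbol{y},B(\boldsymbol{x}_0,r))}$ is a compact subset of $\im_{\rm T}(\boldsymbol{y},B(\boldsymbol{x}_0,\widetilde{r}))$ by Lemma~\ref{lem:regular-approximate-differentiability}(ii) --- this is exactly where the hypothesis $\widetilde{r}>2r$ enters --- one gets, for $k$ large, $\im_{\rm T}(\boldsymbol{y}_{n_k},B(\boldsymbol{x}_0,r))\subset \subset \im_{\rm T}(\boldsymbol{y}_{n_k},B(\boldsymbol{x}_0,\widetilde{r}))$ and, by homotopy invariance of the degree (comparing $\boldsymbol{y}_{n_k}$ with $\boldsymbol{y}$ on $\partial B(\boldsymbol{x}_0,\widetilde{r})$, and using the ellipsoid approximation at $\boldsymbol{x}_0$ together with $\deg(\boldsymbol{y},B(\boldsymbol{x}_0,\widetilde{r}),\boldsymbol{y}(\boldsymbol{x}_0))=1$ from Lemma~\ref{lem:regular-approximate-differentiability}(i) to see that $\deg(\boldsymbol{y},B(\boldsymbol{x}_0,\widetilde{r}),\cdot)$ is already $1$ on a full neighborhood of $\closure{\im_{\rm T}(\boldsymbol{y},B(\boldsymbol{x}_0,r))}$), that $\deg(\boldsymbol{y}_{n_k},B(\boldsymbol{x}_0,\widetilde{r}),\cdot)=1$ on the whole of $\im_{\rm T}(\boldsymbol{y}_{n_k},B(\boldsymbol{x}_0,r))$. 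By Theorem~\ref{thm:deg=m} this gives $\mult(\boldsymbol{y}_{n_k},B(\boldsymbol{x}_0,\widetilde{r}),\cdot)=1$ a.e.\ there, and by monotonicity of the multiplicity function in the domain $\mult(\boldsymbol{y}_{n_k},B(\boldsymbol{x}_0,r),\cdot)\le 1$ a.e.; combined once more with Theorem~\ref{thm:deg=m} and with the Lusin property (N${}^{-1}$) of $\boldsymbol{y}_{n_k}$, this forces $\boldsymbol{y}_{n_k}$ to be almost everywhere injective on $B(\boldsymbol{x}_0,r)$, i.e.\ $B(\boldsymbol{x}_0,r)\in\mathcal{U}^{\rm inj}_{\boldsymbol{y}_{n_k}}$. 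Since only finitely many $k$ are excluded, a tail of $(\boldsymbol{y}_{n_k})$ --- which still converges uniformly on the two spheres --- yields the conclusion for every $k$.

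I expect the main obstacle to be exactly what makes this a theorem of \cite{barchiesi.henao.moracorral} rather than soft degree theory: the maps $\boldsymbol{y}_{n_k}$ are merely Sobolev, hence in general discontinuous and possibly violating the Lusin condition (N), so that ``almost everywhere injectivity on $B(\boldsymbol{x}_0,r)$'' can only be accessed through the geometric domain and the degree--multiplicity identity of Theorem~\ref{thm:deg=m}, never through an open-mapping or invariance-of-domain argument. One must also make sure that $B(\boldsymbol{x}_0,r)$ and $B(\boldsymbol{x}_0,\widetilde{r})$ are regular subdomains \emph{for every} $\boldsymbol{y}_{n_k}$ (so that Theorem~\ref{thm:deg=m} is applicable), which is where Lemma~\ref{lem:weak-convergence-boundary} and a possible refinement of the set of radii are needed, and that the stabilization of the degree near $\boldsymbol{y}(\boldsymbol{x}_0)$ is \emph{uniform} in $k$ --- precisely the point bought by the quantitative gap $\widetilde{r}>2r$. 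All of these steps are carried out in \cite{barchiesi.henao.moracorral}; our only change is to fix in advance the subsequence along which the convergence on $\partial B(\boldsymbol{x}_0,r)\cup\partial B(\boldsymbol{x}_0,\widetilde{r})$ holds.
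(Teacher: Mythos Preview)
The paper does not supply its own proof of this proposition: it is stated with the attribution \cite[Theorem 6.3, Claim (b)]{barchiesi.henao.moracorral} and no proof environment follows. Your proposal is therefore entirely in line with the paper's treatment --- both defer to \cite{barchiesi.henao.moracorral} --- and your additional sketch of the degree--multiplicity argument (together with the observation that the subsequence with uniform convergence on the two spheres may be prescribed in advance) is an accurate summary of how the cited result is obtained and how it is reformulated here.
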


The following result yields the stability of the local inverse with respect to the weak convergence of deformations in $W^{1,p}(\Omega;\R^N)$.

\begin{theorem}[{\cite[Theorem 6.3, Claim (c)]{barchiesi.henao.moracorral}}]
	\label{thm:stability-inverse}
	Let $(\boldsymbol{y}_n)\subset \mathcal{Y}_p(\Omega)$ and $\boldsymbol{y}\in \mathcal{Y}_p(\Omega)$ be such that $\boldsymbol{y}_n \wk \boldsymbol{y}$ in $W^{1,p}(\Omega;\R^N)$. Let $U \in \mathcal{U}_{\boldsymbol{y}}^{\rm inj} \cap \bigcap_{n=1}^\infty \mathcal{U}_{\boldsymbol{y}_n}^{\rm inj}$ and let $V \subset \subset \im_{\rm T}(\boldsymbol{y},U)$ be open, so that $V \subset \subset \im_{\rm T}(\boldsymbol{y}_n,U)$ for $n \gg 1$. Then, there holds
	\begin{equation*}
	\text{$\boldsymbol{y}_{n,U}^{-1}\to \boldsymbol{y}_{U}^{-1}$ a.e. in $V$ and strongly in $L^1(V;\R^N)$.}   
	\end{equation*}
	Moreover, each Jacobian minor of $\boldsymbol{y}_{n,U}^{-1}$ converges weakly-$*$ in $\mathcal{M}_{\rm b}(V)$ to the corresponding Jacobian minor of $\boldsymbol{y}_{U}^{-1}$.
	In particular, if $(\det D \boldsymbol{y}_{n,U}^{-1})\subset L^1(V)$ is equi-integrable, then the Jacobian minors converge also weakly in $L^1(V)$. In this case, we have
	\begin{equation*}
	\text{$\boldsymbol{y}_{n,U}^{-1}\wk \boldsymbol{y}_{U}^{-1}$ in $W^{1,1}(V;\R^N)$.}
	\end{equation*}
\end{theorem}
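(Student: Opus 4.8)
The plan is to establish Theorem~\ref{thm:stability-inverse} in four steps: a uniform bound on the local inverses, their almost everywhere convergence, the resulting strong $L^1$ convergence, and finally the convergence of the Jacobian minors; Propositions~\ref{prop:regularity-inverse}, \ref{prop:inverse-change-of-variable}, \ref{prop:local-invertibility-sequence} and~\ref{prop:weak-convergence} are used as given. For the \emph{uniform bounds}: by Definition~\ref{def:local-inverse} and Theorem~\ref{thm:deg=m} one has $\boldsymbol{y}_{n,U}^{-1}(\boldsymbol{\xi})\in U$ for a.e. $\boldsymbol{\xi}\in\im_{\rm T}(\boldsymbol{y}_n,U)\supset V$, so $(\boldsymbol{y}_{n,U}^{-1})$ is bounded in $L^\infty(V;\R^N)$. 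Writing $D\boldsymbol{y}_{n,U}^{-1}=(D\boldsymbol{y}_n)^{-1}\circ\boldsymbol{y}_{n,U}^{-1}$ (Proposition~\ref{prop:regularity-inverse}), using the Change-of-variable Formula for the inverse (Proposition~\ref{prop:inverse-change-of-variable}) together with $(D\boldsymbol{y}_n)^{-1}\det D\boldsymbol{y}_n=\adj D\boldsymbol{y}_n$, one gets $\int_V|D\boldsymbol{y}_{n,U}^{-1}|\,\d\boldsymbol{\xi}\le\int_U|\adj D\boldsymbol{y}_n|\,\d\boldsymbol{x}$; more generally, by Jacobi's identity for compound matrices, every order-$k$ Jacobian minor of $\boldsymbol{y}_{n,U}^{-1}$ is controlled in $L^1(V)$ by the $L^1(U)$-norm of the order-$(N-k)$ minors of $D\boldsymbol{y}_n$. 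Since $(\boldsymbol{y}_n)$ is bounded in $W^{1,p}(\Omega;\R^N)$ and $p>N-1$, all these minors are bounded in $L^1(U)$, so $(\boldsymbol{y}_{n,U}^{-1})$ is bounded in $W^{1,1}(V;\R^N)\cap L^\infty(V;\R^N)$.

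\emph{Almost everywhere convergence (the crux).} Fix $\boldsymbol{\xi}\in V$ outside the $\leb$-null set $\bigcup_n\big(\im_{\rm T}(\boldsymbol{y}_n,U)\setminus\im_{\rm G}(\boldsymbol{y}_n,U)\big)$ and such that $\boldsymbol{x}_0\coloneqq\boldsymbol{y}_U^{-1}(\boldsymbol{\xi})$ belongs to $\dom_{\rm G}(\boldsymbol{y},U)$, is regularly approximately differentiable with $\det\nabla\boldsymbol{y}(\boldsymbol{x}_0)>0$, satisfies $\boldsymbol{y}(\boldsymbol{x}_0)=\boldsymbol{\xi}$ and $\Theta^N(U,\boldsymbol{x}_0)=1$; by Lemma~\ref{lem:geometric-image}, Proposition~\ref{prop:regular-approximate-differentiability} and Proposition~\ref{prop:inverse-change-of-variable} this excludes only a $\leb$-null subset of $V$, and for every such $\boldsymbol{\xi}$ and every $n$ with $V\subset\im_{\rm T}(\boldsymbol{y}_n,U)$ the point $\boldsymbol{y}_{n,U}^{-1}(\boldsymbol{\xi})$ is the unique $\boldsymbol{y}_n$-preimage of $\boldsymbol{\xi}$ in $\dom_{\rm G}(\boldsymbol{y}_n,U)$ (Lemma~\ref{lem:injective}). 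One proves $\boldsymbol{y}_{n,U}^{-1}(\boldsymbol{\xi})\to\boldsymbol{x}_0$ by contradiction: if a subsequence stays at distance $\ge\varepsilon_0>0$ from $\boldsymbol{x}_0$, choose, via Lemmas~\ref{lem:regular-approximate-differentiability} and~\ref{lem:abundance-star}, radii $r<\widetilde r/2$ with $r<\varepsilon_0$, $\closure{B(\boldsymbol{x}_0,\widetilde r)}\subset U$, $B(\boldsymbol{x}_0,r),B(\boldsymbol{x}_0,\widetilde r)\in\mathcal{U}_{\boldsymbol{y}}$, $\boldsymbol{\xi}\in\im_{\rm T}(\boldsymbol{y},B(\boldsymbol{x}_0,r))$ (as $\deg(\boldsymbol{y},B(\boldsymbol{x}_0,r),\boldsymbol{\xi})=1$) and $\boldsymbol{y}_n\restr{\partial B(\boldsymbol{x}_0,r)},\boldsymbol{y}_n\restr{\partial B(\boldsymbol{x}_0,\widetilde r)}\in W^{1,p}$ for all $n$; then along a further subsequence with $\boldsymbol{y}_n\to\boldsymbol{y}$ uniformly on $\partial B(\boldsymbol{x}_0,r)\cup\partial B(\boldsymbol{x}_0,\widetilde r)$, Proposition~\ref{prop:local-invertibility-sequence} gives $B(\boldsymbol{x}_0,r)\in\mathcal{U}_{\boldsymbol{y}_n}^{\rm inj}$ and Lemma~\ref{lem:topological-image-uniform-convergence} gives $\boldsymbol{\xi}\in\im_{\rm T}(\boldsymbol{y}_n,B(\boldsymbol{x}_0,r))$ for $n\gg1$, whence the unique $\boldsymbol{y}_n$-preimage of $\boldsymbol{\xi}$ in $\dom_{\rm G}(\boldsymbol{y}_n,U)$ must lie in $B(\boldsymbol{x}_0,r)$, i.e. $|\boldsymbol{y}_{n,U}^{-1}(\boldsymbol{\xi})-\boldsymbol{x}_0|<r<\varepsilon_0$ for $n\gg1$, a contradiction. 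I expect the main obstacle to be precisely this last implication — that the preimage of $\boldsymbol{\xi}$ cannot escape $B(\boldsymbol{x}_0,r)$: it must be argued through the topological-image/degree comparison behind Propositions~\ref{prop:weak-convergence} and~\ref{prop:local-invertibility-sequence}, keeping careful track of the $\leb$-null sets on which geometric and topological images differ (Theorem~\ref{thm:deg=m}), and it is here that the restriction to regular subdomains avoiding the singularity set and the uniform radius of local invertibility are genuinely needed.

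\emph{Strong $L^1$ convergence and weak-$*$ convergence of minors.} The pointwise a.e. convergence just obtained, the uniform $L^\infty(V)$-bound and $\leb(V)<\infty$ yield $\boldsymbol{y}_{n,U}^{-1}\to\boldsymbol{y}_U^{-1}$ in $L^1(V;\R^N)$ by dominated convergence. Since each Jacobian minor of $\boldsymbol{y}_{n,U}^{-1}$ is bounded in $L^1(V)$, along subsequences it converges weakly-$*$ in $\mathcal{M}_{\rm b}(V)$; to identify the limit with the corresponding minor of $\boldsymbol{y}_U^{-1}$ one argues by induction on the order, exploiting the divergence (null-Lagrangian) structure of minors so that the distributional, hence weak-$*$, limit of the order-$k$ minors of $\boldsymbol{y}_{n,U}^{-1}$ is read off from the $L^1$ convergence of $\boldsymbol{y}_{n,U}^{-1}$ and the inductive hypothesis on the order-$(k-1)$ minors, the base case $k=1$ being the distributional convergence of $D\boldsymbol{y}_{n,U}^{-1}$; the limits lie in $L^1(V)$ by Proposition~\ref{prop:regularity-inverse}, and since they are independent of the subsequence the full sequences converge.

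\emph{The equi-integrable case.} Suppose $(\det D\boldsymbol{y}_{n,U}^{-1})$ is equi-integrable. For $1\le k\le N-1$, the order-$k$ minor of $\boldsymbol{y}_{n,U}^{-1}$ equals $\big((\text{order-}(N-k)\text{ minor of }D\boldsymbol{y}_n)\circ\boldsymbol{y}_{n,U}^{-1}\big)\det D\boldsymbol{y}_{n,U}^{-1}$; applying Hölder's inequality with exponents $p/(N-k)>1$ and its conjugate, together with the change-of-variable bound (Proposition~\ref{prop:inverse-change-of-variable}) which controls $\int_V\big|(\text{order-}(N-k)\text{ minor of }D\boldsymbol{y}_n)\circ\boldsymbol{y}_{n,U}^{-1}\big|^{p/(N-k)}\det D\boldsymbol{y}_{n,U}^{-1}\,\d\boldsymbol{\xi}$ by $\int_U|\text{order-}(N-k)\text{ minor of }D\boldsymbol{y}_n|^{p/(N-k)}\,\d\boldsymbol{x}\le C$, one bounds $\int_E|\text{order-}k\text{ minor of }\boldsymbol{y}_{n,U}^{-1}|$ by $C\big(\int_E\det D\boldsymbol{y}_{n,U}^{-1}\big)^{1-(N-k)/p}$, which tends to $0$ uniformly in $n$ as $\leb(E)\to0$. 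Thus all Jacobian minors of $\boldsymbol{y}_{n,U}^{-1}$ are equi-integrable (the order-$N$ one by assumption), so by the Dunford--Pettis theorem the weak-$*$ convergence above upgrades to weak convergence in $L^1(V)$; in particular $D\boldsymbol{y}_{n,U}^{-1}\wk D\boldsymbol{y}_U^{-1}$ in $L^1(V;\rnn)$, which together with $\boldsymbol{y}_{n,U}^{-1}\to\boldsymbol{y}_U^{-1}$ in $L^1(V;\R^N)$ gives $\boldsymbol{y}_{n,U}^{-1}\wk\boldsymbol{y}_U^{-1}$ in $W^{1,1}(V;\R^N)$.
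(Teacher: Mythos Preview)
The paper does not contain a proof of this theorem: it is stated as a citation of \cite[Theorem 6.3, Claim (c)]{barchiesi.henao.moracorral} and no proof environment follows. There is therefore no ``paper's own proof'' to compare your proposal against.

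That said, your outline is broadly consistent with the machinery the paper imports from \cite{barchiesi.henao.moracorral}: the uniform $L^\infty$ bound from $\boldsymbol{y}_{n,U}^{-1}(V)\subset U$, the $W^{1,1}$ bound via $D\boldsymbol{y}_{n,U}^{-1}=(D\boldsymbol{y}_n)^{-1}\circ\boldsymbol{y}_{n,U}^{-1}$ and change of variables, the pointwise trapping argument for a.e.\ convergence through topological images of small balls (Lemma~\ref{lem:regular-approximate-differentiability}, Proposition~\ref{prop:local-invertibility-sequence}, Lemma~\ref{lem:topological-image-uniform-convergence}), and the null-Lagrangian identification of the minor limits. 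One point to watch in your a.e.\ step: the contradiction argument, as written, passes to a subsequence along which uniform convergence on $\partial B(\boldsymbol{x}_0,r)\cup\partial B(\boldsymbol{x}_0,\widetilde r)$ holds, and this subsequence depends on $\boldsymbol{x}_0$ (hence on $\boldsymbol{\xi}$); to conclude a.e.\ convergence of the \emph{full} sequence on $V$ you must either run the argument for every subsequence of the original sequence (so that any non-convergent subsequence is refuted), or globalize via a covering/diagonal argument as the paper does elsewhere (cf.\ Lemma~\ref{lem:topological-images-nested} and Step~4 of Theorem~\ref{thm:compactness}). Your equi-integrability estimate for the lower-order minors is correct in spirit, though the H\"older splitting you wrote needs $\det D\boldsymbol{y}_{n,U}^{-1}$ to appear with the right exponent; a cleaner route is to write the order-$k$ minor as a product and apply the de la Vall\'ee Poussin criterion directly.
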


\subsection{Global invertibility} The requirement of global injectivity of admissible deformations can be easily incorporated in our analysis. Our approach relies on the Ciarlet-Ne\v{c}as condition \cite{ciarlet.necas}. 

Let $\boldsymbol{y}\colon \Omega \to \R^N$ be almost everywhere approximately differentiable and recall Definition \ref{def:geometric-image}. The map $\boldsymbol{y}$ satisfies the Ciarlet-Ne\v{c}as condition  when the following holds:
\begin{equation*}
    \int_\Omega |\det \nabla \boldsymbol{y}|\,\d\boldsymbol{x}\leq \leb(\im_{\rm G}(\boldsymbol{y},\Omega)).
\end{equation*}
Note that, by Proposition \ref{prop:change-of-variable}, the opposite inequality is automatically satisfied, so that, in the previous equation, we could equivalently replace the inequality  with an equality. Also, we observe that the validity of the Ciarlet-Ne\v{c}as condition depends only on the equivalence class of $\boldsymbol{y}$.

The following result clarifies the relationship between almost everywhere injectivity and the Ciarlet-Ne\v{c}as condition and it is extremely useful in applications.

\begin{proposition}[{\cite[Proposition 1.5]{giacomini.ponsiglione}}]
\label{prop:ciarlet-necas}
Let $\boldsymbol{y}\colon \Omega \to \R^N$ be almost everywhere approximately differentiable. Then:
\begin{enumerate}[(i)]
    \item if $\boldsymbol{y}$ is almost everywhere injective in $\Omega$, then it satisfies the Ciarlet-Ne\v{c}as condition;
    \item if $\boldsymbol{y}$ satisfies the Ciarlet-Ne\v{c}as condition and also $\det \nabla \boldsymbol{y}\neq 0$ almost everywhere, then $\boldsymbol{y}$ is almost everywhere injective.
\end{enumerate}
\end{proposition}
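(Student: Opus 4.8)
The plan is to reduce both implications to one identity coming from the Federer Change-of-variable Formula restricted to the geometric domain. On $\dom_{\rm G}(\boldsymbol{y},\Omega)$ we have $\det\nabla\boldsymbol{y}\neq 0$ and, by Lemma \ref{lem:geometric-image}(ii), $\boldsymbol{y}$ has the Lusin property (N); splitting this set according to the sign of $\det\nabla\boldsymbol{y}$ and applying Proposition \ref{prop:change-of-variable} on each piece with $\boldsymbol{\psi}\equiv\pm1$, together with $\leb(\Omega\setminus\dom_{\rm G}(\boldsymbol{y},\Omega))=0$ from Lemma \ref{lem:geometric-image}(i), I would establish
\begin{equation*}
\int_\Omega |\det\nabla\boldsymbol{y}|\,\d\boldsymbol{x}=\int_{\im_{\rm G}(\boldsymbol{y},\Omega)}\mult(\boldsymbol{y},\Omega,\boldsymbol{\xi})\,\d\boldsymbol{\xi}.
\end{equation*}
Since $\im_{\rm G}(\boldsymbol{y},\Omega)=\boldsymbol{y}(\dom_{\rm G}(\boldsymbol{y},\Omega))$ by Definition \ref{def:geometric-image}, the integrand on the right is $\geq1$ on the domain of integration, which already re-proves the ``automatic'' inequality mentioned in the text.

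For (i), I would invoke Lemma \ref{lem:injective}: almost everywhere injectivity of $\boldsymbol{y}$ in $\Omega$ makes $\boldsymbol{y}\restr{\dom_{\rm G}(\boldsymbol{y},\Omega)}$ injective, hence $\mult(\boldsymbol{y},\Omega,\cdot)\leq1$, and the identity collapses to $\int_\Omega|\det\nabla\boldsymbol{y}|\,\d\boldsymbol{x}=\leb(\im_{\rm G}(\boldsymbol{y},\Omega))$; in particular the Ciarlet-Ne\v{c}as inequality holds.

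For (ii), assuming the Ciarlet-Ne\v{c}as condition and $\det\nabla\boldsymbol{y}\neq0$ a.e., I would combine the identity and $\mult(\boldsymbol{y},\Omega,\cdot)\geq1$ on $\im_{\rm G}(\boldsymbol{y},\Omega)$ to get $\int_\Omega|\det\nabla\boldsymbol{y}|\,\d\boldsymbol{x}\geq\leb(\im_{\rm G}(\boldsymbol{y},\Omega))$, while Ciarlet-Ne\v{c}as gives the reverse inequality; hence $\int_{\im_{\rm G}(\boldsymbol{y},\Omega)}(\mult(\boldsymbol{y},\Omega,\boldsymbol{\xi})-1)\,\d\boldsymbol{\xi}=0$, so $\mult(\boldsymbol{y},\Omega,\cdot)=1$ for $\leb$-a.e.\ $\boldsymbol{\xi}\in\im_{\rm G}(\boldsymbol{y},\Omega)$. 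Setting $N\coloneqq\{\boldsymbol{\xi}\in\im_{\rm G}(\boldsymbol{y},\Omega):\mult(\boldsymbol{y},\Omega,\boldsymbol{\xi})\geq2\}$, so $\leb(N)=0$, and $X\coloneqq(\Omega\setminus\dom_{\rm G}(\boldsymbol{y},\Omega))\cup(\boldsymbol{y}^{-1}(N)\cap\dom_{\rm G}(\boldsymbol{y},\Omega))$, I would check $\leb(X)=0$: the first piece is null by Lemma \ref{lem:geometric-image}(i), and the second because $\boldsymbol{y}\restr{\dom_{\rm G}(\boldsymbol{y},\Omega)}$ has the Lusin property (N${}^{-1}$) (the fact recalled after Proposition \ref{prop:change-of-variable}; one may also argue directly by decomposing $\dom_{\rm G}(\boldsymbol{y},\Omega)$ along the sets $\{|\det\nabla\boldsymbol{y}|>1/k\}$ and using the Area Formula on each). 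Finally, $\boldsymbol{y}\restr{\Omega\setminus X}$ is injective: two distinct points of $\dom_{\rm G}(\boldsymbol{y},\Omega)\setminus\boldsymbol{y}^{-1}(N)$ with the same image would lie over a point of $\im_{\rm G}(\boldsymbol{y},\Omega)\setminus N$, contradicting $\mult(\boldsymbol{y},\Omega,\cdot)=1$ there.

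Claim (i) and the automatic inequality are essentially formal once the identity is in hand. I expect the genuine obstacle to be the last step of (ii): upgrading ``$\mult=1$ for $\leb$-a.e.\ point of $\im_{\rm G}(\boldsymbol{y},\Omega)$'' to honest almost everywhere injectivity requires controlling the a priori possibly large preimage $\boldsymbol{y}^{-1}(N)$ of the exceptional $\leb$-null set $N$, which is exactly where the Lusin (N${}^{-1}$) property on the geometric domain is used. A secondary technical point is that the Change-of-variable Formula must be applied with $|\det\nabla\boldsymbol{y}|$ rather than the signed Jacobian, since orientation preservation is not assumed; this is handled by the sign decomposition of the geometric domain mentioned above.
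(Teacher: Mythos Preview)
The paper does not give its own proof of this proposition: it is simply quoted from \cite[Proposition 1.5]{giacomini.ponsiglione}. Your argument is correct and is in fact the standard route to this result, via the area formula identity
\[
\int_\Omega |\det\nabla\boldsymbol{y}|\,\d\boldsymbol{x}=\int_{\im_{\rm G}(\boldsymbol{y},\Omega)}\mult(\boldsymbol{y},\Omega,\boldsymbol{\xi})\,\d\boldsymbol{\xi},
\]
together with the Lusin~(N${}^{-1}$) property on the geometric domain to control the exceptional set in~(ii).

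One small technical point worth flagging: in claim~(i) you invoke Lemma~\ref{lem:injective} and Proposition~\ref{prop:change-of-variable}, both of which are stated in this paper under the standing hypothesis $\det\nabla\boldsymbol{y}\neq 0$ almost everywhere, which is \emph{not} assumed in~(i). This is harmless but deserves a word: on the set $\{\det\nabla\boldsymbol{y}=0\}$ the integrand $|\det\nabla\boldsymbol{y}|$ vanishes and, by the area formula, the image of this set under $\boldsymbol{y}\restr{\Omega_{\rm d}(\boldsymbol{y})}$ is $\leb$-null, so one may restrict attention to $\{\det\nabla\boldsymbol{y}\neq 0\}$ without affecting either side of the Ciarlet--Ne\v{c}as inequality. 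Alternatively, one can bypass Lemma~\ref{lem:injective} entirely in~(i): if $X\subset\Omega$ is the null exceptional set for almost everywhere injectivity, then $\boldsymbol{y}(X\cap\dom_{\rm G}(\boldsymbol{y},\Omega))$ is $\leb$-null by Lusin~(N), so $\mult(\boldsymbol{y},\Omega,\cdot)\leq 1$ for $\leb$-a.e.\ point of $\im_{\rm G}(\boldsymbol{y},\Omega)$, which already suffices.
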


Recalling \eqref{eqn:admissible-deformations}, the class of globally injective admissible deformations is defined as
\begin{equation}
\label{eqn:admissible-deformations-injective}
    \mathcal{Y}_p^{\rm inj}(\Omega)\coloneqq \left \{\boldsymbol{y}\in \mathcal{Y}_p(\Omega):\:\text{$\boldsymbol{y}$ a.e. injective in $\Omega$} \right \}.
\end{equation}

The following simple result provides the stability of almost everywhere injectivity with respect to the weak convergence in $W^{1,p}(\Omega;\R^N)$ and the weak convergence in $L^1(\Omega)$ of the Jacobian determinant.

\begin{corollary}
\label{cor:closure-injectivity}
Let $(\boldsymbol{y}_n)\subset \mathcal{Y}_p^{\rm inj}(\Omega)$. Suppose that there exists $\boldsymbol{y}\in \mathcal{Y}_p(\Omega)$ such that
\begin{equation*}
    \text{$\boldsymbol{y}_n \wk \boldsymbol{y}$ in $W^{1,p}(\Omega;\R^N)$, \quad $\det D \boldsymbol{y}_n \wk \det D \boldsymbol{y}$ in $L^1(\Omega)$.}
\end{equation*}
Then, $\boldsymbol{y}$ is almost everywhere injective, so that $\boldsymbol{y}\in \mathcal{Y}_p^{\rm inj}(\Omega)$.
\end{corollary}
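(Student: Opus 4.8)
The plan is to reduce almost everywhere injectivity of $\boldsymbol{y}$ to the Ciarlet-Ne\v{c}as condition via Proposition \ref{prop:ciarlet-necas} and to obtain the latter by passing to the limit in the Ciarlet-Ne\v{c}as inequalities satisfied by the approximating maps. Since each $\boldsymbol{y}_n$ is almost everywhere injective, Proposition \ref{prop:ciarlet-necas}(i) gives
\begin{equation*}
  \int_\Omega \det D\boldsymbol{y}_n\,\d\boldsymbol{x}=\int_\Omega |\det \nabla\boldsymbol{y}_n|\,\d\boldsymbol{x}\leq \leb(\im_{\rm G}(\boldsymbol{y}_n,\Omega)),\qquad n\in\N,
\end{equation*}
where we used that $\det D\boldsymbol{y}_n>0$ almost everywhere because $\boldsymbol{y}_n\in\mathcal{Y}_p(\Omega)$, together with $\nabla\boldsymbol{y}_n=D\boldsymbol{y}_n$ almost everywhere. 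It therefore suffices to show that the left-hand side converges to $\int_\Omega \det D\boldsymbol{y}\,\d\boldsymbol{x}$ and that, at least along a subsequence, the right-hand side converges to $\leb(\im_{\rm G}(\boldsymbol{y},\Omega))$.

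The convergence of the left-hand side is immediate: testing the weak convergence $\det D\boldsymbol{y}_n\wk \det D\boldsymbol{y}$ in $L^1(\Omega)$ against the constant function $1\in L^\infty(\Omega)$ yields $\int_\Omega \det D\boldsymbol{y}_n\,\d\boldsymbol{x}\to\int_\Omega \det D\boldsymbol{y}\,\d\boldsymbol{x}$. For the right-hand side, recall that a weakly convergent sequence in $L^1(\Omega)$ is equi-integrable by the Dunford-Pettis theorem; hence $(\det D\boldsymbol{y}_n)$ is equi-integrable and Proposition \ref{prop:weak-convergence}(ii) applies. Passing to a subsequence (not relabelled), we obtain $\chi_{\im_{\rm G}(\boldsymbol{y}_n,\Omega)}\to\chi_{\im_{\rm G}(\boldsymbol{y},\Omega)}$ in $L^1(\R^N)$, and in particular $\leb(\im_{\rm G}(\boldsymbol{y}_n,\Omega))\to\leb(\im_{\rm G}(\boldsymbol{y},\Omega))$ (these measures being finite since, e.g., $\leb(\im_{\rm G}(\boldsymbol{y},\Omega))\leq\int_\Omega\det D\boldsymbol{y}\,\d\boldsymbol{x}<\infty$ by the Change-of-variable Formula). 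Combining the two limits, along this subsequence we conclude
\begin{equation*}
  \int_\Omega \det D\boldsymbol{y}\,\d\boldsymbol{x}=\lim_{n\to\infty}\int_\Omega \det D\boldsymbol{y}_n\,\d\boldsymbol{x}\leq \lim_{n\to\infty}\leb(\im_{\rm G}(\boldsymbol{y}_n,\Omega))=\leb(\im_{\rm G}(\boldsymbol{y},\Omega)),
\end{equation*}
so that $\boldsymbol{y}$ satisfies the Ciarlet-Ne\v{c}as condition, recalling that $\det D\boldsymbol{y}>0$ almost everywhere, so the absolute value may be dropped.

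Finally, since $\boldsymbol{y}\in\mathcal{Y}_p(\Omega)$ we have $\det\nabla\boldsymbol{y}=\det D\boldsymbol{y}\neq 0$ almost everywhere, so Proposition \ref{prop:ciarlet-necas}(ii) yields that $\boldsymbol{y}$ is almost everywhere injective; together with $\boldsymbol{y}\in\mathcal{Y}_p(\Omega)$ this gives $\boldsymbol{y}\in\mathcal{Y}_p^{\rm inj}(\Omega)$, as claimed. The only point requiring a little care is that Proposition \ref{prop:weak-convergence}(ii) holds only up to subsequences; this is harmless here because $\int_\Omega \det D\boldsymbol{y}_n\,\d\boldsymbol{x}$ converges along the full sequence, so the displayed inequality, obtained along the extracted subsequence, is nonetheless valid. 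I do not foresee a genuine obstacle: the statement is essentially a packaging of Propositions \ref{prop:ciarlet-necas} and \ref{prop:weak-convergence} together with the Dunford-Pettis theorem.
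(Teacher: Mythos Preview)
Your proof is correct and follows essentially the same approach as the paper: write the Ciarlet--Ne\v{c}as inequality for each $\boldsymbol{y}_n$ via Proposition~\ref{prop:ciarlet-necas}(i), pass to the limit using Proposition~\ref{prop:weak-convergence}(ii), and conclude with Proposition~\ref{prop:ciarlet-necas}(ii). You are slightly more explicit than the paper in justifying the equi-integrability hypothesis of Proposition~\ref{prop:weak-convergence}(ii) via Dunford--Pettis and in handling the subsequence extraction, but the argument is otherwise identical.
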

\begin{proof}
By Proposition \ref{prop:ciarlet-necas}, for every $n \in \N$, there holds
\begin{equation}
\label{eqn:cn-seq}
    \int_\Omega \det D \boldsymbol{y}_n\,\d\boldsymbol{x}\leq \leb(\im_{\rm G}(\boldsymbol{y}_n,\Omega)).
\end{equation}
Thanks to claim (ii) of Proposition \ref{prop:weak-convergence}, passing to the limit, as $n \to \infty$, in \eqref{eqn:cn-seq}, we obtain
\begin{equation}
    \int_\Omega \det D \boldsymbol{y}\,\d\boldsymbol{x}\leq \leb(\im_{\rm G}(\boldsymbol{y},\Omega)).
\end{equation}
Since $\det D\boldsymbol{y}>0$ almost everywhere by assumption, this implies that $\boldsymbol{y}$ is almost everywhere injective by Proposition \ref{prop:ciarlet-necas}. 
\end{proof}

In the case of global injectivity, it is natural to wonder if the global inverse enjoys the same kind of regularity of the local inverse established in Theorem \ref{prop:regularity-inverse}. The answer is actually affermative.
Given $\boldsymbol{y}\in \mathcal{Y}^{\rm inj}_p(\Omega)$, it follows from \cite[Theorem 3.3]{henao.moracorral.regularity} that the global inverse $\boldsymbol{y}_{\Omega}^{-1}$, given by an arbitrary extension of $(\boldsymbol{y}\restr{\dom_{\rm G}(\boldsymbol{y},\Omega)})^{-1}\restr{\im_{\rm G}(\boldsymbol{y},\Omega)\cap \im_{\rm T}(\boldsymbol{y},\Omega)} $ to $\im_{\rm T}(\boldsymbol{y},\Omega)$, belongs to $W^{1,1}(\im_{\rm T}(\boldsymbol{y},\Omega);\R^N)$. However, this fact will not be exploited in our analysis.

\section{Static setting}
\label{sec:static}

In this section, we address the analysis of the variational model in the static setting. In the first subsection, we describe the mechanical model; in the second subsection, we present our compacntess result, namely Theorem \ref{thm:compactness}; finally, in the third subsection, we apply the compactness result to establish the existence of minimizers in the static case in Theorem \ref{thm:existence-minimizers}. The results of this section will also be instrumental for the analysis in the quasistatic setting.

\subsection{The static model}
Recall that $\Omega\subset \R^N$ is a bounded Lipschitz domain and $p>N-1$. Recall also the class of admissible deformations introduced in \eqref{eqn:admissible-deformations} and Definition \ref{def:topological-image}. For each $\boldsymbol{y}\in \mathcal{Y}_p(\Omega)$, magnetizations are given by maps $\boldsymbol{m}\in W^{1,2}(\im_{\rm T}(\boldsymbol{y},\Omega);\R^N)$ such that 
\begin{equation}
\label{eqn:magnetic-saturation}
    \text{$|\boldsymbol{m}\circ \boldsymbol{y}|\det D\boldsymbol{y}=1$ a.e. in $\Omega$.}    
\end{equation}
Thus, admissible states belong to the class
\begin{equation}
\label{eqn:admissible-states}
    \mathcal{Q}\coloneqq \left \{(\boldsymbol{y},\boldsymbol{m}):\:\boldsymbol{y}\in \mathcal{Y}_p(\Omega),\: \boldsymbol{m}\in W^{1,2}(\im_{\rm T}(\boldsymbol{y},\Omega);\R^{N}),\:\text{$|\boldsymbol{m}\circ \boldsymbol{y}|\det D\boldsymbol{y}=1$ a.e. in $\Omega$} \right \}.
\end{equation}
This class is endowed with the topology that makes the map
\begin{equation*}
    (\boldsymbol{y},\boldsymbol{m})\mapsto (\boldsymbol{y},\,\chi_{\im_{\rm T}(\boldsymbol{y},\Omega)}\boldsymbol{m},\,\chi_{\im_{\rm T}(\boldsymbol{y},\Omega)}D\boldsymbol{m})
\end{equation*}
from $\mathcal{Q}$ to $W^{1,p}(\Omega;\R^N)\times L^2(\R^N;\R^N)\times L^2(\R^N;\rnn)$ a homeomorphism onto its image, where the latter space is equipped with the product weak topology.
When considering only globally injective deformations, we restrict ourselves to the class
\begin{equation}
    \mathcal{Q}^{\rm inj}\coloneqq \left \{(\boldsymbol{y},\boldsymbol{m})\in \mathcal{Q}:\:\boldsymbol{y}\in \mathcal{Y}_p^{\rm inj}(\Omega) \right \},
\end{equation}
where we recall \eqref{eqn:admissible-deformations-injective}.

The magnetoelastic energy functional $E\colon \mathcal{Q}\to [0,+\infty]$ is defined, for $\boldsymbol{q}=(\boldsymbol{y},\boldsymbol{m})$, by setting
\begin{equation}
\label{eqn:magnetoelastic-energy}
    E(\boldsymbol{q})\coloneqq \int_\Omega W(D\boldsymbol{y},\boldsymbol{m}\circ \boldsymbol{y})\,\d\boldsymbol{x}+\int_{\im_{\rm T}(\boldsymbol{y},\Omega)} |D\boldsymbol{m}|^2\,\d\boldsymbol{\xi}+\frac{1}{2}\int_{\R^N} |D u_{\boldsymbol{m}}|^2\,\d\boldsymbol{\xi}.
\end{equation}
We denote the functionals corresponding to the three terms in the previous formula by $E^{\rm el}, E^{\rm exc}, E^{\rm mag}\colon \mathcal{Q} \to [0,+\infty)$, respectively.
The first term in \eqref{eqn:magnetoelastic-energy} represents the elastic energy and the elastic energy density $W\colon \R^{N\times N}\times \R^N \to [0,+\infty)$ is assumed to be continuous. Note that the composition $\boldsymbol{m}\circ \boldsymbol{y}$ is measurable and does not depend on the representatives of both $\boldsymbol{m}$ and $\boldsymbol{y}$. This comes from the Lusin property (N${}^{-1}$) of $\boldsymbol{y}$. On the function $W$, we take the following two assumptions:
\begin{enumerate}[(i)]
    \item \textbf{Coercivity:} there exist a constant $C>0$ and a Borel  function $\gamma\colon (0,+\infty)\to (0,+\infty)$ satisfying
    \begin{equation}
    \label{eqn:gamma}
        \lim_{h \to 0^+}h\,\gamma(h)=\lim_{h \to +\infty}\frac{\gamma(h)}{h}=+\infty
    \end{equation}
    such that there holds
    \begin{equation}
    \label{eqn:coercivity}
        \forall\,\boldsymbol{F}\in \rnn_+,\:\forall\,\boldsymbol{\lambda}\in \R^N, \quad W(\boldsymbol{F},\boldsymbol{\lambda})\geq C |\boldsymbol{F}|^p+\gamma(\det \boldsymbol{F});
    \end{equation}
    \item \textbf{Partial polyconvexity:} there exists a continuous function 
    \begin{equation*}
    \widehat{W}\colon \rnn_+\times \dots\times \rnn_+ \times (0,+\infty) \times \R^N\setminus \{\boldsymbol{0}\}\to [0,+\infty),  
    \end{equation*}
    such that the map $\boldsymbol{F}\mapsto \widehat{W}(\boldsymbol{F},\mathbf{M}_2(\boldsymbol{F}),\dots,\mathbf{M}_{N-2}(\boldsymbol{F}),\cof\boldsymbol{F},h,\boldsymbol{\lambda})$ is convex for every $h>0$ and $\boldsymbol{\lambda}\in \R^N\setminus \{\boldsymbol{0}\}$, and, for every $\boldsymbol{F}\in\rnn_+$ and $\boldsymbol{\lambda}\in \R^N \setminus \{\boldsymbol{0}\}$ with $|\boldsymbol{\lambda}|\det\boldsymbol{F}=1$,  there holds
    \begin{equation}
    \label{eqn:W-polyconvex} W(\boldsymbol{F},\boldsymbol{\lambda})=\widehat{W}(\boldsymbol{F},\mathbf{M}_2(\boldsymbol{F}),\dots,\mathbf{M}_{N-2}(\boldsymbol{F}),\cof\boldsymbol{F},\det\boldsymbol{F},\boldsymbol{\lambda}).
    \end{equation}
\end{enumerate}

Note that, the first limit condition in \eqref{eqn:gamma} entails the more natural one
\begin{equation}
\label{eqn:extreme-compression}
    \lim_{h \to 0^+} \gamma(h)=+\infty.
\end{equation}
This condition is fundamental from the modeling point of view, as it ensures that the elastic  energy blows up in case of extreme compressions. Additionally, in view of \eqref{eqn:extreme-compression}, we can extend the map $W$ by continuity as $W\colon \rnn\times \R^N \to [0,+\infty]$ by setting $W(\boldsymbol{F},\boldsymbol{\lambda})=+\infty$ for every $\boldsymbol{F}\in \rnn$ with $\det\boldsymbol{F}\leq 0$ and $\boldsymbol{\lambda}\in\R^N$.  Condition (ii) is close to the assumption of $W(\cdot,\boldsymbol{\lambda})$ being polyconvex for every $\boldsymbol{\lambda}\in\R^N$. On the one hand, the function $\widehat{W}$ does not depend on $\boldsymbol{\lambda}\in\R^N$. On the other hand,  the convexity requirement does not involve the determinant of $\boldsymbol{F}$. In (ii), for every $r\in \{1,\dots,N\}$,
we denote by $\mathbf{M}_r(\boldsymbol{F})$ the square matrix of order $\binom{N}{r}$ whose terms are all the minors of order $r$ of $\boldsymbol{F}$ with a given sign and order. A possible choice in this sense is described in \cite[Section 5.4]{dacorogna}. In particular, there hold $\mathbf{M}_1(\boldsymbol{F})=\boldsymbol{F}$, $\mathbf{M}_{N-1}(\boldsymbol{F})=\cof \boldsymbol{F}$ and $\mathrm{M}_N(\boldsymbol{F})=\det \boldsymbol{F}$.
To clarify, in the case $N=2$, the identity \eqref{eqn:W-polyconvex} reads
\begin{equation*}
W(\boldsymbol{F},\boldsymbol{\lambda})=\widehat{W}(\boldsymbol{F},\det\boldsymbol{F},\boldsymbol{\lambda}),
\end{equation*}
while, for $N=3$, we have
\begin{equation*}
W(\boldsymbol{F},\boldsymbol{\lambda})=\widehat{W}(\boldsymbol{F},\cof\boldsymbol{F},\det\boldsymbol{F},\boldsymbol{\lambda}).
\end{equation*}
An example of function $W$ satisfying conditions (i)--(ii) is provided in Example \ref{ex:W} below.

Note that the continuity of $\widehat{W}$ entails the one of $W$.  Since $\boldsymbol{y}$ has the Lusin property (N${}^{-1}$), the composition $\boldsymbol{m}\circ \boldsymbol{y}$ is measurable and well defined, in the sense that the equivalence class of the composition does not depend on the representative of $\boldsymbol{m}$ nor of the one of $\boldsymbol{y}$.

\begin{example}
\label{ex:W}
Define $\boldsymbol{L}\colon \S^{N-1}\to \rnn_+$ by setting $\boldsymbol{L}(\boldsymbol{z})\coloneqq \alpha \boldsymbol{z}\otimes \boldsymbol{z}+\beta(\boldsymbol{I}-\boldsymbol{z}\otimes \boldsymbol{z})$, where $\alpha,\beta>0$. Observe that $\det \boldsymbol{L}(\boldsymbol{z})=\alpha \beta^2$, so that  $\boldsymbol{L}(\boldsymbol{z})$ is invertible for every $\boldsymbol{z}\in \S^{N-1}$.  Let $\Phi\colon \rnn \to [0,+\infty)$ and define $W\colon \rnn \times \R^N \to [0,+\infty]$ by setting
\begin{equation*}
	W(\boldsymbol{F},\boldsymbol{\lambda})\coloneqq \Phi \big (\boldsymbol{L}(\boldsymbol{\lambda}\det \boldsymbol{F})^{-1}\boldsymbol{F} \big )
\end{equation*}
if $|\boldsymbol{\lambda}|\det\boldsymbol{F}=1$ and $W=+\infty$ otherwise. Suppose that $\Phi$ satisfies the following two conditions:
\begin{enumerate}[(i)]
	\item there exist two constants $C>0$ and $a>1$ such that
	\begin{equation*}
		\forall\,\boldsymbol{\Xi}\in\rnn, \quad \Phi(\boldsymbol{\Xi})\geq C |\boldsymbol{\Xi}|^p+\frac{1}{|\det \boldsymbol{\Xi}|^a};
	\end{equation*}
	\item there exists a continuous function
	\begin{equation*}
	\widehat{\Phi}\colon \rnn_+\times \dots\times \rnn_+ \times (0,+\infty) \times \R^N\setminus \{\boldsymbol{0}\}\to [0,+\infty),  
	\end{equation*}
	such that the map $\boldsymbol{\Xi}\mapsto \widehat{\Phi}(\boldsymbol{\Xi},\mathbf{M}_2(\boldsymbol{\Xi}),\dots,\mathbf{M}_{N-2}(\boldsymbol{\Xi}),\cof\boldsymbol{\Xi},d)$ is convex for every $d>0$, and, for every $\boldsymbol{\Xi}\in\rnn_+$,  there holds
	\begin{equation*}
 	\Phi(\boldsymbol{\Xi})=\widehat{\Phi}(\boldsymbol{\Xi},\mathbf{M}_2(\boldsymbol{\Xi}),\dots,\mathbf{M}_{N-2}(\boldsymbol{\Xi}),\cof\boldsymbol{\Xi},\det\boldsymbol{\Xi}).
	\end{equation*} 
\end{enumerate}
In this case, assumptions \eqref{eqn:gamma}--\eqref{eqn:W-polyconvex} are satisfied. Assumptions \eqref{eqn:gamma}--\eqref{eqn:coercivity} are checked by exploiting the uniform boundedness of  $\boldsymbol{L}^{-1}(\boldsymbol{z})$ for every $\boldsymbol{z}\in \S^{N-1}$. Property \eqref{eqn:W-polyconvex} is verified by using the Cauchy-Binet Formula for the minors of the product of two matrices similarly to \cite[Lemma 8.1]{barchiesi.henao.moracorral}.
\end{example}
 
The second term in \eqref{eqn:magnetoelastic-energy} constitutes the exchange energy. Note that the corresponding functional is well defined as the set $\im_{\rm T}(\boldsymbol{y},\Omega)$ depends only on the equivalence class of $\boldsymbol{y}$. The last term in \eqref{eqn:magnetoelastic-energy} represents the magnetostatic energy and involves the stray field potential $u_{\boldsymbol{m}}\colon \R^N\to\R^N$. This is given by a weak solution of the Maxwell equation:
\begin{equation}
\label{eqn:maxwell}
    \div \left (-Du_{\boldsymbol{m}}-\chi_{\im_{\rm T}(\boldsymbol{y},\Omega)}\boldsymbol{m} \right)=0\quad \text{in $\R^N$.}
\end{equation}
This means that $u_{\boldsymbol{m}}\in V^{1,2}(\R^N)$ satisfies
\begin{equation}
    \forall\,v\in V^{1,2}(\R^N),\quad \int_{\R^N} \left (-D u_{\boldsymbol{m}}-\chi_{\im_{\rm T}(\boldsymbol{y},\Omega)}\boldsymbol{m} \right) \cdot D v\,\d\boldsymbol{\xi}=0,
\end{equation}
where 
\begin{equation*}
V^{1,2}(\R^N)\coloneqq \left \{v\in L^2_{\loc}(\R^N):\: D v\in L^2(\R^N;\R^N)\right\}.    
\end{equation*}
Such a weak solution $u_{\boldsymbol{m}}$ of \eqref{eqn:maxwell} exists and is unique up to additive constants \cite[Proposition 8.8]{barchiesi.henao.moracorral}, so that the stray field $Du_{\boldsymbol{m}} \in L^2(\R^N;\R^N)$ and, in turn, the magnetostatic energy are well defined.

\subsection{Compactness} The following compactness result constitutes the first main result of the section. In particular, after having identified a limiting admissible state, we show that the magnetic saturation constraint is preserved and we prove the strong convergence in $L^1(\Omega;\R^N)$ of the compositions of magnetizations with deformations.  

\begin{theorem}[Compactness]
\label{thm:compactness}
Let $(\boldsymbol{q}_n)\subset \mathcal{Q}$ with $\boldsymbol{q}_n=(\boldsymbol{y}_n,\boldsymbol{m}_n)$. Suppose that 
\begin{equation}
\label{eqn:compactness-deformation}
\text{$\boldsymbol{y}_n\wk\boldsymbol{y}$ in $W^{1,p}(\Omega;\R^N)$, \qquad $\det D\boldsymbol{y}_n \wk \det D \boldsymbol{y}$ in $L^1(\Omega)$.}
\end{equation}
for some $\boldsymbol{y}\in \mathcal{Y}_p(\Omega)$.
Also, suppose that 
\begin{equation}
	\label{eqn:equiintegrability-det}
	\text{$(1/\det D\boldsymbol{y}_n)$ is equi-integrable,}
\end{equation}
\begin{equation}
	\label{eqn:gradient-magnetization-bounded}
	\text{$(\chi_{\im_{\rm T}(\boldsymbol{y}_n,\Omega)}D\boldsymbol{m}_n)$ is bounded in $L^2(\R^N;\rnn)$.}
\end{equation}
Then, there exists $\boldsymbol{m}\in W^{1,2}(\im_{\rm T}(\boldsymbol{y},\Omega);\R^N)$ such that $\boldsymbol{q}=(\boldsymbol{y},\boldsymbol{m})\in \mathcal{Q}$ and, up to subsequences, the following convergences hold: 
\begin{equation}
    \label{eqn:compactness-extension-m}
    \text{$\chi_{\im_{\rm T}(\boldsymbol{y}_n,\Omega)}\boldsymbol{m}_n \wk \chi_{\im_{\rm T}(\boldsymbol{y},\Omega)}\boldsymbol{m}$ in $L^2(\R^N;\R^N)$;}
\end{equation}
\begin{equation}
    \label{eqn:compactness-extension-Dm}
    \text{$\chi_{\im_{\rm T}(\boldsymbol{y}_n,\Omega)}D\boldsymbol{m}_n \wk \chi_{\im_{\rm T}(\boldsymbol{y},\Omega)}D\boldsymbol{m}$ in $L^2(\R^N;\rnn)$;}
\end{equation}
\begin{equation}
\label{eqn:compactness-composition}
    \text{$\boldsymbol{m}_n\circ \boldsymbol{y}_n \to \boldsymbol{m}\circ \boldsymbol{y}$ a.e. in $\Omega$ and in $L^1(\Omega;\R^N)$.}
\end{equation}
Moreover, up to subsequences, there hold
\begin{equation}
\label{eqn:improved-determinant}
    \text{$\det D \boldsymbol{y}_n \to \det D \boldsymbol{y}$ in $L^1(\Omega)$, \qquad $\frac{1}{\det D \boldsymbol{y}_n} \to \frac{1}{\det D \boldsymbol{y}}$ in $L^1(\Omega)$;}
\end{equation}
\begin{equation}
\label{eqn:compactness-lagrangian}
\boldsymbol{m}_n\circ \boldsymbol{y}_n\det D\boldsymbol{y}_n\to \boldsymbol{m}\circ \boldsymbol{y}\det D\boldsymbol{y}  \text{ in $L^q(\Omega;\R^N)$ for $1\leq q<\infty$;} 
\end{equation}
\begin{equation}
\label{eqn:improved-extension}
    \text{$\chi_{\im_{\rm T}(\boldsymbol{y}_n,\Omega)}\boldsymbol{m}_n \to \chi_{\im_{\rm T}(\boldsymbol{y},\Omega)}\boldsymbol{m}$ a.e. in $\R^N$ and in  $L^q(\R^N;\R^N)$ for every $1 \leq q<2$.}
\end{equation}
In particular, if $(\boldsymbol{q}_n)\subset \mathcal{Q}^{\,\rm inj}$, then $\boldsymbol{q}\in \mathcal{Q}^{\,\rm inj}$ and additionally, up to subsequences, there holds
\begin{equation}
\label{eqn:improved-injective}
    \text{$\chi_{\im_{\rm T}(\boldsymbol{y}_n,\Omega)}\boldsymbol{m}_n \to \chi_{\im_{\rm T}(\boldsymbol{y},\Omega)}\boldsymbol{m}$  in  $L^2(\R^N;\R^N)$.}
\end{equation}
\end{theorem}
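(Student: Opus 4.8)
\emph{Overview and Step 1 (bounds, weak limits, Sobolev regularity of $\boldsymbol m$).} The plan is to combine the local invertibility machinery collected above with a localization on topological images of balls, and then to upgrade the resulting weak/averaged convergences to pointwise ones by a Young measure argument. First I would note that, by \eqref{eqn:magnetic-saturation}, Proposition \ref{prop:change-of-variable} (multiplicities are $\ge1$) and Corollary \ref{cor:topological-geometric-image},
\[
\int_{\im_{\rm T}(\boldsymbol y_n,\Omega)}|\boldsymbol m_n|^2\,\d\boldsymbol\xi\le\int_\Omega|\boldsymbol m_n\circ\boldsymbol y_n|^2\det D\boldsymbol y_n\,\d\boldsymbol x=\int_\Omega\frac{\d\boldsymbol x}{\det D\boldsymbol y_n},
\]
which is bounded by \eqref{eqn:equiintegrability-det}; together with \eqref{eqn:gradient-magnetization-bounded}, the extensions $\chi_{\im_{\rm T}(\boldsymbol y_n,\Omega)}\boldsymbol m_n$ and $\chi_{\im_{\rm T}(\boldsymbol y_n,\Omega)}D\boldsymbol m_n$ are bounded in $L^2(\R^N)$, with weak limits $\boldsymbol\mu,\boldsymbol G$ along a subsequence. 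Using Proposition \ref{prop:weak-convergence}(ii) ($\chi_{\im_{\rm T}(\boldsymbol y_n,\Omega)}\to\chi_{\im_{\rm T}(\boldsymbol y,\Omega)}$ in $L^1(\R^N)$ and a.e.) and the estimate $\|\chi_{\im_{\rm T}(\boldsymbol y_n,\Omega)\setminus\im_{\rm T}(\boldsymbol y,\Omega)}\boldsymbol m_n\|_{L^1}\le\leb\big(\im_{\rm T}(\boldsymbol y_n,\Omega)\,\triangle\,\im_{\rm T}(\boldsymbol y,\Omega)\big)^{1/2}\|\boldsymbol m_n\|_{L^2}\to0$, I would conclude $\boldsymbol\mu=\chi_{\im_{\rm T}(\boldsymbol y,\Omega)}\boldsymbol\mu$ and $\boldsymbol G=\chi_{\im_{\rm T}(\boldsymbol y,\Omega)}\boldsymbol G$, so that $\boldsymbol m:=\boldsymbol\mu\restr{\im_{\rm T}(\boldsymbol y,\Omega)}$ and $\boldsymbol g:=\boldsymbol G\restr{\im_{\rm T}(\boldsymbol y,\Omega)}$ belong to $L^2$, which is \eqref{eqn:compactness-extension-m}. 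To get Sobolev regularity I would test with $\boldsymbol\varphi\in C^1_c(\im_{\rm T}(\boldsymbol y,\Omega);\R^N)$: by compactness of $\supp\boldsymbol\varphi$ and Proposition \ref{prop:weak-convergence}(i), $\supp\boldsymbol\varphi\subset\subset\im_{\rm T}(\boldsymbol y_n,\Omega)$ for $n\gg1$, so integration by parts for $\boldsymbol m_n$ passes to the limit and gives $D\boldsymbol m=\boldsymbol g$, hence $\boldsymbol m\in W^{1,2}(\im_{\rm T}(\boldsymbol y,\Omega);\R^N)$ and \eqref{eqn:compactness-extension-Dm}.

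\emph{Step 2 (local strong convergence in the deformed configuration).} For $U\in\mathcal U_{\boldsymbol y}^*$ and $V\subset\subset\im_{\rm T}(\boldsymbol y,U)$, Proposition \ref{prop:weak-convergence}(i) gives $V\subset\subset\im_{\rm T}(\boldsymbol y_n,\Omega)$ for $n\gg1$, so $(\boldsymbol m_n)$ is bounded in $W^{1,2}(V;\R^N)$; by Rellich and Step 1, $\boldsymbol m_n\to\boldsymbol m$ in $L^2(V)$ and, up to a subsequence, a.e. in $V$. Exhausting $\im_{\rm T}(\boldsymbol y,\Omega)$ by topological images of such subdomains (which, by Theorem \ref{thm:deg=m}, cover it up to a null set) and diagonalizing, $\boldsymbol m_n\to\boldsymbol m$ a.e. in $\im_{\rm T}(\boldsymbol y,\Omega)$; since off $\im_{\rm T}(\boldsymbol y,\Omega)$ one has $\chi_{\im_{\rm T}(\boldsymbol y_n,\Omega)}=0$ eventually a.e. by Proposition \ref{prop:weak-convergence}(ii), this yields $\chi_{\im_{\rm T}(\boldsymbol y_n,\Omega)}\boldsymbol m_n\to\chi_{\im_{\rm T}(\boldsymbol y,\Omega)}\boldsymbol m$ a.e. in $\R^N$. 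These functions are bounded in $L^2$ and, as an $L^1$-convergent sequence of indicators is uniformly tight, a H\"older estimate shows that $|\chi_{\im_{\rm T}(\boldsymbol y_n,\Omega)}\boldsymbol m_n|^q$ is uniformly integrable for every $q<2$; Vitali's theorem then gives \eqref{eqn:improved-extension}.

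\emph{Step 3 (localization and preservation of the magnetic saturation constraint).} This is the core. I would fix a point $\boldsymbol x_0\in\Omega$ that is good for $\boldsymbol y$ (regularly approximately differentiable with $\det\nabla\boldsymbol y(\boldsymbol x_0)>0$, lying in $\dom_{\rm G}(\boldsymbol y,\Omega)$ with density one in the relevant sets), which by Propositions \ref{prop:regular-approximate-differentiability} and \ref{prop:ciarlet-necas} is the case for a.e.\ $\boldsymbol x_0$. Using Lemmas \ref{lem:abundance-star}, \ref{lem:regular-approximate-differentiability}, \ref{lem:weak-convergence-boundary}, Theorem \ref{thm:local-invertibility} and Proposition \ref{prop:local-invertibility-sequence}, I would pick radii $\rho<\widetilde\rho/2$ in the good set attached to $\boldsymbol x_0$ and a subsequence along which $\boldsymbol y_n\to\boldsymbol y$ uniformly on $\partial B(\boldsymbol x_0,\rho)\cup\partial B(\boldsymbol x_0,\widetilde\rho)$, so that $B:=B(\boldsymbol x_0,\rho)\in\mathcal U_{\boldsymbol y}^{*,\,\mathrm{inj}}\cap\bigcap_k\mathcal U_{\boldsymbol y_{n_k}}^{\mathrm{inj}}$, $\leb(\boldsymbol y_n(\partial B))=\leb(\boldsymbol y(\partial B))=0$, and $\im_{\rm T}(\boldsymbol y_n,B)\subset\im_{\rm T}(\boldsymbol y_n,\Omega)$ stays inside a fixed ball. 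The key observation is that, by \eqref{eqn:magnetic-saturation}, Definition \ref{def:local-inverse} and Proposition \ref{prop:regularity-inverse}, and since $\boldsymbol m_n(\boldsymbol\xi)=\boldsymbol m_n\circ\boldsymbol y_n\big(\boldsymbol y_{n,B}^{-1}(\boldsymbol\xi)\big)$ for a.e.\ $\boldsymbol\xi$,
\[
\det D\boldsymbol y_{n,B}^{-1}=\frac1{\det D\boldsymbol y_n\circ\boldsymbol y_{n,B}^{-1}}=|\boldsymbol m_n|\qquad\text{a.e. in }\im_{\rm T}(\boldsymbol y_n,B).
\]
By Lemma \ref{lem:topological-image-uniform-convergence}(iv) $\chi_{\im_{\rm T}(\boldsymbol y_n,B)}\to\chi_{\im_{\rm T}(\boldsymbol y,B)}$ a.e.; together with Step 2 and the uniform $L^2$ bound, $\chi_{\im_{\rm T}(\boldsymbol y_n,B)}\det D\boldsymbol y_{n,B}^{-1}=\chi_{\im_{\rm T}(\boldsymbol y_n,B)}|\boldsymbol m_n|\to\chi_{\im_{\rm T}(\boldsymbol y,B)}|\boldsymbol m|$ in $L^1(\R^N)$, while Theorem \ref{thm:stability-inverse} identifies this limit as $\det D\boldsymbol y_B^{-1}$. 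Hence $\det D\boldsymbol y_B^{-1}=|\boldsymbol m|$ a.e.\ in $\im_{\rm T}(\boldsymbol y,B)$, and precomposing with $\boldsymbol y$ via Proposition \ref{prop:inverse-change-of-variable} yields $|\boldsymbol m\circ\boldsymbol y|\det D\boldsymbol y=1$ a.e.\ in $B$. As such balls cover $\Omega$ up to a null set, \eqref{eqn:magnetic-saturation} holds and $\boldsymbol q\in\mathcal Q$.

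\emph{Step 4 (convergence of the compositions, remaining claims, main obstacle).} On the same balls, Proposition \ref{prop:inverse-change-of-variable} applied to $\boldsymbol y_n$ and to $\boldsymbol y$ gives, for $g\in C_0(\R^N)$,
\[
\int_B g(\boldsymbol m_n\circ\boldsymbol y_n)\,\d\boldsymbol x=\int_{\im_{\rm T}(\boldsymbol y_n,B)}g(\boldsymbol m_n)\,|\boldsymbol m_n|\,\d\boldsymbol\xi\ \longrightarrow\ \int_{\im_{\rm T}(\boldsymbol y,B)}g(\boldsymbol m)\,|\boldsymbol m|\,\d\boldsymbol\xi=\int_B g(\boldsymbol m\circ\boldsymbol y)\,\d\boldsymbol x,
\]
the convergence following from Step 2, $\chi_{\im_{\rm T}(\boldsymbol y_n,B)}\to\chi_{\im_{\rm T}(\boldsymbol y,B)}$ a.e., and the equi-integrability of $g(\boldsymbol m_n)|\boldsymbol m_n|\le\|g\|_\infty|\boldsymbol m_n|$ on a fixed ball. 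A routine subsequence-and-density argument (balls being approximable in $L^1$ by good ones, and every subsequence admitting a further one along which the scheme runs) upgrades this to $g(\boldsymbol m_n\circ\boldsymbol y_n)\wks g(\boldsymbol m\circ\boldsymbol y)$ in $L^\infty(\Omega)$ for all $g\in C_0(\R^N)$. Since $(\boldsymbol m_n\circ\boldsymbol y_n)$ is equi-integrable in $L^1(\Omega;\R^N)$ by \eqref{eqn:magnetic-saturation} and \eqref{eqn:equiintegrability-det}, the Fundamental Theorem of Young measures forces its generated Young measure to be $\delta_{\boldsymbol m\circ\boldsymbol y(\boldsymbol x)}$, hence $\boldsymbol m_n\circ\boldsymbol y_n\to\boldsymbol m\circ\boldsymbol y$ in measure and, by equi-integrability, in $L^1$ (a.e.\ along a subsequence), which is \eqref{eqn:compactness-composition}. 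Then $1/\det D\boldsymbol y_n=|\boldsymbol m_n\circ\boldsymbol y_n|\to|\boldsymbol m\circ\boldsymbol y|=1/\det D\boldsymbol y$ in $L^1$, so $\det D\boldsymbol y_n\to\det D\boldsymbol y$ a.e.; since $\int_\Omega\det D\boldsymbol y_n\to\int_\Omega\det D\boldsymbol y$ by \eqref{eqn:compactness-deformation}, Scheff\'e's lemma yields \eqref{eqn:improved-determinant}. For \eqref{eqn:compactness-lagrangian} one uses that $\boldsymbol m_n\circ\boldsymbol y_n\det D\boldsymbol y_n=(\boldsymbol m_n\circ\boldsymbol y_n)/|\boldsymbol m_n\circ\boldsymbol y_n|$ has unit modulus and converges a.e.\ to $(\boldsymbol m\circ\boldsymbol y)/|\boldsymbol m\circ\boldsymbol y|$, so dominated convergence applies. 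Finally, if $(\boldsymbol q_n)\subset\mathcal Q^{\rm inj}$, Corollary \ref{cor:closure-injectivity} gives $\boldsymbol q\in\mathcal Q^{\rm inj}$, and then Proposition \ref{prop:change-of-variable} (multiplicity one), \eqref{eqn:improved-determinant} and Step 3 give $\|\boldsymbol m_n\|_{L^2}^2=\int_\Omega 1/\det D\boldsymbol y_n\to\int_\Omega 1/\det D\boldsymbol y=\|\boldsymbol m\|_{L^2}^2$, so the weak convergence \eqref{eqn:compactness-extension-m} improves to strong by uniform convexity, i.e.\ \eqref{eqn:improved-injective}. The main obstacle is Step 3: arranging the radii so that invertibility of $\boldsymbol y$ and of all $\boldsymbol y_n$ along a common subsequence, regularity of the spherical boundaries, convergence of the topological images and the nesting $\im_{\rm T}(\boldsymbol y_n,B)\subset\im_{\rm T}(\boldsymbol y_n,\Omega)$ hold at once — this is where regular approximate differentiability of admissible deformations is essential — and identifying the $L^1$-limit of $\det D\boldsymbol y_{n,B}^{-1}=|\boldsymbol m_n|$ with $\det D\boldsymbol y_B^{-1}$; the Young-measure passage in Step 4, with its subsequence bookkeeping, is the other delicate point.
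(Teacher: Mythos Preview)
Your proposal is correct and the overall architecture (local invertibility, localization on balls, change of variables, then upgrade to global statements) matches the paper, but two of the core steps are carried out by genuinely different arguments.

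For the magnetic saturation constraint, the paper proves a preparatory ``nested topological images'' lemma (Lemma~\ref{lem:topological-images-nested}) producing \emph{three} radii $r''<r'<r$ with $\im_{\rm T}(\boldsymbol y_{n_k},B'')\subset\subset\im_{\rm T}(\boldsymbol y,B')\subset\subset\im_{\rm T}(\boldsymbol y_{n_k},B)$, then tests with $\varphi\in C^0_c(B'')$ and pulls back and forth via Proposition~\ref{prop:change-of-variable}, using the a.e.\ convergence of the local inverses on $\im_{\rm T}(\boldsymbol y,B')$. You instead exploit the pointwise identity $\det D\boldsymbol y_{n,B}^{-1}=|\boldsymbol m_n|$ on $\im_{\rm T}(\boldsymbol y_n,B)$ together with the weak-$*$ convergence of Jacobian minors from Theorem~\ref{thm:stability-inverse} on compact subsets of $\im_{\rm T}(\boldsymbol y,B)$; combined with the strong $L^2$ convergence of $|\boldsymbol m_n|$ from your Step~2, uniqueness of limits gives $\det D\boldsymbol y_B^{-1}=|\boldsymbol m|$ there, hence the constraint. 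Your route bypasses Lemma~\ref{lem:topological-images-nested} entirely and needs only the two radii of Proposition~\ref{prop:local-invertibility-sequence}; it is cleaner, at the price of invoking the minor convergence in Theorem~\ref{thm:stability-inverse}, which the paper does not use in this step.

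For the convergence of compositions, the paper again works on the nested balls: it computes $\int_{B''}|\boldsymbol m_{n_k}\!\circ\boldsymbol y_{n_k}|^q$ via change of variables, shows convergence of the $L^q$ norms, identifies the weak limit by testing with $\boldsymbol\varphi\in C^0_c(B'')$ (here the convergence of inverses on $\im_{\rm T}(\boldsymbol y,B')$ is essential), obtains strong $L^q$ convergence by uniform convexity, extracts a.e.\ convergence, and globalizes by a diagonal argument indexed on a Lindel\"of cover. You instead use the identity $\det D\boldsymbol y_{n,B}^{-1}=|\boldsymbol m_n|$ once more to rewrite $\int_B g(\boldsymbol m_n\!\circ\boldsymbol y_n)=\int_{\im_{\rm T}(\boldsymbol y_n,B)}g(\boldsymbol m_n)|\boldsymbol m_n|$, pass to the limit by Vitali (no inverse needed), and then conclude via the Fundamental Theorem of Young measures that the generated Young measure is $\delta_{\boldsymbol m\circ\boldsymbol y}$, hence convergence in measure and, by equi-integrability, in $L^1$. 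This again avoids the three-radii nesting and the explicit use of $\boldsymbol y_{n,B}^{-1}\to\boldsymbol y_B^{-1}$ with test functions; the price is the Young-measure machinery and the Lebesgue-differentiation step hidden in your ``subsequence-and-density'' remark (for each good $\boldsymbol x_0$ the admissible radii in $P$ have density one at $0$, so the averaged identity over $B(\boldsymbol x_0,\rho)$ localizes to a pointwise one). Both approaches require comparable subsequence bookkeeping; the paper's is more hands-on, yours more conceptual.
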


The proof exploits the following result involving the topological image of nested balls.

\begin{lemma}[Topological image of nested balls]
\label{lem:topological-images-nested}
Let $\boldsymbol{y}\in \mathcal{Y}_p(\Omega)$ and let $\boldsymbol{x}_0\in \Omega$ be such that $\boldsymbol{y}$ is regularly approximately differentiable at $\boldsymbol{x}_0$ with $\det \nabla \boldsymbol{y}(\boldsymbol{x}_0)>0$. Then, there exist ${r}_{\boldsymbol{x}_0}>0$
with the following property: for every $(\boldsymbol{y}_n)\subset \mathcal{Y}_p(\Omega)$ such that $\boldsymbol{y}_n \wk \boldsymbol{y}$ in $W^{1,p}(\Omega;\R^N)$, there exist $r,r',r''>0$ with
\begin{equation*}
        {r}_{\boldsymbol{x}_0}<r''<r'<r<\dist(\boldsymbol{x}_0;\partial \Omega)
    \end{equation*}
and a subsequence $(\boldsymbol{y}_{n_k})$, possibly depending on $r,r'$ and $r''$, such that there hold:
    \begin{equation}
\label{eqn:regularity-nested}
    B(\boldsymbol{x}_0,r),B(\boldsymbol{x}_0,r'),B(\boldsymbol{x}_0,r'')\in  \bigcap_{k=1}^\infty \mathcal{U}_{\boldsymbol{y}_{n_k}}^{*,\,\rm inj} \cap \mathcal{U}_{\boldsymbol{y}}^{*,\,\rm inj},
\end{equation}
\begin{equation}
\label{eqn:uniform-convergence-nested}
    \text{${\boldsymbol{y}}_{n_k} \to {\boldsymbol{y}}$ uniformly on $\partial B(\boldsymbol{x}_0,r)\cup \partial B(\boldsymbol{x}_0,r')\cup \partial B(\boldsymbol{x}_0,r'')$,}
\end{equation}
\begin{equation}
    \label{eqn:topological-images-nested}
        \im_{\rm T}(\boldsymbol{y},B(\boldsymbol{x}_0,r''))\subset \subset \im_{\rm T}(\boldsymbol{y},B(\boldsymbol{x}_0,r')) \subset \subset \im_{\rm T}(\boldsymbol{y},B(\boldsymbol{x}_0,r)),    
    \end{equation}
\begin{equation}
    \label{eqn:topological-images-nested-sequence}
        \forall\,k \in \N, \quad \im_{\rm T}(\boldsymbol{y}_{n_k},B(\boldsymbol{x}_0,r''))\subset \subset \im_{\rm T}(\boldsymbol{y},B(\boldsymbol{x}_0,r'))\subset \subset \im_{\rm T}(\boldsymbol{y}_{n_k},B(\boldsymbol{x}_0,r)).
\end{equation}
In this case, up to subsequences, we have
\begin{equation*}
    \text{$\boldsymbol{y}_{n_k,B(\boldsymbol{x}_0,r)}^{-1} \to \boldsymbol{y}_{B(\boldsymbol{x}_0,r)}^{-1}$ a.e. in $\im_{\rm T}(\boldsymbol{y},B(\boldsymbol{x}_0,r'))$ and strongly in $L^1(\im_{\rm T}(\boldsymbol{y},B(\boldsymbol{x}_0,r'));\R^N)$.}
\end{equation*}
\end{lemma}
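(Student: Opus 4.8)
The plan is to reduce everything to the preliminary results on local invertibility and on the stability of the local inverse, the only new point being a choice of the radii that keeps the lower bound $r_{\boldsymbol{x}_0}$ independent of the sequence. By Lemma~\ref{lem:regular-approximate-differentiability} there is $P\subset(0,\dist(\boldsymbol{x}_0;\partial\Omega))$ with $\Theta^1_+(P,0)=1$ such that, for $r\in P$, we have $B(\boldsymbol{x}_0,r)\in\mathcal{U}_{\boldsymbol{y}}$, $\deg(\boldsymbol{y},B(\boldsymbol{x}_0,r),\boldsymbol{y}(\boldsymbol{x}_0))=1$, and $\im_{\rm T}(\boldsymbol{y},B(\boldsymbol{x}_0,\rho'))\cup\boldsymbol{y}(\partial B(\boldsymbol{x}_0,\rho'))\subset\im_{\rm T}(\boldsymbol{y},B(\boldsymbol{x}_0,\rho))$ whenever $\rho,\rho'\in P$ with $\rho'<\rho/2$. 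Since $p>N-1$, the bound $\mathscr{H}^{\max\{N-p,0\}}(S_{\boldsymbol{y}})=0$ gives $\mathscr{H}^1(S_{\boldsymbol{y}})=0$; discarding a further $\mathscr{L}^1$-null set of radii (using this, Corollary~\ref{cor:lebesgue-points-balls} and the Remark after Lemma~\ref{lem:weak-convergence-boundary}), I obtain $P_0\subset P$ with $\Theta^1_+(P_0,0)=1$ along which, in addition, $B(\boldsymbol{x}_0,r)\in\mathcal{U}_{\boldsymbol{y}}^*$, $\partial B(\boldsymbol{x}_0,r)\subset L_{\boldsymbol{y}}$, and the continuous representative of $\boldsymbol{y}\restr{\partial B(\boldsymbol{x}_0,r)}$ equals $\boldsymbol{y}^*\restr{\partial B(\boldsymbol{x}_0,r)}$. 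As $\Theta^1_+(P_0,0)=1$, I fix $\varepsilon_0<\dist(\boldsymbol{x}_0;\partial\Omega)$ (depending only on $\boldsymbol{x}_0,\boldsymbol{y}$) and four pairwise disjoint intervals $J_4<J_3<J_2<J_1\subset(0,\varepsilon_0)$ with $\inf J_4>0$, each with $\mathscr{L}^1(P_0\cap J_i)>0$ and with $\sup J_{i+1}\le\tfrac12\inf J_i$, and I set $r_{\boldsymbol{x}_0}:=\tfrac12\inf J_4>0$.

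Now let $(\boldsymbol{y}_n)\subset\mathcal{Y}_p(\Omega)$ with $\boldsymbol{y}_n\wk\boldsymbol{y}$. By Rellich's theorem $\boldsymbol{y}_n\to\boldsymbol{y}$ in $L^p(\Omega;\R^N)$, so, after extracting a subsequence $(\boldsymbol{y}_{n_k})$ and using the coarea formula, $\boldsymbol{y}_{n_k}\to\boldsymbol{y}$ in $L^p(\partial B(\boldsymbol{x}_0,r);\R^N)$ for $\mathscr{L}^1$-a.e.\ $r$; again by coarea, for a.e.\ $r$ one has $\sup_k\|\boldsymbol{y}_{n_k}\|_{W^{1,p}(\partial B(\boldsymbol{x}_0,r);\R^N)}<\infty$, the restrictions of $\boldsymbol{y}$ and of every $\boldsymbol{y}_{n_k}$ to $\partial B(\boldsymbol{x}_0,r)$ belong to $W^{1,p}$, and (since $\mathscr{H}^1(S_{\boldsymbol{y}_{n_k}})=0$) $\partial B(\boldsymbol{x}_0,r)\cap S_{\boldsymbol{y}_{n_k}}=\emptyset$ for every $k$. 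On the full-measure set $G$ of such $r$, the uniform $W^{1,p}$-bound and the compact Morrey embedding $W^{1,p}(\partial B(\boldsymbol{x}_0,r);\R^N)\hookrightarrow\hookrightarrow C^0(\partial B(\boldsymbol{x}_0,r);\R^N)$ (here $p>N-1$) force $\boldsymbol{y}_{n_k}\to\boldsymbol{y}$ uniformly on $\partial B(\boldsymbol{x}_0,r)$. Picking $r_i\in P_0\cap G\cap J_i$ and setting $\widetilde r:=r_1$, $r:=r_2$, $r':=r_3$, $r'':=r_4$, we get $r_{\boldsymbol{x}_0}<r''<r'<r<\widetilde r<\dist(\boldsymbol{x}_0;\partial\Omega)$ with each radius below half of the next, and uniform convergence on all four spheres, which already contains \eqref{eqn:uniform-convergence-nested}. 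By construction $B(\boldsymbol{x}_0,\rho)\in\mathcal{U}_{\boldsymbol{y}}^*$ for $\rho\in\{r'',r',r\}$, and $B(\boldsymbol{x}_0,\rho)\in\mathcal{U}_{\boldsymbol{y}}^{\rm inj}$ by Theorem~\ref{thm:local-invertibility}, since each such $\rho$ lies in $P$ below half of the next radius of the chain (which also lies in $P$); Proposition~\ref{prop:local-invertibility-sequence}, applied to the pairs $(r'',r')$, $(r',r)$ and $(r,\widetilde r)$, gives $B(\boldsymbol{x}_0,\rho)\in\bigcap_k\mathcal{U}_{\boldsymbol{y}_{n_k}}^{\rm inj}$, and combining this with $\partial B(\boldsymbol{x}_0,\rho)\cap S_{\boldsymbol{y}_{n_k}}=\emptyset$ yields \eqref{eqn:regularity-nested}.

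It remains to control the topological images. Since $\partial\,\im_{\rm T}(\boldsymbol{y},U)\subset\boldsymbol{y}(\partial U)$ and $\im_{\rm T}(\boldsymbol{y},U)$ is bounded, the inclusions $\im_{\rm T}(\boldsymbol{y},B(\boldsymbol{x}_0,r''))\cup\boldsymbol{y}(\partial B(\boldsymbol{x}_0,r''))\subset\im_{\rm T}(\boldsymbol{y},B(\boldsymbol{x}_0,r'))$ and $\im_{\rm T}(\boldsymbol{y},B(\boldsymbol{x}_0,r'))\cup\boldsymbol{y}(\partial B(\boldsymbol{x}_0,r'))\subset\im_{\rm T}(\boldsymbol{y},B(\boldsymbol{x}_0,r))$ coming from the nesting on $P$ upgrade to the compact inclusions \eqref{eqn:topological-images-nested}. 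For \eqref{eqn:topological-images-nested-sequence}, the right inclusion is Lemma~\ref{lem:topological-image-uniform-convergence}(i) on $U=B(\boldsymbol{x}_0,r)$ applied to the compact set $\closure{\im_{\rm T}(\boldsymbol{y},B(\boldsymbol{x}_0,r'))}\subset\im_{\rm T}(\boldsymbol{y},B(\boldsymbol{x}_0,r))$; for the left one, fix $R$ with $\boldsymbol{y}(\partial B(\boldsymbol{x}_0,r''))\subset B(\boldsymbol{0},R)$, so that Lemma~\ref{lem:topological-image-uniform-convergence}(iii) on $U=B(\boldsymbol{x}_0,r'')$ gives $\im_{\rm T}(\boldsymbol{y}_{n_k},B(\boldsymbol{x}_0,r''))\cup\boldsymbol{y}_{n_k}(\partial B(\boldsymbol{x}_0,r''))\subset B(\boldsymbol{0},R)$ for $k\gg1$, and then Lemma~\ref{lem:topological-image-uniform-convergence}(ii) on the same $U$, applied to the compact set $\closure{B(\boldsymbol{0},R)}\setminus\im_{\rm T}(\boldsymbol{y},B(\boldsymbol{x}_0,r'))$ — which is disjoint from $\im_{\rm T}(\boldsymbol{y},B(\boldsymbol{x}_0,r''))\cup\boldsymbol{y}(\partial B(\boldsymbol{x}_0,r''))$ because the latter sits inside $\im_{\rm T}(\boldsymbol{y},B(\boldsymbol{x}_0,r'))$ — shows $\closure{\im_{\rm T}(\boldsymbol{y}_{n_k},B(\boldsymbol{x}_0,r''))}\subset\im_{\rm T}(\boldsymbol{y},B(\boldsymbol{x}_0,r'))$ for $k\gg1$. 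Passing to a further subsequence and relabelling gives \eqref{eqn:topological-images-nested-sequence} for every $k$, and the convergence of the local inverses follows from Theorem~\ref{thm:stability-inverse} with $U=B(\boldsymbol{x}_0,r)$ and $V=\im_{\rm T}(\boldsymbol{y},B(\boldsymbol{x}_0,r'))$, admissible by \eqref{eqn:regularity-nested} and \eqref{eqn:topological-images-nested}.

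The main obstacle is this last left inclusion: one must trap $\im_{\rm T}(\boldsymbol{y}_{n_k},B(\boldsymbol{x}_0,r''))$ from the outside, and for that the intermediate buffer ball $B(\boldsymbol{x}_0,r')$ is indispensable, since it lets the nesting for the limit deformation (Lemma~\ref{lem:regular-approximate-differentiability}(ii)) absorb the slack produced by the merely uniform convergence on the spheres; keeping $r_{\boldsymbol{x}_0}$ sequence-independent while letting $r,r',r''$ adapt to the given sequence is the other point requiring care.
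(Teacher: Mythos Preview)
Your overall strategy matches the paper's proof: fix a chain of intervals in $P$ (depending only on $\boldsymbol{y}$ and $\boldsymbol{x}_0$) to define $r_{\boldsymbol{x}_0}$, then for a given sequence pick one radius in each interval where the boundary restrictions behave well, and deduce the nesting from Lemma~\ref{lem:regular-approximate-differentiability}(ii) together with Lemma~\ref{lem:topological-image-uniform-convergence}. Your argument for the left inclusion in \eqref{eqn:topological-images-nested-sequence} via Lemma~\ref{lem:topological-image-uniform-convergence}(ii) and (iii) is a correct variant of the paper's, which applies (iii) directly with $O=\im_{\rm T}(\boldsymbol{y},B(\boldsymbol{x}_0,r'))$ (this is licit because $\R^N\setminus\im_{\rm T}(\boldsymbol{y},B(\boldsymbol{x}_0,r'))$ is connected by Lemma~\ref{lem:connected}(ii)).

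There is, however, one genuine gap. You claim that, after a single Rellich subsequence, the coarea formula yields $\sup_k\|\boldsymbol{y}_{n_k}\|_{W^{1,p}(\partial B(\boldsymbol{x}_0,r);\R^N)}<\infty$ for $\mathscr{L}^1$-a.e.\ $r$, and from this you conclude uniform convergence on $\partial B(\boldsymbol{x}_0,r)$ for all $r$ in a full-measure set $G$, \emph{for the same subsequence}. Coarea only gives a uniform bound on $\int f_k(r)\,\d r$ with $f_k(r)=\|\boldsymbol{y}_{n_k}\|^p_{W^{1,p}(\partial B(\boldsymbol{x}_0,r))}$; boundedness of $(f_k)$ in $L^1$ does not force $\sup_k f_k(r)<\infty$ a.e.\ (a typewriter-type sequence $f_{k}=k\,\chi_{I_k}$, $|I_k|=1/k$, cycling through $[0,1]$, is a counterexample). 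What coarea plus Fatou do give is $\liminf_k f_k(r)<\infty$ a.e.; then, \emph{for each chosen radius separately}, you extract a further subsequence along which the $W^{1,p}$-norms on that sphere stay bounded, and only then does the compact Morrey embedding combined with $L^p$-convergence on the sphere yield uniform convergence. This is precisely the content of Lemma~\ref{lem:weak-convergence-boundary}, and it is how the paper proceeds: it selects $\widetilde r,r$ (and then $r',r''$) inside the prescribed intervals and, at each step, passes to a further subsequence depending on that radius. Since you only need four radii, successive extractions are harmless; but as written, your set $G$ and the assertion that the fixed subsequence converges uniformly on $\partial B(\boldsymbol{x}_0,r)$ for every $r\in G$ are not justified. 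Replacing that paragraph by four applications of Lemma~\ref{lem:weak-convergence-boundary} (one per interval $J_i$, each time refining the subsequence) closes the gap and leaves the rest of your proof intact.
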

We stress that, in the previous statement, the radius $r_{\boldsymbol{x}_0}$ does not depend on the sequence $(\boldsymbol{y}_n)$, but only on $\boldsymbol{y}$ and $\boldsymbol{x}_0$. A graphical representation of \eqref{eqn:topological-images-nested}--\eqref{eqn:topological-images-nested-sequence} is provided in Figure \ref{fig:nested}.

\begin{figure}
	\centering
	\begin{tikzpicture}
	\node[anchor=south west,inner sep=0] at (0,0) {\includegraphics[width=.8\textwidth]{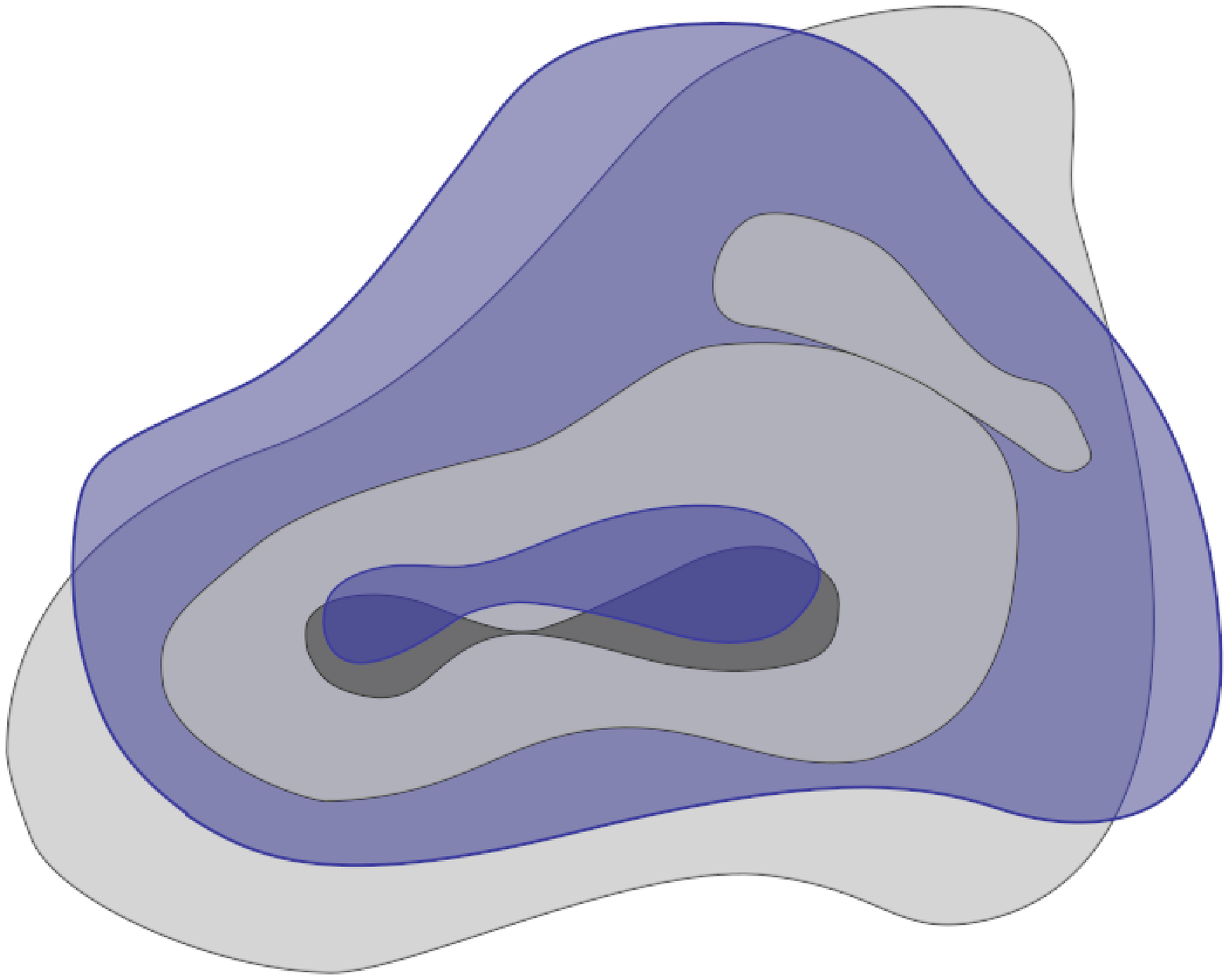}};
	\node at (8.2,5.3) { \color{bblue}  $\im_{\rm T}(\boldsymbol{y}_{n_k},B(\boldsymbol{x}_0,r''))$};
	\node at (7.4,3.3) { $\im_{\rm T}(\boldsymbol{y},B(\boldsymbol{x}_0,r''))$};
	\node at (7.7,5.8) {$\im_{\rm T}(\boldsymbol{y},B(\boldsymbol{x}_0,r'))$};
	\node at (12.8,6.2) {\color{bblue} $\im_{\rm T}(\boldsymbol{y}_{n_k},B(\boldsymbol{x}_0,r))$};
	\node at (12.3, 7.2) {$\im_{\rm T}(\boldsymbol{y},B(\boldsymbol{x}_0,r))$};
	\end{tikzpicture}
	\caption{The sets $\im_{\rm T}(\boldsymbol{y},B(\boldsymbol{x}_0,r''))$ and $\im_{\rm T}(\boldsymbol{y}_{n_k},B(\boldsymbol{x}_0,r''))$ are both contained in $\im_{\rm T}(\boldsymbol{y},B(\boldsymbol{x}_0,r'))$ which, in turn, is contained in the intersection of  $\im_{\rm T}(\boldsymbol{y},B(\boldsymbol{x}_0,r))$ and $\im_{\rm T}(\boldsymbol{y}_{n_k},B(\boldsymbol{x}_0,r))$.}
	\label{fig:nested}
\end{figure}

\begin{proof}[Proof of Lemma \ref{lem:topological-images-nested}]
	We begin by checking the following claim:
	\begin{equation}
	\label{eqn:claim-radii}
	\forall\,\alpha>0,\:\exists\,r,r'>0:\:0<r'<\frac{r}{2}<r<\alpha, \quad \mathscr{L}^1(P\cap (r',r))>0.
	\end{equation}
	Recall that $\Theta^1_+(P,0)=1$. Thus, by definition, there holds
	\begin{equation}
	\label{eqn:epsilon-delta}
	\forall\,\varepsilon>0,\:\exists\,\delta(\varepsilon)>0:\:\forall\,0<\rho<\delta(\varepsilon),\quad (1-\varepsilon)\rho <\mathscr{L}^1(P\cap (0,\rho))<(1+\varepsilon)\rho.
	\end{equation}
	Let $\varepsilon>0$. Given $0<r<\alpha \wedge \delta(\varepsilon)$, by \eqref{eqn:epsilon-delta}, we have
	\begin{equation}
	\label{eqn:d1}
	(1-\varepsilon)r<\mathscr{L}^1(P\cap (0,r))<(1+\varepsilon)r. 
	\end{equation}
	Similarly, for $0<r'<r/2\wedge \delta(\varepsilon)$, we obtain
	\begin{equation}
	\label{eqn:d2}
	(1-\varepsilon)r'<\mathscr{L}^1(P\cap (0,r'))<(1+\varepsilon)r'. 
	\end{equation}
	Combining \eqref{eqn:d1}--\eqref{eqn:d2}, we estimate
	\begin{equation*}
	\begin{split}
	\mathscr{L}^1(P\cap (r',r))&=\mathscr{L}^1(P \cap [r',r))=\mathscr{L}^1(P \cap (0,r))-\mathscr{L}^1(P \cap (0,r'))\\
	&>(1-\varepsilon)r-(1+\varepsilon)r'>\left(\frac{1}{2}-\frac{3}{2}\varepsilon\right)r.
	\end{split}
	\end{equation*}
	As we may assume $\varepsilon<1/3$, this proves \eqref{eqn:claim-radii}.
	
	First, by applying \eqref{eqn:claim-radii} with $\alpha=\dist(\boldsymbol{x}_0;\partial \Omega)$, we find $\rho_1,\rho_2>0$ such that 
	\begin{equation}
	\label{eqn:radii-start}
	{\rho}_2<\frac{\rho_1}{2}<\rho_1 <\dist(\boldsymbol{x}_0;\partial \Omega), \quad \mathscr{L}^1(P \cap (\rho_2,\rho_1))>0.
	\end{equation}
	Analogously, by a repeated application of \eqref{eqn:claim-radii}, we find $\rho_3,\rho_4,\rho_5,\rho_6,\rho_7,\rho_8,>0$ satisfying
	\begin{equation}
	\label{eqn:nested-radii-choice}
	\rho_8<\frac{\rho_7}{2}<\rho_7<\frac{\rho_6}{2}<\rho_6<\frac{\rho_5}{2}<\rho_5<\frac{\rho_4}{2}<\rho_4<\frac{\rho_3}{2}<\rho_3<\frac{\rho_2}{2}
	\end{equation}
	and
	\begin{equation}
	\label{eqn:nested-radii-measure}
	\mathscr{L}^1(P \cap (\rho_4,\rho_3))>0, \quad \mathscr{L}^1(P \cap (\rho_6,\rho_5))>0, \quad \mathscr{L}^1(P \cap (\rho_8,\rho_7))>0 
	\end{equation}
	by taking $\alpha=\rho_2/2$, $\alpha=\rho_4/2$ and $\alpha=\rho_6/2$. Then, we set $r_{\boldsymbol{x}_0}\coloneqq \rho_8>0$.
	
	Now, let $(\boldsymbol{y}_n)\subset \mathcal{Y}_p(\Omega)$ be such that $\boldsymbol{y}_n \wk \boldsymbol{y}$ in $W^{1,p}(\Omega;\R^N)$. Thanks to \eqref{eqn:radii-start} and \eqref{eqn:nested-radii-measure}, by Lemma \ref{lem:weak-convergence-boundary} and Lemma \ref{lem:abundance-star}, there exist $\widetilde{r}\in P\cap (\rho_2,\rho_1)$, $r\in P\cap (\rho_4,\rho_3)$, and a subsequence $(\boldsymbol{y}_{n_k})$, possibly depending on $\widetilde{r}$ and $r$, such that 
	\begin{equation*}
	B(\boldsymbol{x}_0,\widetilde{r}),B(\boldsymbol{x}_0,r)\in \bigcap_{k=1}^\infty \mathcal{U}_{\boldsymbol{y}_{n_k}}^* \cap \mathcal{U}_{\boldsymbol{y}}^*
	\end{equation*}
	and
	\begin{equation*}
	\text{${\boldsymbol{y}}_{n_k}\to {\boldsymbol{y}}$ uniformly on $\partial B(\boldsymbol{x}_0,\widetilde{r}) \cup \partial B(\boldsymbol{x}_0,r)$.}
	\end{equation*}
	Note that, by \eqref{eqn:nested-radii-choice}, there holds $r<\widetilde{r}/2$. Therefore, thanks to Proposition \ref{prop:local-invertibility-sequence}, we conclude that each map $\boldsymbol{y}_{n_k}$ and also $\boldsymbol{y}$, are almost everywhere injective on $B(\boldsymbol{x}_0,r)$.  
	
	At this point, taking into account \eqref{eqn:nested-radii-measure},  we apply once more Lemma \ref{lem:weak-convergence-boundary} and Lemma \ref{lem:abundance-star} to select $r'\in P \cap (\rho_6,\rho_5)$ and $r''\in P \cap (\rho_8,\rho_7)$, as well as a further subsequence that we do not relabel, so that  \eqref{eqn:regularity-nested}--\eqref{eqn:uniform-convergence-nested} are satisfied.
	
	From \eqref{eqn:nested-radii-choice}, we see that $r'<r/2$ and $r''<r'/2$. Thus, by claim (ii) of Lemma \ref{lem:regular-approximate-differentiability}, there hold
	\begin{equation}
	\label{eqn:bbs1}
	\im_{\rm T}(\boldsymbol{y},B(\boldsymbol{x}_0,r'))\cup {\boldsymbol{y}}(\partial B(\boldsymbol{x}_0,r')) \subset \im_{\rm T}(\boldsymbol{y},B(\boldsymbol{x}_0,r))
	\end{equation}
	and
	\begin{equation}
	\label{eqn:bbs2}
	\im_{\rm T}(\boldsymbol{y},B(\boldsymbol{x}_0,r''))\cup {\boldsymbol{y}}(\partial B(\boldsymbol{x}_0,r'')) \subset \im_{\rm T}(\boldsymbol{y},B(\boldsymbol{x}_0,r')).
	\end{equation}
	These yield \eqref{eqn:topological-images-nested}. The first inclusion in \eqref{eqn:topological-images-nested-sequence} follows from \eqref{eqn:bbs1} by claim (iii) of Lemma \ref{lem:topological-image-uniform-convergence}, whereas the second inclusion in \eqref{eqn:topological-images-nested-sequence} is deduced from \eqref{eqn:bbs2} by claim (i) of Lemma \ref{lem:topological-image-uniform-convergence}.
	Finally, the last statement follows by Theorem \ref{thm:stability-inverse}.
\end{proof}

We are now ready to present the proof of our compactness result.

\begin{proof}[Proof of Theorem \ref{thm:compactness}]
For convenience of the reader, the proof is subdivided into six steps.

\textbf{Step 1 (Compactness of magnetizations).} In this step, we argue as in \cite[Proposition 7.1]{barchiesi.henao.moracorral}. For every $n \in \N$, set 
\begin{equation*}
    \boldsymbol{v}_n\coloneqq \chi_{\im_{\rm T}(\boldsymbol{y}_n,\Omega)}\boldsymbol{m}_n, \qquad \boldsymbol{V}_n\coloneqq \chi_{\im_{\rm T}(\boldsymbol{y}_n,\Omega)}D\boldsymbol{m}_n.
\end{equation*}
By \eqref{eqn:gradient-magnetization-bounded}, $(\boldsymbol{V}_n)$ is bounded in $L^2(\R^N;\rnn)$.  Applying Corollary \ref{cor:topological-geometric-image} and  Proposition \ref{prop:change-of-variable}, we compute
\begin{equation*}
        \int_{\im_{\rm T}(\boldsymbol{y}_n,\Omega)} |\boldsymbol{m}_n|^2\,\d\boldsymbol{\xi}\leq \int_\Omega |\boldsymbol{m}_n \circ \boldsymbol{y}_n|^2\det D \boldsymbol{y}_n\,\d\boldsymbol{x}=\int_\Omega \frac{1}{\det D \boldsymbol{y}_n}\,\d\boldsymbol{x},
\end{equation*}
where the last equality is justified by the identity
\begin{equation}
\label{eqn:saturation-sequence}
    \text{$|\boldsymbol{m}_n \circ \boldsymbol{y}_n|\det D \boldsymbol{y}_n=1$ a.e. in $\Omega$,}
\end{equation}
which holds for every $n \in \N$.
Thus, $(\boldsymbol{v}_n)$ is bounded in $L^2(\R^N;\R^N)$ thanks to \eqref{eqn:equiintegrability-det}. Therefore, there exist $\boldsymbol{v}\in L^2(\R^N;\R^N)$ and $\boldsymbol{V}\in L^2(\R^N;\rnn)$ such that, up to subsequences, we have
\begin{equation}
\label{eqn:weak-convergence-v-and-V}
    \text{$\boldsymbol{v}_n \wk \boldsymbol{v}$ in $L^2(\R^N;\R^N)$, \qquad $\boldsymbol{V}_n \wk \boldsymbol{V}$ in $L^2(\R^N;\rnn)$.}
\end{equation}
Let $U \in  \mathcal{U}_{\boldsymbol{y}}^*$ and let $V \subset \subset \im_{\rm T}(\boldsymbol{y},U)$ be a smooth domain. By claim (i) of Proposition \ref{prop:weak-convergence}, for $n \gg 1$ there holds $V \subset \subset \im_{\rm T}(\boldsymbol{y}_n,\Omega)$. In this case, we have
\begin{equation}
\label{eqn:constant}
    \sup_{n \gg 1}||\boldsymbol{m}_n||_{W^{1,2}(V;\R^N)}= \sup_{n \gg 1} \left \{||\boldsymbol{v}_n||^2_{L^2(V;\R^N)}+||\boldsymbol{V}_n||^2_{L^2(V;\rnn)} \right \}^{1/2}\leq C,
\end{equation}
for some constant $C>0$ which is independent on $V$. Thus, there exist a subsequence $(\boldsymbol{m}_{n_k})$ and a map $\boldsymbol{m}\in W^{1,2}(V;\R^N)$, both possibly depending on $V$, such that 
\begin{equation}
\label{eqn:compactness-m}
    \text{$\boldsymbol{m}_{n_k} \wk \boldsymbol{m}$ in $W^{1,2}(V;\R^N)$, \qquad $\boldsymbol{m}_{n_k} \to \boldsymbol{m}$ in $L^q(V;\R^N)$ for every $1\leq q <2^*$ and a.e. in $V$.}
\end{equation}
Here, we applied the Sobolev Embedding Theorem.
In view of \eqref{eqn:weak-convergence-v-and-V}, there hold $\boldsymbol{m}=\boldsymbol{v}$  and $\boldsymbol{V}=D\boldsymbol{m}$ almost everywhere in $V$. In particular, we deduce that the map $\boldsymbol{m}$ does not depend on $V$. As $U$ and $V$ are arbitrary and the constant on the right-hand side of \eqref{eqn:constant} does not depend on these sets, we infer that $\boldsymbol{m}\in W^{1,2}(\im_{\rm T}(\boldsymbol{y},\Omega);\R^N)$ and also $\boldsymbol{v}=\boldsymbol{m}$  and $\boldsymbol{V}=D \boldsymbol{m}$ almost everywhere in $\im_{\rm T}(\boldsymbol{y},\Omega)$.

Now, we write $\im_{\rm T}(\boldsymbol{y},\Omega)=\bigcup_{i=1}^\infty V_i$, where 
$(V_i)$ denotes a sequence of smooth domains with the property that for every $i \in \N$ there exists $U_i \in \mathcal{U}_{\boldsymbol{y}}^*$ such that $V_i \subset \subset \im_{\rm T}(\boldsymbol{y},U_i)$. This is possible thanks to the Lindel\"{o}f property.
Arguing as in \eqref{eqn:compactness-m} and applying a diagonal argument, we select a subsequence of $(\boldsymbol{m}_n)$, that we do not relabel, such that
\begin{equation*}
     \forall\,i \in \N,\quad \text{$\boldsymbol{m}_{n} \wk \boldsymbol{m}$ in $W^{1,2}(V_i;\R^N)$, \quad $\boldsymbol{m}_{n} \to \boldsymbol{m}$ in $L^q(V_i;\R^N)$ for every $1\leq q<2^*$ and a.e. in $V_i$,}
\end{equation*}
This entails the following
\begin{equation}
\label{eqn:compactness-m-local}
\begin{split}
    &\text{$\forall\,V \subset \subset \im_{\rm T}(\boldsymbol{y},\Omega)$ open,\qquad $\boldsymbol{m}_{n}\wk\boldsymbol{m}$ in $W^{1,2}(V;\R^N)$,}\\
    &\hspace{-2mm}\text{$\boldsymbol{m}_{n}\to \boldsymbol{m}$ in $L^q(V;\R^N)$ for every $1\leq q <2^*$ and a.e. in $V$.}
\end{split}
\end{equation}
Note that, in the previous equation, the set $V$ does not have to be smooth.

\textbf{Step 2 (Magnetic saturation constraint).} In order to prove that $\boldsymbol{q}=(\boldsymbol{y},\boldsymbol{m})\in \mathcal{Q}$, we have to show that \eqref{eqn:magnetic-saturation} holds true. 
Let $\boldsymbol{x}_0\in \Omega$ be such that $\boldsymbol{y}$ is regularly approximately differentiable at $\boldsymbol{x}_0$ with $\det \nabla \boldsymbol{y}(\boldsymbol{x}_0)>0$. Let $r,r',r''>0$ and $(\boldsymbol{y}_{n_k})$ be given by Lemma \ref{lem:topological-images-nested}. For convenience, we set $B\coloneqq B(\boldsymbol{x}_0,r)$, $B'\coloneqq B(\boldsymbol{x}_0,r')$ and $B''\coloneqq B(\boldsymbol{x}_0,r'')$. Thus
\begin{equation}
\label{eqn:nested}
    \im_{\rm T}(\boldsymbol{y},B'')\subset \subset \im_{\rm T}(\boldsymbol{y},B')\subset \subset \im_{\rm T}(\boldsymbol{y},B)
\end{equation}
and, for every $k \in \N$, there holds
\begin{equation}
\label{eqn:nested-sequence}
    \im_{\rm T}(\boldsymbol{y}_{n_k},B'')\subset \subset \im_{\rm T}(\boldsymbol{y},B')\subset \subset \im_{\rm T}(\boldsymbol{y}_{n_k},B).
\end{equation}
Also, we have
\begin{equation}
\label{eqn:inverses-convergence-ae}
    \text{$\boldsymbol{y}_{n_k,B}^{-1}\to\boldsymbol{y}_{B}^{-1}$ a.e. in $\im_{\rm T}(\boldsymbol{y},B')$.}
\end{equation}
Let $\varphi\in C^0_{\rm c}(B'')$ and set $K\coloneqq \mathrm{supp}\,\varphi$. 
With the aid of Proposition \ref{prop:change-of-variable}, we compute
\begin{equation*}
    \begin{split}
        \int_{B''} \varphi\,\d\boldsymbol{x}&=\int_{B''} \varphi\,|\boldsymbol{m}_{n_k} \circ \boldsymbol{y}_{n_k}|\,\det D\boldsymbol{y}_{n_k}\,\d\boldsymbol{x}\\
        &=\int_{\im_{\rm G}(\boldsymbol{y}_{n_k},B'')} \varphi\circ \boldsymbol{y}_{n_k,B}^{-1}\,|\boldsymbol{m}_{n_k}|\,\d\boldsymbol{\xi}\\
        &=\int_{\im_{\rm T}(\boldsymbol{y},B')} \varphi\circ \boldsymbol{y}_{n_k,B}^{-1}\,|\boldsymbol{m}_{n_k}|\,\d\boldsymbol{\xi}.
    \end{split}
\end{equation*}
Here, we exploited \eqref{eqn:saturation-sequence} together with
the identities $\varphi \circ \boldsymbol{y}_{n_k,B}^{-1}=0$ on $\im_{\rm G}(\boldsymbol{y}_{n_k},B'')\setminus \im_{\rm G}(\boldsymbol{y}_{n_k},K)$ and $\im_{\rm G}(\boldsymbol{y}_{n_k},B'')\subset \im_{\rm T}(\boldsymbol{y},B')$, the latter being justified by  \eqref{eqn:nested-sequence} and claim (iii) of Corollary \ref{cor:topological-geometric-image}. Given \eqref{eqn:compactness-m-local} and \eqref{eqn:inverses-convergence-ae}, taking the limit, as $k \to \infty$, in the previous equation yields
\begin{equation*}
    \begin{split}
        \int_{B''} \varphi\,\d\boldsymbol{x}&=\int_{\im_{\rm T}(\boldsymbol{y},B')} \varphi\circ \boldsymbol{y}_{B}^{-1}\,|\boldsymbol{m}|\,\d\boldsymbol{\xi}\\
        &=\int_{\im_{\rm G}(\boldsymbol{y},B'')} \varphi\circ \boldsymbol{y}_{B}^{-1}\,|\boldsymbol{m}|\,\d\boldsymbol{\xi}\\
        &=\int_{B''} \varphi\,|\boldsymbol{m}\circ \boldsymbol{y}|\,\det D\boldsymbol{y}\,\d\boldsymbol{x}.
    \end{split}
\end{equation*}
Here, we applied Proposition \ref{prop:change-of-variable} and we exploited the identities $\boldsymbol{\varphi} \circ \boldsymbol{y}_{B}^{-1}=0$ on $\im_{\rm G}(\boldsymbol{y},B'')\setminus \im_{\rm G}(\boldsymbol{y},K)$ and $\im_{\rm G}(\boldsymbol{y},B'')\subset \im_{\rm T}(\boldsymbol{y},B')$, the latter being justified by \eqref{eqn:nested} and claim (iii) of Corollary \ref{cor:topological-geometric-image}. As $\varphi$ is arbitrary, this entails 
\begin{equation*}
    \text{$|\boldsymbol{m}\circ \boldsymbol{y}|\det D\boldsymbol{y}=1$ a.e. in $B''$.}
\end{equation*}
As almost every point in $\Omega$ has the same property of $\boldsymbol{x}_0$, this proves \eqref{eqn:magnetic-saturation}.

\textbf{Step 3 (Convergence of the trivial extensions).} In order to establish \eqref{eqn:compactness-extension-m}--\eqref{eqn:compactness-extension-Dm}, it is sufficient to show that $\boldsymbol{v}=\boldsymbol{0}$ and $\boldsymbol{V}=\boldsymbol{O}$ almost everywhere in $\R^N \setminus \im_{\rm T}(\boldsymbol{y},\Omega)$. To do this, we argue as in \cite[Proposition 7.1]{barchiesi.henao.moracorral}. Recall \eqref{eqn:weak-convergence-v-and-V} and let $Y\subset \R^N \setminus \im_{\rm T}(\boldsymbol{y},\Omega)$ be measurable with $\leb(Y)<+\infty$. Then,
\begin{equation*}
    \left |\int_Y \boldsymbol{v}\,\d\boldsymbol{\xi} \right |=\lim_{n \to \infty} \left |\int_Y \boldsymbol{v}_n\,\d\boldsymbol{\xi} \right |\leq \lim_{n \to \infty} \int_{\im_{\rm T}(\boldsymbol{y}_n,\Omega)\setminus \im_{\rm T}(\boldsymbol{y},\Omega)} |\boldsymbol{v}_n|\,\d\boldsymbol{\xi}=0,
\end{equation*}
where, in the last equality, we exploited  claim (ii) of Corollary \ref{cor:topological-geometric-image} and the equi-integrability of $(\boldsymbol{v}_n)$. This last property follows from the boundedness of $(\boldsymbol{v}_n)$ in $L^2(\R^N;\R^N)$. The identity $\boldsymbol{V}=\boldsymbol{O}$ almost everywhere in $\R^N \setminus \im_{\rm T}(\boldsymbol{y},\Omega)$ is proved in the same way.

\textbf{Step 4 (Convergence of compositions).} From the equi-integrability of $(1/\det D \boldsymbol{y}_n)$ and from \eqref{eqn:saturation-sequence}, we see that $(\boldsymbol{m}_n \circ \boldsymbol{y}_n)$ is equi-integrable. If we show that
\begin{equation}
\label{eqn:composition-convergence-ae}
    \text{$\boldsymbol{m}_n \circ \boldsymbol{y}_n \to \boldsymbol{m}\circ \boldsymbol{y}$ \:a.e. in $\Omega$,}
\end{equation}
then \eqref{eqn:compactness-composition} follows by the Vitali Convergence Theorem.

First, we argue locally. Let $\boldsymbol{x}_0\in \Omega$ be such that $\boldsymbol{y}$ is regularly approximately differentiable at $\boldsymbol{x}_0$ with $\det \nabla \boldsymbol{y}(\boldsymbol{x}_0)>0$. Let $r,r',r''>0$ and $(\boldsymbol{y}_{n_k})$ be given by Lemma \ref{lem:topological-images-nested}. As in Step 2, we set $B\coloneqq B(\boldsymbol{x}_0,r), B'\coloneqq B(\boldsymbol{x}_0,r')$ and $B''\coloneqq B(\boldsymbol{x}_0,r'')$, so that \eqref{eqn:nested}--\eqref{eqn:inverses-convergence-ae} hold true.
Fix $1<q<2^*-1$. By \eqref{eqn:compactness-m-local}, the sequence $(\boldsymbol{m}_{n_k})$ admits a majorant  in $L^{q+1}(\im_{\rm T}(\boldsymbol{y},B');\R^N)$. Exploiting \eqref{eqn:saturation-sequence} and applying Proposition \ref{prop:change-of-variable}, we compute
\begin{equation*}
    \begin{split}
        \int_{B''}|\boldsymbol{m}_{n_k} \circ \boldsymbol{y}_{n_k}|^q\,\d\boldsymbol{x}&=\int_{B''} |\boldsymbol{m}_{n_k} \circ \boldsymbol{y}_{n_k}|^{q+1}\,\det D\boldsymbol{y}_{n_k}\,\d\boldsymbol{x}\\
        &=\int_{\im_{\rm G}(\boldsymbol{y}_{n_k},B'')} |\boldsymbol{m}_{n_k}|^{q+1}\,\d\boldsymbol{\xi}\\
        &=\int_{\im_{\rm T}(\boldsymbol{y},B')} \chi_{\im_{\rm G}(\boldsymbol{y}_{n_k},B'')}\,|\boldsymbol{m}_{n_k}|^{q+1}\,\d\boldsymbol{\xi}.
    \end{split}
\end{equation*}
By Lemma \ref{lem:topological-images-nested}, there holds  ${\boldsymbol{y}}_{n_k}\to{\boldsymbol{y}}$ uniformly on $\partial B''$. Thus,  by claim (iv) of  Lemma \ref{lem:topological-image-uniform-convergence}, we have
\begin{equation*}
    \text{$\chi_{\im_{\rm G}(\boldsymbol{y}_{n_k},B'')}\to \chi_{\im_{\rm G}(\boldsymbol{y},B'')}$ a.e. in $\R^N$.}
\end{equation*}
This, together with \eqref{eqn:compactness-m-local}, allows us to pass to the limit, as $k \to \infty$, in the previous equation thanks to the  Dominated Convergence Theorem. Employing once again Proposition \ref{prop:change-of-variable} and recalling \eqref{eqn:magnetic-saturation}, we obtain
\begin{equation}
\label{eqn:convergence-composition-Lq-norm}
    \begin{split}
        \lim_{k \to \infty} \int_{B''}|\boldsymbol{m}_{n_k} \circ \boldsymbol{y}_{n_k}|^q\,\d\boldsymbol{x}&=\int_{\im_{\rm T}(\boldsymbol{y},B')} \chi_{\im_{\rm G}(\boldsymbol{y},B'')}\,|\boldsymbol{m}|^{q+1}\,\d\boldsymbol{\xi}\\
        &=\int_{\im_{\rm G}(\boldsymbol{y},B'')} |\boldsymbol{m}|^{q+1}\,\d\boldsymbol{\xi}\\
        &=\int_{B''} |\boldsymbol{m}\circ \boldsymbol{y}|^{q+1}\,\det D \boldsymbol{y}\,\d\boldsymbol{x}\\
        &=\int_{B''} |\boldsymbol{m} \circ \boldsymbol{y}|^q\,\d\boldsymbol{x}.
    \end{split}
\end{equation}
In particular, we see that $(\boldsymbol{m}_{n_k} \circ \boldsymbol{y}_{n_k})$ is bounded in $L^q(B'';\R^N)$. Hence, there exist a subsequence of $(\boldsymbol{m}_{n_k} \circ \boldsymbol{y}_{n_k})$, that we do not relabel, and a map $\boldsymbol{\lambda}\in L^q(B'';\R^N)$, both possibly depending on $B''$, such that $\boldsymbol{m}_{n_k}\circ \boldsymbol{y}_{n_k} \wk \boldsymbol{\lambda}$ in $L^q(B'';\R^N)$. We claim that $\boldsymbol{\lambda}=\boldsymbol{m}\circ \boldsymbol{y}$ almost everywhere in $B''$. To see this, let $\boldsymbol{\varphi}\in C^0_{\rm c}(B'';\R^N)$. 
With the aid of Proposition \ref{prop:change-of-variable} and exploiting \eqref{eqn:saturation-sequence}, we compute
\begin{equation*}
\begin{split}
    \int_{B''} \boldsymbol{m}_{n_k} \circ
    \boldsymbol{y}_{n_k} \cdot \boldsymbol{\varphi}\,\d\boldsymbol{x}&=\int_{B''} |\boldsymbol{m}_{n_k} \circ
    \boldsymbol{y}_{n_k}|\,\boldsymbol{m}_{n_k} \circ
    \boldsymbol{y}_{n_k} \cdot \boldsymbol{\varphi}\,\det D \boldsymbol{y}_{n_k}\,\d\boldsymbol{x}\\
    &=\int_{\im_{\rm G}(\boldsymbol{y}_{n_k},B'')} |\boldsymbol{m}_{n_k}|\,\boldsymbol{m}_{n_k} \cdot \boldsymbol{\varphi}\circ \boldsymbol{y}_{n_k,B}^{-1}\,\d\boldsymbol{\xi}\\
    &=\int_{\im_{\rm T}(\boldsymbol{y},B')} |\boldsymbol{m}_{n_k}|\,\boldsymbol{m}_{n_k} \cdot \boldsymbol{\varphi}\circ \boldsymbol{y}_{n_k,B}^{-1}\,\d\boldsymbol{\xi}
\end{split}
\end{equation*}
Recalling \eqref{eqn:compactness-m-local} and \eqref{eqn:inverses-convergence-ae}, applying the Dominated Convergence Theorem, we pass to the limit, as $k \to \infty$, in the previous equation. Note that this is justified as $(|\boldsymbol{m}_{n_k}|^2)$ admits a integrable majorant by \eqref{eqn:compactness-m-local} and $(\boldsymbol{\varphi}\circ \boldsymbol{y}_{n_k,B}^{-1})$ is uniformly bounded. 
We obtain
\begin{equation*}
    \begin{split}
        \int_{B''} \boldsymbol{\lambda} \cdot \boldsymbol{\varphi}\,\d\boldsymbol{x}&=\int_{\im_{\rm T}(\boldsymbol{y},B')} |\boldsymbol{m}|\,\boldsymbol{m}\cdot \boldsymbol{\varphi}\circ \boldsymbol{y}_{B}^{-1} \,\d\boldsymbol{\xi}\\
        &=\int_{\im_{\rm G}(\boldsymbol{y},B'')} |\boldsymbol{m}|\,\boldsymbol{m}\cdot \boldsymbol{\varphi}\circ \boldsymbol{y}_{B}^{-1} \,\d\boldsymbol{\xi}\\
        &=\int_{B''} |\boldsymbol{m}\circ \boldsymbol{y}|\,\boldsymbol{m}\circ \boldsymbol{y}\cdot \boldsymbol{\varphi}\,\det D \boldsymbol{y}\,\d\boldsymbol{x}\\
        &=\int_{B''} \boldsymbol{m}\circ \boldsymbol{y}\cdot \boldsymbol{\varphi}\,\d\boldsymbol{x}
    \end{split}
\end{equation*}
This entails $\boldsymbol{\lambda}=\boldsymbol{m}\circ \boldsymbol{y}$ almost everywhere in $B''$. Therefore, by the Urysohn property, the whole sequence $(\boldsymbol{m}_{n_k} \circ \boldsymbol{y}_{n_k})$ converges weakly in $L^q(B'';\R^N)$. 
As $q>1$, the convergence in \eqref{eqn:convergence-composition-Lq-norm} yields 
\begin{equation*}
    \text{$\boldsymbol{m}_{n_k} \circ \boldsymbol{y}_{n_k} \to \boldsymbol{m}\circ \boldsymbol{y}$ in $L^q(B'';\R^N)$.}
\end{equation*}
Thus, up to extraction of a further subsequence, we have
\begin{equation*}
    \text{$\boldsymbol{m}_{n_k} \circ \boldsymbol{y}_{n_k} \to \boldsymbol{m}\circ \boldsymbol{y}$ a.e. in $B''$.}
\end{equation*}
At this point, we stress that both the radius $r''>0$ of $B''$ and the sequence of indices $(n_k)$ depend on the point $\boldsymbol{x}_0$.

To achieve \eqref{eqn:composition-convergence-ae}, we employ a diagonal argument. Denote by $\Omega_0$ the set of points $\boldsymbol{x}_0\in \Omega$ such that $\boldsymbol{y}$ is regularly approximately differentiable at $\boldsymbol{x}_0$ with $\det \nabla \boldsymbol{y}(\boldsymbol{x}_0)>0$. In this case, $\leb(\Omega \setminus \Omega_0)=0$. For every $\boldsymbol{x}_0\in \Omega_0$, let ${r}_{\boldsymbol{x}_0}>0$ be given by Lemma \ref{lem:topological-images-nested}. Thanks to the Lindel\"{o}f property, there exists a sequence $(\boldsymbol{x}_i)\subset \Omega_0$ such that
\begin{equation}
\label{eqn:Omega_0}
    \Omega_0 \subset \bigcup_{i=1}^\infty B(\boldsymbol{x}_i,{r}_{\boldsymbol{x}_i}).
\end{equation}
Now, by means of the previous local argument, for every $i \in \N$, we  select a subsequence, indexed by $\left (n_k^{(i)}\right )$, such that
\begin{equation*}
\text{$\boldsymbol{m}_{n_k^{(i)}} \circ \boldsymbol{y}_{n_k^{(i)}}\to \boldsymbol{m}\circ \boldsymbol{y}$ a.e. in $B(\boldsymbol{x}_i,r_{\boldsymbol{x}_i})$.}
\end{equation*}
For each $i \in \N$, we choose $\left (n_k^{(i)}\right )$ to be a subsequence of $\left (n_k^{(i-1)}\right )$. In this way, we have
\begin{equation*}
\forall\,j \in \N:j\leq i,\quad \text{$\boldsymbol{m}_{n_k^{(i)}} \circ \boldsymbol{y}_{n_k^{(i)}}\to \boldsymbol{m}\circ \boldsymbol{y}$ a.e. in $B(\boldsymbol{x}_j,r_{\boldsymbol{x}_j})$.}
\end{equation*}
Thus, setting $n_j\coloneqq n^{(j)}_j$ for every $j \in \N$, there holds
\begin{equation*}
\forall\,i \in \N,\quad \text{$\boldsymbol{m}_{n_j} \circ \boldsymbol{y}_{n_j}\to \boldsymbol{m}\circ \boldsymbol{y}$ a.e. in $B(\boldsymbol{x}_i,r_{\boldsymbol{x}_i})$,}
\end{equation*}
which, in view of \eqref{eqn:Omega_0}, yields \eqref{eqn:composition-convergence-ae} for the subsequence indexed by $(n_j)$.

\textbf{Step 5 (Improved convergences).} From \eqref{eqn:compactness-composition}, we have
\begin{equation}
\label{eqn:L^1-distance-inverse-determinant}
    \int_\Omega \left | \frac{1}{\det D \boldsymbol{y}_n}-\frac{1}{\det D \boldsymbol{y}}   \right |\,\d\boldsymbol{x}=\int_\Omega \big | |\boldsymbol{m}_n \circ \boldsymbol{y}_n|- |\boldsymbol{m}\circ \boldsymbol{y}|\big |\,\d\boldsymbol{x}\leq \int_\Omega |\boldsymbol{m}_n \circ \boldsymbol{y}_n-\boldsymbol{m}\circ \boldsymbol{y}|\,\d\boldsymbol{x}.
\end{equation}
Since the right-hand side goes to zero, as $n \to \infty$, the second convergence in \eqref{eqn:improved-determinant} follows. Thus, up to subsequences, $\det D \boldsymbol{y}_n \to \det D \boldsymbol{y}$ almost everywhere and, by the Vitali Convergence Theorem, this entails the first convergence in  \eqref{eqn:improved-determinant}. 
Claim \eqref{eqn:compactness-lagrangian} immediately follows from \eqref{eqn:compactness-composition}--\eqref{eqn:improved-determinant} by taking into account \eqref{eqn:magnetic-saturation} and \eqref{eqn:saturation-sequence}.
To prove \eqref{eqn:improved-extension}, observe that, by claim (ii) of Proposition \ref{prop:weak-convergence} and by \eqref{eqn:compactness-m-local}, there holds
\begin{equation*}
    \text{$\chi_{\im_{\rm T}(\boldsymbol{y}_n,\Omega)}\boldsymbol{m}_n \to \chi_{\im_{\rm T}(\boldsymbol{y},\Omega)}\boldsymbol{m}$ a.e. in $\R^N$.}
\end{equation*}
As $(\chi_{\im_{\rm T}(\boldsymbol{y}_n,\Omega)}\boldsymbol{m}_n)$ is bounded in $L^2(\R^N;\R^N)$, by De la Vallée-Poussin Criterion \cite[Theorem 2.29]{fonseca.leoni}, the same sequence is $q$-equi-integrable for every $1\leq q<2$, so that \eqref{eqn:improved-extension} follows by applying the Vitali Convergence Theorem.

\textbf{Step 6 (Global injectivity).} If $(\boldsymbol{q}_n)\subset\mathcal{Q}^{\,\rm inj}$, then $\boldsymbol{q}\in \mathcal{Q}^{\,\rm inj}$ thanks to \eqref{eqn:compactness-deformation} and Corollary \ref{cor:closure-injectivity}. Finally, we show \eqref{eqn:improved-injective}. Applying Proposition \ref{prop:change-of-variable} and Theorem \ref{thm:deg=m}, for every $n \in \N$ we compute
\begin{equation*}
    \int_{\im_{\rm T}(\boldsymbol{y}_n,\Omega)}|\boldsymbol{m}_n|^2\,\d\boldsymbol{\xi}=\int_\Omega |\boldsymbol{m}_n \circ \boldsymbol{y}_n|^2\det D \boldsymbol{y}_n\,\d\boldsymbol{x}=\int_\Omega |\boldsymbol{m}_n \circ \boldsymbol{y}_n|\,\d\boldsymbol{x},
\end{equation*}
where we exploited \eqref{eqn:saturation-sequence}. Analogously, taking into account the injectivity of $\boldsymbol{y}$, we have
\begin{equation*}
    \int_{\im_{\rm T}(\boldsymbol{y},\Omega)}|\boldsymbol{m}|^2\,\d\boldsymbol{\xi}=\int_\Omega |\boldsymbol{m} \circ \boldsymbol{y}|^2\det D \boldsymbol{y}\,\d\boldsymbol{x}=\int_\Omega |\boldsymbol{m} \circ \boldsymbol{y}|\,\d\boldsymbol{x}.
\end{equation*}
Thus, from \eqref{eqn:compactness-composition}, we obtain
\begin{equation*}
    \lim_{n \to \infty} \|\chi_{\im_{\rm T}(\boldsymbol{y}_n,\Omega)}\boldsymbol{m}_n\|_{L^2(\R^N;\R^N)}=\|\chi_{\im_{\rm T}(\boldsymbol{y},\Omega)}\boldsymbol{m}\|_{L^2(\R^N;\R^N)}
\end{equation*}
which, together with \eqref{eqn:compactness-extension-m}, yields \eqref{eqn:improved-injective}. 
\end{proof}

We conclude the subsection with a remark explaining how the techniques employed here can be used to prove the strong convergence of the composition of magnetizations with deformations when the magnetic saturation constraint is formulated in the deformed configuration as in \cite{barchiesi.henao.moracorral}.The approach is described in the following remark.

\begin{remark}[Compactness for sphere-valued magnetizations]
	\label{rem:sphere}
	Let $(\boldsymbol{y}_n)\subset \mathcal{Y}_p(\Omega)$ and let $(\boldsymbol{m}_n)$ with $\boldsymbol{m}_n\in W^{1,2}(\im_{\rm T}(\boldsymbol{y}_n,\Omega);\mathbb{S}^{N-1})$. Suppose that \eqref{eqn:compactness-deformation} holds for some $\boldsymbol{y}\in\mathcal{Y}_p(\Omega)$. Additionally, suppose that \eqref{eqn:gradient-magnetization-bounded} holds true and that there exists  $\gamma \colon (0,+\infty)\to(0,+\infty)$ Borel satisfying \eqref{eqn:extreme-compression} such that
	\begin{equation}
	\label{eqn:gb}
	\sup_{n \in \N} \|\gamma(\det D \boldsymbol{y}_n)\|_{L^1(\Omega)}<+\infty.
	\end{equation}
	Then, there exists $\boldsymbol{m}\in W^{1,2}(\im_{\rm T}(\boldsymbol{y},\Omega);\mathbb{S}^{N-1})$ such that the convergences in \eqref{eqn:compactness-extension-m}--\eqref{eqn:compactness-extension-Dm} hold true.
	The identification of $\boldsymbol{m}$ as well as \eqref{eqn:compactness-extension-m}--\eqref{eqn:compactness-extension-Dm} have been achieved in \cite[Proposition 7.1]{barchiesi.henao.moracorral}. We only show how to prove the convergence of compositions.
	
	Let $\boldsymbol{x}_0\in\Omega$ be such that $\boldsymbol{y}$ is regularly approximately differentiable at $\boldsymbol{x}_0$ with $\det \nabla \boldsymbol{y}(\boldsymbol{x}_0)>0$. Also, let $r,r',r'',r_{\boldsymbol{x}_0}>0$ and $(\boldsymbol{y}_{n_k})$ be given by Lemma \ref{lem:topological-images-nested}. Define $B,B'$ and $B''$ be as in Step 2 of the proof of Theorem \ref{thm:compactness}, so that \eqref{eqn:nested}--\eqref{eqn:inverses-convergence-ae} hold true.
	As in \cite[Proposition 7.8]{barchiesi.henao.moracorral}, we 
	introduce the function $\widehat{\gamma}\colon (0,+\infty)\to (0,+\infty)$ by setting $\widehat{\gamma}(z)\coloneqq z\,\gamma(1/z)$. In this case,  \eqref{eqn:extreme-compression} yields
	\begin{equation*}
	\lim_{z \to +\infty} \frac{\widehat{\gamma}(z)}{z}=+\infty.
	\end{equation*}
	Applying Proposition \ref{prop:change-of-variable} and Theorem \ref{thm:deg=m}, for every $k \in \N$, we compute
	\begin{equation*}
	\begin{split}
	\int_{\im_{\rm T}(\boldsymbol{y}_{n_k},B)} \widehat{\gamma}(\det D\boldsymbol{y}_{n_k,B}^{-1})\,\d\boldsymbol{\xi}
	&= \int_{\im_{\rm T}(\boldsymbol{y}_{n_k},B)} \gamma\left(\frac{1}{\det D\boldsymbol{y}_{n_k,B}^{-1}}\right)\,\det D\boldsymbol{y}_{n_k,B}^{-1}\,\d\boldsymbol{\xi}\\
	&=\int_B \gamma(\det D \boldsymbol{y}_{n_k})\,\d\boldsymbol{x},
	\end{split}
	\end{equation*}
	where the right-hand side is uniformly bounded thanks to  \eqref{eqn:gb}.
	Here, we used the identity
	\begin{equation*}
	\text{$\det D\boldsymbol{y}_{n_k,B}^{-1}=(\det D \boldsymbol{y}_{n_k})^{-1}\circ \boldsymbol{y}_{n_k,B} ^{-1}$ a.e. in $\im_{\rm T}(\boldsymbol{y}_{n_k},B)$.}
	\end{equation*}
	In view of \eqref{eqn:nested}, there holds
	\begin{equation*}
	\sup_{k \in \N} \int_{\im_{\rm T}(\boldsymbol{y},B')} \widehat{\gamma}(\det D\boldsymbol{y}_{n_k,B}^{-1})\,\d\boldsymbol{\xi}\leq \sup_{k \in \N} \int_{\im_{\rm T}(\boldsymbol{y}_{n_k},B)} \widehat{\gamma}(\det D\boldsymbol{y}_{n_k,B}^{-1})\,\d\boldsymbol{\xi},
	\end{equation*}
	where 
	the sequence $(\det D\boldsymbol{y}_{n_k,B}^{-1})$ is equi-integrable on $\im_{\rm T}(\boldsymbol{y},B')$ by De la Vallée-Poussin Criterion \cite[Theorem 2.29]{fonseca.leoni}.
	At this point, Theorem \ref{thm:stability-inverse} ensures that, up to subsequence, there holds
	\begin{equation}
	\label{eqn:inverses-determinant}
	\text{$\det D\boldsymbol{y}_{n_k,B}^{-1}\wk \det D \boldsymbol{y}_B^{-1}$ in $L^1(\im_{\rm T}(\boldsymbol{y},B'))$.}
	\end{equation}
	Let $\boldsymbol{\varphi}\in C^0_{\rm c}(B'';\R^N)$.
	Applying Proposition \ref{prop:inverse-change-of-variable}, we compute
	\begin{equation*}
	\begin{split}
	\int_{B''} \boldsymbol{m}_{n_k} \circ \boldsymbol{y}_{n_k} \cdot \boldsymbol{\varphi}\,\d\boldsymbol{x}&=\int_{\im_{\rm G}(\boldsymbol{y}_{n_k},B'')} \boldsymbol{m}_{n_k} \cdot \boldsymbol{\varphi}\circ \boldsymbol{y}_{n_k,B}^{-1}\,\det D\boldsymbol{y}_{n_k,B}^{-1}\,\d\boldsymbol{\xi}\\
	&=\int_{\im_{\rm T}(\boldsymbol{y},B')} \boldsymbol{m}_{n_k} \cdot \boldsymbol{\varphi}\circ \boldsymbol{y}_{n_k,B}^{-1}\,\det D\boldsymbol{y}_{n_k,B}^{-1}\,\d\boldsymbol{\xi}.
	\end{split}
	\end{equation*}
	Recalling \eqref{eqn:compactness-m-local}, \eqref{eqn:inverses-convergence-ae} and \eqref{eqn:inverses-determinant}, we pass to the limit, as $k \to \infty$, in the previous equation by employing \cite[Proposition 2.61]{fonseca.leoni}. We obtain
	\begin{equation*}
	\begin{split}
	\lim_{k \to \infty} \int_{B''} \boldsymbol{m}_{n_k} \circ \boldsymbol{y}_{n_k} \cdot \boldsymbol{\varphi}\,\d\boldsymbol{x}&=\int_{\im_{\rm T}(\boldsymbol{y},B')} \boldsymbol{m} \cdot \boldsymbol{\varphi}\circ \boldsymbol{y}_B^{-1}\,\det D\boldsymbol{y}_B^{-1}\,\d\boldsymbol{\xi}\\
	&=\int_{\im_{\rm G}(\boldsymbol{y},B'')} \boldsymbol{m} \cdot \boldsymbol{\varphi}\circ \boldsymbol{y}_B^{-1}\,\det D\boldsymbol{y}_B^{-1}\,\d\boldsymbol{\xi}\\
	&=\int_{B''} \boldsymbol{m}\circ \boldsymbol{y}\,\cdot \boldsymbol{\varphi}\,\d\boldsymbol{x},
	\end{split}
	\end{equation*}
	where, in the last line, we used Proposition \ref{prop:inverse-change-of-variable}. This yields
	\begin{equation*}
	\text{$\boldsymbol{m}_{n_k} \circ \boldsymbol{y}_{n_k} \wk \boldsymbol{m}\circ \boldsymbol{y}$ in $L^2(\Omega;\R^N)$.}
	\end{equation*}
	In particular, as 
	\begin{equation*}
	\|\boldsymbol{m}_{n_k} \circ \boldsymbol{y}_{n_k}\|_{L^2(B'';\R^N)}=\|\boldsymbol{m} \circ \boldsymbol{y}\|_{L^2(B'';\R^N)}=\sqrt{\leb(B'')}
	\end{equation*}
	for every $k \in \N$, this actually entails the strong convergence in $L^2(B'';\R^N)$.
	
	By means of a diagonal argument analogous to the one employed in Step 2 of the proof of Theorem \ref{thm:compactness}, we select a subsequence for which the compositions converge strongly in $L^2(\Omega;\R^N)$ and, more generally, in $L^q(\Omega;\R^N)$ for every $1 \leq q<\infty$.
\end{remark}

\subsection{Existence of minimizers}
To prove the existence of minimizers of the magnetoelastic energy functional, we apply the Direct Method of the Calculus of Variations.
For future reference, we highlight the results concerning the coercivity and the lower semicontinuity of the magnetoelastic energy functional separately. We state the coercivity result in a slightly more self-contained form.

\begin{proposition}[Coercivity]
\label{prop:coercivity}
Let $(\boldsymbol{q}_n)\subset \mathcal{Q}$ with $\boldsymbol{q}_n=(\boldsymbol{y}_n,\boldsymbol{m}_n)$ be such that
\begin{equation}
\label{eqn:bound}
    \sup_{n \in \N} \left \{ \| \boldsymbol{y}_n\|_{W^{1,p}(\Omega;\R^N)}+\|D\boldsymbol{m}_n\|_{L^2(\im_{\rm T}(\boldsymbol{y}_n,\Omega);\rnn)}+\|\gamma(\det D \boldsymbol{y}_n)\|_{L^1(\Omega)} \right \}<+\infty,
\end{equation}
where $\gamma\colon (0,+\infty)\to (0,+\infty)$ is Borel and satisfies \eqref{eqn:gamma}.
Then, there exists $\boldsymbol{q}\in \mathcal{Q}$ with $\boldsymbol{q}=(\boldsymbol{y},\boldsymbol{m})$ such that, up to subsequences, the convergences in \eqref{eqn:compactness-deformation}--\eqref{eqn:improved-extension} hold true. In particular, if $(\boldsymbol{q}_n)\subset \mathcal{Q}^{\,\rm inj}$, then $\boldsymbol{q}\in \mathcal{Q}^{\,\rm inj}$ and the convergence in \eqref{eqn:improved-injective} holds true as well.  
\end{proposition}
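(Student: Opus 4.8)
The plan is to check that the uniform bound \eqref{eqn:bound} forces the three hypotheses \eqref{eqn:compactness-deformation}, \eqref{eqn:equiintegrability-det} and \eqref{eqn:gradient-magnetization-bounded} of Theorem \ref{thm:compactness}, and then to invoke that theorem directly; all the asserted convergences, as well as the conclusion in the globally injective case, will then follow at once.

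The first task is to translate the bound on $\|\gamma(\det D\boldsymbol{y}_n)\|_{L^1(\Omega)}$ into integrability properties of the Jacobians and their reciprocals, exploiting the two growth conditions in \eqref{eqn:gamma}. Since $\gamma(h)/h\to+\infty$ as $h\to+\infty$, a standard truncation argument (in the spirit of the De la Vall\'ee-Poussin Criterion \cite[Theorem 2.29]{fonseca.leoni}) applied to $\sup_n\|\gamma(\det D\boldsymbol{y}_n)\|_{L^1(\Omega)}<+\infty$ shows that $(\det D\boldsymbol{y}_n)$ is bounded in $L^1(\Omega)$ and equi-integrable. Symmetrically, setting $\Psi(z)\coloneqq\gamma(1/z)$ for $z>0$, one has $\Psi(1/\det D\boldsymbol{y}_n)=\gamma(\det D\boldsymbol{y}_n)$ and $\Psi(z)/z=z^{-1}\gamma(1/z)\to+\infty$ as $z\to+\infty$ by the first limit in \eqref{eqn:gamma}; hence the same criterion yields that $(1/\det D\boldsymbol{y}_n)$ is equi-integrable and, since $\Omega$ has finite measure, bounded in $L^1(\Omega)$. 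This is exactly \eqref{eqn:equiintegrability-det}.

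Next I would identify the limiting deformation. As $(\boldsymbol{y}_n)$ is bounded in $W^{1,p}(\Omega;\R^N)$ by \eqref{eqn:bound}, up to a subsequence $\boldsymbol{y}_n\wk\boldsymbol{y}$ in $W^{1,p}(\Omega;\R^N)$; by the equi-integrability of $(\det D\boldsymbol{y}_n)$ and the Dunford-Pettis theorem, passing to a further subsequence, $\det D\boldsymbol{y}_n\wk h$ in $L^1(\Omega)$ for some $h\in L^1(\Omega)$. Theorem \ref{thm:closedness} then gives $h=\det D\boldsymbol{y}$ a.e. and that $\boldsymbol{y}$ satisfies \eqref{eqn:DIV}, so that \eqref{eqn:compactness-deformation} holds once we know $\boldsymbol{y}\in\mathcal{Y}_p(\Omega)$. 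The only point still to be settled is $\det D\boldsymbol{y}>0$ a.e., and this is the one step that is not mere bookkeeping. I would obtain it by lower semicontinuity: the functional $u\mapsto\int_\Omega\Gamma(u)\,\d\boldsymbol{x}$, with $\Gamma(t)\coloneqq 1/t$ if $t>0$ and $\Gamma(t)\coloneqq+\infty$ if $t\leq 0$, is convex and lower semicontinuous on $L^1(\Omega)$ (the latter by Fatou's lemma), hence sequentially weakly lower semicontinuous there; since $\det D\boldsymbol{y}_n\wk\det D\boldsymbol{y}$ in $L^1(\Omega)$ and $\int_\Omega\Gamma(\det D\boldsymbol{y}_n)\,\d\boldsymbol{x}=\int_\Omega(\det D\boldsymbol{y}_n)^{-1}\,\d\boldsymbol{x}$ is uniformly bounded by the previous step, we deduce $\int_\Omega\Gamma(\det D\boldsymbol{y})\,\d\boldsymbol{x}<+\infty$, which is possible only if $\det D\boldsymbol{y}>0$ a.e. Together with the properties already established, this gives $\boldsymbol{y}\in\mathcal{Y}_p(\Omega)$.

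Finally, \eqref{eqn:gradient-magnetization-bounded} is immediate, since $\|\chi_{\im_{\rm T}(\boldsymbol{y}_n,\Omega)}D\boldsymbol{m}_n\|_{L^2(\R^N;\rnn)}=\|D\boldsymbol{m}_n\|_{L^2(\im_{\rm T}(\boldsymbol{y}_n,\Omega);\rnn)}$ is controlled by \eqref{eqn:bound}. Having verified all the hypotheses, I would apply Theorem \ref{thm:compactness}: it produces $\boldsymbol{m}\in W^{1,2}(\im_{\rm T}(\boldsymbol{y},\Omega);\R^N)$ with $\boldsymbol{q}=(\boldsymbol{y},\boldsymbol{m})\in\mathcal{Q}$ and, along a subsequence, all the convergences \eqref{eqn:compactness-extension-m}--\eqref{eqn:improved-extension}; the globally injective statement, including \eqref{eqn:improved-injective}, is precisely the corresponding conclusion of Theorem \ref{thm:compactness}. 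I expect no real difficulty beyond the positivity $\det D\boldsymbol{y}>0$ discussed above, which rests on the weak continuity of the Jacobian on $\mathcal{Y}_p(\Omega)$ from Theorem \ref{thm:closedness} combined with the elementary lower semicontinuity argument.
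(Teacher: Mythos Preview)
Your proof is correct and follows essentially the same route as the paper: extract the weak limits, use De la Vall\'ee--Poussin on the two growth conditions in \eqref{eqn:gamma} to get equi-integrability of both $(\det D\boldsymbol{y}_n)$ and $(1/\det D\boldsymbol{y}_n)$, apply Theorem \ref{thm:closedness}, and then invoke Theorem \ref{thm:compactness}. The only difference is cosmetic, in the step $\det D\boldsymbol{y}>0$ a.e.: the paper argues by contradiction via Fatou and \eqref{eqn:extreme-compression}, while you use weak lower semicontinuity of the convex functional $u\mapsto\int_\Omega \Gamma(u)\,\d\boldsymbol{x}$ with $\Gamma(t)=1/t$ for $t>0$ and $+\infty$ otherwise; both arguments are standard and equally valid.
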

\begin{proof}
By \eqref{eqn:bound}, there exists $\boldsymbol{y}\in W^{1,p}(\Omega;\R^N)$ such that, up to subsequences, there holds 
\begin{equation*}
    \text{$\boldsymbol{y}_n \wk \boldsymbol{y}$ in $W^{1,p}(\Omega;\R^N)$.}
\end{equation*}
 We need to prove that $\det D\boldsymbol{y}>0$ almost everywhere.
From \eqref{eqn:gamma} and \eqref{eqn:bound},  by De la Vallée-Poussin Criterion \cite[Theorem 2.29]{fonseca.leoni} and the Dunford-Pettis Theorem \cite[Theorem 2.54]{fonseca.leoni}, there exists $h \in L^1(\Omega)$ such that
\begin{equation*}
    \text{$\det D \boldsymbol{y}_n \wk h$ in $L^1(\Omega)$.}
\end{equation*}
By Theorem \ref{thm:closedness}, we infer that $\boldsymbol{y}$ satisfies \eqref{eqn:DIV} and  $h=\det D \boldsymbol{y}$ almost everywhere. As $\det D \boldsymbol{y}_n>0$ almost everywhere for every $n \in \N$, there holds $\det D\boldsymbol{y}\geq 0$ almost everywhere. 
Suppose by contraddiction that $\det D \boldsymbol{y}=0$ in $A$ for some measurable set $A \subset \Omega$ with $\leb(A)>0$. In this case, $\det D \boldsymbol{y}_n \to 0$ in $L^1(A)$ and, up to subsequences, $\det D \boldsymbol{y}_n \to 0$ almost everywhere in $A$. Thus, by \eqref{eqn:extreme-compression}, $\gamma(\det D \boldsymbol{y}_n)\to +\infty$ almost everywhere in $A$ so that, by the Fatou Lemma, we have
\begin{equation*}
    \liminf_{n\to \infty} \int_\Omega \gamma(\det D \boldsymbol{y}_n)\,\d\boldsymbol{x} \geq \liminf_{n\to \infty} \int_A \gamma(\det D \boldsymbol{y}_n)\,\d\boldsymbol{x}=+\infty.
\end{equation*}
This contraddicts \eqref{eqn:bound} and we conclude that $\det D \boldsymbol{y}>0$ almost evereywhere. Hence,  $\boldsymbol{y}\in \mathcal{Y}_p(\Omega)$.

We claim that $(1/\det D \boldsymbol{y}_n)$ is equi-integrable. To see this, define $\widetilde{\gamma}\colon (0,+\infty)\to (0,+\infty)$ by setting $\widetilde{\gamma}(z)=\gamma(1/z)$. By \eqref{eqn:gamma}, there holds 
\begin{equation*}
    \lim_{z \to +\infty} \widetilde{\gamma}(z)/z=+\infty.    
\end{equation*}
Also, by \eqref{eqn:bound}, we have
\begin{equation*}
    \sup_{n \in \N}\int_\Omega \widetilde{\gamma}(1/\det D \boldsymbol{y}_n)\,\d\boldsymbol{x}=\sup_{n \in \N}\int_\Omega \gamma(\det D \boldsymbol{y}_n)\,\d\boldsymbol{x}<+\infty,
\end{equation*}
so that $(1/\det D \boldsymbol{y}_n)$ is equi-integrable by De la Vallée-Poussin Criterion \cite[Theorem 2.29]{fonseca.leoni}. The proof is concluded by applying Theorem \ref{thm:compactness}.
\end{proof}

\begin{proposition}[Lower semicontinuity of the magnetoelastic energy]
\label{prop:lsc}
Let $(\boldsymbol{q}_n)\subset \mathcal{Q}$ with $\boldsymbol{q}_n=(\boldsymbol{y}_n,\boldsymbol{m}_n)$. Suppose that there exists $\boldsymbol{q}\in \mathcal{Q}$ with $\boldsymbol{q}=(\boldsymbol{y},\boldsymbol{m})$ such that the convergences in \eqref{eqn:compactness-deformation} and \eqref{eqn:compactness-extension-m}--\eqref{eqn:compactness-extension-Dm} hold true as well as the almost everywhere convergence in \eqref{eqn:compactness-composition}.
Then, there holds:
\begin{equation}
\label{eqn:lsc-magnetoelastic-energy}
    E(\boldsymbol{q})\leq \liminf_{n \to \infty} E(\boldsymbol{q}_n).
\end{equation}
\end{proposition}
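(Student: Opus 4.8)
The plan is to estimate the three summands of $E=E^{\rm el}+E^{\rm exc}+E^{\rm mag}$ one at a time and conclude by superadditivity of $\liminf$. The exchange term is immediate: $E^{\rm exc}(\boldsymbol{q}_n)=\|\chi_{\im_{\rm T}(\boldsymbol{y}_n,\Omega)}D\boldsymbol{m}_n\|_{L^2(\R^N;\rnn)}^2$ and likewise $E^{\rm exc}(\boldsymbol{q})=\|\chi_{\im_{\rm T}(\boldsymbol{y},\Omega)}D\boldsymbol{m}\|_{L^2(\R^N;\rnn)}^2$, so \eqref{eqn:compactness-extension-Dm} and the weak lower semicontinuity of the $L^2$-norm give $E^{\rm exc}(\boldsymbol{q})\le\liminf_n E^{\rm exc}(\boldsymbol{q}_n)$.

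For the magnetostatic term I would first test the weak formulation of \eqref{eqn:maxwell} for $\boldsymbol{m}_n$ with $u_{\boldsymbol{m}_n}$ itself, obtaining $\|Du_{\boldsymbol{m}_n}\|_{L^2(\R^N;\R^N)}\le\|\chi_{\im_{\rm T}(\boldsymbol{y}_n,\Omega)}\boldsymbol{m}_n\|_{L^2(\R^N;\R^N)}$, which is bounded by \eqref{eqn:compactness-extension-m}. Hence, up to a subsequence, $Du_{\boldsymbol{m}_n}\wk\boldsymbol{g}$ in $L^2(\R^N;\R^N)$ with $\boldsymbol{g}$ curl-free in the sense of distributions, so $\boldsymbol{g}=Dv$ for some $v\in V^{1,2}(\R^N)$; passing to the limit in the weak formulation of \eqref{eqn:maxwell} by means of \eqref{eqn:compactness-extension-m} identifies $v$ as a weak solution of the Maxwell equation associated with $\boldsymbol{m}$, whence $Dv=Du_{\boldsymbol{m}}$ by uniqueness up to constants \cite[Proposition 8.8]{barchiesi.henao.moracorral}. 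As the limit does not depend on the subsequence, $Du_{\boldsymbol{m}_n}\wk Du_{\boldsymbol{m}}$ along the whole sequence and weak lower semicontinuity of the $L^2$-norm yields $E^{\rm mag}(\boldsymbol{q})\le\liminf_n E^{\rm mag}(\boldsymbol{q}_n)$. (Equivalently, $Du_{\boldsymbol{m}}$ is, up to sign, the orthogonal projection of $\chi_{\im_{\rm T}(\boldsymbol{y},\Omega)}\boldsymbol{m}$ onto the closed subspace of gradient fields in $L^2(\R^N;\R^N)$, and one invokes continuity of this projection.)

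The hard part is the elastic term, where the a.e. convergence in \eqref{eqn:compactness-composition} and the partial polyconvexity of $W$ come into play. Since $\boldsymbol{q}_n,\boldsymbol{q}\in\mathcal{Q}$, the saturation constraint holds pointwise a.e. for every $n$ and in the limit, so \eqref{eqn:W-polyconvex} lets me rewrite
\begin{equation*}
E^{\rm el}(\boldsymbol{q}_n)=\int_\Omega\widehat{W}\big(D\boldsymbol{y}_n,\mathbf{M}_2(D\boldsymbol{y}_n),\dots,\cof D\boldsymbol{y}_n,\det D\boldsymbol{y}_n,\boldsymbol{m}_n\circ\boldsymbol{y}_n\big)\,\d\boldsymbol{x},
\end{equation*}
and similarly for $\boldsymbol{q}$. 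Because $\boldsymbol{y}_n\wk\boldsymbol{y}$ in $W^{1,p}(\Omega;\R^N)$ with $p>N-1$, all minors of order $r\in\{1,\dots,N-1\}$ are weakly continuous, so $\mathbf{M}_r(D\boldsymbol{y}_n)\wk\mathbf{M}_r(D\boldsymbol{y})$ in $L^{p/r}(\Omega)$, hence weakly and equi-integrably in $L^1(\Omega)$; moreover, \eqref{eqn:compactness-composition} together with the identity $\det D\boldsymbol{y}_n=1/|\boldsymbol{m}_n\circ\boldsymbol{y}_n|$ (a.e., with $|\boldsymbol{m}\circ\boldsymbol{y}|>0$ a.e.) gives $\det D\boldsymbol{y}_n\to\det D\boldsymbol{y}$ a.e. in $\Omega$. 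I would then invoke the classical lower semicontinuity theorem for integral functionals whose integrand is continuous, nonnegative, convex in the weakly convergent variable (here the vector of minors of order $\le N-1$) and depends on an additional variable — here $(\det D\boldsymbol{y}_n,\boldsymbol{m}_n\circ\boldsymbol{y}_n)$ — that converges a.e. and, a.e. in the limit, takes values in the open region $(0,+\infty)\times(\R^N\setminus\{\boldsymbol{0}\})$ on which $\widehat{W}$ is continuous and convex in the first arguments: writing $\widehat{W}$ as a countable supremum of affine functions of the minors with coefficients depending continuously on the auxiliary variable, localizing by Egorov to sets on which the auxiliary variable ranges in a fixed compact subset of that region, and passing to the limit in each affine term yields $E^{\rm el}(\boldsymbol{q})\le\liminf_n E^{\rm el}(\boldsymbol{q}_n)$. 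The continuous selection of these supporting affine functions is supplied by the Eisen selection lemma \cite{eisen}.

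Combining the three estimates by superadditivity of $\liminf$ gives \eqref{eqn:lsc-magnetoelastic-energy}. The step to spell out carefully in the full proof is the Eisen-based lower semicontinuity argument for the elastic energy; compared with its standard form, the only new feature is that the coupling variable is the composition $\boldsymbol{m}_n\circ\boldsymbol{y}_n$, and it is precisely its a.e. convergence, established in \eqref{eqn:compactness-composition}, that makes the argument go through.
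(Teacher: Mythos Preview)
Your proposal is correct and follows essentially the same route as the paper: both split $E$ into its three terms, handle $E^{\rm exc}$ and $E^{\rm mag}$ via weak lower semicontinuity of the $L^2$-norm (the latter after testing the Maxwell equation to bound and identify the stray field), and treat $E^{\rm el}$ by combining weak convergence of the Jacobian minors with the a.e.\ convergence of the composition and the partial polyconvexity of $\widehat{W}$; the paper cites \cite[Theorem 5.4]{ball.currie.olver} (following \cite{kruzik.stefanelli.zeman}) where you invoke the Eisen selection lemma directly, but the underlying mechanism is the same. Your explicit observation that the constraint $|\boldsymbol{m}_n\circ\boldsymbol{y}_n|\det D\boldsymbol{y}_n=1$ upgrades the weak $L^1$ convergence of $\det D\boldsymbol{y}_n$ to a.e.\ convergence---needed because $\widehat{W}$ is \emph{not} assumed convex in the determinant slot---is a point the paper's proof leaves implicit.
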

\begin{proof}
By \eqref{eqn:compactness-deformation} and by the weak continuity of Jacobian minors, we have 
that all Jacobian minors of $\boldsymbol{y}_n$ converge weakly in $L^1(\Omega)$ to the corresponding Jacobian minor of $\boldsymbol{y}$. 
At this point, we can argue as in \cite[Theorem 2.4]{kruzik.stefanelli.zeman}.
From the weak convergence of the Jacobian minors of deformations in $L^1(\Omega)$ and from the almost everywhere convergence in \eqref{eqn:compactness-composition}, exploiting the polyconvexity assumption \eqref{eqn:W-polyconvex} and applying 
\cite[Theorem 5.4]{ball.currie.olver}, we obtain
\begin{equation}
\label{eqn:lsc-el}
    E^{\rm el}(\boldsymbol{q})\leq \liminf_{n \to \infty} E^{\rm el}(\boldsymbol{q}_n).
\end{equation}
From \eqref{eqn:compactness-extension-Dm}, by the lower semicontinuity of the norm, we immediately get
\begin{equation}
\label{eqn:lsc-exc}
    E^{\rm exc}(\boldsymbol{q})\leq \liminf_{n \to \infty} E^{\rm exc}(\boldsymbol{q}_n).
\end{equation}
For simplicity, for every $n \in \N$, set $u_n\coloneqq u_{\boldsymbol{m}_n}$. Thus, for every $n \in \N$, there holds
\begin{equation}
\label{eqn:lsc-maxwell-sequence}
    \forall\,v \in V^{1,2}(\R^N),\qquad \int_{\R^N} \left (-D u_n-\chi_{\im_{\rm T}(\boldsymbol{y}_n,\Omega)}\boldsymbol{m}_n \right ) \cdot D v \,\d\boldsymbol{\xi}=0.
\end{equation}
Testing \eqref{eqn:lsc-maxwell-sequence} with $v=u_n$ and applying the H\"{o}lder inequality, we see that
\begin{equation*}
    \sup_{n \in \N}\|Du_n\|_{L^2(\R^N;\R^N)}\leq \sup_{n \in \N}\|\chi_{\im_{\rm T}(\boldsymbol{y}_n,\Omega)}\boldsymbol{m}_n\|_{L^2(\R^N;\R^N)}<+\infty,
\end{equation*}
where we used \eqref{eqn:compactness-extension-m}. Thus, up to subsequences, there exists $u \in V^{1,2}(\R^N)$ such that
\begin{equation}
\label{eqn:weak-convergence-stray-fields}
    \text{$Du_n \wk Du$ in $L^2(\R^N;\R^N)$.}    
\end{equation}
Passing to the limit, as $n \to \infty$, in \eqref{eqn:lsc-maxwell-sequence} taking into account \eqref{eqn:compactness-extension-m}, we deduce
\begin{equation}
\label{eqn:lsc-maxwell}
    \forall\,v \in V^{1,2}(\R^N),\qquad \int_{\R^N} \left (-D u-\chi_{\im_{\rm T}(\boldsymbol{y},\Omega)}\boldsymbol{m} \right ) \cdot D v \,\d\boldsymbol{\xi}=0,
\end{equation}
and, in turn, we can assume $u=u_{\boldsymbol{m}}$. Then, by the lower semicontinuity of the norm, we obtain
\begin{equation}
\label{eqn:lsc-mag}
    E^{\rm mag}(\boldsymbol{q})\leq \liminf_{n \to \infty} E^{\rm mag}(\boldsymbol{q}_n).
\end{equation}
Finally, combining \eqref{eqn:lsc-el}--\eqref{eqn:lsc-exc} and \eqref{eqn:lsc-mag}, we establish \eqref{eqn:lsc-magnetoelastic-energy}. 
\end{proof}

The magnetostatic energy functional is actually continuous with respect to the notion of  convergnece provided by Theorem \ref{thm:compactness} in the case of globally invertible deformations. This fact is the content of the following remark. 

\begin{remark}[Continuity of the magnetostatic energy]
	With the same notation adopted in Proposition \ref{prop:lsc}, suppose that
	\eqref{eqn:improved-extension} and \eqref{eqn:weak-convergence-stray-fields} hold true.
	Then, there holds
	\begin{equation*}
	\text{$D \psi_{\boldsymbol{m}_n}\to D \psi_{\boldsymbol{m}}$ in $L^2(\R^N;\R^N)$}
	\end{equation*}
	and, in turn,
	\begin{equation*}
	\lim_{n \to \infty} E^{\rm mag}(\boldsymbol{q}_n)=E^{\rm mag}(\boldsymbol{q}).
	\end{equation*}
	To check this, we test the weak form of the Maxwell equation with the stray field potentials to see that, for every $n \in \N$, there holds
	\begin{equation*}
	\int_{\R^N} |D\psi_n|^2\,\d\boldsymbol{\xi}=\int_{\R^N} \chi_{\im_{\rm T}(\boldsymbol{y}_n,\Omega)}\boldsymbol{m}_n \cdot D \psi_n\,\d\boldsymbol{\xi}
	\end{equation*}
	and, analogously
	\begin{equation*}
	\int_{\R^N} |D\psi|^2\,\d\boldsymbol{\xi}=\int_{\R^N} \chi_{\im_{\rm T}(\boldsymbol{y},\Omega)}\boldsymbol{m} \cdot D \psi\,\d\boldsymbol{\xi}.
	\end{equation*}
	Hence, from \eqref{eqn:improved-extension} and \eqref{eqn:weak-convergence-stray-fields}, we obtain
	\begin{equation*}
	\lim_{n \to \infty} \int_{\R^N} |D \psi_n|^2\,\d\boldsymbol{\xi}=\int_{\R^N} |D \psi|^2\,\d\boldsymbol{\xi},
	\end{equation*}
	which, together with \eqref{eqn:weak-convergence-stray-fields}, entails the claim.
\end{remark}

We are finally able to establish the existence of minimizers for the magnetoelastic energy under Dirichlet boundary conditions on deformations. This constitutes the second main result of the section.

\begin{theorem}[Existence of mimimizers]
\label{thm:existence-minimizers}
Assume \eqref{eqn:gamma}--\eqref{eqn:W-polyconvex}. Let $\Gamma \subset \partial \Omega$ be measurable with $\haus(\Gamma)>0$ and let $\boldsymbol{d}\in L^p(\Gamma;\R^N)$. Suppose that 
\begin{equation*}
    \mathcal{A}\coloneqq\left \{{\boldsymbol{q}}=({\boldsymbol{y}},{\boldsymbol{m}})\in \mathcal{Q}:\:\text{$\tr_\Gamma({\boldsymbol{y}})=\boldsymbol{d}$ a.e. on $\Gamma$}\right \}    
\end{equation*}
is nonempty. Then, the functional $E$ admits a minimizer in $\mathcal{A}$. Analogously, if $\mathcal{A}^{\rm inj}\coloneqq \mathcal{A} \cap \mathcal{Q}^{\,\rm inj}$ is nonempty, then the functional $E$ admits a minimizer in $\mathcal{A}^{\rm inj}$.
\end{theorem}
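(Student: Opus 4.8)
The plan is to apply the Direct Method, using the three ingredients already isolated above: the coercivity estimate contained in assumption \eqref{eqn:coercivity}, the compactness packaged in Proposition \ref{prop:coercivity}, and the lower semicontinuity of Proposition \ref{prop:lsc}. We may assume $\inf_{\mathcal{A}} E<+\infty$, since otherwise every element of $\mathcal{A}$ is trivially a minimizer. Fix a minimizing sequence $(\boldsymbol{q}_n)\subset \mathcal{A}$ with $\boldsymbol{q}_n=(\boldsymbol{y}_n,\boldsymbol{m}_n)$ and $E(\boldsymbol{q}_n)\to m\coloneqq \inf_{\mathcal{A}}E<+\infty$, so that $\sup_n E(\boldsymbol{q}_n)<+\infty$.

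First I would establish the bound \eqref{eqn:bound}. The coercivity assumption \eqref{eqn:coercivity} gives $E^{\rm el}(\boldsymbol{q}_n)\geq C\|D\boldsymbol{y}_n\|_{L^p(\Omega;\rnn)}^p+\|\gamma(\det D\boldsymbol{y}_n)\|_{L^1(\Omega)}$, while $E^{\rm exc}(\boldsymbol{q}_n)=\|D\boldsymbol{m}_n\|_{L^2(\im_{\rm T}(\boldsymbol{y}_n,\Omega);\rnn)}^2$; hence the uniform energy bound controls $\|D\boldsymbol{y}_n\|_{L^p}$, $\|\gamma(\det D\boldsymbol{y}_n)\|_{L^1}$ and $\|D\boldsymbol{m}_n\|_{L^2}$. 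To upgrade the gradient bound to a full $W^{1,p}$ bound on $\boldsymbol{y}_n$, I would invoke a generalized Poincar\'e inequality on the bounded Lipschitz domain $\Omega$: since $\tr_\Gamma(\boldsymbol{y}_n)=\boldsymbol{d}$ on $\Gamma$ with $\haus(\Gamma)>0$, one has $\|\boldsymbol{y}_n\|_{W^{1,p}(\Omega;\R^N)}\leq C\big(\|D\boldsymbol{y}_n\|_{L^p(\Omega;\rnn)}+\|\boldsymbol{d}\|_{L^p(\Gamma;\R^N)}\big)$ with $C$ depending only on $\Omega$, $p$ and $\Gamma$. Thus \eqref{eqn:bound} holds, and Proposition \ref{prop:coercivity} yields an admissible state $\boldsymbol{q}=(\boldsymbol{y},\boldsymbol{m})\in \mathcal{Q}$ such that, after passing to a subsequence, all the convergences \eqref{eqn:compactness-deformation}--\eqref{eqn:improved-extension} hold.

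Next I would check that the limit remains admissible for the boundary datum, i.e.\ $\boldsymbol{q}\in \mathcal{A}$. The only point to verify is $\tr_\Gamma(\boldsymbol{y})=\boldsymbol{d}$ a.e.\ on $\Gamma$: since the trace operator $\tr_\Gamma\colon W^{1,p}(\Omega;\R^N)\to L^p(\Gamma;\R^N)$ is linear and bounded, it is continuous for the weak topologies, so the first convergence in \eqref{eqn:compactness-deformation} gives $\tr_\Gamma(\boldsymbol{y}_n)\wk \tr_\Gamma(\boldsymbol{y})$ in $L^p(\Gamma;\R^N)$; as $\tr_\Gamma(\boldsymbol{y}_n)=\boldsymbol{d}$ for every $n$, the limit equals $\boldsymbol{d}$. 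Since the convergences \eqref{eqn:compactness-deformation} and \eqref{eqn:compactness-extension-m}--\eqref{eqn:compactness-extension-Dm} together with the a.e.\ convergence in \eqref{eqn:compactness-composition} are provided by Proposition \ref{prop:coercivity}, Proposition \ref{prop:lsc} applies and gives $E(\boldsymbol{q})\leq \liminf_n E(\boldsymbol{q}_n)=m$. Combined with $\boldsymbol{q}\in \mathcal{A}$, hence $E(\boldsymbol{q})\geq m$, this shows $E(\boldsymbol{q})=m$, so $\boldsymbol{q}$ is a minimizer in $\mathcal{A}$. For the globally injective case one argues identically starting from a minimizing sequence in $\mathcal{A}^{\rm inj}\subset \mathcal{Q}^{\,\rm inj}$: the last assertion of Proposition \ref{prop:coercivity} guarantees $\boldsymbol{q}\in \mathcal{Q}^{\,\rm inj}$, and since the trace condition is preserved as above, $\boldsymbol{q}\in \mathcal{A}^{\rm inj}$ is the desired minimizer.

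As for the main obstacle: for this particular statement there is essentially no new difficulty, since the substance lives in Theorem \ref{thm:compactness} and in Propositions \ref{prop:coercivity}--\ref{prop:lsc}. The only genuinely necessary verifications are that the coercivity of $W$ combined with the Dirichlet condition on $\Gamma$ produces a uniform $W^{1,p}$ bound on the minimizing sequence --- coercivity alone controls only the gradient, so the generalized Poincar\'e inequality is indispensable and is exactly where $\haus(\Gamma)>0$ is used --- and that the constraint set $\mathcal{A}$ is closed under weak $W^{1,p}$ convergence, which reduces to the weak continuity of the trace.
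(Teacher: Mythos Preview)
Your proposal is correct and follows essentially the same route as the paper: take a minimizing sequence, use \eqref{eqn:coercivity} plus the Poincar\'e inequality with boundary term on $\Gamma$ to verify \eqref{eqn:bound}, apply Proposition \ref{prop:coercivity} for compactness, preserve the trace constraint by weak continuity of $\tr_\Gamma$, and conclude via Proposition \ref{prop:lsc}. Your remark that the role of $\haus(\Gamma)>0$ is precisely to enable the Poincar\'e inequality, and that the injective case follows from the last clause of Proposition \ref{prop:coercivity}, matches the paper's treatment.
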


\begin{proof}
The proof is a standard application of the Direct Method. Let $(\boldsymbol{q}_n)\subset \mathcal{A}$ be a minimizing sequence. In particular, for every $n \in \N$, there holds $\tr_\Gamma(\boldsymbol{y}_n)=\boldsymbol{d}$ almost everywhere on $\Gamma$.
We have 
\begin{equation*}
    \sup_{n \in \N} E(\boldsymbol{q}_n)<+\infty
\end{equation*}
and, in view of \eqref{eqn:coercivity}, this yields
\begin{equation}
\label{eqn:b1}
    \sup_{n \in \N} \left \{ \| D\boldsymbol{y}_n\|_{L^p(\Omega;\rnn)}+\|D\boldsymbol{m}_n\|_{L^2(\im_{\rm T}(\boldsymbol{y}_n,\Omega);\rnn)}+\|\gamma(\det D \boldsymbol{y}_n)\|_{L^1(\Omega)} \right \}<+\infty.
\end{equation}
By the Poincarè inequality with boundary term, for every $n \in \N$, we have
\begin{equation}
\label{eqn:b2}
    \|\boldsymbol{y}_n\|_{L^p(\Omega;\R^N)}\leq C \left (\|D\boldsymbol{y}_n\|_{L^p(\Omega;\rnn)}+\|\boldsymbol{d}\|_{L^p(\Gamma;\R^N)} \right),
\end{equation}
where the constant $C>0$ depends only on $\Omega$, $N$ and $p$. 
As \eqref{eqn:b1} together with \eqref{eqn:b2} yields \eqref{eqn:bound}, by Proposition \eqref{prop:coercivity} we deduce \eqref{eqn:compactness-deformation}--\eqref{eqn:improved-determinant}. In particular, \eqref{eqn:compactness-deformation} and the weak continuity of the trace operator entail $\tr_\Gamma(\boldsymbol{y})=\boldsymbol{d}$ almost everywhere on $\Gamma$, so that $\boldsymbol{q}\in \mathcal{A}$.
Finally, applying Proposition \ref{prop:lsc}, we obtain
\begin{equation*}
    E(\boldsymbol{q})\leq \liminf_{n \to \infty} E(\boldsymbol{q}_n)=\inf_{\widehat{\boldsymbol{q}}\in \mathcal{A}} E(\widehat{\boldsymbol{q}})
\end{equation*}
and, in turn, $\boldsymbol{q}$ is a minimizer of $E$ in $\mathcal{A}$. The proof under the constraint of global injectivity of deformations is totally analogous.
\end{proof}

\begin{remark}[Applied loads]
Let $\boldsymbol{f}\in L^{p'}(\Omega;\R^N)$, $\boldsymbol{g}\in L^{p'}(\Sigma;\R^N)$, where $\Sigma \subset \partial \Omega$ is measurable, and $\boldsymbol{h}\in L^2(\R^N;\R^N)$, represent applied body forces, surface forces and magnetic fields, respectively. The work of applied loads is determined by the functional $L \colon \mathcal{Q}\to \R$ defined by 
\begin{equation}
\label{eqn:applied-loads}
    L(\boldsymbol{q})\coloneqq \int_{\Omega} \boldsymbol{f}\cdot \boldsymbol{y}\,\d\boldsymbol{x}+\int_\Sigma \boldsymbol{g}\cdot \tr_\Sigma(\boldsymbol{y})\,\d\boldsymbol{a}+\int_{\im_{\rm T}(\boldsymbol{y},\Omega)} \boldsymbol{h}\cdot \boldsymbol{m}\,\d\boldsymbol{\xi},
\end{equation}
where $\boldsymbol{q}=(\boldsymbol{y},\boldsymbol{m})$. In particular, the energy contribution corresponding to external magnetic fields is of Eulerian type. The functional in \eqref{eqn:applied-loads}  is continuous with respect to the convergences in \eqref{eqn:compactness-deformation} and \eqref{eqn:compactness-extension-m}. Therefore, the proof of Theorem \ref{thm:existence-minimizers} can be easily adapted to prove the existence of minimizers of the total energy $E-L$ in $\mathcal{A}$ and $\mathcal{A}^{\,\rm inj}$, whenever these two classes are nonempty.
\end{remark}

The existence result provided by Theorem \ref{thm:existence-minimizers} can be adapted to cover the incompressible regime as well as inhomogeneous elastic energy densities.

Recalling Remark \ref{rem:sphere}, we realize that our  arguments can provide an alternative proof of \cite[Theorem 8.9]{barchiesi.henao.moracorral}, where the magnetic saturation constraint is formulated in the actual space. In this case, for the function $\gamma$ in \eqref{eqn:coercivity}, it is sufficient to satisfy \eqref{eqn:extreme-compression} in place of the first limit condition in \eqref{eqn:gamma}.

\section{Quasistatic setting}
\label{sec:quasistatic}
In this section, we study quasistatic evolutions driven by time-dependent applied loads and boundary conditions in presence of a rate-independent dissipation. 
The main result of the section is Theorem \ref{thm:existence-energetic-solution} which states the existence of energetic solutions for the quasistatic model. For simplicity, in this section, we do not specify our results for the case of globally invertible deformations. However, this requirement can be easily incorporated in the analysis thanks to Corollay \ref{cor:closure-injectivity}.

\subsection{The quasistatic model}
Let $T>0$ be the time horizon. Let $\Gamma, \Sigma \subset \partial \Omega$ be measurable with $\haus(\Gamma)>0$.
The map 
\begin{equation}
\label{eqn:regularity-boundary-datum}
    \boldsymbol{d}\in AC([0,T];L^p(\Gamma;\R^N))
\end{equation}
 constitutes the boundary datum. Applied loads are determined by the functions
\begin{equation}
\label{eqn:regularity-applied-loads}
    \begin{split}
        &\boldsymbol{f}\in AC([0,T];L^{p'}(\Omega;\R^N)), \quad \boldsymbol{g}\in AC([0,T];L^{p'}(\Sigma;\R^N)),\\
        &\hspace{25mm}\boldsymbol{h}\in AC([0,T];L^2(\R^N;\R^N)),
    \end{split}
\end{equation}
representing mechanical body and surface forces, and external magnetic fields, respectively. 

Recall \eqref{eqn:admissible-states} and \eqref{eqn:magnetoelastic-energy}. 
The total energy functional $\mathcal{E}\colon [0,T] \times  \mathcal{Q} \to \R$ is defined as
\begin{equation*}
    \mathcal{F}(t,\boldsymbol{q})\coloneqq E(\boldsymbol{q})-\mathcal{L}(t,\boldsymbol{q})+\mathcal{B}(t,\boldsymbol{q}).
\end{equation*}
The functionals $\mathcal{L},\mathcal{B}\colon [0,T]\times \mathcal{Q}\to \R$ are given by
\begin{equation}
\label{eqn:functional-L}
\mathcal{L}(t,\boldsymbol{q})\coloneqq \int_\Omega \boldsymbol{f}(t)\cdot \boldsymbol{y}\,\d\boldsymbol{x}+\int_{\Sigma} \boldsymbol{g}(t)\cdot \tr_\Sigma(\boldsymbol{y})\,\d\boldsymbol{a}+\int_{\im_{\rm T}(\boldsymbol{y},\Omega)} \boldsymbol{h}(t)\cdot \boldsymbol{m}\,\d\boldsymbol{\xi}
\end{equation}
and
\begin{equation}
\label{eqn:functional-B}
    \mathcal{B}(t,\boldsymbol{q})\coloneqq \int_\Gamma |\boldsymbol{d}(t)-\tr_\Gamma(\boldsymbol{y})|^p\,\d\boldsymbol{a},
\end{equation}
where $\boldsymbol{q}=(\boldsymbol{y},\boldsymbol{m})$. The functional in \eqref{eqn:functional-L} accounts for the work of applied loads, while
the functional in \eqref{eqn:functional-B} enforces the Dirichlet boundary condition $\tr_\Gamma(\boldsymbol{y})=\boldsymbol{d}(t)$ on $\Gamma$ in a relaxed sense.
Observe that, given \eqref{eqn:regularity-boundary-datum}--\eqref{eqn:regularity-applied-loads}, for every fixed $\boldsymbol{q}\in \mathcal{Q}$ the map $t \mapsto \mathcal{E}(t,\boldsymbol{q})$ belongs to $AC([0,T])$.

The dissipation distance $\mathcal{D}\colon \mathcal{Q}\times \mathcal{Q}\to [0,+\infty)$ is defined by
\begin{equation*}
    \mathcal{D}(\boldsymbol{q},\widehat{\boldsymbol{q}})\coloneqq \int_\Omega |\boldsymbol{m}\circ \boldsymbol{y}\det D\boldsymbol{y}-\widehat{\boldsymbol{m}}\circ \widehat{\boldsymbol{y}}\det D \widehat{\boldsymbol{y}}|\,\d\boldsymbol{x},
\end{equation*}
where $\boldsymbol{q}=(\boldsymbol{y},\boldsymbol{m})$ and $\widehat{\boldsymbol{q}}=(\widehat{\boldsymbol{y}},\widehat{\boldsymbol{m}})$. Given a map $\boldsymbol{q}\colon [0,T]\to \mathcal{Q}$, its variation with respect to $\mathcal{D}$ on the interval $[a,b]\subset [0,T]$ is defined as
\begin{equation}
\label{eqn:varD}
    \var_{\mathcal{D}}(\boldsymbol{q};[a,b])\coloneqq \sup \left \{\sum_{i=1}^m \mathcal{D}(\boldsymbol{q}(t^{i-1}),\boldsymbol{q}(t^i)):\:\text{$\Pi=(t^0,\dots,t^l)$ partition of $[a,b]$} \right \}.
\end{equation}
Here, a partition of $[a,b]$ is any finite ordered subset $(t^0,\dots,t^l)$ of $[0,T]$ with $a=t^0<\dots<t^l=b$ for some $l \in \N$. The quantity in \eqref{eqn:varD} represents the amount of energy dissipated along the evolution described by $\boldsymbol{q}$ within the time interval $[a,b]$.

The aim of this section is to prove the existence of energetic solutions for the quasistatic model. We recall the definition of such solutions \cite{mielke.roubicek}.

\begin{definition}[Energetic solution]
\label{def:energetic-solution}
A map $\boldsymbol{q}\colon [0,T]\to \mathcal{Q}$ is termed an energetic solution if the function $t \mapsto \partial_t \mathcal{F}(t,\boldsymbol{q}(t))$ belongs to $L^1(0,T)$ and the following two conditions hold:
\begin{enumerate}[(i)]
    \item \textbf{Global stability:} 
    \begin{equation}
    \label{eqn:global-stability}
        \forall\,t \in [0,T],\:\forall\, \widehat{\boldsymbol{q}}\in \mathcal{Q},\quad \mathcal{F}(t,\boldsymbol{q}(t))\leq \mathcal{F}(t,\widehat{\boldsymbol{q}})+\mathcal{D}(\boldsymbol{q}(t),\widehat{\boldsymbol{q}}).
    \end{equation}
    \item \textbf{Energy-dissipation balance:}
    \begin{equation}
    \label{eqn:energy-dissipation-balance}
        \forall\,t\in [0,T], \quad \mathcal{F}(t,\boldsymbol{q}(t))+\var_{\mathcal{D}}(\boldsymbol{q};[0,T])=\mathcal{F}(0,\boldsymbol{q}(0))+\int_0^t \partial_t \mathcal{F}(\tau,\boldsymbol{q}(\tau))\,\d\tau.
    \end{equation}
\end{enumerate}
\end{definition}

The strategy of the proof of the existence of energetic solutions is standard and consists of two main steps: first, for a fixed partition of the time interval, one solves the incremental minimization problem; then, one considers a sequence of partition of vanishing size and constructs the desired solution from the sequence piecewise constant interpolants corresponding to the solutions of the incremental minimization problems by compactness arguments. These two steps will be addressed in the next two subsections.

\subsection{Incremental minimization problem}
Before proceeding, we show that our model fulfils the basic assumptions of the theory of rate-independent systems \cite{mielke.roubicek}. Let $Z\subset (0,T)$ be the set of times for which the applied loads do not admit a pointwise derivative. Given \eqref{eqn:regularity-applied-loads}, there holds $\mathscr{L}^1(Z)=0$.

\begin{lemma}[Time-control of the total energy]
Assume \eqref{eqn:regularity-boundary-datum}--\eqref{eqn:regularity-applied-loads} and let $\mathcal{F}$ be defined as in \eqref{eqn:functional-E}--\eqref{eqn:functional-L}. The following two claims hold true.
\begin{enumerate}[(i)]
    \item There exist two constants $C_1,C_2>0$, depending only on the given data, such that, for every $t \in [0,T]$ and $\boldsymbol{q}\in \mathcal{Q}$ with $\boldsymbol{q}=(\boldsymbol{y},\boldsymbol{m})$,  there holds
        \begin{equation}
        \label{eqn:coercivity-total-energy}
        \begin{split}
        \mathcal{F}(t,\boldsymbol{q})&\geq C_1 \left \{\|D\boldsymbol{y}\|_{L^p(\Omega;\rnn)}^p+ \|\gamma(\det D \boldsymbol{y})\|_{L^1(\Omega)}+\|D\boldsymbol{m}\|_{L^2(\im_{\rm T}(\boldsymbol{y},\Omega);\rnn)}^2 \right \}\\
        &+C_1\,\|\boldsymbol{d}(t)-\boldsymbol{y}\|^p_{L^{p^\sharp}(\Gamma;\R^N)}-C_2.
        \end{split}
        \end{equation}
    \item There exist a constants $L>0$ and a function $\eta\in L^1(0,T)$, all depending only on the given data, such that, for every $\boldsymbol{q}\in\mathcal{Q}$, the following estimates hold:
    \begin{equation}
        \label{eqn:control-time-derivative}
         \forall, t\in (0,T)\setminus Z, \quad |\partial_t \mathcal{F}(t,\boldsymbol{q})|\leq \eta(t) \left (  \mathcal{F}(t,\boldsymbol{q})+L \right),
    \end{equation}
    \begin{equation}
        \label{eqn:gronwall}
         \forall,s,t\in (0,T), \quad \mathcal{F}(t,\boldsymbol{q})+L \leq \left(\mathcal{F}(s,\boldsymbol{q})+L \right)\mathrm{e}^{|H(t)-H(s)|},
    \end{equation}
    \begin{equation}
        \label{eqn:gronwall-derivative}
         \forall\, t \in (0,T)\setminus Z, \:\forall\,s\in [0,T], \quad |\partial_t\mathcal{F}(t,\boldsymbol{q})| \leq \eta(t)\left(\mathcal{F}(s,\boldsymbol{q})+C_0 \right)\mathrm{e}^{|H(t)-H(s)|}.
     \end{equation}
    Here, we define the function $H\in AC([0,T])$  by setting
    \begin{equation*}
    H(t)\coloneqq \int_0^t \eta(\tau)\,\d\tau.
    \end{equation*}  
    In particular, the constant $L>0$ is chosen in order to have
    \begin{equation}
    \label{eqn:L}
    \inf_{(t,{\boldsymbol{q}})\in [0,T]\times \mathcal{Q}} \mathcal{F}({t},\boldsymbol{q})\geq -\,L.
    \end{equation}  
\end{enumerate}
\end{lemma}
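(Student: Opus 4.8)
The plan is to establish the three estimates by a standard Gronwall-type argument, starting from the coercivity lower bound in (i) and the fact that $t\mapsto \mathcal{F}(t,\boldsymbol{q})$ is absolutely continuous with an explicit pointwise derivative. First I would prove claim (i): writing $\mathcal{F}(t,\boldsymbol{q})=E(\boldsymbol{q})-\mathcal{L}(t,\boldsymbol{q})+\mathcal{B}(t,\boldsymbol{q})$, I use the coercivity assumption \eqref{eqn:coercivity} on $W$ to bound $E^{\rm el}(\boldsymbol{q})$ from below by $C\|D\boldsymbol{y}\|_{L^p}^p+\|\gamma(\det D\boldsymbol{y})\|_{L^1}$, keep $E^{\rm exc}(\boldsymbol{q})=\|D\boldsymbol{m}\|_{L^2(\im_{\rm T}(\boldsymbol{y},\Omega))}^2$ and $E^{\rm mag}(\boldsymbol{q})\geq 0$ as they are, and keep $\mathcal{B}(t,\boldsymbol{q})=\|\boldsymbol{d}(t)-\tr_\Gamma(\boldsymbol{y})\|_{L^p(\Gamma)}^p$. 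The only term to control is $-\mathcal{L}(t,\boldsymbol{q})$: the body-force term $\int_\Omega \boldsymbol{f}(t)\cdot\boldsymbol{y}$ and the surface term $\int_\Sigma \boldsymbol{g}(t)\cdot\tr_\Sigma(\boldsymbol{y})$ are estimated by Hölder together with a Poincaré inequality with boundary term on $\Gamma$ (as in the proof of Theorem \ref{thm:existence-minimizers}), absorbing $\|\boldsymbol{y}\|_{W^{1,p}}$ into a small fraction of the elastic term plus a constant depending on $\boldsymbol{d}(t)$ and $\boldsymbol{f}(t),\boldsymbol{g}(t)$, which are uniformly bounded by \eqref{eqn:regularity-boundary-datum}--\eqref{eqn:regularity-applied-loads}; the magnetic term $\int_{\im_{\rm T}(\boldsymbol{y},\Omega)}\boldsymbol{h}(t)\cdot\boldsymbol{m}$ is estimated by Hölder using $\|\chi_{\im_{\rm T}(\boldsymbol{y},\Omega)}\boldsymbol{m}\|_{L^2}^2=\int_\Omega(1/\det D\boldsymbol{y})\,\d\boldsymbol{x}$, which by \eqref{eqn:gamma} is absorbed into $\|\gamma(\det D\boldsymbol{y})\|_{L^1}$ plus a constant. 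The exponent $p^\sharp$ is the Sobolev trace exponent, and one uses that $\tr_\Gamma$ is continuous into $L^{p^\sharp}(\Gamma)$. Collecting terms yields \eqref{eqn:coercivity-total-energy}. Estimate \eqref{eqn:L} then follows by discarding the nonnegative terms on the right-hand side, so $\inf \mathcal{F}\geq -C_2=:-L$.

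Next I would prove the pointwise derivative bound \eqref{eqn:control-time-derivative}. For fixed $\boldsymbol{q}=(\boldsymbol{y},\boldsymbol{m})$ and $t\notin Z$, only $\mathcal{L}$ and $\mathcal{B}$ depend on $t$, so
\[
\partial_t\mathcal{F}(t,\boldsymbol{q})=-\int_\Omega \dot{\boldsymbol{f}}(t)\cdot\boldsymbol{y}\,\d\boldsymbol{x}-\int_\Sigma \dot{\boldsymbol{g}}(t)\cdot\tr_\Sigma(\boldsymbol{y})\,\d\boldsymbol{a}-\int_{\im_{\rm T}(\boldsymbol{y},\Omega)}\dot{\boldsymbol{h}}(t)\cdot\boldsymbol{m}\,\d\boldsymbol{\xi}+p\int_\Gamma |\boldsymbol{d}(t)-\tr_\Gamma(\boldsymbol{y})|^{p-2}(\boldsymbol{d}(t)-\tr_\Gamma(\boldsymbol{y}))\cdot\dot{\boldsymbol{d}}(t)\,\d\boldsymbol{a}.
\]
Each of these four terms is estimated by Hölder: the first three give a contribution bounded by $(\|\dot{\boldsymbol{f}}(t)\|_{L^{p'}}+\|\dot{\boldsymbol{g}}(t)\|_{L^{p'}}+\|\dot{\boldsymbol{h}}(t)\|_{L^2})$ times a quantity controlled, via \eqref{eqn:coercivity-total-energy} and Young's inequality, by $\mathcal{F}(t,\boldsymbol{q})+L+1$; the last term is bounded by $p\|\dot{\boldsymbol{d}}(t)\|_{L^p(\Gamma)}\|\boldsymbol{d}(t)-\tr_\Gamma(\boldsymbol{y})\|_{L^p(\Gamma)}^{p-1}$, and $\|\boldsymbol{d}(t)-\tr_\Gamma(\boldsymbol{y})\|_{L^p(\Gamma)}^{p-1}\leq 1+\|\boldsymbol{d}(t)-\tr_\Gamma(\boldsymbol{y})\|_{L^p(\Gamma)}^p\leq 1+C(\mathcal{F}(t,\boldsymbol{q})+L)$ again by \eqref{eqn:coercivity-total-energy}. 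Setting $\eta(t)$ to be a suitable constant multiple of $\|\dot{\boldsymbol{f}}(t)\|_{L^{p'}}+\|\dot{\boldsymbol{g}}(t)\|_{L^{p'}}+\|\dot{\boldsymbol{h}}(t)\|_{L^2}+\|\dot{\boldsymbol{d}}(t)\|_{L^p(\Gamma)}+1$, which lies in $L^1(0,T)$ by \eqref{eqn:regularity-boundary-datum}--\eqref{eqn:regularity-applied-loads}, and adjusting $L$ if necessary, gives \eqref{eqn:control-time-derivative}.

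Finally, \eqref{eqn:gronwall} follows from \eqref{eqn:control-time-derivative} by the differential Gronwall inequality: for fixed $\boldsymbol{q}$, the absolutely continuous function $\varphi(t):=\mathcal{F}(t,\boldsymbol{q})+L\geq 0$ satisfies $|\varphi'(t)|\leq\eta(t)\varphi(t)$ for a.e.\ $t$, hence $\varphi(t)\leq\varphi(s)\mathrm{e}^{|H(t)-H(s)|}$ for all $s,t$, which is \eqref{eqn:gronwall}. Combining \eqref{eqn:control-time-derivative} at time $t$ with \eqref{eqn:gronwall} to pass from $t$ to $s$ gives $|\partial_t\mathcal{F}(t,\boldsymbol{q})|\leq\eta(t)(\mathcal{F}(t,\boldsymbol{q})+L)\leq\eta(t)(\mathcal{F}(s,\boldsymbol{q})+L)\mathrm{e}^{|H(t)-H(s)|}$, which is \eqref{eqn:gronwall-derivative} with $C_0=L$. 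The main technical obstacle is the bookkeeping in claim (i): one must carefully track which constants absorb into which coercive term (in particular routing the magnetic self-energy bound $\|\chi_{\im_{\rm T}(\boldsymbol{y},\Omega)}\boldsymbol{m}\|_{L^2}^2=\int_\Omega 1/\det D\boldsymbol{y}$ through the growth condition \eqref{eqn:gamma} rather than through the exchange term) so that the constant $C_1$ in \eqref{eqn:coercivity-total-energy} is genuinely uniform; once (i) is in place, the Gronwall steps are routine.
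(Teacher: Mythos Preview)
Your proposal is correct and follows essentially the same route as the paper: Poincar\'e with boundary term to control $\|\boldsymbol{y}\|_{W^{1,p}}$, the change-of-variables identity $\int_{\im_{\rm T}(\boldsymbol{y},\Omega)}|\boldsymbol{m}|^2\,\d\boldsymbol{\xi}\leq\int_\Omega 1/\det D\boldsymbol{y}\,\d\boldsymbol{x}$ routed through the growth of $\gamma$ to control the magnetic contribution, then H\"older--Young for (i), the explicit formula for $\partial_t\mathcal{F}$ plus H\"older for \eqref{eqn:control-time-derivative}, and Gronwall for the rest. One small correction: your displayed equality $\|\chi_{\im_{\rm T}(\boldsymbol{y},\Omega)}\boldsymbol{m}\|_{L^2}^2=\int_\Omega 1/\det D\boldsymbol{y}$ should be an inequality ($\leq$) in the non-injective case, since the change-of-variables formula carries the multiplicity function; this does not affect the argument, as only the upper bound is needed.
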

\begin{proof}
First,  by the Poincaré inequality with trace term, for every $\boldsymbol{y}\in W^{1,p}(\Omega;\R^N)$ and $t \in [0,T]$, there holds
\begin{equation}
\label{eqn:poincare}
    \|\boldsymbol{y}\|_{W^{1,p}(\Omega;\R^N)}\leq C_3 \left (\|D\boldsymbol{y}\|_{L^p(\Omega;\rnn)}+\|\boldsymbol{d}(t)-\tr_\Gamma(\boldsymbol{y})\|_{L^p(\Gamma;\R^N)}+\|\boldsymbol{d}\|_{C^0([0,T];L^p(\Gamma;\R^N))} \right),
\end{equation}
where the constant $C_3>0$ depends only on $\Omega$, $N$ and $p$. Second, define $\widetilde{\gamma}$ as in the proof of Proposition \ref{prop:coercivity} and recall that, by \eqref{eqn:gamma}, this function has superlinear growth at infinity. Applying Proposition \ref{prop:change-of-variable} and exploiting \eqref{eqn:magnetic-saturation}, for every $\boldsymbol{q}=(\boldsymbol{y},\boldsymbol{m})\in \mathcal{Q}$ we estimate
\begin{equation}
\label{eqn:m-gamma}
    \begin{split}
        \int_{\im_{\rm T}(\boldsymbol{y},\Omega)} |\boldsymbol{m}|^2\,\d\boldsymbol{\xi}&\leq \int_\Omega |\boldsymbol{m}\circ \boldsymbol{y}|^2\,\det D \boldsymbol{y}\,\d\boldsymbol{x}=\int_\Omega \frac{1}{\det D \boldsymbol{y}}\,\d\boldsymbol{x}\\
        &\leq C_4 \left (1+\left \|\widetilde{\gamma}\left (\frac{1}{\det D \boldsymbol{y}}\right)\right\|_{L^1(\Omega)} \right)\leq  C_4 \left (1+||\gamma(\det D \boldsymbol{y})||_{L^1(\Omega)} \right),
    \end{split}
\end{equation}
where the constant $C_4>0$ depends only on $\Omega$ and $\gamma$.

Recalling \eqref{eqn:coercivity} and employing \eqref{eqn:poincare}--\eqref{eqn:m-gamma}, claim (i) is checked by means of a standard application of H\"{o}lder and Young inequalities. 
For (ii), recall that, for every fixed $\boldsymbol{q}\in \mathcal{Q}$, 
the map $t\mapsto \mathcal{F}(t,\boldsymbol{q})$ belongs to $AC([0,T])$. In particular,  $t \in (0,T)\setminus Z$ and $\boldsymbol{q}=(\boldsymbol{y},\boldsymbol{m})\in \mathcal{Q}$, there holds 
\begin{equation}
\label{eqn:time-derivative-expression}
		\begin{split}
			\partial_t \mathcal{F}(t,\boldsymbol{q})&=-\partial_t \mathcal{L}(t,\boldsymbol{q})+\partial_t \mathcal{B}(t,\boldsymbol{q})\\
			&=
			-\int_\Omega \dot{\boldsymbol{f}}(t)\cdot \boldsymbol{y}\,\d\boldsymbol{x}-\int_{\Sigma} \dot{\boldsymbol{g}}(t)\cdot \tr_\Sigma(\boldsymbol{y})\,\d\boldsymbol{a}-\int_{\im_{\rm T}(\boldsymbol{y},\Omega)} \dot{\boldsymbol{h}}(t)\cdot \boldsymbol{m}\,\d\boldsymbol{\xi}\\
			&+p\int_{\Gamma} |\boldsymbol{d}(t)-\boldsymbol{y}|^{p-2} (\boldsymbol{d}(t)-\boldsymbol{y})\cdot \dot{\boldsymbol{d}}(t)\,\d\boldsymbol{a}.
		\end{split}
\end{equation}
Employing again H\"{o}lder and Young inequalities together with \eqref{eqn:coercivity-total-energy} and \eqref{eqn:poincare}--\eqref{eqn:m-gamma}, we find a function $\eta \in L^1(0,T)$ of the form
\begin{equation*}
	\eta(t)\coloneqq C \left(\|\dot{\boldsymbol{f}}(t)\|_{L^{p'}(\Omega;\R^N)}+ \|\dot{\boldsymbol{g}}(t)\|_{L^{p'}(\Sigma;\R^N)}+\|\dot{\boldsymbol{h}}(t)\|_{L^2(\R^N;\R^N)}+ \|\dot{\boldsymbol{d}}(t)\|_{L^p(\Sigma;\R^N)}       \right)
\end{equation*}
for some constant $C>0$ and another constant $0<L\leq C_2$, both  depending only on the given data, such that \eqref{eqn:control-time-derivative} holds true.
From this, by applying the Gronwall inequality, we deduce \eqref{eqn:gronwall}. Finally, combining \eqref{eqn:control-time-derivative} and \eqref{eqn:gronwall}, we obtain \eqref{eqn:gronwall-derivative}. In particular, \eqref{eqn:L}  follows from \eqref{eqn:coercivity-total-energy}.
\end{proof}

In the following, we will exploit the coercivity and lower semicontinuity properties below which are simple consequences of the results of Section \ref{sec:static}. 

\begin{lemma}[Coercivity and lower semicontinuity of the total energy]
	\label{lem:properties-total-energy}
	Assume \eqref{eqn:gamma}--\eqref{eqn:W-polyconvex} and \eqref{eqn:regularity-boundary-datum}--\eqref{eqn:regularity-applied-loads}. The following two claims hold true.
	\begin{enumerate}[(i)]
		\item \textbf{Coercivity:} Let $(t_n)\subset [0,T]$ and $(\boldsymbol{q}_n)\subset \mathcal{Q}$ with $\boldsymbol{q}_n=(\boldsymbol{y}_n,\boldsymbol{m}_n)$. Suppose that
		\begin{equation}
		\label{eqn:total-energy-bounded}
		\sup_{n \in \N} \mathcal{F}(t_n,\boldsymbol{q}_n)<+\infty.
		\end{equation}
		Then,  there exists $\boldsymbol{q}\in \mathcal{Q}$ with $\boldsymbol{q}=(\boldsymbol{y},\boldsymbol{m})$ such that, up to subsequences, the convergences in \eqref{eqn:compactness-deformation}--\eqref{eqn:improved-extension} hold true.
		\item \textbf{Lower semicontinuity:} Let $(t_n)\subset [0,T]$ and $t \in [0,T]$. Let $(\boldsymbol{q}_n)\subset \mathcal{Q}$ with $\boldsymbol{q}_n=(\boldsymbol{y}_n,\boldsymbol{m}_n)$ and $\boldsymbol{q}\in \mathcal{Q}$ with $\boldsymbol{q}=(\boldsymbol{y},\boldsymbol{m})$. Suppose that $t_n \to t$  and that the convergences in \eqref{eqn:compactness-deformation} and \eqref{eqn:compactness-extension-m}--\eqref{eqn:compactness-extension-Dm} hold true as well as the almost everywhere convergence in \eqref{eqn:compactness-composition}. Then, there holds
		\begin{equation}
		\label{eqn:total-energy-lsc}
		\mathcal{F}(t,\boldsymbol{q})\leq \liminf_{n \to \infty} \mathcal{F}(t_n,\boldsymbol{q}_n).
		\end{equation}
	\end{enumerate}
\end{lemma}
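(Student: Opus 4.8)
The plan is to derive both claims of Lemma~\ref{lem:properties-total-energy} from the corresponding results for the magnetoelastic energy $E$ established in Section~\ref{sec:static}, by controlling the remaining terms $\mathcal{L}$ and $\mathcal{B}$ through the structural estimates already collected in the preceding lemma. For the coercivity claim~(i), I would start from the bound $\sup_n \mathcal{F}(t_n,\boldsymbol{q}_n)<+\infty$ and invoke inequality~\eqref{eqn:coercivity-total-energy}: since $C_1>0$ and $C_2$ is a fixed constant, this immediately yields
\begin{equation*}
\sup_{n\in\N}\left\{\|D\boldsymbol{y}_n\|_{L^p(\Omega;\rnn)}^p+\|\gamma(\det D\boldsymbol{y}_n)\|_{L^1(\Omega)}+\|D\boldsymbol{m}_n\|_{L^2(\im_{\rm T}(\boldsymbol{y}_n,\Omega);\rnn)}^2+\|\boldsymbol{d}(t_n)-\boldsymbol{y}_n\|_{L^{p^\sharp}(\Gamma;\R^N)}^p\right\}<+\infty.
\end{equation*}
Combining the last term with the uniform bound on $\boldsymbol{d}$ coming from \eqref{eqn:regularity-boundary-datum} (so that $(\boldsymbol{d}(t_n))$ is bounded in $L^p(\Gamma;\R^N)$) and using the Poincaré inequality with trace term as in~\eqref{eqn:poincare}, one obtains a uniform bound on $\|\boldsymbol{y}_n\|_{W^{1,p}(\Omega;\R^N)}$. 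Hence the hypothesis~\eqref{eqn:bound} of Proposition~\ref{prop:coercivity} is satisfied (with the $\gamma$ fixed here), and an application of that proposition delivers a limit state $\boldsymbol{q}=(\boldsymbol{y},\boldsymbol{m})\in\mathcal{Q}$ together with all the convergences \eqref{eqn:compactness-deformation}--\eqref{eqn:improved-extension}, up to a subsequence. This closes claim~(i).

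For the lower semicontinuity claim~(ii), the strategy is to split $\mathcal{F}(t_n,\boldsymbol{q}_n)=E(\boldsymbol{q}_n)-\mathcal{L}(t_n,\boldsymbol{q}_n)+\mathcal{B}(t_n,\boldsymbol{q}_n)$ and treat the three pieces separately. For the elastic-plus-exchange-plus-magnetostatic part, Proposition~\ref{prop:lsc} applies verbatim under the stated convergences and gives $E(\boldsymbol{q})\leq\liminf_n E(\boldsymbol{q}_n)$. For the load term $\mathcal{L}$, I would show it is actually continuous along the sequence: the time-dependence is handled by $t_n\to t$ together with the absolute continuity $\boldsymbol{f},\boldsymbol{g},\boldsymbol{h}\in AC([0,T];\cdot)$, which forces $\boldsymbol{f}(t_n)\to\boldsymbol{f}(t)$ in $L^{p'}(\Omega;\R^N)$ and analogously for $\boldsymbol{g},\boldsymbol{h}$; the spatial dependence is handled by pairing these strong convergences against the weak convergences $\boldsymbol{y}_n\wk\boldsymbol{y}$ in $W^{1,p}$ (hence $\tr_\Sigma(\boldsymbol{y}_n)\wk\tr_\Sigma(\boldsymbol{y})$ in $L^p(\Sigma;\R^N)$ by continuity of the trace) and $\chi_{\im_{\rm T}(\boldsymbol{y}_n,\Omega)}\boldsymbol{m}_n\wk\chi_{\im_{\rm T}(\boldsymbol{y},\Omega)}\boldsymbol{m}$ in $L^2(\R^N;\R^N)$. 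Thus $\mathcal{L}(t_n,\boldsymbol{q}_n)\to\mathcal{L}(t,\boldsymbol{q})$. For the boundary-penalty term $\mathcal{B}$, I would argue lower semicontinuity: $\boldsymbol{d}(t_n)-\tr_\Gamma(\boldsymbol{y}_n)\wk\boldsymbol{d}(t)-\tr_\Gamma(\boldsymbol{y})$ in $L^p(\Gamma;\R^N)$ (using $\boldsymbol{d}(t_n)\to\boldsymbol{d}(t)$ strongly and the weak trace convergence), and then weak lower semicontinuity of the convex functional $v\mapsto\int_\Gamma|v|^p\,\d\boldsymbol{a}$ gives $\mathcal{B}(t,\boldsymbol{q})\leq\liminf_n\mathcal{B}(t_n,\boldsymbol{q}_n)$. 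Adding the elastic liminf inequality, the continuity of $\mathcal{L}$, and the liminf inequality for $\mathcal{B}$ yields~\eqref{eqn:total-energy-lsc}.

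I do not anticipate a serious obstacle here, since the lemma is essentially a packaging of Propositions~\ref{prop:coercivity} and~\ref{prop:lsc} together with routine checks; the mildly delicate point is the continuity of the load functional $\mathcal{L}$, and specifically the magnetic term $\int_{\im_{\rm T}(\boldsymbol{y},\Omega)}\boldsymbol{h}\cdot\boldsymbol{m}\,\d\boldsymbol{\xi}$, which must be read as $\int_{\R^N}\boldsymbol{h}(t_n)\cdot\chi_{\im_{\rm T}(\boldsymbol{y}_n,\Omega)}\boldsymbol{m}_n\,\d\boldsymbol{\xi}$ so that the weak convergence~\eqref{eqn:compactness-extension-m} can be paired against the strong convergence $\boldsymbol{h}(t_n)\to\boldsymbol{h}(t)$ in $L^2(\R^N;\R^N)$; one should note that a product of a weakly convergent and a strongly convergent sequence in $L^2$ converges, and that the cross term $\big(\boldsymbol{h}(t_n)-\boldsymbol{h}(t)\big)\cdot\chi_{\im_{\rm T}(\boldsymbol{y}_n,\Omega)}\boldsymbol{m}_n$ is controlled by the uniform $L^2$-bound on $\chi_{\im_{\rm T}(\boldsymbol{y}_n,\Omega)}\boldsymbol{m}_n$ coming from the coercivity already established. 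With these observations in place the proof is complete.
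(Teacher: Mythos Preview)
Your proposal is correct and follows essentially the same route as the paper: reduce claim~(i) to Proposition~\ref{prop:coercivity} via the coercivity estimate~\eqref{eqn:coercivity-total-energy} and the Poincar\'e inequality~\eqref{eqn:poincare}, and reduce claim~(ii) to Proposition~\ref{prop:lsc} plus continuity of $\mathcal{L}$ under strong--weak pairing. The only minor organizational difference is in the handling of $\mathcal{B}$: the paper actually claims \emph{continuity} of $\mathcal{B}(t,\cdot)$ along the sequence (using that the trace $W^{1,p}(\Omega)\to L^p(\Gamma)$ is compact, so $\tr_\Gamma(\boldsymbol{y}_n)\to\tr_\Gamma(\boldsymbol{y})$ strongly), whereas you invoke only weak lower semicontinuity of the $L^p$-norm; both are valid and yield~\eqref{eqn:total-energy-lsc}. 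One small wording fix: in~(ii) the uniform $L^2$-bound on $\chi_{\im_{\rm T}(\boldsymbol{y}_n,\Omega)}\boldsymbol{m}_n$ you need for the cross term comes directly from the assumed weak convergence~\eqref{eqn:compactness-extension-m} (weakly convergent sequences are bounded), not from the coercivity of claim~(i).
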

\begin{proof}
	From \eqref{eqn:total-energy-bounded}, by \eqref{eqn:coercivity-total-energy} and \eqref{eqn:poincare}, we deduce \eqref{eqn:bound}. Thus, claim (i) follows by Proposition \ref{prop:coercivity}. We prove claim (ii). First, observe that the functionals $\mathcal{L}(t,\cdot)$ and $\mathcal{B}(t,\cdot)$ are continuous with respect to the convergences in \eqref{eqn:compactness-deformation}-- \eqref{eqn:compactness-extension-m}. This, together with Proposition \ref{prop:lsc}, yields 
	\begin{equation*}
	\mathcal{F}(t,\boldsymbol{q})\leq \liminf_{n \to \infty} \mathcal{F}(t,\boldsymbol{q}_n).
	\end{equation*}
	Hence, in order to establish \eqref{eqn:total-energy-lsc}, it is sufficient to check that
	\begin{equation*}
	\lim_{n \to \infty} \left \{\mathcal{F}(t_n,\boldsymbol{q}_n)-\mathcal{F}(t,\boldsymbol{q}_n) \right \}= \lim_{n\to\infty} \left \{  \mathcal{L}(t,\boldsymbol{q}_n)-\mathcal{L}(t_n,\boldsymbol{q}_n)+\mathcal{B}(t_n,\boldsymbol{q}_n)-\mathcal{B}(t,\boldsymbol{q}_n)\right\} =0 .
	\end{equation*}
	This claim holds true since, from \eqref{eqn:compactness-deformation}--\eqref{eqn:compactness-extension-Dm} and \eqref{eqn:regularity-boundary-datum}--\eqref{eqn:regularity-applied-loads}, we have
	\begin{equation*}
	\lim_{n\to \infty} \mathcal{L}(t_n,\boldsymbol{q}_n)= \lim_{n\to \infty} \mathcal{L}(t,\boldsymbol{q}_n)= \mathcal{L}(t,\boldsymbol{q}), \qquad \lim_{n\to \infty} \mathcal{B}(t_n,\boldsymbol{q}_n)=\lim_{n\to \infty} \mathcal{B}(t,\boldsymbol{q}_n)=\mathcal{B}(t,\boldsymbol{q}).
	\end{equation*}
\end{proof}

We now introduce the incremental minimization problem.
Let $\Pi=(t^0,\dots,t^l)$ be a partition of $[0,T]$ and let $\boldsymbol{q}^0\in \mathcal{Q}$. The incremental minimization problem determined by $\Pi$ with initial datum $\boldsymbol{q}^0$ reads as follows: 
\begin{equation}
\label{eqn:IMP}
    \begin{split}
        &\hspace{2mm}\text{find $\boldsymbol{q}^1,\dots,\boldsymbol{q}^l\in \mathcal{Q}$ such that each $\boldsymbol{q}^i$ is a minimizer of}\\
        &\text{the functional $\boldsymbol{q}\mapsto \mathcal{F}(t^i,\boldsymbol{q})+\mathcal{D}(\boldsymbol{q}^{i-1},\boldsymbol{q})$ for $i \in \{1,\dots,l\}$.}
    \end{split}
\end{equation}

In the next result, we prove the existence of solutions of the incremental minimization problem and we collect their main properties.

\begin{proposition}[Solutions of the incremental minimization problem]
\label{prop:IMP}
Let $\Pi=(t^0,\dots,t^l)$ be a partition of $[0,T]$ and let $\boldsymbol{q}^0\in \mathcal{Q}$. Then, the incremental minimization problem determined by $\Pi$ with initial datum $\boldsymbol{q}^0$ admits a solution $(\boldsymbol{q}^1,\dots,\boldsymbol{q}^l)$. Moreover, if $\boldsymbol{q}^0$ satisfies
\begin{equation}
    \label{eqn:stability-initial-datum}
    \forall\,\widehat{\boldsymbol{q}}\in \mathcal{Q},\quad \mathcal{F}(0,\boldsymbol{q}^0)\leq \mathcal{F}(0,\widehat{\boldsymbol{q}})+\mathcal{D}(\boldsymbol{q}^0,\widehat{\boldsymbol{q}}),
\end{equation}
then the following hold:
\begin{equation}
    \label{eqn:IMP-stability}
    \forall\,i \in \{1,\dots,l\},\:\forall\,\widehat{\boldsymbol{q}}\in \mathcal{Q}, \quad \mathcal{F}(t^i,\boldsymbol{q}^i)\leq \mathcal{F}(t^i,\widehat{\boldsymbol{q}})+\mathcal{D}(\boldsymbol{q}^i,\widehat{\boldsymbol{q}}),
\end{equation}
\begin{equation}
\label{eqn:IMP-energy-dissipation-balance}
    \forall\,i \in \{1,\dots,l\}, \quad \mathcal{F}(t^i,\boldsymbol{q}^i)-\mathcal{F}(t^{i-1},\boldsymbol{q}^{i-1})+\mathcal{D}(\boldsymbol{q}^{i-1},\boldsymbol{q}^i)\leq \int_{t^{i-1}}^{t^i} \partial_t \mathcal{F}(\tau,\boldsymbol{q}^{i-1})\,\d\tau,
\end{equation}
\begin{equation}
\label{eqn:IMP-a-priori-estimates}
    \forall\,i \in \{1,\dots,l\}, \quad \mathcal{F}(t^i,\boldsymbol{q}^i)+L+\sum_{j=1}^i \mathcal{D}(\boldsymbol{q}^{j-1},\boldsymbol{q}^j)\leq \left ( \mathcal{F}(0,\boldsymbol{q}^0)+L\right)\mathrm{e}^{H(t^i)}.
\end{equation}
\end{proposition}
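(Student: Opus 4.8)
The plan is to run the standard incremental-minimization machinery for rate-independent systems, using the coercivity and lower semicontinuity established in Lemma \ref{lem:properties-total-energy} together with the time-control estimates of the preceding lemma. First I would address existence of a solution of \eqref{eqn:IMP}: proceeding inductively on $i\in\{1,\dots,l\}$, assume $\boldsymbol{q}^{i-1}\in\mathcal{Q}$ has been constructed and consider a minimizing sequence $(\boldsymbol{q}_n)\subset\mathcal{Q}$ for the functional $\boldsymbol{q}\mapsto\mathcal{F}(t^i,\boldsymbol{q})+\mathcal{D}(\boldsymbol{q}^{i-1},\boldsymbol{q})$. Since $\mathcal{D}\geq 0$ and since the competitor $\boldsymbol{q}^{i-1}$ itself gives a finite value, the infimum is finite and $\sup_n\mathcal{F}(t^i,\boldsymbol{q}_n)<+\infty$. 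By claim (i) of Lemma \ref{lem:properties-total-energy} there exist $\boldsymbol{q}^i=(\boldsymbol{y}^i,\boldsymbol{m}^i)\in\mathcal{Q}$ and a (not relabeled) subsequence along which \eqref{eqn:compactness-deformation}--\eqref{eqn:improved-extension} hold; in particular the almost everywhere convergence of compositions in \eqref{eqn:compactness-composition} and \eqref{eqn:compactness-lagrangian} are available. Claim (ii) of Lemma \ref{lem:properties-total-energy} (with $t_n\equiv t^i$) gives lower semicontinuity of $\mathcal{F}(t^i,\cdot)$, and the Lagrangian densities $\boldsymbol{m}_n\circ\boldsymbol{y}_n\det D\boldsymbol{y}_n\to\boldsymbol{m}^i\circ\boldsymbol{y}^i\det D\boldsymbol{y}^i$ in $L^1(\Omega;\R^N)$ by \eqref{eqn:compactness-lagrangian}, whence $\mathcal{D}(\boldsymbol{q}^{i-1},\boldsymbol{q}_n)\to\mathcal{D}(\boldsymbol{q}^{i-1},\boldsymbol{q}^i)$ by continuity of the $L^1$-norm. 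Adding the two limits shows $\boldsymbol{q}^i$ is a minimizer, closing the induction.

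Next I would prove the stability inequality \eqref{eqn:IMP-stability}, again by induction on $i$, the case $i=0$ being the hypothesis \eqref{eqn:stability-initial-datum}. Assume \eqref{eqn:IMP-stability} holds with $i-1$ in place of $i$ (for $i=1$ this is \eqref{eqn:stability-initial-datum}); fix an arbitrary competitor $\widehat{\boldsymbol{q}}\in\mathcal{Q}$. Minimality of $\boldsymbol{q}^i$ against the competitor $\widehat{\boldsymbol{q}}$ gives
\begin{equation*}
\mathcal{F}(t^i,\boldsymbol{q}^i)+\mathcal{D}(\boldsymbol{q}^{i-1},\boldsymbol{q}^i)\leq \mathcal{F}(t^i,\widehat{\boldsymbol{q}})+\mathcal{D}(\boldsymbol{q}^{i-1},\widehat{\boldsymbol{q}}).
\end{equation*}
Since $\mathcal{D}$ is defined through the $L^1$-norm of the difference of Lagrangian magnetizations, it satisfies the triangle inequality $\mathcal{D}(\boldsymbol{q}^{i-1},\widehat{\boldsymbol{q}})\leq\mathcal{D}(\boldsymbol{q}^{i-1},\boldsymbol{q}^i)+\mathcal{D}(\boldsymbol{q}^i,\widehat{\boldsymbol{q}})$; substituting and cancelling the common term $\mathcal{D}(\boldsymbol{q}^{i-1},\boldsymbol{q}^i)$ (which is finite, being bounded by the finite energy level via the coercivity estimates) yields \eqref{eqn:IMP-stability}. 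The energy inequality \eqref{eqn:IMP-energy-dissipation-balance} follows by testing minimality of $\boldsymbol{q}^i$ with the competitor $\boldsymbol{q}^{i-1}$: this gives $\mathcal{F}(t^i,\boldsymbol{q}^i)+\mathcal{D}(\boldsymbol{q}^{i-1},\boldsymbol{q}^i)\leq\mathcal{F}(t^i,\boldsymbol{q}^{i-1})$, and then
\begin{equation*}
\mathcal{F}(t^i,\boldsymbol{q}^{i-1})-\mathcal{F}(t^{i-1},\boldsymbol{q}^{i-1})=\int_{t^{i-1}}^{t^i}\partial_t\mathcal{F}(\tau,\boldsymbol{q}^{i-1})\,\d\tau,
\end{equation*}
using that $t\mapsto\mathcal{F}(t,\boldsymbol{q}^{i-1})\in AC([0,T])$, so its pointwise derivative (which exists off the null set $Z$ and is integrable by \eqref{eqn:gronwall-derivative}) recovers the increment via the fundamental theorem of calculus; rearranging gives \eqref{eqn:IMP-energy-dissipation-balance}.

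Finally, for the a priori estimate \eqref{eqn:IMP-a-priori-estimates} I would sum \eqref{eqn:IMP-energy-dissipation-balance} over $j=1,\dots,i$, obtaining
\begin{equation*}
\mathcal{F}(t^i,\boldsymbol{q}^i)+\sum_{j=1}^i\mathcal{D}(\boldsymbol{q}^{j-1},\boldsymbol{q}^j)\leq \mathcal{F}(0,\boldsymbol{q}^0)+\sum_{j=1}^i\int_{t^{j-1}}^{t^j}\partial_t\mathcal{F}(\tau,\boldsymbol{q}^{j-1})\,\d\tau.
\end{equation*}
Adding $L$ to both sides and bounding $\partial_t\mathcal{F}(\tau,\boldsymbol{q}^{j-1})\leq|\partial_t\mathcal{F}(\tau,\boldsymbol{q}^{j-1})|\leq\eta(\tau)\bigl(\mathcal{F}(\tau,\boldsymbol{q}^{j-1})+L\bigr)$ on $(0,T)\setminus Z$ via \eqref{eqn:control-time-derivative}, together with the monotonicity $\mathcal{F}(\tau,\boldsymbol{q}^{j-1})+L\leq\mathcal{F}(t^{j-1},\boldsymbol{q}^{j-1})+L+\int_{t^{j-1}}^\tau\eta(s)\bigl(\mathcal{F}(s,\boldsymbol{q}^{j-1})+L\bigr)\,\d s$ and Gronwall's inequality on each subinterval, one reduces to the discrete Gronwall recursion
\begin{equation*}
\mathcal{F}(t^i,\boldsymbol{q}^i)+L+\sum_{j=1}^i\mathcal{D}(\boldsymbol{q}^{j-1},\boldsymbol{q}^j)\leq \bigl(\mathcal{F}(t^{i-1},\boldsymbol{q}^{i-1})+L+\sum_{j=1}^{i-1}\mathcal{D}(\boldsymbol{q}^{j-1},\boldsymbol{q}^j)\bigr)\mathrm{e}^{H(t^i)-H(t^{i-1})},
\end{equation*}
which iterates to \eqref{eqn:IMP-a-priori-estimates} since $H(t^0)=0$. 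The main obstacle is the existence step: one must verify that the dissipation term $\mathcal{D}(\boldsymbol{q}^{i-1},\boldsymbol{q}_n)$ converges to $\mathcal{D}(\boldsymbol{q}^{i-1},\boldsymbol{q}^i)$ along the minimizing sequence, and this is precisely where the strong $L^1$-convergence of the Lagrangian magnetizations \eqref{eqn:compactness-lagrangian} — itself a consequence of the delicate convergence of compositions in Theorem \ref{thm:compactness} — is indispensable; the remaining steps are routine manipulations with the triangle inequality and Gronwall's lemma.
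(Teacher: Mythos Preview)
Your proposal is correct and follows essentially the same approach as the paper: the existence step is handled via the Direct Method using Lemma \ref{lem:properties-total-energy}(i) for compactness, Lemma \ref{lem:properties-total-energy}(ii) for lower semicontinuity of $\mathcal{F}$, and the strong $L^1$-convergence \eqref{eqn:compactness-lagrangian} for continuity of $\mathcal{D}(\boldsymbol{q}^{i-1},\cdot)$, exactly as in the paper. For \eqref{eqn:IMP-stability}--\eqref{eqn:IMP-a-priori-estimates} the paper simply cites \cite[Proposition 2.1.4]{mielke.roubicek} as ``standard computations'', whereas you spell out the triangle-inequality and Gronwall arguments; one small remark is that your inductive hypothesis in the stability step is never actually used---\eqref{eqn:IMP-stability} follows directly from minimality of $\boldsymbol{q}^i$ and the triangle inequality for $\mathcal{D}$, with no appeal to stability of $\boldsymbol{q}^{i-1}$.
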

\begin{proof}
Once the solvability of the incremental minimization problem is shown, claims \eqref{eqn:IMP-stability}--\eqref{eqn:IMP-a-priori-estimates} follow by standard computations \cite[Proposition 2.1.4]{mielke.roubicek}. Let $i \in \{1,\dots,l\}$ and suppose that $\boldsymbol{q}^{i-1}\in \mathcal{Q}$ is given. Let $(\boldsymbol{q}_n)\subset \mathcal{Q}$ be a minimizing sequence for the functional $\boldsymbol{q}\mapsto \mathcal{F}(t^i,\boldsymbol{q})+\mathcal{D}(\boldsymbol{q}^{i-1},\boldsymbol{q})$. Clearly
\begin{equation*}
    \sup_{n \in \N} \mathcal{F}(t^i,\boldsymbol{q}_n)\leq \sup_{n \in \N} \left \{ \mathcal{F}(t^i,\boldsymbol{q}_n)+\mathcal{D}(\boldsymbol{q}^{i-1},\boldsymbol{q}_n) \right\}<+\infty.
\end{equation*}
Thus, by claim (i) of Lemma \ref{lem:properties-total-energy}, there exists $\boldsymbol{q}\in \mathcal{Q}$ with $\boldsymbol{q}=(\boldsymbol{y},\boldsymbol{m})$ such that, up to subsequences, the convergence in \eqref{eqn:compactness-deformation}--\eqref{eqn:improved-extension} hold true. Then, by claim (ii) of Lemma \ref{lem:properties-total-energy} and by the definition of the dissipation distance, we have
\begin{equation*}
\begin{split}
    \mathcal{F}(t^i,\boldsymbol{q})+\mathcal{D}(\boldsymbol{q}^{i-1},\boldsymbol{q})&\leq \liminf_{n \to \infty} \mathcal{F}(t^i,\boldsymbol{q}_n)+\lim_{n \to \infty}\mathcal{D}(\boldsymbol{q}^{i-1},\boldsymbol{q}_n)\\
    &=\liminf_{n \to \infty} \left \{\mathcal{F}(t^i,\boldsymbol{q}_n) +\mathcal{D}(\boldsymbol{q}^{i-1},\boldsymbol{q}_n)\right \}\\
    &=\inf_{\widehat{\boldsymbol{q}}\in\mathcal{Q}} \left \{\mathcal{F}(t^i,\widehat{\boldsymbol{q}}) +\mathcal{D}(\boldsymbol{q}^{i-1},\widehat{\boldsymbol{q}})\right \}.
\end{split}
\end{equation*}
This concludes the proof.
\end{proof}

In the next result, we introduce the piecewise-constant interpolants and we show that these satisfy time-discrete versions of the stability condition and of the upper energy-dissipation inequality.

\begin{proposition}[Piecewise-constant interpolants]
\label{prop:pc-interpolants}
Let $\Pi=(t^0,\dots,t^l)$ be a partition of $[0,T]$ and let $\boldsymbol{q}^0\in \mathcal{Q}$ satisfy \eqref{eqn:stability-initial-datum}. Suppose that $(\boldsymbol{q}^1,\dots,\boldsymbol{q}^l)$ is a solution of the incremental minimization problem determined by $\Pi$ with initial datum $\boldsymbol{q}^0$. Define the piecewise-constant interpolant $\boldsymbol{q}_\Pi\colon [0,T] \to \mathcal{Q}$ by setting
\begin{equation}
\label{eqn:pc-interpolant}
    \boldsymbol{q}_\Pi(t)\coloneqq 
    \begin{cases}
    \boldsymbol{q}^{i-1} & \text{if $t^{i-1}\leq t<t^i$ for some $i \in \{1,\dots,l\}$,}\\
    \boldsymbol{q}^l & \text{if $t=T$.}
    \end{cases}
\end{equation}
Then, the following hold:
\begin{equation}
\label{eqn:pc-interpolants-stability}
    \forall\,t \in \Pi,\:\forall\,\widehat{\boldsymbol{q}}\in \mathcal{Q},\quad \mathcal{F}(t,\boldsymbol{q}_\Pi(t))\leq \mathcal{F}(t,\widehat{\boldsymbol{q}})+\mathcal{D}(\boldsymbol{q}_\Pi(t),\widehat{\boldsymbol{q}}),
\end{equation}
\begin{equation}
\label{eqn:pc-interpolants-energy-dissipation-balance}
    \forall\,s,t\in \Pi:s<t,\quad \mathcal{F}(t,\boldsymbol{q}_\Pi(t))-\mathcal{F}(s,\boldsymbol{q}_\Pi(s))+\var_{\mathcal{D}}(\boldsymbol{q}_\Pi;[s,t])\leq \int_s^t \partial_t \mathcal{F}(\tau,\boldsymbol{q}_\Pi(\tau))\,\d\tau,
\end{equation}
\begin{equation}
\label{eqn:pc-interpolants-a-priori-estimate}
    \forall\,t \in [0,T],\quad \mathcal{F}(t,\boldsymbol{q}_\Pi(t))+L+\var_{\mathcal{D}}(\boldsymbol{q}_\Pi;[0,t])\leq \left (\mathcal{F}(0,\boldsymbol{q}^0)+L \right) \mathrm{e}^{H(t)}.
\end{equation}
\end{proposition}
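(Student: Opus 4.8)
The plan is to derive \eqref{eqn:pc-interpolants-stability}--\eqref{eqn:pc-interpolants-a-priori-estimate} from the properties of the incremental solutions established in Proposition \ref{prop:IMP}, following the standard template for rate-independent systems \cite[Proposition 2.1.4]{mielke.roubicek}. The one point requiring care is the passage from the nodewise estimates \eqref{eqn:IMP-energy-dissipation-balance}--\eqref{eqn:IMP-a-priori-estimates}, which only control the dissipation along the partition, to the estimates \eqref{eqn:pc-interpolants-energy-dissipation-balance}--\eqref{eqn:pc-interpolants-a-priori-estimate} formulated through $\var_{\mathcal{D}}$; here the triangle inequality for $\mathcal{D}$ together with the observation that $\boldsymbol{q}_\Pi$ is constant on each subinterval must be invoked.

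\begin{proof}[Proof of Proposition \ref{prop:pc-interpolants}]
\textbf{Stability \eqref{eqn:pc-interpolants-stability}.} If $t \in \Pi$, then either $t=t^i$ with $i \in \{1,\dots,l\}$, in which case $\boldsymbol{q}_\Pi(t)=\boldsymbol{q}^i$ if $t<T$ or $\boldsymbol{q}_\Pi(t)=\boldsymbol{q}^l$ if $t=T=t^l$, and \eqref{eqn:pc-interpolants-stability} is exactly \eqref{eqn:IMP-stability}; or $t=t^0=0$, in which case $\boldsymbol{q}_\Pi(0)=\boldsymbol{q}^0$ and \eqref{eqn:pc-interpolants-stability} reduces to the assumption \eqref{eqn:stability-initial-datum}.

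\textbf{Variation identity on the partition.} We first claim that for every pair $s,t \in \Pi$ with $s=t^j<t^i=t$ there holds
\begin{equation}
\label{eqn:var-sum}
\var_{\mathcal{D}}(\boldsymbol{q}_\Pi;[s,t])=\sum_{k=j+1}^{i} \mathcal{D}(\boldsymbol{q}^{k-1},\boldsymbol{q}^k).
\end{equation}
Indeed, since $\boldsymbol{q}_\Pi$ is constant on $[t^{k-1},t^k)$ and equals $\boldsymbol{q}^{k-1}$ there, while $\boldsymbol{q}_\Pi(t^k)=\boldsymbol{q}^k$, any finite partition $\Pi'$ of $[s,t]$ can be refined by inserting the nodes $t^{j+1},\dots,t^{i-1}$ without decreasing the corresponding dissipation sum, thanks to the triangle inequality for $\mathcal{D}$; and on each subinterval $[t^{k-1},t^k]$ the map $\boldsymbol{q}_\Pi$ takes only the two values $\boldsymbol{q}^{k-1}$ and $\boldsymbol{q}^k$, so that the supremum of the dissipation sums over that subinterval equals $\mathcal{D}(\boldsymbol{q}^{k-1},\boldsymbol{q}^k)$. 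Summing over $k \in \{j+1,\dots,i\}$ and again using the triangle inequality yields \eqref{eqn:var-sum}. The same argument, together with the convention $\boldsymbol{q}_\Pi(T)=\boldsymbol{q}^l$, shows that $\var_{\mathcal{D}}(\boldsymbol{q}_\Pi;[0,t])=\sum_{k=1}^{i}\mathcal{D}(\boldsymbol{q}^{k-1},\boldsymbol{q}^k)$ whenever $t^{i-1}\leq t<t^i$, and $=\sum_{k=1}^{l}\mathcal{D}(\boldsymbol{q}^{k-1},\boldsymbol{q}^k)$ for $t=T$, because $\boldsymbol{q}_\Pi$ is constant on $[t^{i-1},t)$.

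\textbf{Energy--dissipation inequality \eqref{eqn:pc-interpolants-energy-dissipation-balance}.} Let $s=t^j<t^i=t$ be nodes of $\Pi$. Summing \eqref{eqn:IMP-energy-dissipation-balance} over $k \in \{j+1,\dots,i\}$, the energy terms telescope and we get
\begin{equation*}
\mathcal{F}(t^i,\boldsymbol{q}^i)-\mathcal{F}(t^j,\boldsymbol{q}^j)+\sum_{k=j+1}^{i}\mathcal{D}(\boldsymbol{q}^{k-1},\boldsymbol{q}^k)\leq \sum_{k=j+1}^{i}\int_{t^{k-1}}^{t^k}\partial_t\mathcal{F}(\tau,\boldsymbol{q}^{k-1})\,\d\tau=\int_s^t \partial_t\mathcal{F}(\tau,\boldsymbol{q}_\Pi(\tau))\,\d\tau,
\end{equation*}
where the last equality uses that $\boldsymbol{q}_\Pi(\tau)=\boldsymbol{q}^{k-1}$ for $\tau \in [t^{k-1},t^k)$. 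Recalling that $\mathcal{F}(t,\boldsymbol{q}_\Pi(t))=\mathcal{F}(t^i,\boldsymbol{q}^i)$ and $\mathcal{F}(s,\boldsymbol{q}_\Pi(s))=\mathcal{F}(t^j,\boldsymbol{q}^j)$, and inserting \eqref{eqn:var-sum}, we obtain \eqref{eqn:pc-interpolants-energy-dissipation-balance}.

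\textbf{A priori estimate \eqref{eqn:pc-interpolants-a-priori-estimate}.} Fix $t \in [0,T]$ and let $i \in \{1,\dots,l\}$ be such that $t^{i-1}\leq t<t^i$, or $i=l$ and $t=T$. Then $\boldsymbol{q}_\Pi(t)=\boldsymbol{q}^{i-1}$ (resp. $\boldsymbol{q}^l$) and, by the computation above, $\var_{\mathcal{D}}(\boldsymbol{q}_\Pi;[0,t])=\sum_{k=1}^{i-1}\mathcal{D}(\boldsymbol{q}^{k-1},\boldsymbol{q}^k)$ (resp. $\sum_{k=1}^{l}\mathcal{D}(\boldsymbol{q}^{k-1},\boldsymbol{q}^k)$). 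Applying \eqref{eqn:IMP-a-priori-estimates} at the index $i-1$ (resp. $l$) gives
\begin{equation*}
\mathcal{F}(t,\boldsymbol{q}_\Pi(t))+L+\var_{\mathcal{D}}(\boldsymbol{q}_\Pi;[0,t])=\mathcal{F}(t^{i-1},\boldsymbol{q}^{i-1})+L+\sum_{k=1}^{i-1}\mathcal{D}(\boldsymbol{q}^{k-1},\boldsymbol{q}^k)\leq \left(\mathcal{F}(0,\boldsymbol{q}^0)+L\right)\mathrm{e}^{H(t^{i-1})},
\end{equation*}
and since $H$ is nondecreasing (being the integral of the nonnegative function $\eta$) and $t^{i-1}\leq t$, the right-hand side is bounded by $\left(\mathcal{F}(0,\boldsymbol{q}^0)+L\right)\mathrm{e}^{H(t)}$. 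The case $t<t^1$ is covered by the convention $\boldsymbol{q}_\Pi(t)=\boldsymbol{q}^0$ and $\var_{\mathcal{D}}(\boldsymbol{q}_\Pi;[0,t])=0$, for which the estimate is trivial. This proves \eqref{eqn:pc-interpolants-a-priori-estimate} and concludes the proof.
\end{proof}
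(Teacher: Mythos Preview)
Your treatment of \eqref{eqn:pc-interpolants-stability} and \eqref{eqn:pc-interpolants-energy-dissipation-balance} is fine and matches the paper's approach. There is, however, a genuine error in your derivation of \eqref{eqn:pc-interpolants-a-priori-estimate}.

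For $t^{i-1}\le t<t^i$ you write the equality
\[
\mathcal{F}(t,\boldsymbol{q}_\Pi(t))+L+\var_{\mathcal{D}}(\boldsymbol{q}_\Pi;[0,t])=\mathcal{F}(t^{i-1},\boldsymbol{q}^{i-1})+L+\sum_{k=1}^{i-1}\mathcal{D}(\boldsymbol{q}^{k-1},\boldsymbol{q}^k),
\]
but $\boldsymbol{q}_\Pi(t)=\boldsymbol{q}^{i-1}$ only gives $\mathcal{F}(t,\boldsymbol{q}_\Pi(t))=\mathcal{F}(t,\boldsymbol{q}^{i-1})$, not $\mathcal{F}(t^{i-1},\boldsymbol{q}^{i-1})$. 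The total energy depends on time through $\mathcal{L}$ and $\mathcal{B}$, so these two quantities differ. Consequently, applying \eqref{eqn:IMP-a-priori-estimates} at index $i-1$ and then invoking monotonicity of $H$ does not close the argument: you have not controlled $\mathcal{F}(t,\boldsymbol{q}^{i-1})$ at all.

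The missing ingredient is the Gronwall-type estimate \eqref{eqn:gronwall}, which the paper uses explicitly: first bound
\[
\mathcal{F}(t,\boldsymbol{q}^{i-1})+L\le\big(\mathcal{F}(t^{i-1},\boldsymbol{q}^{i-1})+L\big)\mathrm{e}^{H(t)-H(t^{i-1})},
\]
then absorb the dissipation sum (which is nonnegative) into the bracket, and only afterwards apply \eqref{eqn:IMP-a-priori-estimates}. The two exponential factors then combine to $\mathrm{e}^{H(t)}$. Incidentally, your variation paragraph contains a related slip: for $t^{i-1}\le t<t^i$ the variation on $[0,t]$ equals $\sum_{k=1}^{i-1}\mathcal{D}(\boldsymbol{q}^{k-1},\boldsymbol{q}^k)$, not $\sum_{k=1}^{i}$ as you first state; you silently use the correct value later.
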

\begin{proof}
Claims \eqref{eqn:pc-interpolants-stability} and \eqref{eqn:pc-interpolants-energy-dissipation-balance} follow from \eqref{eqn:IMP-stability}  and \eqref{eqn:IMP-energy-dissipation-balance}, respectively. To prove \eqref{eqn:pc-interpolants-a-priori-estimate}, let $t \in [0,T]$ and let $i \in \{1,\dots,l\}$ be such that $t^{i-1}\leq t<t^i$. Then, using  \eqref{eqn:gronwall},  \eqref{eqn:IMP-a-priori-estimates} and \eqref{eqn:pc-interpolant}, we obtain
\begin{equation*}
    \begin{split}
        \mathcal{F}(t,\boldsymbol{q}_\Pi(t))+L+\var_{\mathcal{D}}(\boldsymbol{q}_\Pi;[0,t])&=\mathcal{F}(t,\boldsymbol{q}^{i-1})+L+\sum_{j=1}^{i-1}\mathcal{D}(\boldsymbol{q}^{j-1},\boldsymbol{q}^j)\\
        &\leq \left(\mathcal{F}(t^{i-1},\boldsymbol{q}^{i-1})+L \right) \mathrm{e}^{H(t)-H(t^{i-1})}+\sum_{j=1}^{i-1}\mathcal{D}(\boldsymbol{q}^{j-1},\boldsymbol{q}^j)\\
        &\leq \left (\mathcal{F}(t^{i-1},\boldsymbol{q}^{i-1})+L+\sum_{j=1}^{i-1}\mathcal{D}(\boldsymbol{q}^{j-1},\boldsymbol{q}^j) \right) \mathrm{e}^{H(t)-H(t^{i-1})}\\
        &\leq \left(\mathcal{F}(0,\boldsymbol{q}^0)+L \right) \mathrm{e}^{H(t)}.
    \end{split}
\end{equation*}
\end{proof}

\subsection{Existence of energetic solutions} We are finally ready to present the main result of the section, namely the existence of energetic solutions for the quasistatic model.

\begin{theorem}[Existence of energetic solutions]
\label{thm:existence-energetic-solution}
Assume \eqref{eqn:gamma}--\eqref{eqn:W-polyconvex} and \eqref{eqn:regularity-boundary-datum}--\eqref{eqn:regularity-applied-loads}. Then, for every $\boldsymbol{q}^0 \in \mathcal{Q}$ satisfying \eqref{eqn:stability-initial-datum}, there exists a measurable map $\boldsymbol{q}\colon [0,T]\to \mathcal{Q}$ with $\boldsymbol{q}(t)=(\boldsymbol{y}(t),\boldsymbol{m}(t))$  which is an energetic solution according to Definition \ref{def:energetic-solution} and satisfies $\boldsymbol{q}(0)=\boldsymbol{q}^0$. Moreover, the map $t \mapsto \boldsymbol{m}(t)\circ \boldsymbol{y}(t)\det D \boldsymbol{y}(t)$ belongs to $BV([0,T];L^1(\Omega;\R^N))$.
\end{theorem}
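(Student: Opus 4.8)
The plan is to follow the classical time-discretization scheme for rate-independent systems, exactly as in \cite[Section 2]{mielke.roubicek}, with the compactness tools of Section \ref{sec:static} supplying the only nonstandard ingredient. First I would fix a sequence of partitions $(\Pi_k)$ of $[0,T]$ with fineness $|\Pi_k|\to 0$, apply Proposition \ref{prop:IMP} to obtain, for each $k$, a solution $(\boldsymbol{q}_k^0,\dots,\boldsymbol{q}_k^{l_k})$ of the incremental minimization problem with $\boldsymbol{q}_k^0=\boldsymbol{q}^0$, and let $\boldsymbol{q}_k\coloneqq \boldsymbol{q}_{\Pi_k}$ be the piecewise-constant interpolant from Proposition \ref{prop:pc-interpolants}. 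The a priori estimate \eqref{eqn:pc-interpolants-a-priori-estimate}, together with \eqref{eqn:L} and the Gronwall-type bound \eqref{eqn:gronwall}, gives a uniform bound on $\mathcal{F}(t,\boldsymbol{q}_k(t))$ and on $\var_{\mathcal{D}}(\boldsymbol{q}_k;[0,T])$, uniformly in $k$ and $t$.

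**Next I would extract a limit via a Helly-type selection.** The dissipation distance $\mathcal{D}$ controls the quantity $\boldsymbol{m}\circ\boldsymbol{y}\det D\boldsymbol{y}$ in $L^1(\Omega;\R^N)$, so the maps $t\mapsto \boldsymbol{m}_k(t)\circ\boldsymbol{y}_k(t)\det D\boldsymbol{y}_k(t)$ have uniformly bounded variation in $L^1(\Omega;\R^N)$; by the generalized Helly selection principle \cite[Theorem 2.1.24]{mielke.roubicek} there is a subsequence and a nondecreasing bounded $\delta\colon[0,T]\to[0,\infty)$ with $\var_{\mathcal D}(\boldsymbol{q}_k;[0,t])\to\delta(t)$ for all $t$, and there is a $BV([0,T];L^1(\Omega;\R^N))$ limit of the dissipative variable. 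For each fixed $t$ the energy bound \eqref{eqn:total-energy-bounded} lets me invoke claim (i) of Lemma \ref{lem:properties-total-energy}, hence Theorem \ref{thm:compactness}, to get (along a $t$-dependent subsequence a priori) convergence of $\boldsymbol{q}_k(t)$ in the sense \eqref{eqn:compactness-deformation}--\eqref{eqn:improved-extension} to some $\boldsymbol{q}(t)\in\mathcal{Q}$; a diagonal argument over a countable dense set of times together with monotonicity of $\delta$ and the variation bound upgrades this to a single subsequence and a limit map $\boldsymbol{q}$ defined on all of $[0,T]$, with $\boldsymbol{q}(0)=\boldsymbol{q}^0$ and $\boldsymbol{m}(t)\circ\boldsymbol{y}(t)\det D\boldsymbol{y}(t)$ the Helly limit. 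In particular the last assertion — membership in $BV([0,T];L^1(\Omega;\R^N))$ — is immediate from lower semicontinuity of the variation under the $L^1(\Omega;\R^N)$ convergence of \eqref{eqn:compactness-lagrangian} at each time.

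**Then I would pass to the limit in stability and in the energy balance.** Global stability \eqref{eqn:global-stability} follows from \eqref{eqn:pc-interpolants-stability} by the standard argument: given a competitor $\widehat{\boldsymbol{q}}$, one uses lower semicontinuity of $\mathcal{F}(t,\cdot)$ (Lemma \ref{lem:properties-total-energy}(ii)) on the left and the continuity of $\widehat{\boldsymbol{q}}'\mapsto\mathcal{F}(t,\widehat{\boldsymbol{q}}')+\mathcal{D}(\boldsymbol{q}_k(t),\widehat{\boldsymbol{q}}')$ in its first slot, which here reduces to the continuity of $\boldsymbol{q}_k(t)\mapsto\mathcal{D}(\boldsymbol{q}_k(t),\widehat{\boldsymbol{q}})$ — and this is exactly the $L^1(\Omega;\R^N)$-continuity of $\boldsymbol{m}\circ\boldsymbol{y}\det D\boldsymbol{y}$ furnished by \eqref{eqn:compactness-lagrangian}. (For $t$ not in any $\Pi_k$ one first approximates $t$ by partition points, using the time-continuity estimates \eqref{eqn:gronwall}--\eqref{eqn:gronwall-derivative}.) For the energy–dissipation balance one combines the upper inequality obtained by passing to the limit in \eqref{eqn:pc-interpolants-energy-dissipation-balance} — using $\liminf$ of $\mathcal{F}(t,\boldsymbol{q}_k(t))$, lower semicontinuity of the variation, and convergence of $\int_s^t\partial_t\mathcal{F}(\tau,\boldsymbol{q}_k(\tau))\,\d\tau$ to $\int_s^t\partial_t\mathcal{F}(\tau,\boldsymbol{q}(\tau))\,\d\tau$ (which needs the domination \eqref{eqn:gronwall-derivative} and the pointwise convergence of the power, itself a consequence of \eqref{eqn:compactness-deformation} and \eqref{eqn:compactness-extension-m}) — with the lower inequality, which is a general consequence of global stability via the standard Riemann-sum argument \cite[Proposition 2.1.23]{mielke.roubicek}. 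Finally, measurability of $\boldsymbol{q}$ is obtained as usual from pointwise-a.e. convergence of measurable interpolants.

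**The main obstacle** is the continuity of the dissipation distance along the sequence of time-discrete solutions: one needs $\mathcal{D}(\boldsymbol{q}_k(t),\cdot)\to\mathcal{D}(\boldsymbol{q}(t),\cdot)$, equivalently the strong $L^1(\Omega;\R^N)$ convergence $\boldsymbol{m}_k(t)\circ\boldsymbol{y}_k(t)\det D\boldsymbol{y}_k(t)\to\boldsymbol{m}(t)\circ\boldsymbol{y}(t)\det D\boldsymbol{y}(t)$, where $\boldsymbol{y}_k$ and $\boldsymbol{m}_k$ are both possibly discontinuous and may fail Lusin's condition (N). This is precisely what Theorem \ref{thm:compactness} delivers — convergence \eqref{eqn:compactness-composition} of the compositions plus \eqref{eqn:improved-determinant}, hence \eqref{eqn:compactness-lagrangian} — so once the coercivity of $\mathcal{F}$ puts us in the hypotheses of that theorem, the remainder of the argument is the routine abstract machinery. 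A minor additional subtlety, already flagged in the introduction and handled as in \cite{bresciani.davoli.kruzik}, is that the energy bound controlling compactness at a given time must be transported across times, which is exactly the role of the Gronwall estimates \eqref{eqn:gronwall}--\eqref{eqn:gronwall-derivative}.
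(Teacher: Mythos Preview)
Your plan matches the paper's: time-discretization via Proposition~\ref{prop:IMP}, Helly on the $L^1$-valued dissipative variable $\boldsymbol z_k(t)=\boldsymbol m_k(t)\circ\boldsymbol y_k(t)\det D\boldsymbol y_k(t)$, then passage to the limit in \eqref{eqn:pc-interpolants-stability}--\eqref{eqn:pc-interpolants-energy-dissipation-balance} using Lemma~\ref{lem:properties-total-energy} and, crucially, the convergence \eqref{eqn:compactness-lagrangian} from Theorem~\ref{thm:compactness} for the continuity of $\mathcal D$. You have correctly located the only nonstandard ingredient.

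There is one genuine gap: the construction of the full limit state $\boldsymbol q(t)$ and its measurability. Your ``diagonal argument over a countable dense set of times'' does \emph{not} produce a single subsequence along which $\boldsymbol q_k(t)$ converges at \emph{every} $t\in[0,T]$. Helly gives this only for $\boldsymbol z_k(t)$; the remaining components $(\boldsymbol y_k(t),\chi\boldsymbol m_k(t),\chi D\boldsymbol m_k(t))$ have no time regularity to exploit, so at each $t$ their convergence is genuinely only along a $t$-dependent subsequence, and the limit $\boldsymbol q(t)$ is not uniquely determined by $\boldsymbol z(t)$. Consequently your closing line---measurability ``from pointwise-a.e.\ convergence of measurable interpolants''---does not go through, and for the same reason the power $\partial_t\mathcal F(\tau,\boldsymbol q_k(\tau))$ need not converge pointwise in $\tau$. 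The paper closes this gap differently: after fixing the subsequence from Helly (for $\boldsymbol z_k$) and Dunford--Pettis (for $\vartheta_k\coloneqq\partial_t\mathcal F(\cdot,\boldsymbol q_k(\cdot))\rightharpoonup\vartheta$ in $L^1$), it forms the set-valued map $t\mapsto S(t)$ of accumulation points of $(\boldsymbol y_k(t),\chi\boldsymbol m_k(t),\chi D\boldsymbol m_k(t))$ in a fixed compact metrizable set $\mathcal X$, proves $S$ is measurable, and obtains $\boldsymbol q(t)$ by a \emph{measurable selection} \cite{aubin.frankowska}. For the upper energy inequality one then only needs $\vartheta\le\limsup_k\vartheta_k=\partial_t\mathcal F(\cdot,\boldsymbol q(\cdot))$ a.e.\ (Reverse Fatou plus continuity of \eqref{eqn:time-derivative-expression} under \eqref{eqn:t-subseq-deformation}--\eqref{eqn:t-subseq-m}), not the full convergence you claim. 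Once you replace the diagonal step by this selection argument, the rest of your outline is exactly the paper's proof.
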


The measurability of the map $\boldsymbol{q}$ in Theorem \ref{thm:existence-energetic-solution} is meant with respect to the Borel $\sigma$-algebra of $\mathcal{Q}$, namely the map
\begin{equation*}
	t \mapsto (\boldsymbol{y},\chi_{\im_{\rm T}(\boldsymbol{y},\Omega)}\boldsymbol{m},\chi_{\im_{\rm T}(\boldsymbol{y},\Omega)}D\boldsymbol{m})
\end{equation*}
is measurable from $[0,T]$ to $W^{1,p}(\Omega;\R^N)\times L^2(\R^N;\R^N)\times L^2(\R^N;\rnn)$, where the latter space is equipped with the product weak topology. 
 Eventually, by means of standard computations \cite{mielke.roubicek.stefanelli}, it can be shown that the map $t \mapsto \mathcal{F}(t,\boldsymbol{q}(t))$ belongs to $BV([0,T])$.

The remainder of the section is devoted to the proof of Theorem \ref{thm:existence-energetic-solution}. This employs the following version of the Helly Selection Principle, which is a special case of \cite[Theorem 3.2]{mainik.mielke}.

\begin{lemma}[Helly Selection Principle]
\label{lem:helly}
Let $Z$ be a Banach space and let $\mathcal{K}\subset Z$ be compact. Let $(\boldsymbol{z}_n)\subset BV([0,T];Z)$ be such that the following hold:
\begin{equation*}
    \forall\,n \in \N,\:\forall\,t \in [0,T], \quad \boldsymbol{z}_n(t)\in \mathcal{K}, 
\end{equation*}
\begin{equation}
    \sup_{n \in \N} \var_Z(\boldsymbol{z}_n;[0,T])<+\infty.
\end{equation}
Then, there exists $\boldsymbol{z}\in BV([0,T];Z)$ such that, up to subsequences, there holds:
\begin{equation}
    \text{$\forall\,t \in [0,T],\quad \boldsymbol{z}_n(t)\to\boldsymbol{z}(t)$ in $Z$.}
\end{equation}
\end{lemma}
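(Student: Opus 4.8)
The plan is to run the classical Helly argument, reducing the vector-valued selection to the scalar monotone case through the variation functions. First I would set, for each $n\in\N$, $V_n\colon[0,T]\to[0,+\infty)$, $V_n(t)\coloneqq\var_Z(\boldsymbol{z}_n;[0,t])$. Each $V_n$ is nondecreasing, additivity of the variation over adjacent intervals gives $\var_Z(\boldsymbol{z}_n;[s,t])=V_n(t)-V_n(s)$ for $0\le s\le t\le T$, and hence from the very definition of $\var_Z$ one has the key bound $\|\boldsymbol{z}_n(t)-\boldsymbol{z}_n(s)\|_Z\le V_n(t)-V_n(s)$. Since $\sup_n V_n(T)<+\infty$ by hypothesis, the classical Helly selection theorem for uniformly bounded nondecreasing real functions yields a subsequence, not relabelled, and a nondecreasing $V\colon[0,T]\to[0,+\infty)$ with $V_n(t)\to V(t)$ for every $t\in[0,T]$.

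Next I would fix a countable set $D\subset[0,T]$ that is dense, contains $0$ and $T$, and contains the (at most countable) set of discontinuity points of the monotone function $V$. As $Z$ is a Banach space, the compact set $\mathcal{K}$ is metrizable and sequentially compact, so a diagonal extraction over the countable set $D$ produces a further subsequence, again not relabelled, and a map $\boldsymbol{z}\colon D\to\mathcal{K}$ with $\boldsymbol{z}_n(t)\to\boldsymbol{z}(t)$ in $Z$ for every $t\in D$. Passing to the limit in the key bound along $D$ gives $\|\boldsymbol{z}(t)-\boldsymbol{z}(s)\|_Z\le V(t)-V(s)$ whenever $s\le t$ in $D$, and more generally $\|\boldsymbol{z}(s)-\boldsymbol{z}(s')\|_Z\le|V(s)-V(s')|$ for all $s,s'\in D$.

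Then I would extend $\boldsymbol{z}$ to $[0,T]$ and prove convergence everywhere. For $t\in[0,T]\setminus D$ the function $V$ is continuous at $t$ by the choice of $D$; hence for any sequence $(s_k)\subset D$ with $s_k\to t$ the sequence $(\boldsymbol{z}(s_k))$ is Cauchy in $Z$, so it converges, and a standard interlacing argument shows the limit is independent of $(s_k)$ — call it $\boldsymbol{z}(t)\in\mathcal{K}$. To check that $\boldsymbol{z}_n(t)\to\boldsymbol{z}(t)$, fix $\varepsilon>0$, and by left-continuity of $V$ at $t$ and density of $D$ pick $s\in D$ with $s\le t$ and $V(t)-V(s)<\varepsilon$, so that also $\|\boldsymbol{z}(t)-\boldsymbol{z}(s)\|_Z\le V(t)-V(s)<\varepsilon$; then
\begin{equation*}
\|\boldsymbol{z}_n(t)-\boldsymbol{z}(t)\|_Z\le\big(V_n(t)-V_n(s)\big)+\|\boldsymbol{z}_n(s)-\boldsymbol{z}(s)\|_Z+\varepsilon,
\end{equation*}
and letting $n\to\infty$ gives $\limsup_n\|\boldsymbol{z}_n(t)-\boldsymbol{z}(t)\|_Z\le2\varepsilon$, whence $\boldsymbol{z}_n(t)\to\boldsymbol{z}(t)$ since $\varepsilon$ was arbitrary. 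Finally, for any partition $0=t^0<\dots<t^m=T$ one has $\sum_{j}\|\boldsymbol{z}(t^j)-\boldsymbol{z}(t^{j-1})\|_Z=\lim_n\sum_j\|\boldsymbol{z}_n(t^j)-\boldsymbol{z}_n(t^{j-1})\|_Z\le\liminf_n\var_Z(\boldsymbol{z}_n;[0,T])$, so $\var_Z(\boldsymbol{z};[0,T])\le\sup_n\var_Z(\boldsymbol{z}_n;[0,T])<+\infty$ and $\boldsymbol{z}\in BV([0,T];Z)$.

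The only genuinely delicate point is the behaviour at times where the limiting variation $V$ jumps: at such a point the pointwise limit of the $\boldsymbol{z}_n$ need not be pinned down by neighbouring values and the $\limsup$ estimate above fails. The remedy, and the reason the argument works, is precisely to include these countably many times in the dense countable set $D$ on which the diagonal extraction is performed; off $D$, continuity of $V$ transfers the Cauchy property of $(\boldsymbol{z}(s))_{s\in D}$ and then forces convergence of the whole subsequence. Everything else is elementary.
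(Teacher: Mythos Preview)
Your argument is correct and is the classical Helly selection proof: pass to a subsequence along which the variation functions $V_n$ converge pointwise to a monotone $V$, extract by diagonalisation on a countable dense set enlarged to contain all jump points of $V$, and then use continuity of $V$ off that set to propagate convergence to every $t$. The only place where care is needed --- the jump points of $V$ --- you handle exactly as one should, by folding them into the countable set $D$ on which the diagonal argument is run.

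The paper does not actually prove this lemma; it simply records it as a special case of \cite[Theorem 3.2]{mainik.mielke} and moves on. So there is nothing to compare at the level of argument: you have supplied a self-contained proof where the paper defers to the literature. Your route is elementary and does not require the abstract machinery of \cite{mainik.mielke} (which treats more general extended-pseudometric dissipation distances rather than a genuine Banach-space norm); the price is that your argument uses compactness in norm rather than merely sequential compactness with respect to a weaker Hausdorff topology, but that is precisely the setting of the present lemma.
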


We now proceed with the proof of Theorem \ref{thm:existence-energetic-solution}.

\begin{proof}[Proof of Theorem \ref{thm:existence-energetic-solution}]
We rigorously follow the scheme provided in \cite[Theorem 2.1.6]{mielke.roubicek}. The proof is subdivided into five steps.

\textbf{Step 1 (A priori estimates).} Let $(\Pi_n)$ with $\Pi_n=(t^0_n,\dots,t^{l_n}_n)$ be a sequence of partitions of $[0,T]$ such that $|\Pi_n|\coloneqq\max_{i \in \{1,\dots,l_n\}} (t^i_n-t^{i-1}_n)\to 0$, as $n \to \infty$. By Proposition \ref{prop:IMP}, for every $n \in \N$, the incremental minimization problem determined by $\Pi_n$ with initial datum $\boldsymbol{q}^0$ admits a solution. If $\boldsymbol{q}_n\coloneqq \boldsymbol{q}_{\Pi_n}$ denotes the corresponding piecewise-constant interpolant defined as in \eqref{eqn:pc-interpolant}, then, by Proposition \ref{prop:pc-interpolants}, there hold:
\begin{equation}
\label{eqn:stability-sequence}
    \forall\,t \in \Pi_n,\:\forall\,\widehat{\boldsymbol{q}}\in \mathcal{Q},\quad \mathcal{F}(t,\boldsymbol{q}_n(t))\leq \mathcal{F}(t,\widehat{\boldsymbol{q}})+\mathcal{D}(\boldsymbol{q}_n(t),\widehat{\boldsymbol{q}}),
\end{equation}
\begin{equation}
\label{eqn:energy-dissipation-balance-sequence}
    \forall\,s,t\in \Pi_n:s<t,\quad \mathcal{F}(t,\boldsymbol{q}_n(t))-\mathcal{F}(s,\boldsymbol{q}_n(s))+\var_{\mathcal{D}}(\boldsymbol{q}_n;[s,t])\leq \int_s^t \partial_t \mathcal{F}(\tau,\boldsymbol{q}_n(\tau))\,\d\tau,
\end{equation}
\begin{equation}
\label{eqn:a-priori-estimate-sequence}
    \forall\,t \in [0,T],\quad \mathcal{F}(t,\boldsymbol{q}_n(t))+L+\var_{\mathcal{D}}(\boldsymbol{q}_n;[0,t])\leq \left (\mathcal{F}(0,\boldsymbol{q}^0)+L \right) \mathrm{e}^{H(t)}.
\end{equation}
In particular, from \eqref{eqn:a-priori-estimate-sequence}, we deduce the existence of a constant $M>0$, depending only on the given data, such that
\begin{equation}
\label{eqn:a-priori-estimate}
    \sup_{n \in \N} \left \{ \sup_{t \in [0,T]} \mathcal{F}(t,\boldsymbol{q}_n(t))+L+\var_{\mathcal{D}}(\boldsymbol{q}_n;[0,T]) \right \}\leq M.
\end{equation}

\textbf{Step 2 (Compactness).} Set
\begin{equation*}
    \mathcal{H}\coloneqq \bigcup_{\widehat{t}\in [0,T]} \left \{\widehat{\boldsymbol{q}}=(\widehat{\boldsymbol{y}},\widehat{\boldsymbol{m}})\in \mathcal{Q}:\: \mathcal{F}(\widehat{t},\widehat{\boldsymbol{q}})\leq M\right \}.
\end{equation*}
and
\begin{equation*}
    \mathcal{K}\coloneqq \bigcup_{\widehat{t}\in [0,T]} \left \{\widehat{\boldsymbol{m}}\circ \widehat{\boldsymbol{y}}\det D \widehat{\boldsymbol{y}}:\:\widehat{\boldsymbol{q}}=(\widehat{\boldsymbol{y}},\widehat{\boldsymbol{m}})\in \mathcal{H}\right \}.
\end{equation*}
By \eqref{eqn:a-priori-estimate}, the sequence $(\boldsymbol{q}_n)$ takes values in $\mathcal{H}$. For every $n \in \N$, define $\boldsymbol{z}_n\colon [0,T]\to L^1(\Omega;\R^N)$ by setting
\begin{equation*}
	\boldsymbol{z}_n(t)\coloneqq \boldsymbol{m}_n(t)\circ \boldsymbol{y}_n(t)\det D\boldsymbol{y}_n(t).
\end{equation*}
By construction, the sequence $(\boldsymbol{z}_n)$ takes values in $\mathcal{K}$.  Also, by claim (i) of Lemma \ref{lem:properties-total-energy}, $\mathcal{K}$ is a compact subset of $L^1(\Omega;\R^N)$ while, by \eqref{eqn:a-priori-estimate}, there holds
\begin{equation*}
    \sup_{n \in \N}\var_{L^1(\Omega;\R^N)}(\boldsymbol{z}_n;[0,T])<+\infty.
\end{equation*}
Hence, thanks to Lemma \ref{lem:helly}, there exists  a map $\boldsymbol{z}\in BV([0,T];L^1(\Omega;\R^N))$ such that, up to subsequences, we have
\begin{equation}
\label{eqn:compactness-z}
    \text{$\forall\,t \in [0,T]$, \quad $\boldsymbol{z}_n(t)\to\boldsymbol{z}(t)$ in $L^1(\Omega;\R^N)$.}
\end{equation}
The construction of the candidate solution $\boldsymbol{q}\colon [0,T]\to \mathcal{Q}$ requires more work. First, by \eqref{eqn:coercivity-total-energy},  every $\widehat{\boldsymbol{q}}=(\widehat{\boldsymbol{y}},\widehat{\boldsymbol{m}})\in \mathcal{H}$ with $\mathcal{D}(\widehat{t},\widehat{\boldsymbol{q}})\leq M$ for some $\widehat{t}\in [0,T]$ satisfies
\begin{equation*}
    \|D\widehat{\boldsymbol{y}}\|_{L^p(\Omega;\rnn)}+\|\gamma(\det D \widehat{\boldsymbol{y}})\|_{L^1(\Omega)}+\|D\widehat{\boldsymbol{m}}\|_{L^2(\im_{\rm T}(\widehat{\boldsymbol{y}},\Omega);\rnn)}+\|\boldsymbol{d}(\widehat{t})-\tr_\Gamma(\widehat{\boldsymbol{y}})\|_{L^p(\Gamma;\R^N)}\leq C_5,
\end{equation*}
where the constant $C_5>0$ depends only on $M$ and on the given data. From this, setting 
\begin{equation*}
    \widehat{\boldsymbol{v}}\coloneqq \chi_{\im_{\rm T}(\widehat{\boldsymbol{y}},\Omega)}\widehat{\boldsymbol{m}}, \qquad \widehat{\boldsymbol{V}}\coloneqq \chi_{\im_{\rm T}(\widehat{\boldsymbol{y}},\Omega)}D\widehat{\boldsymbol{m}},
\end{equation*}
by \eqref{eqn:poincare}--\eqref{eqn:m-gamma}, we deduce
\begin{equation}
\label{eqn:X-space}
    \|\widehat{\boldsymbol{y}}\|_{W^{1,p}(\Omega;\R^N)}++\|\widehat{\boldsymbol{v}}\|_{L^2(\R^N;\R^N)}+\|\widehat{\boldsymbol{V}}\|_{L^2(\R^N;\rnn)}\leq C_6,
\end{equation}
where  the constant $C_6>0$  also depends only on $M$ and on the given data. In particular, this constant does not depend on $\widehat{t}$.
We define $\mathcal{X}$ as the set of triples
\begin{equation*}
	(\widehat{\boldsymbol{y}},\widehat{\boldsymbol{v}},\widehat{\boldsymbol{V}})\in W^{1,p}(\Omega;\R^N)\times L^2(\R^N;\R^N)\times L^2(\R^N;\rnn)	
\end{equation*}
satisfying
\eqref{eqn:X-space}. 
This is endowed with the product weak topology which makes it a compact metrizable space. Now, for every $n \in \N$, let $\boldsymbol{q}_n(t)=(\boldsymbol{y}_n(t),\boldsymbol{m}_n(t))$ for every $t \in [0,T]$ and define the maps $\boldsymbol{v}_n\colon [0,T]\to L^2(\R^N;\R^N)$ and $\boldsymbol{V}_n\colon [0,T]\to L^2(\R^N;\rnn)$ by setting
\begin{equation*}
    \boldsymbol{v}_n(t)\coloneqq \chi_{\im_{\rm T}(\boldsymbol{y}_n(t),\Omega)}\boldsymbol{m}_n(t),\qquad \boldsymbol{V}_n(t)\coloneqq \chi_{\im_{\rm T}(\boldsymbol{y}_n(t),\Omega)}D\boldsymbol{m}_n(t). 
\end{equation*}
By construction, the map $t \mapsto  (\boldsymbol{y}_n(t),\boldsymbol{v}_n(t),\boldsymbol{V}_n(t))$ takes values in $\mathcal{X}$ for every $n \in \N$. Consider the set-valued map $S\colon [0,T]\to \mathcal{P}(\mathcal{X})$ where $S(t)$ is defined as the set of all limit points of the sequence $((\boldsymbol{y}_n(t),\boldsymbol{v}_n(t),\boldsymbol{V}_n(t)))$ in $\mathcal{X}$. Equivalently, $S(t)$ is the limit superior of the set $\{(\boldsymbol{y}_n(t),\boldsymbol{v}_n(t),\boldsymbol{V}_n(t))\}$ in the sense of Kuratowski \cite[Definition 1.1.1]{aubin.frankowska}. Clearly, $S(t)$ is closed in $\mathcal{X}$ and nonempty for every $t \in [0,T]$.
Also, by \cite[Theorem 8.2.5]{aubin.frankowska}, the set-valued map $S$ is measurable \cite[Definition 8.1.1]{aubin.frankowska}. Therefore, by \cite[Theorem 8.1.3]{aubin.frankowska}, there exists a measurable selection of $S$, namely a measurable map $\boldsymbol{s}\colon [0,T]\to \mathcal{X}$ such that $\boldsymbol{s}(t)\in S(t)$ for every $t \in [0,T]$. Let $\boldsymbol{s}(t)=(\boldsymbol{y}(t),\boldsymbol{v}(t),\boldsymbol{V}(t))$. By definition of $S$, for every $t \in [0,T]$ there exist a sequence of indices $\left (n_k \right)$, possibly depending on $t$,  such that
\begin{equation}
\label{eqn:t-subseq-deformation}
    \text{$\boldsymbol{y}_{n_k}(t)\wk \boldsymbol{y}(t)$ in $W^{1,p}(\Omega;\R^N)$},
\end{equation}
\begin{equation}
\label{eqn:t-subseq-m}
    \text{$\boldsymbol{v}_{n_k}(t)\wk \boldsymbol{v}(t)$ in $L^2(\R^N;\R^N)$},
\end{equation}
\begin{equation}
\label{eqn:t-subseq-Dm}
    \text{$\boldsymbol{V}_{n_k}(t)\wk \boldsymbol{V}(t)$ in $L^2(\R^N;\rnn)$}.
\end{equation}
By claim (i) of Proposition \ref{lem:properties-total-energy}, we deduce several facts. First, $\boldsymbol{y}(t)\in \mathcal{Y}_p(\Omega)$ and there holds
\begin{equation}
\label{eqn:t-subseq-jacobian}
    \text{$\det D \boldsymbol{y}_{n_k}(t)\wk \det D\boldsymbol{y}(t)$ in $L^1(\Omega)$}.
\end{equation}
Second, there exists $\boldsymbol{m}(t)\in W^{1,2}(\im_{\rm T}(\boldsymbol{y}(t),\Omega);\R^N)$ satisfying 
\begin{equation*}
\text{$|\boldsymbol{m}(t)\circ \boldsymbol{y}(t)|\det D \boldsymbol{y}(t)=1$ a.e. in $\Omega$,}    
\end{equation*}
such that 
\begin{equation*}
    \boldsymbol{v}(t)\coloneqq \chi_{\im_{\rm T}(\boldsymbol{y}(t),\Omega)}\boldsymbol{m}(t),\qquad \boldsymbol{V}(t)\coloneqq \chi_{\im_{\rm T}(\boldsymbol{y}(t),\Omega)}D\boldsymbol{m}(t). 
\end{equation*}
In particular, setting $\boldsymbol{q}(t)\coloneqq (\boldsymbol{y}(t),\boldsymbol{m}(t))\in \mathcal{Q}$ for every $t \in [0,T]$,  the map $\boldsymbol{q}\colon [0,T]\to \mathcal{Q}$ is  measurable. Third, up to subsequences, there holds
\begin{equation*}
    \text{$\boldsymbol{z}_{n_k}(t)\to \boldsymbol{m}(t)\circ \boldsymbol{y}(t)\det D \boldsymbol{y}(t)$ in $L^1(\Omega;\R^N)$,}
\end{equation*}
which, together with \eqref{eqn:compactness-z}, yields
\begin{equation*}
    \text{$\boldsymbol{z}(t)=\boldsymbol{m}(t)\circ \boldsymbol{y}(t)\det D \boldsymbol{y}(t)$ a.e. in $\Omega$.}
\end{equation*}
For every $n \in \N$, define $\vartheta_n\colon [0,T]\to \R$ by setting $\vartheta_n(t)\coloneqq \partial_t \mathcal{F}(t,\boldsymbol{q}_n(t))$. As $\boldsymbol{q}_n$ is piecewise-constant, the function $\vartheta_n$ is measurable. Also, by \eqref{eqn:control-time-derivative} and \eqref{eqn:a-priori-estimate}, we have
\begin{equation}
\label{eqn:theta-control}
    |\vartheta_n(t)|\leq \eta(t)\left ( \mathcal{F}(t,\boldsymbol{q}_n(t))+L\right) \leq (M+L) \eta(t)
\end{equation}
for almost every $t \in (0,T)$, so that the sequence $(\vartheta_n)$ is equi-integrable. Thus, by the Dunford-Pettis Theorem \cite[Theorem 2.54]{fonseca.leoni}, up to subsequence, there holds
\begin{equation}
\label{eqn:weak-convergence-power}
    \text{$\vartheta_n \wk \vartheta$ in $L^1(0,T)$}
\end{equation}
for some $\vartheta \in L^1(0,T)$. If we define $\Bar{\vartheta} \colon (0,T)\to \R$ by setting 
\begin{equation*}
    \Bar{\vartheta}(t)\coloneqq \limsup_{n \to \infty} \vartheta_n(t),
\end{equation*}
then, by \eqref{eqn:theta-control}, we deduce $\Bar{\vartheta}\in L^1(0,T)$. Moreover, by the Reverse Fatou Lemma, there holds $\vartheta\leq \Bar{\vartheta}$ almost everywhere.

Let $t \in (0,T)\setminus Z$, where  $Z\subset (0,T)$ is the set in claim (ii) of Lemma \ref{lem:properties-total-energy}. Up to subsequences, we can assume that, for the sequence of indices $(n_k)$ for which\eqref{eqn:t-subseq-deformation}--\eqref{eqn:t-subseq-m} hold true, we also have
\begin{equation*}
    \Bar{\vartheta}(t)=\lim_{k \to \infty} \vartheta_{n_k}(t).
\end{equation*}
In this case, recalling \eqref{eqn:time-derivative-expression} and exploiting \eqref{eqn:t-subseq-deformation}--\eqref{eqn:t-subseq-m}, we deduce that $\bar{\vartheta}(t)=\partial_t \mathcal{F}(t,\boldsymbol{q}(t))$. Therefore, this equality holds for almost every $t \in (0,T)$.

\textbf{Step 3 (Stability).} The remaining steps of the proof are quite standard. We first prove that the map $\boldsymbol{q}$ satisfies the global stability condition \eqref{eqn:global-stability}. Fix $t \in [0,T]$ and let $(n_k)$ be the  sequence of indices for which \eqref{eqn:t-subseq-deformation}--\eqref{eqn:t-subseq-jacobian} hold true. For every $k \in \N$, set $\tau_k(t)\coloneqq \max \left \{s \in \Pi_{n_k}:\:s \leq t \right \}$. Since $|\Pi_{n_k}|\to 0$, as $k \to \infty$, we have $\tau_k(t)\to t$, as $k \to \infty$. Also, by \eqref{eqn:pc-interpolant}, we have $\boldsymbol{q}_{n_k}(t)=\boldsymbol{q}_{n_k}(\tau_k(t))$. By \eqref{eqn:pc-interpolants-stability}, there holds
\begin{equation*}
    \forall\,\widehat{\boldsymbol{q}}\in \mathcal{Q}, \quad \mathcal{F}(\tau_k(t),\boldsymbol{q}_{n_k}(t))\leq \mathcal{F}(\tau_k(t),\widehat{\boldsymbol{q}})+\mathcal{D}(\boldsymbol{q}_{n_k}(t),\widehat{\boldsymbol{q}}).
\end{equation*}
Passing to the limit, as $k \to \infty$, with the aid of claim (ii) of Proposition \ref{lem:properties-total-energy} taking into account \eqref{eqn:compactness-z},  \eqref{eqn:t-subseq-deformation}--\eqref{eqn:t-subseq-jacobian}, and the continuity of $\widehat{t} \mapsto \mathcal{F}(\widehat{t},\widehat{\boldsymbol{q}})$, we obtain
\begin{equation*}
        \mathcal{F}(t,\boldsymbol{q}(t))\leq \liminf_{k \to \infty} \mathcal{F}(\tau_k(t),\boldsymbol{q}_{n_k}(t))
        \leq \liminf_{k \to \infty} \left \{ \mathcal{F}(\tau_k(t),\widehat{\boldsymbol{q}})+\mathcal{D}(\boldsymbol{q}_{n_k}(t),\widehat{\boldsymbol{q}}) \right \}=\mathcal{F}(t,\widehat{\boldsymbol{q}})+\mathcal{D}(\boldsymbol{q}(t),\widehat{\boldsymbol{q}}).
\end{equation*}
This proves \eqref{eqn:global-stability} for $t \in [0,T]$ fixed.

\textbf{Step 4 (Upper energy-dissipation inequality).} We want to prove the following inequality
\begin{equation}
    \label{eqn:u-e-d}
    \forall\,t \in [0,T], \quad \mathcal{F}(t,\boldsymbol{q}(t))+\var_{\mathcal{D}}(\boldsymbol{q};[0,t])\leq \mathcal{E}(0,\boldsymbol{q}^0)+\int_0^t \partial_t \mathcal{F}(\tau,\boldsymbol{q}(\tau))\,\d\tau.
\end{equation}
Fix $t \in [0,T]$ and let $(n_k)$ and $\tau_k$ be as in Step 3.  We compute
\begin{equation*}
    \begin{split}
        |\mathcal{F}(t,\boldsymbol{q}_{n_k}(t))-\mathcal{F}(\tau_k(t),\boldsymbol{q}_{n_k}(\tau_k(t)))|&=|\mathcal{F}(t,\boldsymbol{q}_{n_k}(\tau_k(t)))-\mathcal{E}(\tau_k(t),\boldsymbol{q}_{n_k}(\tau_k(t)))|\\
        &\leq \int_{\tau_k(t)}^t |\partial_t \mathcal{F}(\tau,\boldsymbol{q}_{n_k}(\tau_k(t)))|\,\d\tau\\
        &\leq \left ( \mathcal{F}(\tau_k(t),\boldsymbol{q}_{n_k}(\tau_k(t)))+L \right) \int_{\tau_k(t)}^t \eta(\tau) \mathrm{e}^{H(\tau)-H(\tau_k(t))}\,\d\tau\\
        &\leq (M+L) \left(\mathrm{e}^{H(t)-H(\tau_k(t))}-1 \right),
    \end{split}
\end{equation*}
where we employed \eqref{eqn:gronwall-derivative} and \eqref{eqn:a-priori-estimate}.  From this, we immediately obtain
\begin{equation}
\label{eqn:ued-prelim}
    \mathcal{F}(t,\boldsymbol{q}_{n_k}(t))\leq \mathcal{F}(\tau_k(t),\boldsymbol{q}_{n_k}(\tau_k(t)))+(M+L) \left(\mathrm{e}^{H(t)-H(\tau_k(t))}-1 \right).
\end{equation}
Observe that $\var_{\mathcal{D}}(\boldsymbol{q}_{n_k};[0,t])=\var_{\mathcal{D}}(\boldsymbol{q}_{n_k};[0,\tau_k(t)])$. This, together with \eqref{eqn:pc-interpolants-energy-dissipation-balance} and \eqref{eqn:ued-prelim}, yields
\begin{equation}
\label{eqn:ued-prelim-2}
    \begin{split}
        \mathcal{F}(t,\boldsymbol{q}_{n_k}(t))+\var_{\mathcal{D}}(\boldsymbol{q}_{n_k};[0,t])&\leq \mathcal{F}(\tau_k(t),\boldsymbol{q}_{n_k}(\tau_k(t)))+\var_{\mathcal{D}}(\boldsymbol{q}_{n_k};[0,\tau_k(t)])\\
        &+(M+L) \left(\mathrm{e}^{H(t)-H(\tau_k(t))}-1 \right)\\
        &\leq\mathcal{F}(0,\boldsymbol{q}^0)+\int_0^{\tau_k(t)} \partial_t \mathcal{F}(\tau,\boldsymbol{q}_{n_k}(\tau))\,\d\tau\\
        &+(M+L) \left(\mathrm{e}^{H(t)-H(\tau_k(t))}-1 \right)\\
        &=\mathcal{F}(0,\boldsymbol{q}^0)+\int_0^{\tau_k(t)} \vartheta_{n_k}(\tau)\,\d\tau\\
        &+(M+L) \left(\mathrm{e}^{H(t)-H(\tau_k(t))}-1 \right).
    \end{split}
\end{equation}
By \eqref{eqn:compactness-z} and by the lower semicontinuity of the total variation, we have
\begin{equation}
\label{eqn:lsc-total-variation}
\begin{split}
    \var_{\mathcal{D}}(\boldsymbol{q};[0,t])&=\var_{L^1(\Omega;\R^N)}(\boldsymbol{z};[0,t])\\
    &\leq \liminf_{k \to \infty} \var_{L^1(\Omega;\R^N)}(\boldsymbol{z}_{n_k};[0,t])\\
    &=\liminf_{k \to \infty} \var_{\mathcal{D}}(\boldsymbol{q}_{n_k};[0,t]).
\end{split}
\end{equation}
Recall \eqref{eqn:t-subseq-deformation}--\eqref{eqn:t-subseq-jacobian}, \eqref{eqn:weak-convergence-power} and \eqref{eqn:lsc-total-variation}. Applying claim (ii) of Proposition \ref{lem:properties-total-energy}, we take the inferior limit, as $k \to \infty$, in \eqref{eqn:ued-prelim-2}  and  we obtain
\begin{equation*}
    \begin{split}
        \mathcal{F}(t,\boldsymbol{q}(t)) + \var_{\mathcal{D}}(\boldsymbol{q};[0,t])&\leq\liminf_{k \to \infty} \left \{ \mathcal{F}(t,\boldsymbol{q}_{n_k}(t))+\var_{\mathcal{D}}(\boldsymbol{q}_{n_k};[0,t])\right\}\\
        &\leq\mathcal{F}(0,\boldsymbol{q}^0)+\int_0^t \vartheta(\tau)\,\d\tau\\
        &\leq \mathcal{F}(0,\boldsymbol{q}^0)+\int_0^t \partial_t\mathcal{F}(\tau,\boldsymbol{q}(\tau))\,\d\tau.
    \end{split}
\end{equation*}
Here, we exploited that $\tau_k(t)\to t$, as $k \to \infty$, and that $\vartheta\leq \Bar{\vartheta}$ almost everywhere. This proves \eqref{eqn:u-e-d}.

\textbf{Step 5 (Lower energy-dissipation inequality).} In view of \eqref{eqn:u-e-d}, in order to establish \eqref{eqn:energy-dissipation-balance}, we are left to prove the following:
\begin{equation}
    \label{eqn:l-e-d}
    \forall\,t \in [0,T], \quad \mathcal{F}(t,\boldsymbol{q}(t))+\var_{\mathcal{D}}(\boldsymbol{q};[0,t])\geq \mathcal{F}(0,\boldsymbol{q}^0)+\int_0^t \partial_t \mathcal{F}(\tau,\boldsymbol{q}(\tau))\,\d\tau.
\end{equation}
This is deduced from \eqref{eqn:global-stability} by arguing as in \cite[Proposition 2.1.23]{mielke.roubicek}. 
\end{proof}

We conclude by mentioning that, so far, we are not able to impose time-dependent Dirichlet boundary conditions in the sense of traces. In particular, the strategy in \cite{francfort.mielke} seems not to be applicable to our problem. The obstacle is that, regardless to the regularity of the boundary datum, the magnetostatic energy is not differentiable in time when composed with it. However, the other energy terms are well behaved in this sense. Therefore, the strategy in \cite{francfort.mielke} in combination with our results could be used to study quasistatic evolutions for variational models of nematic elastomers like the ones in \cite{barchiesi.henao.moracorral} and \cite{henao.stroffolini} by imposing time-dependent boundary conditions in the sense of traces.

\section*{Appendix: Sobolev maps on the boundary of domains of class \texorpdfstring{$C^2$}{c2} }
\label{sec:appendix}

\setcounter{section}{0}
\setcounter{theorem}{0}

\setcounter{equation}{0}

\renewcommand{\thetheorem}{A.\arabic{theorem}}

\setcounter{equation}{0}
\renewcommand{\theequation}{A.\arabic{equation}}

In this appendix, we briefly recall the definition of Sobolev maps on the boundary of smooth domains. The aim is to fix notation and terminology. Then, we provide a detailed proof of Lemma \ref{lem:weak-convergence-boundary}.

\subsection*{Tangential differentiability}
Henceforth, $U\subset \subset \R^N$ denotes a domain of class $C^2$. In this case, $\partial U$ is an hypersurface of class $C^2$ without boundary. For every $\boldsymbol{x}_0\in\partial U$, there exist a local chart at $\boldsymbol{x}_0$, namely an injective immersion $\boldsymbol{\zeta}\in C^2(\closure{Q};\R^N)$, where $Q\subset \R^{N-1}$ is a bounded domain, such that $\boldsymbol{\zeta}(Q)$ is an relatively open set of $\partial U$ containing $\boldsymbol{x}_0$. In this case, the tangent space $T_{\boldsymbol{x}_0}\partial U$ is spanned by the unit vectors $(\boldsymbol{t}^{(1)}_U(\boldsymbol{x}_0),\dots,\boldsymbol{t}^{(N-1)}_U(\boldsymbol{x}_0))$, where, for every $i\in \{1,\dots,N-1\}$, we set
\begin{equation*}
\boldsymbol{t}^{(i)}_U(\boldsymbol{x}_0)\coloneqq \frac{D\boldsymbol{\zeta}(\boldsymbol{\zeta}^{-1}(\boldsymbol{x}_0))\boldsymbol{e}_i}{|D\boldsymbol{\zeta}(\boldsymbol{\zeta}^{-1}(\boldsymbol{x}_0))\boldsymbol{e}_i|}.
\end{equation*}
Here, $(\boldsymbol{e}_1,\dots,\boldsymbol{e}_{N-1})$ is the canonical basis of $\R^{N-1}$. 
We denote the projection operator onto the tangent space by $\boldsymbol{\Pi}_{T_{\boldsymbol{x}_0}\partial U}\colon \R^N \to T_{\boldsymbol{x}_0}\partial U$.
Given the map  from $\R^N$ to $\R^{N-1}$ defined by
\begin{equation*}
\boldsymbol{x}\mapsto \sum_{i=1}^{N-1} \left (\boldsymbol{x}\cdot \boldsymbol{t}^{(i)}_U(\boldsymbol{x}_0) \right)\boldsymbol{e}_i,
\end{equation*}
we denote by $\boldsymbol{\Lambda}_U(\boldsymbol{x}_0)\in\R^{(N-1)\times N}$ the matrix representing it with respect to the canonical basis of $\R^N$ and $\R^{N-1}$. In this case, $\boldsymbol{\Lambda}_U\in C^1(\partial U;\R^{(N-1)\times N})$. Moreover, 
there holds $\boldsymbol{\Lambda}_U(\boldsymbol{\zeta}(\boldsymbol{x}_0))D\boldsymbol{\zeta}(\boldsymbol{\zeta}^{-1}(\boldsymbol{x}_0))=\boldsymbol{J}$, where $\boldsymbol{J}\in\R^{(N-1)\times (N-1)}$ is the identity matrix.

We now recall the classical definition of tangential differentiability. We give the definition for vector-valued maps which are the ones of interest for our purposes.

\begin{definition}[Tangential differentiability]
	\label{def:tangential-differentiability}
	A map $\boldsymbol{y}\colon \partial U \to \R^N$  is termed to be of class $C^1$ if there exists a map $\boldsymbol{Y}\in C^1(\R^N;\R^N)$ such that $\boldsymbol{Y}\restr{\partial U}=\boldsymbol{y}$. In this case, we write  $\boldsymbol{y}\in C^1(\partial U;\R^N)$ and we define the tangential gradient of $\boldsymbol{y}$ at $\boldsymbol{x}_0\in \partial U$ as $D^{\partial U}\boldsymbol{y}(\boldsymbol{x}_0)\coloneqq D\boldsymbol{y}(\boldsymbol{x}_0) \left( \boldsymbol{I}-\boldsymbol{n}_U(\boldsymbol{x}_0)\otimes \boldsymbol{n}_U(\boldsymbol{x}_0)\right)$.
\end{definition}

Note that $D^{\partial U}\boldsymbol{y}(\boldsymbol{x}_0)\in \rnn$. Basically, the rows of this matrix are given by $\boldsymbol{\Pi}_{T_{\boldsymbol{x}_0}\partial U}(DY^i(\boldsymbol{x}_0))$ for $i \in \{1,\dots,N\}$. It is easily shown that the previous definiton is well posed in the sense that does not depend on the choice of the extension $\boldsymbol{Y}$. Clearly, for $\boldsymbol{y}\in C^1(\R^N;\R^N)$, we have
\begin{equation}
\label{eqn:A1}
D^{\partial U}\boldsymbol{y}\restr{\partial U}(\boldsymbol{x}_0)= D\boldsymbol{y}(\boldsymbol{x}_0) \left( \boldsymbol{I}-\boldsymbol{n}_U(\boldsymbol{x}_0)\otimes \boldsymbol{n}_U(\boldsymbol{x}_0)\right).
\end{equation}
Definition \ref{def:tangential-differentiability} is equivalent to the one generally used in differential geometry: a map $\boldsymbol{y}\colon \partial U \to\R^N$ belongs to $C^1(\partial U;\R^N)$ if and only if, for every $\boldsymbol{x}_0\in \partial U$, there exists a local chart $\boldsymbol{\zeta}\in C^2(\closure{A};\R^N)$ at $\boldsymbol{x}_0$ such that $\boldsymbol{\upsilon}\coloneqq \boldsymbol{y}\circ \boldsymbol{\zeta}\in C^1(A;\R^N)$. In this case, we have the identity
\begin{equation}
\label{eqn:A2}
D^{\partial U}\boldsymbol{y}(\boldsymbol{x}_0)=D\boldsymbol{\upsilon}(\boldsymbol{\zeta}^{-1}(\boldsymbol{x}_0))\boldsymbol{\Lambda}_U(\boldsymbol{x}_0).
\end{equation}
Given $\boldsymbol{y}\in C^1(\partial U;\R^N)$, its tangential divergence at $\boldsymbol{x}_0\in \partial U$ is simply defined as 
\begin{equation*}
\div^{\partial U} \boldsymbol{y}(\boldsymbol{x}_0)\coloneqq \tr D^{\partial U}\boldsymbol{y}(\boldsymbol{x}_0).
\end{equation*}

\subsection*{Divergence Theorem and Integration-by-parts Formula}
We know recall the Divergence Theorem on surfaces. Here, we limit ourselves to the case which is of interest for us, namely the case of boundaries of smooth domains. However, these results hold true for more general smooth hypersurfaces possibly with boundary.

Before proceeding, we need to introduce the mean curvature. Observe that, given the regularity of $U$, we have $\boldsymbol{n}_U \in C^1(\partial U;\R^N)$. The scalar mean curvature of $\partial U$ is defined as 
\begin{equation*}
H_U\coloneqq \div^{\partial U}\boldsymbol{n}_U.
\end{equation*}
The reader must be warned that different conventions of sign in the previous definition are also considered in the literature.

We  now state the Divergence Theorem. For a proof,	we refet to \cite[Chapter 3]{ambrosio},  \cite[Theorem 2, Chapter 2, Section 1.5]{cartesian.currents} or \cite[Theorem 11.8 and Remark 11.12]{maggi}.
\begin{theorem}[Divergence Theorem on boundaries]
	\label{thm:div}
	Let $U \subset \subset \R^N$ be a domain of class $C^2$ and let $\boldsymbol{y}\in C^1(\partial U;\R^N)$. Then, there holds:
	\begin{equation*}
	\int_{\partial U} \div^{\partial U}\boldsymbol{y}\,\d\boldsymbol{a}=\int_{\partial U} (\boldsymbol{y}\cdot \boldsymbol{n}_U)\,H_U\,\d\boldsymbol{a}.
	\end{equation*}
\end{theorem}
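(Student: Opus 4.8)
The plan is to decompose $\boldsymbol{y}$ along $\partial U$ into its tangential and normal parts and handle the two contributions separately. Since $U$ is of class $C^2$ we have $\boldsymbol{n}_U\in C^1(\partial U;\R^N)$, so setting $\varphi\coloneqq\boldsymbol{y}\cdot\boldsymbol{n}_U\in C^1(\partial U)$ and $\boldsymbol{\tau}\coloneqq\boldsymbol{y}-\varphi\,\boldsymbol{n}_U\in C^1(\partial U;\R^N)$ gives $\boldsymbol{y}=\boldsymbol{\tau}+\varphi\,\boldsymbol{n}_U$ with $\boldsymbol{\tau}(\boldsymbol{x})\in T_{\boldsymbol{x}}\partial U$ for every $\boldsymbol{x}\in\partial U$. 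By linearity of $\div^{\partial U}$ it then suffices to prove that $\int_{\partial U}\div^{\partial U}(\varphi\,\boldsymbol{n}_U)\,\d\boldsymbol{a}=\int_{\partial U}\varphi\,H_U\,\d\boldsymbol{a}$ and $\int_{\partial U}\div^{\partial U}\boldsymbol{\tau}\,\d\boldsymbol{a}=0$. For the normal part I would extend $\boldsymbol{n}_U$ to the tubular neighbourhood $T(\partial U,\delta)$ by $-Dd_U$, which is $C^1$ and agrees with $\boldsymbol{n}_U$ on $\partial U$ by the tubular neighbourhood properties listed in Section~\ref{sec:preliminaries}, use the product rule $D^{\partial U}(\varphi\,\boldsymbol{n}_U)=\boldsymbol{n}_U\otimes D^{\partial U}\varphi+\varphi\,D^{\partial U}\boldsymbol{n}_U$, and take traces. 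The first term contributes $D^{\partial U}\varphi\cdot\boldsymbol{n}_U=0$ because the tangential gradient is tangent to $\partial U$; and $|Dd_U|\equiv 1$ on $T(\partial U,\delta)$ forces $D^2 d_U\,Dd_U=\tfrac12\nabla(|Dd_U|^2)=0$, hence $D^{\partial U}\boldsymbol{n}_U\,\boldsymbol{n}_U=0$ and $\tr D^{\partial U}\boldsymbol{n}_U=H_U$, so that $\div^{\partial U}(\varphi\,\boldsymbol{n}_U)=\varphi\,H_U$ pointwise on $\partial U$.

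The main obstacle is the tangential identity $\int_{\partial U}\div^{\partial U}\boldsymbol{\tau}\,\d\boldsymbol{a}=0$, which encodes the fact that the compact hypersurface $\partial U$ has no boundary. The plan is to exploit the foliation of $T(\partial U,\delta)$ by the level sets $\partial U_\ell$. Extend $\boldsymbol{\tau}$ to $T(\partial U,\delta)$ by $\widetilde{\boldsymbol{\tau}}(\boldsymbol{x})\coloneqq\boldsymbol{\tau}(\boldsymbol{\Pi}_U(\boldsymbol{x}))$; this is $C^1$, and since $\boldsymbol{n}_{U_\ell}(\boldsymbol{x})=\boldsymbol{n}_U(\boldsymbol{\Pi}_U(\boldsymbol{x}))$ on $\partial U_\ell$, the field $\widetilde{\boldsymbol{\tau}}$ is tangent to every level set. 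For $0<\delta'<\delta$ I would apply the classical Divergence Theorem in $\R^N$ on the annular region $\{-\delta'<d_U<\delta'\}$, whose boundary is $\partial U_{\delta'}\cup\partial U_{-\delta'}$ with unit normal equal to $\pm\boldsymbol{n}_{U_{\pm\delta'}}$: because $\widetilde{\boldsymbol{\tau}}$ is orthogonal to $\boldsymbol{n}_{U_\ell}$ on each level set, both boundary integrals vanish and $\int_{\{-\delta'<d_U<\delta'\}}\div\widetilde{\boldsymbol{\tau}}\,\d\boldsymbol{x}=0$. Using $|Dd_U|\equiv 1$ once more, one checks that on each $\partial U_\ell$ the full divergence reduces to the tangential one, $\div\widetilde{\boldsymbol{\tau}}=\div^{\partial U_\ell}\widetilde{\boldsymbol{\tau}}$ (the missing normal–normal term is $\partial_{\boldsymbol{n}_{U_\ell}}(\widetilde{\boldsymbol{\tau}}\cdot\boldsymbol{n}_{U_\ell})-\widetilde{\boldsymbol{\tau}}\cdot D\boldsymbol{n}_{U_\ell}\,\boldsymbol{n}_{U_\ell}=0$), so the Coarea Formula (with coarea factor $1$) yields $\int_{-\delta'}^{\delta'}(\int_{\partial U_\ell}\div^{\partial U_\ell}\widetilde{\boldsymbol{\tau}}\,\d\boldsymbol{a})\,\d\ell=0$ for every such $\delta'$.

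Finally I would divide by $2\delta'$ and let $\delta'\to 0^+$: since $d_U\in C^2(T(\partial U,\delta))$, the hypersurfaces $\partial U_\ell$, their normals, and the induced surface measures depend continuously on $\ell$, and $\widetilde{\boldsymbol{\tau}}$ is a fixed $C^1$ field, so $\ell\mapsto\int_{\partial U_\ell}\div^{\partial U_\ell}\widetilde{\boldsymbol{\tau}}\,\d\boldsymbol{a}$ is continuous and its mean over $(-\delta',\delta')$ converges to its value at $\ell=0$, namely $\int_{\partial U}\div^{\partial U}(\widetilde{\boldsymbol{\tau}}|_{\partial U})\,\d\boldsymbol{a}$. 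Because $\boldsymbol{\Pi}_U$ restricts to the identity on $\partial U$ and its differential there restricts to the identity on tangent vectors, $D^{\partial U}(\widetilde{\boldsymbol{\tau}}|_{\partial U})=D^{\partial U}\boldsymbol{\tau}$, so this integral equals $\int_{\partial U}\div^{\partial U}\boldsymbol{\tau}\,\d\boldsymbol{a}$, which is therefore $0$. Adding the two contributions gives the claimed formula. An entirely equivalent route for the tangential part is a partition of unity subordinate to a finite atlas of $C^2$ charts of $\partial U$, reducing via \eqref{eqn:A2} to the classical divergence theorem for compactly supported $C^1$ fields on domains of $\R^{N-1}$; in that approach I expect the verification that the chart-wise boundary terms cancel to be the most delicate bookkeeping, which is why I prefer the foliation argument above, where the cancellation is built in.
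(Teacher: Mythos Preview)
Your argument is sound. The decomposition into normal and tangential parts is the standard opening move; the pointwise identity $\div^{\partial U}(\varphi\,\boldsymbol{n}_U)=\varphi\,H_U$ is correctly justified via the signed-distance extension and $|Dd_U|\equiv 1$; and your foliation argument for the tangential part is a clean way to encode the boundarylessness of $\partial U$. The key step $(D\widetilde{\boldsymbol\tau}\,\boldsymbol{n}_{U_\ell})\cdot\boldsymbol{n}_{U_\ell}=0$ is in fact immediate once one notes that $\widetilde{\boldsymbol\tau}$ is constant along normal lines (so $D\widetilde{\boldsymbol\tau}\,\boldsymbol{n}_{U_\ell}=0$ outright), which slightly streamlines your justification. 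The continuity of $\ell\mapsto\int_{\partial U_\ell}\div^{\partial U_\ell}\widetilde{\boldsymbol\tau}\,\d\boldsymbol{a}$ can be made fully rigorous by pulling back to $\partial U$ via the $C^1$ diffeomorphism $\boldsymbol{z}\mapsto\boldsymbol{z}-\ell\,\boldsymbol{n}_U(\boldsymbol{z})$, whose Jacobian depends continuously on $\ell$.

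Note, however, that the paper does not supply its own proof of this theorem: it simply refers to \cite[Chapter~3]{ambrosio}, \cite[Theorem~2, Chapter~2, Section~1.5]{cartesian.currents}, and \cite[Theorem~11.8 and Remark~11.12]{maggi}. So there is no in-paper argument to compare against. Your foliation approach is close in spirit to the signed-distance techniques in \cite{ambrosio}; the treatments in \cite{cartesian.currents} and \cite{maggi} proceed instead via local charts and the Euclidean divergence theorem on $\R^{N-1}$, essentially the partition-of-unity route you mention at the end. Both routes are standard; your choice avoids the bookkeeping of overlapping charts at the cost of invoking the tubular-neighbourhood machinery, which the paper has already set up in Section~\ref{sec:preliminaries} anyway.
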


From Theorem \ref{thm:div}, we easily deduce the following identities.

\begin{corollary}[Integration-by-parts Formula on boundaries]
	\label{cor:ibp}
	Let $U \subset \subset \R^N$ be a domain of class $C^2$ and let $\boldsymbol{y}\in C^1(\partial U;\R^N)$. Then, for every $\boldsymbol{\varphi}\in C^1(\partial U;\R^N)$, the following formula holds:
	\begin{equation*}
	\int_{\partial U} (D^{\partial U}\boldsymbol{y})\boldsymbol{\varphi}\,\d\boldsymbol{a}=\int_{\partial U} \boldsymbol{y}(\boldsymbol{\varphi}\cdot \boldsymbol{n}_U)\,H_U\,\d\boldsymbol{a}-\int_{\partial U} \boldsymbol{y}\,\div^{\partial U}\boldsymbol{\varphi}\,\d\boldsymbol{a}.
	\end{equation*}
	Equivalently, for every $\boldsymbol{\Phi}\in C^1(\partial U;\rnn)$, there holds
	\begin{equation*}
	\int_{\partial U} D^{\partial U}\boldsymbol{y}:\boldsymbol{\Phi}\,\d\boldsymbol{a}=\int_{\partial U} (\boldsymbol{\Phi}\boldsymbol{n}_U)\cdot \boldsymbol{y}\,H_U\,\d\boldsymbol{a}-\int_{\partial U} \boldsymbol{y}\cdot \div^{\partial U}\boldsymbol{\Phi}\,\d\boldsymbol{a}.
	\end{equation*} 
\end{corollary}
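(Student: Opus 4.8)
The final statement to prove is Corollary \ref{cor:ibp}, the Integration-by-parts Formula on boundaries, which is to be deduced from Theorem \ref{thm:div} (the Divergence Theorem on boundaries). The plan is to apply the Divergence Theorem to a cleverly chosen auxiliary vector field built from $\boldsymbol{y}$ and $\boldsymbol{\varphi}$, expand the tangential divergence by a product rule, and rearrange.

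First I would establish the scalar building block. Fix indices and consider, for a scalar function $u\in C^1(\partial U)$ and a vector field $\boldsymbol{\varphi}\in C^1(\partial U;\R^N)$, the field $u\,\boldsymbol{\varphi}$. The tangential-divergence product rule gives $\div^{\partial U}(u\,\boldsymbol{\varphi}) = \nabla^{\partial U} u \cdot \boldsymbol{\varphi} + u\,\div^{\partial U}\boldsymbol{\varphi}$, where $\nabla^{\partial U}u$ is the tangential gradient, i.e.\ the (single) row of $D^{\partial U}u$. This product rule follows immediately from Definition \ref{def:tangential-differentiability} by extending $u$ and $\boldsymbol{\varphi}$ to $C^1$ maps on $\R^N$, using the usual Euclidean product rule, and projecting onto the tangent space. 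Applying Theorem \ref{thm:div} to $u\,\boldsymbol{\varphi}$ yields
\begin{equation*}
\int_{\partial U} \nabla^{\partial U}u \cdot \boldsymbol{\varphi}\,\d\boldsymbol{a} + \int_{\partial U} u\,\div^{\partial U}\boldsymbol{\varphi}\,\d\boldsymbol{a} = \int_{\partial U} u\,(\boldsymbol{\varphi}\cdot\boldsymbol{n}_U)\,H_U\,\d\boldsymbol{a}.
\end{equation*}
Rearranging gives the scalar integration-by-parts identity $\int_{\partial U}\nabla^{\partial U}u\cdot\boldsymbol{\varphi}\,\d\boldsymbol{a} = \int_{\partial U} u\,(\boldsymbol{\varphi}\cdot\boldsymbol{n}_U)H_U\,\d\boldsymbol{a} - \int_{\partial U} u\,\div^{\partial U}\boldsymbol{\varphi}\,\d\boldsymbol{a}$.

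Next I would vectorize. For $\boldsymbol{y}\in C^1(\partial U;\R^N)$ apply the scalar identity componentwise with $u = y^i$, the $i$-th component. Since the $i$-th row of $D^{\partial U}\boldsymbol{y}$ is exactly $\nabla^{\partial U}y^i$, the $i$-th component of the vector $(D^{\partial U}\boldsymbol{y})\boldsymbol{\varphi}$ is $\nabla^{\partial U}y^i\cdot\boldsymbol{\varphi}$. Stacking the $N$ scalar identities into a vector identity produces precisely
\begin{equation*}
\int_{\partial U}(D^{\partial U}\boldsymbol{y})\boldsymbol{\varphi}\,\d\boldsymbol{a} = \int_{\partial U}\boldsymbol{y}\,(\boldsymbol{\varphi}\cdot\boldsymbol{n}_U)\,H_U\,\d\boldsymbol{a} - \int_{\partial U}\boldsymbol{y}\,\div^{\partial U}\boldsymbol{\varphi}\,\d\boldsymbol{a},
\end{equation*}
which is the first displayed formula of the corollary. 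For the second (matrix) formula, I would take $\boldsymbol{\Phi}\in C^1(\partial U;\rnn)$ and apply the already-proved vector identity with $\boldsymbol{\varphi}$ equal to the $j$-th row of $\boldsymbol{\Phi}$, then sum over $j$; the bilinear pairings reorganize as $D^{\partial U}\boldsymbol{y}:\boldsymbol{\Phi}$ on the left and $(\boldsymbol{\Phi}\boldsymbol{n}_U)\cdot\boldsymbol{y}$ and $\boldsymbol{y}\cdot\div^{\partial U}\boldsymbol{\Phi}$ on the right (with $\div^{\partial U}\boldsymbol{\Phi}$ understood as the row-wise tangential divergence). Alternatively one can contract the first formula against a constant vector and use that $\boldsymbol{\Phi}$ is arbitrary; either route is routine.

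The only genuinely nontrivial point is the tangential product rule $\div^{\partial U}(u\boldsymbol{\varphi}) = \nabla^{\partial U}u\cdot\boldsymbol{\varphi} + u\,\div^{\partial U}\boldsymbol{\varphi}$, and even this is elementary once one works with $C^1$ extensions to $\R^N$: writing $D^{\partial U}(\cdot) = D(\cdot)(\boldsymbol{I}-\boldsymbol{n}_U\otimes\boldsymbol{n}_U)$, the Euclidean Leibniz rule and the fact that the projector $\boldsymbol{I}-\boldsymbol{n}_U\otimes\boldsymbol{n}_U$ is applied linearly give the claim after taking traces. Thus I anticipate no real obstacle; the proof is a direct consequence of Theorem \ref{thm:div} together with standard differential-calculus identities on the surface, and the write-up is essentially bookkeeping of indices.
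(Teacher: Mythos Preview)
Your proposal is correct and is precisely the argument the paper has in mind: the paper does not write out a proof but simply states that the identities are ``easily deduced'' from Theorem~\ref{thm:div}, and your route---applying the Divergence Theorem to $u\,\boldsymbol{\varphi}$ with the tangential Leibniz rule and then working componentwise---is the standard way to make this precise. One small remark: for the matrix formula it is cleanest to apply your \emph{scalar} identity directly with $u=y^i$ and $\boldsymbol{\varphi}=\boldsymbol{\Phi}^i$ (the $i$-th row of $\boldsymbol{\Phi}$) and sum over $i$, rather than reusing the vector identity with the $j$-th row and summing over $j$, which does not collapse to a scalar without an additional contraction; this is purely a bookkeeping point and does not affect the argument.
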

In the last equation, the tangential divergence operator is suppose to act row-wise on matrix-valued maps. Namely, the rows of the vector field $\div^{\partial U}\boldsymbol{\Phi}$  are given by $\div^{\partial U}\boldsymbol{\Phi}^i$ for $i \in \{1,\dots,N\}$.

\subsection*{Weak tangential differentiability}

The notion of weak tangetial differentiability is conceptually analogous to the one of weak differentiability. In this case, one the classical Integration-by-parts Formula is replaced by the one provided by Corollary \ref{cor:ibp}. Again, we focus on the case of vector-valued maps.

\begin{definition}[Weak tangential differentiability]
	A map $\boldsymbol{y}\in L^1(\partial U;\R^N)$ is termed weakly tangentially differentiable if there exists $\boldsymbol{F}\in L^1(\partial U;\rnn)$ such that, for every $\boldsymbol{\varphi}\in C^1(\partial U;\R^N)$, there holds
	\begin{equation*}
	-\int_{\partial U} \boldsymbol{y}\,\div^{\partial U}\boldsymbol{\varphi}\,\d\boldsymbol{a}=\int_{\partial U} \boldsymbol{F}\boldsymbol{\varphi}\,\d\boldsymbol{a}-\int_{\partial U} \boldsymbol{y}(\boldsymbol{\varphi}\cdot \boldsymbol{n}_U)\,H_U\,\d\boldsymbol{a},
	\end{equation*}
	or, equivalently, for every $\boldsymbol{\Phi}\in C^1(\partial U;\rnn)$, there holds
	\begin{equation*}
	-\int_{\partial U} \boldsymbol{y}\cdot \div^{\partial U}\boldsymbol{\Phi}\,\d\boldsymbol{a}=\int_{\partial U} \boldsymbol{F}:\boldsymbol{\Phi}\,\d\boldsymbol{a}-\int_{\partial U} (\boldsymbol{\Phi}\boldsymbol{n}_U)\cdot \boldsymbol{y}\,H_U\,\d\boldsymbol{a}.
	\end{equation*}
	In this case, $D^{\partial U}\boldsymbol{y}\coloneqq \boldsymbol{F}$ is termed the weak tangential gradient of $\boldsymbol{y}$.
\end{definition}

The weak tangential gradient is unique up to sets of measure zero and, in view of Corollay \ref{cor:ibp}, coincides with the tangential gradient in the case of tangentially differentiable maps.

At this point, for every $1\leq p<\infty$, we define the Sobolev space $W^{1,p}(\partial U;\R^N)$ as the class of maps $\boldsymbol{y}\in L^p(\partial U;\R^N)$ admitting weak tangential gradient $D^{\partial U}\boldsymbol{y}$ which belongs to $L^p(\partial U;\rnn)$. Up to considering equivalence classes of maps that coincide almost everywhere, this is Banach space equipped with the norm
\begin{equation}
\label{eqn:norm}
\boldsymbol{y}\mapsto \left \{\|\boldsymbol{y}\|_{L^p(\partial U;\R^N)}^p+\|D^{\partial U}\boldsymbol{y}\|_{L^p(\partial U;\rnn)}^p   \right \}^{1/p}.
\end{equation} 
By exploiting the natural embedding of $W^{1,p}(\partial U;\R^N)$ in $L^p(\partial U;\R^N)\times L^p(\partial U;\rnn)$ given by $\boldsymbol{y}\mapsto (\boldsymbol{y},D^{\partial U}\boldsymbol{y})$, we see that elements of the dual of $W^{1,p}(\partial U;\R^N)$ admit a representation analogous to the one valid in the case of domains. Given $(\boldsymbol{y}_n)\subset W^{1,p}(\partial U;\R^N)$ and $\boldsymbol{y}\in W^{1,p}(\partial U;\R^N)$, we have that $\boldsymbol{y}_n \wk \boldsymbol{y}$ in $W^{1,p}(\partial U;\R^N)$ if and only if $\boldsymbol{y}_n \wk \boldsymbol{y}$ in $L^p(\partial U;\R^N)$ and $D^{\partial U}\boldsymbol{y}_n \wk D^{\partial U}\boldsymbol{y}$ in $L^p(\partial U;\rnn)$.

The following result shows that, in the case of domains of class $C^2$,  the definition of Sobolev maps on the boundary coincides with the usual definition of Sobolev maps on the boundary given by means of local charts \cite{necas}. This can be proved using standard techniques involving partitions of unity and the density of smooth functions in Sobolev spaces by exploiting \eqref{eqn:A1}--\eqref{eqn:A2}.

\begin{proposition}[Sobolev maps on the boundary]
	\label{prop:sobolev-boundary}
	Let $U \subset \subset \R^N$ be a domain of class $C^2$ and let $\boldsymbol{y}\in L^1(\partial U;\R^N)$. Then, $\boldsymbol{y} \in W^{1,p}(\partial U;\R^N)$ if and only if for every $\boldsymbol{x}_0$ and for every local chart $\boldsymbol{\zeta}\in C^2(\closure{A};\R^N)$ at $\boldsymbol{x}_0$, there holds $\boldsymbol{\upsilon}\coloneqq \boldsymbol{y}\circ \boldsymbol{\zeta}\in W^{1,p}(A;\R^N)$.
\end{proposition}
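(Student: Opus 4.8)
The plan is to reduce the statement to a local one via a finite atlas and a subordinate partition of unity, and then to translate the intrinsic notion of weak tangential differentiability into the Euclidean one by means of the identities \eqref{eqn:A1}--\eqref{eqn:A2} and the Integration-by-parts Formula of Corollary \ref{cor:ibp}. First I would fix local charts $\{(\boldsymbol{\zeta}_k,A_k)\}_{k=1}^K$ whose images cover $\partial U$ and $\{\rho_k\}_{k=1}^K\subset C^\infty(\partial U)$ with $\supp\rho_k\subset\boldsymbol{\zeta}_k(A_k)$ and $\sum_k\rho_k\equiv 1$. Since multiplication by a $C^1$ scalar preserves membership in $W^{1,p}(\partial U;\R^N)$ (with the product rule $D^{\partial U}(\rho\boldsymbol{y})=\rho\,D^{\partial U}\boldsymbol{y}+\boldsymbol{y}\otimes D^{\partial U}\rho$, which follows directly from the definition and Corollary \ref{cor:ibp}) as well as membership in $W^{1,p}(A_k;\R^N)$ after composition with $\boldsymbol{\zeta}_k$, and since both properties are additive, it suffices to prove the equivalence for maps supported in a single chart image $\boldsymbol{\zeta}(A)$.

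For the implication ``$\Rightarrow$'', I would take $\boldsymbol{y}\in W^{1,p}(\partial U;\R^N)$ supported in $\boldsymbol{\zeta}(A)$, fix $j\in\{1,\dots,N-1\}$ and $\phi\in C^1_{\rm c}(A)$, and test the first identity in Corollary \ref{cor:ibp} (written for the weak tangential gradient) against the \emph{tangential} field $\boldsymbol{\varphi}\coloneqq(\phi\circ\boldsymbol{\zeta}^{-1})\,\boldsymbol{\xi}$, where $\boldsymbol{\xi}$ is a $C^1$ extension to $\partial U$ of the coordinate vector field $\boldsymbol{x}\mapsto\partial_j\boldsymbol{\zeta}(\boldsymbol{\zeta}^{-1}(\boldsymbol{x}))$ (cut off away from $\supp\phi$, which is harmless). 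Since $\boldsymbol{\varphi}$ is tangential, $\boldsymbol{\varphi}\cdot\boldsymbol{n}_U\equiv0$, the mean-curvature term drops, and
\begin{equation*}
\int_{\partial U}(D^{\partial U}\boldsymbol{y})\boldsymbol{\varphi}\,\d\boldsymbol{a}=-\int_{\partial U}\boldsymbol{y}\,\div^{\partial U}\boldsymbol{\varphi}\,\d\boldsymbol{a}.
\end{equation*}
Expanding $\div^{\partial U}\big((\phi\circ\boldsymbol{\zeta}^{-1})\boldsymbol{\xi}\big)=(\phi\circ\boldsymbol{\zeta}^{-1})\div^{\partial U}\boldsymbol{\xi}+D^{\partial U}(\phi\circ\boldsymbol{\zeta}^{-1})\boldsymbol{\xi}$ and using \eqref{eqn:A2} for the scalar $\phi\circ\boldsymbol{\zeta}^{-1}$ together with $\boldsymbol{\Lambda}_U(\boldsymbol{\zeta})D\boldsymbol{\zeta}=\boldsymbol{J}$, the term $D^{\partial U}(\phi\circ\boldsymbol{\zeta}^{-1})\boldsymbol{\xi}$ pulls back exactly to $\partial_j\phi$. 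After the change of variables $\boldsymbol{x}=\boldsymbol{\zeta}(\boldsymbol{u})$, with area element $\d\boldsymbol{a}=J_{\boldsymbol{\zeta}}\,\d\boldsymbol{u}$, $J_{\boldsymbol{\zeta}}\in C^1(\overline A)$ bounded below by a positive constant on $\supp\phi$, all remaining terms carry $C^1$ coefficients; absorbing $J_{\boldsymbol{\zeta}}$ (by running the same computation with $\phi/J_{\boldsymbol{\zeta}}$ in place of $\phi$) turns the identity into $\int_A\boldsymbol{\upsilon}\,\partial_j\phi\,\d\boldsymbol{u}=-\int_A\phi\,\boldsymbol{G}_j\,\d\boldsymbol{u}$ for some $\boldsymbol{G}_j\in L^p(A;\R^N)$ built from $D^{\partial U}\boldsymbol{y}\circ\boldsymbol{\zeta}$, $\boldsymbol{\upsilon}$ and smooth bounded functions. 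As $j$ and $\phi$ are arbitrary, $\boldsymbol{\upsilon}\in W^{1,p}(A;\R^N)$.

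For the implication ``$\Leftarrow$'', I would assume $\boldsymbol{\upsilon}_k\coloneqq\boldsymbol{y}\circ\boldsymbol{\zeta}_k\in W^{1,p}(A_k;\R^N)$ for every chart and define the candidate gradient $\boldsymbol{F}$ by $\boldsymbol{F}\circ\boldsymbol{\zeta}_k\coloneqq(D\boldsymbol{\upsilon}_k)\,\boldsymbol{\Lambda}_U(\boldsymbol{\zeta}_k)$; using the chain rule for the $C^1$ transition maps and the transformation law of $\boldsymbol{\Lambda}_U$ one checks $\boldsymbol{F}$ is well defined and lies in $L^p(\partial U;\rnn)$. Then, mollifying $\boldsymbol{\upsilon}_k$ in charts and recombining with the partition of unity, I would produce $\boldsymbol{y}_m\in C^1(\partial U;\R^N)$ with $\boldsymbol{y}_m\to\boldsymbol{y}$ in $L^p(\partial U;\R^N)$ and $D^{\partial U}\boldsymbol{y}_m\to\boldsymbol{F}$ in $L^p(\partial U;\rnn)$ (the latter again through \eqref{eqn:A1}--\eqref{eqn:A2} and the product rule with $\rho_k$). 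For the smooth approximants Corollary \ref{cor:ibp} holds with the classical tangential gradient, which by \eqref{eqn:A1} coincides with $D^{\partial U}\boldsymbol{y}_m$; passing to the limit $m\to\infty$ in that identity — all terms are continuous under $L^p$ convergence of the functions and of their tangential gradients, since $\boldsymbol{n}_U$ and $H_U$ are bounded and continuous on the compact $C^2$ hypersurface $\partial U$ — yields the weak tangential differentiability identity for $\boldsymbol{y}$ with weak tangential gradient $\boldsymbol{F}$, i.e. $\boldsymbol{y}\in W^{1,p}(\partial U;\R^N)$.

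The step I expect to be most delicate is the bookkeeping of the area element $J_{\boldsymbol{\zeta}}$ when passing between the intrinsic integrals over $\partial U$ and the Euclidean integrals over $A$, and, relatedly, the correct appearance of the mean curvature $H_U$: this is exactly where the $C^2$-regularity of $\partial U$ enters (so that $\boldsymbol{n}_U\in C^1(\partial U;\R^N)$, $H_U$ is continuous, and $J_{\boldsymbol{\zeta}}\in C^1(\overline A)$). Since this information is already packaged in the Divergence Theorem \ref{thm:div} and its Corollary \ref{cor:ibp}, both implications reduce to manipulations of those formulas combined with \eqref{eqn:A1}--\eqref{eqn:A2} and routine chart/partition-of-unity arguments, with no further geometric input required.
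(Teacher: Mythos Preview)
Your proposal is correct and follows essentially the same approach that the paper indicates: the paper does not give a detailed proof but only remarks that the result ``can be proved using standard techniques involving partitions of unity and the density of smooth functions in Sobolev spaces by exploiting \eqref{eqn:A1}--\eqref{eqn:A2}'', and your sketch is precisely a fleshing-out of that programme. The only mild difference is that for the forward implication you work directly with the weak integration-by-parts identity (testing against tangential coordinate fields) rather than first approximating $\boldsymbol{y}$ by smooth maps, but this is a matter of taste and both routes are standard.
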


This proposition allows us to extend various results about Sobolev spaces on domain to Sobolev spaces on boundaries of smooth domains. Among these, we have the density of $C^1(\partial U;\R^N)$ in $W^{1,p}(\partial U;\R^N)$, the embeddings of Sobolev and Morrey, and the Poincaré inequality. In particular,  the density of smooth maps shows that the approach presented here is equivalent to the one usually adopted in the theory of Sobolev spaces on Riemannian manifolds \cite{hebey} which are defined as the closure of smooth functions with respect to the norm in \eqref{eqn:norm}. Finally, we stress that the validity of the Morrey embedding in $W^{1,p}(\partial U;\R^N)$ is essential for the analysis carried out in the present work. 

\subsection*{Convergence of Sobolev maps on boundaries}

We conclude this appendix by providing a detailed proof of Lemma \ref{lem:weak-convergence-boundary}.

\begin{proof}[Proof of Lemma \ref{lem:weak-convergence-boundary}]
	Let $\delta>0$ be such that the tubular neighborhood $T(\partial U,\delta)$ is defined.  Claim \eqref{eqn:sobolev-regularity-boundary} is established by mollification and by applying the Coarea Formula \cite[Equation (2.74)]{ambrosio.fusco.pallara} with the signed distance function. By the Rellich embedding, up to subsequences, we have  $\boldsymbol{y}_n \to \boldsymbol{y}$ in $L^p(T(\partial U,\delta);\R^N)$. Hence, by the Coarea Formula \cite[Equation (2.74)]{ambrosio.fusco.pallara}, we have
	\begin{equation}
	\label{eqn:strong-convergence-boundary}
	\text{$\boldsymbol{y}_n\to \boldsymbol{y}$ in $L^p(\partial U_\ell;\R^N)$ for every $\ell \in (-\delta,\delta)\setminus Z_1$,} 
	\end{equation}
	for some $Z_1 \subset (-\delta,\delta)$ with $\mathscr{L}^1(Z_1)=0$. For every $n \in \N$, define $f_n\colon (-\delta,\delta)\to \R$ and $f\colon (-\delta,\delta)\to \R$ by setting
	\begin{equation*}
	f_n(\ell)\coloneqq \int_{\partial U_\ell} |D^{\partial U_\ell}\boldsymbol{y}_n|^p\,\d\boldsymbol{a}, \qquad f(\ell)\coloneqq \liminf_{n \to \infty} f_n(\ell).
	\end{equation*}
	By the Fatou Lemma and by the Coarea Formula \cite[Equation (2.74)]{ambrosio.fusco.pallara}, $f \in L^1(-\delta,\delta)$ and, in turn, $f(\ell)<+\infty$ for every $\ell \in (-\delta,\delta)\setminus Z_2$ for some $Z_2 \subset (-\delta,\delta)$ with $\mathscr{L}^1(Z_2)=0$. Fix $\ell \in (-\delta,\delta)\setminus (Z_1 \cup Z_2)$ and select a subsequence indexed by $(n_k)$, possibly depending on $\ell$, such that $$\lim_{k \to \infty} f_{n_k}(\ell)=f(\ell)<+\infty.$$ Thus, for $k \gg 1$, the sequence $(D^{\partial U_\ell}\boldsymbol{y}_{n_k})\subset L^p(\partial U_\ell;\rnn)$ is bounded so that, up to subsequences, we have
	\begin{equation}
	\label{eqn:scb2}
	\text{$D^{\partial U_\ell}\boldsymbol{y}_{n_k} \wk \boldsymbol{F}$ in $L^p(\partial U_\ell;\rnn)$}
	\end{equation}
	for some $\boldsymbol{F}\in L^p(\partial U_\ell;\rnn)$.
	Now, given \eqref{eqn:sobolev-regularity-boundary}, for every $\boldsymbol{\Phi}\in C^1(\partial U_\ell;\rnn)$ and $k \in \N$ the  Integration-by-parts Formula yields
	\begin{equation*}
	-\int_{\partial U_\ell} \boldsymbol{y}_{n_k}\cdot \div^{\partial U_\ell} \boldsymbol{\Phi}\,\d\boldsymbol{a}=\int_{\partial U_\ell} D^{\partial U_\ell}\boldsymbol{y}_{n_k}:\boldsymbol{\Phi}\,\d\boldsymbol{a}-\int_{\partial U_\ell} H_{U_\ell}\,\boldsymbol{y}_{n_k}\cdot \boldsymbol{\Phi}\boldsymbol{n}_{U_\ell}\,\d\boldsymbol{a},
	\end{equation*}
	where $H_{U_\ell}$ denotes the scalar mean curvature of $\partial U_\ell$. 
	Passing to the limit, as $k\to \infty$, in the previous equation with the aid of \eqref{eqn:strong-convergence-boundary}--\eqref{eqn:scb2}, we deduce that  $\boldsymbol{F}=D^{\partial U_\ell}\boldsymbol{y}$. Therefore, $\boldsymbol{y}_{n_k}\wk \boldsymbol{y}$ in $W^{1,p}(\partial U_\ell;\R^N)$. Finally,  since $p>N-1$, we have ${\boldsymbol{y}}_{n_k} \to {\boldsymbol{y}}$ uniformly on $\partial U_\ell$ by the Morrey embedding.
\end{proof}

\section*{Acknowledgements}
I am thankful to Martin Kru\v{z}\'{i}k and Stefan Kr\"{o}mer  for  insightful discussions on the magnetic saturation constraint and its analytical treatment in the case of continuous and globally invertible deformations. I am also greateful for their hospitality at the Institute of Information Theory and Automation of the Czech Academy of Science in Prague. This work has been supported by the Austrian Science Fund (FWF) and the GA\v{CR} through the  grant I4052-N32/19-29646L and by the Federal Ministry of Education, Science and Research of Austria (BMBWF) through the OeAD-WTZ project CZ04/2019 and  M\v{S}MT \v{CR} project 8J19AT013.

\end{document}